\renewcommand{\phi}{\varphi}
\renewcommand{\epsilon}{\varepsilon}
\DeclareMathOperator{\even}{even}
\DeclareMathOperator{\odd}{odd}
\DeclareMathOperator{\disc}{disc}
\DeclareMathOperator{\Hyp}{Hyp}
\DeclareMathOperator{\Ell}{Ell}
\DeclareMathOperator{\Ig}{Ig}
\DeclareMathOperator{\norm}{norm}
\DeclareMathOperator{\Pl}{Pl}
\DeclareMathOperator{\Planes}{Planes}
\DeclareMathOperator{\rank}{rank}
\DeclareMathOperator{\RealPart}{Re}
\DeclareMathOperator{\Sq}{Sq}
\DeclareMathOperator{\Tr}{Tr}
\DeclareMathOperator{\Vol}{Vol}
\newcommand{\C}{{\mathbb C}}
\newcommand{\F}{{\mathbb F}}
\newcommand{\N}{{\mathbb N}}
\newcommand{\Q}{{\mathbb Q}}
\newcommand{\Z}{{\mathbb Z}}
\newcommand{\Ni}{\N\cup\{\infty\}}
\newcommand{\card}[1]{\#{#1}}
\newcommand{\sums}[1]{\sum_{\substack{#1}}}
\newcommand{\sumls}[1]{\sum\limits_{{\substack{#1}}}}
\newcommand{\prods}[1]{\prod_{\substack{#1}}}
\newcommand{\prodls}[1]{\prod\limits_{{\substack{#1}}}}
\newcommand{\bigopluss}[1]{\bigoplus_{\substack{#1}}}
\newcommand{\bigoplusls}[1]{\bigoplus\limits_{{\substack{#1}}}}
\newcommand{\ceil}[1]{\lceil{#1}\rceil}
\newcommand{\field}{K}
\newcommand{\val}{v_\unif}
\newcommand{\twoval}{v_2}
\newcommand{\resfield}{\F_q}
\newcommand{\resfieldunits}{\F_q^*}
\newcommand{\resfieldsquareunits}{\F_q^{*2}}
\newcommand{\unif}{\pi}
\newcommand{\ring}{R}
\newcommand{\ringunits}{\ring^*}
\newcommand{\ringsquareunits}{\ring^{*2}}
\newcommand{\unitsmodsquares}{\ringunits/\ringsquareunits}
\newcommand{\teich}{T}
\newcommand{\teichu}{\teich^*}
\newcommand{\quotring}[1]{R_{#1}}
\newcommand{\coset}[2]{{#1}+\unif^{#2}\ring}
\newcommand{\ncoset}[1]{\unif^{#1}\ring}
\newcommand{\gncoset}[1]{\gamma^{\ncoset{#1}}}
\newcommand{\zcoset}[2]{z^{\coset{#1}{#2}}}
\newcommand{\zncoset}[1]{z^{\ncoset{#1}}}
\newcommand{\gfgroup}[1]{\Gamma_{#1}}
\newcommand{\groupring}[1]{{\mathcal G}_{#1}}
\newcommand{\limgroupring}{{\mathcal G}}
\newcommand{\igr}{\frac{1-1/q}{1-t/q}}
\newcommand{\igrp}{\left(\igr\right)}
\newcommand{\Planar}[1]{\Planes(\pm_{#1})}
\newtheorem{theorem}{Theorem}[section]
\newtheorem{proposition}[theorem]{Proposition}
\newtheorem{lemma}[theorem]{Lemma}
\newtheorem{corollary}[theorem]{Corollary}
\newtheorem{remark}[theorem]{Remark}
\theoremstyle{remark}
\title[A New Generating Function for Calculating the Igusa Local Zeta Function]{A New Generating Function for Calculating \\ the Igusa Local Zeta Function}
\author{Raemeon A.~Cowan}
\author{Daniel J.~Katz}
\address{Department of Mathematics, California State University, Northridge, United States}
\author{Lauren M.~White}
\date{first version: 25 June 2015; this version: 31 August 2016}
\thanks{All three authors were partially supported by the National Science Foundation grant DMS 1247679.
The second author was partially supported by a Probationary Faculty Grant from California State University, Northridge.}
\begin{document}

\begin{abstract}
A new method is devised for calculating the Igusa local zeta function $Z_f$ of a polynomial $f(x_1,\dots,x_n)$ over a $p$-adic field.  This involves a new kind of generating function $G_f$ that is the projective limit of a family of generating functions, and contains more data than $Z_f$.  This $G_f$ resides in an algebra whose structure is naturally compatible with operations on the underlying polynomials, facilitating calculation of local zeta functions.  This new technique is used to expand significantly the set of quadratic polynomials whose local zeta functions have been calculated explicitly.  Local zeta functions for arbitrary quadratic polynomials over $p$-adic fields with $p$ odd are presented, as well as for polynomials over unramified $2$-adic fields of the form $Q+L$ where $Q$ is a quadratic form and $L$ is a linear form where $Q$ and $L$ have disjoint variables.  For a quadratic form over an arbitrary $p$-adic field with odd $p$, this new technique makes clear precisely which of the three candidate poles are actual poles.
\end{abstract}

\maketitle
\vspace{-4mm}
\section{Introduction}

In this paper, we devise a new method for calculating the Igusa local zeta function of a polynomial over a $p$-adic field.
Throughout this paper, $p$ is a prime, $\field$ is a $p$-adic field with finite residue field $\resfield$ of order $q$ and valuation ring $\ring$.  We let $\pi$ be a uniformizing parameter for $\ring$, and we have the $\pi$-adic valuation $\val$, and define the absolute value of an element $a \in \field$ to be $|a|_{\field}=q^{-\val(a)}$.
We let $\ringunits$ be the group of units in $\ring$ and let $\ringsquareunits$ be the group of square units in $\ring$.
We use $\N$ to denote the set of nonnegative integers.
We say that $K$ (or $R$) is unramified when the prime $p$ does not ramify in $R$, and in that case we always choose $\pi=p$.

The {\it Igusa local zeta function} of a polynomial $f(x_1,\ldots,x_n) \in \field[x_1,\ldots,x_n]$ is
\begin{equation}\label{James}
Z_f(s)=\int_{\ring^n} |f(x_1,\ldots,x_n)|_{\field}^s d x_1 \cdots d x_n,
\end{equation}
where $s \in \C$ with $\RealPart(s) > 0$ and $d x_1\cdots d x_n$ is a volume element for the Haar measure.
This kind of zeta function was introduced by Weil \cite{Weil-1965}, and studied extensively by Igusa \cite{Igusa-1974,Igusa-1975,Igusa-1978}.
See the monograph by Igusa \cite{Igusa-2000}, and also the report by Denef \cite{Denef-1991-Local} for an extensive survey.

Throughout this paper, we set $t=q^{-s}$ because Igusa \cite{Igusa-1974,Igusa-1975,Igusa-1978} showed that $Z_f(s)$ is a rational function of $t$, and by abuse of notation, we shall call this rational function $Z_f(t)$.
When $f(x_1,\ldots,x_n) \in R[x_1,\ldots,x_n]$, the local zeta function carries all information about the number $N_k(f)$ of zeroes that $f(x_1,\ldots,x_n) \pmod{\pi^k}$ has in $(\ring/\unif^k\ring)^n$ for every $k \in \N$.
Indeed, if we define the Poincar\'e series for $f$ to be
\[
P_f(t) = \sum_{i \in \N} \frac{N_i(f)}{q^{n i}} t^i,
\]
then one can show that 
\[
P_f(t) = \frac{1-t Z_f(t)}{1-t}.
\]
This relation makes the local zeta function interesting in arithmetic geometry.  For example, the real parts of the poles of $Z_f(s)$ are connected with the $p$-divisibility of the numbers $N_k(f)$ of zeroes modulo $\pi^k$.
Segers \cite{Segers-2006-Lower} proves that no pole of $Z_f(s)$ has real part less than $-n/2$ by noting that $q^{\ceil{(n/2)(i-1)}} \mid N_i(f)$ for each $i\in\N$ when $n>1$.  The sharpest possible lower bound on $p$-divisibility is given in \cite{Marshall-Ramage-1975}, which is extended to general algebraic sets in \cite{Katz-2009}.
Numerous other works study the poles of the local zeta function \cite{Meuser-1983,Strauss-1983,Lichtin-Meuser-1985,Veys-1990,Denef-1991-Local,Veys-1993,Denef-1995,Denef-Veys-1995,Denef-Hoornaert-2001,Zuniga-Galindo-2003-Local,Zuniga-Galindo-2004-Poles,Saia-Zuniga-Galindo-2005,Zuniga-Galindo-2005,Veys-2006,Veys-Zuniga-Galindo-2008,Melle-Hernandez-Torrelli-Veys-2009,Melle-Hernandez-Torrelli-Veys-2010}.

Many authors have labored on the calculation of local zeta functions in various situations \cite{Robinson-1996,Veys-1997,Albis-Zuniga-Galindo-1999,Denef-Hoornaert-2001,Zuniga-Galindo-2001,Zuniga-Galindo-2003-Computing,Marko-Riedl-2005,Rodriguez-Vega-2005,Saia-Zuniga-Galindo-2005,Ibadula-2006}, and many works either use local zeta functions or else apply the methods developed for obtaining them \cite{duSatoy-Grunewald-2000,Zuniga-Galindo-2004-Pseudo,Zuniga-Galindo-2006,Sakellaridis-2008,Klopsch-Voll-2009-Igusa,Klopsch-Voll-2009-Zeta,Zuniga-Galindo-2009,Voll-2010,Segers-Zuniga-Galindo-2011,Zuniga-Galindo-2011}.
Nonetheless, certain classes of polynomials have proved forbidding to those who wish to obtain general results.
One such example would be quadratic polynomials over $2$-adic fields, the local zeta functions of which are known to be useful \cite{Klopsch-Voll-2009-Zeta}, but which are considerably more challenging to obtain in general than in the case of odd $p$.

We propose a new method in this paper to improve the situation.
We now sketch the basic philosophy.
If $f_1,\ldots,f_k$ are polynomials, then we write $f_1\oplus\cdots\oplus f_k$ to denote the sum $f_1+\cdots+f_k$ and at the same time assert that no indeterminate appears in more than one of the $f_i$; we then say that $f$ is the {\it direct sum} of $f_1,\ldots,f_k$.
If a polynomial can be expressed as the direct sum of many polynomials with only a few indeterminates each, one has a better chance of being able to calculate its local zeta function.
Extreme cases of this would be diagonal forms such as Fermat varieties, whose local zeta functions have been studied in \cite{Marko-Riedl-2005}.
Even if $f$ and $g$ have distinct indeterminates, it is not possible to calculate the local zeta function of $f(x) \oplus g(y)$ from the local zeta functions of $f$ and $g$.
For a trivial example, suppose that $p$ is odd and $\alpha$ is a nonsquare unit in $\ring$, and that $f(x)=x^2$, $g(y)=\pi y-1$, and $h(y)=\pi y-\alpha$.
Then clearly $Z_g=Z_h=1$, and yet $Z_{f\oplus g}\not=1$ while $Z_{f\oplus h}=1$.
This is simply a manifestation of the fact that one cannot deduce the valuation of a sum $a+b$ from the individual valuations of $a$ and $b$.
Therefore, we invent a new object, called the {\it $p$-adic generating function}, that carries enough information so that the generating function for a direct sum can be deduced from the generating functions of the summands.
This $p$-adic generating function, which is the inverse limit of a family of generating functions, contains as much data about the polynomial as the collection of $Z_{f-c}$ for all $c \in \ring$.
Such generating functions cannot be expressed as simple polynomials: their terms have ``exponents'' that are sets, and yet because they reside in a group ring whose algebraic structure is naturally compatible with operations on the underlying polynomials, we obtain a straightforward calculus of generating functions that enables us to build up the generating function for a particular polynomial from smaller elements.
There is then a simple map from the $p$-adic generating functions to local zeta functions that forgets the extra information contained in the former.
This provides a new method for calculating local zeta functions that would have been very difficult to calculate with existing methods.

To demonstrate the use of this method, we calculate the local zeta function for an arbitrary quadratic polynomial when $p$ is odd, and for an arbitrary polynomial of the form $Q\oplus L$ with $Q$ a quadratic form and $L$ a linear form over an unramified $2$-adic field.  The results are given in Theorems \ref{Odilia} and \ref{Ursula} below.
For odd $p$, this extends the work of Igusa \cite{Igusa-1994-Local}, who considered polynomials of the form $Q\oplus L$, where $Q$ is a quadratic form and $L$ is a linear form.
The problem over $2$-adic fields, even the unramified ones, is far more difficult, and until now we lacked general theorems like the ones that Igusa had proved for the case of odd $p$.
The new method we present here has rendered the calculations of the old results far easier, has obtained new results, and should also enable the calculation of local zeta functions for many other non-quadratic polynomials.

Our new method also makes transparent the fact that the local zeta function of a quadratic form over any $p$-adic field has at most three poles (Remark \ref{Percy} below).  When $p$ is odd, it enables us to indicate the poles precisely in Theorem \ref{Arthur}.

We present our results for quadratic polynomials in the next section.  The rest of this paper is organized as follows.  In Section \ref{Henry}, we develop the general theory of the $p$-adic generating function.  In Section \ref{Elizabeth}, we apply the theory to quadratic polynomials to prove the main results presented in Section \ref{Martha}.  Section \ref{Elizabeth} in turn relies on calculations for unimodular quadratic forms that are presented in Section \ref{Carlos}, with technical proofs in the Appendix.

\section{Results: Local Zeta Functions for Quadratic Polynomials}\label{Martha}

In this section, we present our results on the local zeta functions of quadratic polynomials, after some introductory material on quadratic polynomials.

\subsection{General Remarks on Quadratic Polynomials}\label{Gordon}

To any $n\times n$ symmetric matrix $M$ with entries in $R$ we associate the form $Q(x_1,\ldots,x_n)=(x_1,\ldots,x_n) M (x_1,\ldots,x_n)^T$.  When we speak of a {\it quadratic form over $R$}, we mean one that can be obtained in this manner.  When $p=2$, one can scale by $\frac{1}{2}$ (and use diagonal entries in $2\ring$) to be able to obtain an arbitrary homogeneous polynomial of degree $2$ in $\ring[x_1,\ldots,x_n]$, and note that this simply scales the local zeta function by the $\pi$-adic absolute value of $1/2$.
A {\it unimodular matrix} is a matrix with entries in $\ring$ and determinant in $\ringunits$, and a quadratic form $Q$ is said to be {\it unimodular} if its associated matrix is unimodular.

We say that two quadratic forms $Q_1$ and $Q_2$ are {\it equivalent} and write $Q_1\cong Q_2$ if one can be obtained from the other by an invertible $\ring$-linear change of coordinates; these clearly have the same local zeta function.
Equivalence preserves the following three invariants:
\begin{itemize}
\item The {\it rank} of a quadratic form, written $\rank(Q)$, is the rank of its associated matrix.
\item The {\it discriminant} of a quadratic form $Q$, written $\disc(Q)$, is the element of the set $(\field^*/\ringsquareunits) \cup \{0\}$ that contains the determinant of the matrix associated to $Q$.  By a common abuse of terminology, we often say that some $a \in \field$ is the discriminant of a quadratic form to mean that the discriminant is $a\ringsquareunits$.
\item The {\it norm} of a quadratic form $Q(x_1,\ldots,x_n)$ is the ideal of $\ring$ generated by the set $Q(\ring^n)$ of all elements represented by the form.  For unimodular quadratic forms over $p$-adic fields with $p$ odd, the norm is always $\ring$ itself or $0$ (for the zero form).
When $p=2$, a unimodular form has a norm which is an ideal $I$ with $2\ring \subseteq I \subseteq R$ or $I=0$ (for the zero form).
\end{itemize}
Note that the zero form is considered to be unimodular with rank $0$, norm $0$, and discriminant $1$.

It is a fact \cite[\S 91C]{O'Meara-1963} that every quadratic form over $\ring$ can be brought by some equivalence to a form $\bigoplus_{i=0}^\infty \pi^i Q_i$, where $Q_0, Q_1,\ldots$ are unimodular quadratic forms, almost all of which are zero.  (See \cite[\S 94]{O'Meara-1963} for procedures.)
When $p$ is odd, two such decompositions of the same form must be identical up to equivalence of their $i$th components for every $i$ (see \cite[\S 92:2]{O'Meara-1963}).  When $p=2$, this is not the case (cf.~\cite[\S 93:28]{O'Meara-1963} and \cite{Jones-1944}).
For the purposes of our calculations, we assume that our quadratic forms have been transformed into such a decomposition.

Now we should understand the unimodular forms $Q_i$ that make up our decomposition.  For all $p$, each unimodular form is the direct sum of unimodular forms of rank $1$ and $2$ (see \cite[\S 91C, 92:1]{O'Meara-1963}).
If $p$ is odd, we may use solely rank $1$ forms, but we actually find it more convenient to decompose our unimodular forms into both rank $1$ and $2$ forms.  We now describe some basic unimodular forms of interest for all $p$.
\begin{itemize}
\item For $a \in \ringunits$, we use $a \Sq$ to denote the quadratic form $a x^2$.
\item We write $\Hyp$ for the {\it hyperbolic plane} $2 x y$, and we write $\Hyp^n$ for the direct sum of $n$ hyperbolic planes.
\item We write $\Ell$ for the {\it elliptic plane}, which is the rank $2$ form $2 y^2 f(x/y)$ with $f(X)$ a quadratic polynomial over $\ring$ whose reduction modulo $\pi$ is quadratic and irreducible over $\resfield$.  All such forms are equivalent, regardless of the choice of polynomial.
\end{itemize}
In Sections \ref{Olivia} (for $p$ odd) and \ref{Caesar} (for $p=2$ and unramified), we shall use these rank $1$ and $2$ forms to build arbitrary unimodular forms (up to equivalence).

We now consider arbitrary quadratic polynomials, that is, ones that may have linear and constant terms.
We say that two polynomials $f$ and $g$ over $\ring$ are {\it isospectral} if they have the same Igusa local zeta function, which (via the Poincar\'e series) is equivalent to saying that $f$ and $g$ have the same number of zeroes modulo $\unif^k$ for every $k \in \N$.  We say that $f$ and $g$ are {\it strongly isospectral} if $f$ and $g$ modulo $\pi^k$ represent each value in $\ring/\pi^k\ring$ the same number of times for every $k \in \N$, or equivalently, $f$ and $g$ are strongly isospectral if $f-c$ is isospectral to $g-c$ for every constant $c \in\ring$.
When $p$ is odd (and also when $\ring=\Z_2$) it turns out (see Proposition \ref{Theresa}) that every quadratic polynomial is strongly isospectral to a polynomial of the form
\begin{equation}\label{Angelo}
\bigoplus_{i=0}^{\omega} \pi^i Q_i \oplus \pi^\lambda L + c,
\end{equation}
where $Q_0, \ldots, Q_\omega$ are unimodular quadratic forms, and $L$ is a linear form involving at most one variable with $\lambda > \omega$, and $c$ is a constant in $\ring$.
Thus when $p$ is odd or when $R=\Z_2$ we may always assume that a quadratic polynomial whose local zeta function we are seeking has been replaced with a polynomial in form \eqref{Angelo}.
In fact, in our computation of the Igusa local zeta function when $p$ is odd (Theorem \ref{Odilia} below), we do not need to assume that $\lambda > \omega$, but even if we had some nonzero terms $Q_i$ with $i \geq\lambda$, the Igusa local zeta function would not be changed if we replaced each such $Q_i$ with $0$ (this is is a consequence of Lemma \ref{Ludwig} below).
We note that in computing the local zeta function for polynomials of the form $\bigoplus_{i=0}^{\omega} \pi^i Q_i \oplus \pi^\lambda L$ with $p$ odd, Igusa employed the assumption $\lambda \geq \omega$ ($e \geq e_t$ in his notation in the Corollary to Theorem 2 in \cite{Igusa-1994-Local}), and so we see that any quadratic polynomial in our standard form \eqref{Angelo} with $c=0$ is indeed strongly isospectral to one of those considered by Igusa.

\subsection{Quadratic Polynomials for Odd $p$}\label{Olivia}

When $p$ is odd, $\unitsmodsquares$ is a group of order $2$, and we let $\eta$ be the nontrivial character from this group to $\{\pm 1\}$ and extend $\eta$ so that we also write $\eta(b)$ when $b \in \ringunits$ to mean $\eta(b\ringsquareunits)$, and take $\eta(b)=0$ when $\pi\mid b$.
We fix $\alpha \in \ringunits\smallsetminus\ringsquareunits$.
On Table \ref{Wendy} we list the unimodular quadratic forms up to equivalence: every unimodular quadratic form is equivalent to {\it precisely one} form on our table (see \cite[\S 92:1a]{O'Meara-1963}).  Thus rank and discriminant classify the unimodular forms.  We may now compute local zeta functions.
\begin{table}[ht]
\caption{Possible Unimodular Forms for Odd $p$ (up to equivalence)}\label{Wendy}
\begin{center}
\vspace{-3mm}
\begin{tabular}{|c|c|c|}
\multicolumn{3}{c}{$\alpha \in \ringunits\smallsetminus\ringsquareunits$} \\
\hline
rank $r$ & discriminant & possible forms \\
\hline
\hline
\multirow{2}{*}{even} & $(-1)^{r/2}$ & $\Hyp^{r/2}$ \\
& $(-1)^{r/2} \alpha$ & $\Ell\oplus\Hyp^{(r-2)/2}$ \\
\hline
\multirow{2}{*}{odd} & $(-1)^{(r-1)/2}$ & $\Sq \oplus \Hyp^{(r-1)/2}$ \\
& $(-1)^{(r-1)/2}\alpha$ & $\alpha\Sq \oplus \Hyp^{(r-1)/2}$ \\
\hline
\end{tabular}
\end{center}
\end{table}
\FloatBarrier
\begin{theorem}\label{Odilia}
Let $p$ be odd.
Consider the quadratic polynomial $Q=\bigoplus_{i\in\N} \unif^i Q_i \oplus L + c$, where each $Q_i$ is a unimodular quadratic form of rank $r_i$ and discriminant $d_i$ over $\ring$, where there is a positive integer $\omega$ such that $Q_i=0$ for $i > \omega$, where $L$ is a linear form involving at most one indeterminate, and $c \in \ring$ is a constant.
For each $j \in \N$, we let
\begin{center}
\begin{tabular}{lll}
& &  \\
$Q_{(j)} = \!\!\!\!\! \bigoplusls{0 \leq i \leq j \\ i \equiv j \pmod{2}} \!\!\!\!\! Q_i,$ & & $d_{(j)} =\disc(Q_{(j)})=\!\!\!\!\! \prodls{0 \leq i \leq j \\ i \equiv j \pmod{2}} \!\!\!\!\! d_i$, \\
 & & \\
$r_{(j)} =\rank(Q_{(j)})=\!\!\!\!\! \sumls{0 \leq i \leq j \\ i \equiv j \pmod{2}} \!\!\!\!\! r_i$, & & $q_{(j)}  = q^{\,\sum_{0 \leq i < j} r_{(i)}}$. \\
\end{tabular}
\end{center}
We have the following forms for the Igusa local zeta function $Z_Q(t)$ of $Q$ expressed using the terms $I_a(r_{(i)},d_{(i)})$ from Table \ref{Gary}.
\begin{itemize}
\item If $L=0$ and $c=0$, let $r=\sum_{i \in \N} r_i$, and then
\begin{multline*}
\quad\quad\quad Z_Q(t)=\sum_{0 \leq i < \omega-1} \frac{t^i}{q_{(i)}} I_0(r_{(i)},d_{(i)}) \\ + \left(\frac{t^{\omega-1}}{q_{(\omega-1)}} I_0(r_{(\omega-1)},d_{(\omega-1)}) + \frac{t^\omega}{q_{(\omega)}} I_0(r_{(\omega)},d_{(\omega)})\right) \left(1-\frac{t^2}{q^r}\right)^{-1}.
\end{multline*}
\item If $L(x)=b x$ for some $b$ with $\val(b)=\lambda < \infty$, and if $\val(c) \geq \val(b)$, then 
\[
Z_Q(t)=\sum_{0 \leq i < \lambda} \frac{t^i}{q_{(i)}} I_0(r_{(i)},d_{(i)}) + \frac{t^\lambda}{q_{(\lambda)}} \igrp.
\]
\item If $L(x)=b x$ for some $b$ with $\val(b) > \val(c)$ (this includes the case where $L=0$, $c\not=0$), let $\kappa=\val(c)$, and then
\[
Z_Q(t)=\sum_{0 \leq i \leq \kappa} \frac{t^i}{q_{(i)}} I_{c/\pi^i}(r_{(i)},d_{(i)}) + \frac{t^{\kappa}}{q_{(\kappa+1)}}.
\]
\end{itemize}
\end{theorem}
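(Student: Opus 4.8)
\emph{The plan is to} compute the local zeta function by first building the $p$-adic generating function $G_Q \in \limgroupring$ and then applying the specialization map to $Z_Q(t)$ from Section~\ref{Henry}. The decisive structural fact is that this generating function is multiplicative over direct sums: since $Q = \bigoplus_{i} \unif^i Q_i \oplus L + c$, I would write
\[
G_Q \;=\; \Bigl(\prod_{i=0}^{\omega} \delta_i(G_{Q_i})\Bigr)\cdot G_{L} \cdot \gamma_c,
\]
where $\delta_i$ is the valuation-shift (dilation by $\unif^i$) induced on $\limgroupring$ by scaling the $Q_i$-variables, $G_L$ is the generating function of the linear form, and $\gamma_c$ is the group-ring element translating the value distribution by the constant $c$. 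Because each $Q_i$ is unimodular of rank $r_i$ and discriminant $d_i$, the factors $G_{Q_i}$ are exactly the blocks computed in Section~\ref{Carlos}, where the classification of Table~\ref{Wendy} reduces everything (through the generating functions of $\Sq$, $\Hyp$, and $\Ell$, controlled by $\eta$) to functions of $r_i$ and $d_i$ alone. Thus the content of the theorem lies entirely in evaluating this product and specializing.

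Next I would carry out the product. The crucial self-similarity is that the substitution $x \mapsto \unif x$ on all variables sends $\bigoplus_i \unif^i Q_i$ to $\unif^2 \bigoplus_i \unif^i Q_i$, so dilation by $\unif^2$ preserves the form up to an overall valuation shift; this is the source of the parity grouping. Organizing the factors $\delta_i(G_{Q_i})$ by the residue of $i$ modulo $2$, the forms at levels of a common parity combine (again by multiplicativity) into the single unimodular form $Q_{(j)}$ of rank $r_{(j)}$ and discriminant $d_{(j)}$ governing level $j$, and the normalizing power $q_{(j)} = q^{\sum_{i<j} r_{(i)}}$ is precisely the measure factor accumulated by the successive dilations. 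After specialization, the level-$j$ block becomes the tabulated quantity $I_a(r_{(j)},d_{(j)})$ of Table~\ref{Gary}, with the target $a$ fixed by the constant contribution at that level.

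It then remains to identify which term governs the tail, and this yields the three cases. When $L=0$ and $c=0$ the form is homogeneous, and because all $Q_i=0$ beyond $\omega$ the self-similarity makes levels $\omega{+}1,\omega{+}2,\dots$ repeat levels $\omega{-}1,\omega$ with ratio $t^2/q^r$ (gaining $t^2$ and a factor $q^{-r}$, $r=\sum_i r_i = r_{(\omega-1)}+r_{(\omega)}$); the two interleaved geometric series collapse into the factor $(1-t^2/q^r)^{-1}$ multiplying the bracketed boundary terms, while levels $0,\dots,\omega{-}2$ contribute once each. When $L(x)=bx$ with $\val(b)=\lambda<\infty$ and $\val(c)\ge\lambda$, the factor $G_L$ is (up to $\delta_\lambda$) the generating function of the bijection $x\mapsto bx$, whose value distribution is uniform on $\unif^\lambda\ring$; convolution with a uniform distribution is absorbing, so at every level $\ge\lambda$ the distribution is uniform and $c\in\unif^\lambda\ring$ is swallowed, producing the clean tail $\frac{t^\lambda}{q_{(\lambda)}}\igrp$ after the levels below $\lambda$ give $I_0(r_{(i)},d_{(i)})$. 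When instead $\val(b)>\val(c)=\kappa$, the constant is the most significant low-order term: at each level $i\le\kappa$ the relevant count is the number of representations by $Q_{(i)}$ of the shifted target $-c/\unif^i$, giving $I_{c/\unif^i}(r_{(i)},d_{(i)})$, and for levels above $\kappa$ the nonzero residue of $c$ forces $\val(Q)=\kappa$ exactly whenever the form-and-linear part is $\equiv 0 \pmod{\unif^{\kappa+1}}$, truncating the expansion into the single closing term $\frac{t^\kappa}{q_{(\kappa+1)}}$.

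I expect the main obstacle to be the bookkeeping in the product: showing rigorously that the parity-grouped blocks assemble, with the correct $q_{(j)}$ normalizations, into the stated closed forms, and in particular verifying the two \emph{absorbing} phenomena (the geometric collapse in the homogeneous case and the uniformization by the linear term) at the level of the inverse limit $\limgroupring$ rather than merely formally. Keeping the three regimes consistent where they meet --- especially tracking how $c$ propagates through the shifted targets $c/\unif^i$ and then truncates the series --- will demand the most care; once the calculus of Section~\ref{Henry} and the unimodular blocks of Section~\ref{Carlos} are in hand, the remaining work is this careful assembly.
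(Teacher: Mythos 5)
Your proposal is correct and follows essentially the same route as the paper: the sum--product rule and scaling build $G_Q$ as a product of dilated blocks, the parity grouping and the accumulated measure factors $q_{(j)}$ are exactly the content of Proposition \ref{Katherine}, the geometric collapse into $(1-t^2/q^r)^{-1}$ is Lemma \ref{Paul}, the two absorption phenomena are Remark \ref{Zachary}/Corollary \ref{Philip} and Lemma \ref{Una}, and the table entries $I_a(r,d)$ are evaluated in Lemma \ref{Ophelia}. The bookkeeping you flag as the main obstacle is carried out in the paper by induction on $\omega$ in Proposition \ref{Katherine} and then by Theorems \ref{Victor} and \ref{John} specialized to $\ell=\val(2)=0$.
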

Theorem \ref{Odilia} is proved in Section \ref{Winston}.
\begin{table}[ht]
\caption{$I_a(r,d)$ for Theorem \ref{Odilia}}\label{Gary}
\begin{center}
\vspace{-3mm}
\begin{tabular}{|c|c|c|}
\multicolumn{3}{c}{$\eta(b)=\begin{cases} 1 & \text{if $b \in \ringsquareunits$}, \\ -1 & \text{if $b \in \ringunits\smallsetminus\ringsquareunits$} \end{cases}$} \\
\hline
\multirow{4}{*}{$r$ odd} & \multirow{2}{*}{$\pi\mid a$} & \multirow{2}{*}{$\left(1-\frac{t}{q^r} \right) \igrp$} \\
& & \\
& \multirow{2}{*}{$\pi\nmid a$} & \multirow{2}{*}{$\left(1 + \frac{\eta(a(-1)^{(r+1)/2} d)}{q^{(r+1)/2}} t\right) \igrp -\frac{1}{q^r} -\frac{\eta(a(-1)^{(r+1)/2} d)}{q^{(r+1)/2}}$} \\
& & \\
\hline
\multirow{4}{*}{$r$ even} & \multirow{2}{*}{$\pi\mid a$} & \multirow{2}{*}{$\left(1 - \frac{\eta((-1)^{r/2} d)}{q^{r/2}}\right) \left(1+\frac{\eta((-1)^{r/2} d)}{q^{r/2}} t\right) \igrp$} \\
& & \\
& \multirow{2}{*}{$\pi\nmid a$} & \multirow{2}{*}{$\left(1 - \frac{\eta((-1)^{r/2} d)}{q^{r/2}}\right) \left(\igrp + \frac{\eta((-1)^{r/2} d)}{q^{r/2}}\right)$} \\
& & \\
\hline
\end{tabular}
\end{center}
\end{table}

\subsection{Quadratic Polynomials over Unramified $2$-Adic Fields}\label{Caesar}

We now consider the case where the prime $p$ is $2$ and does not ramify in $\ring$.
We will need to use the absolute Galois-theoretic trace $\Tr \colon \field \to \Q_2$ in some of our formulations, and use the convention that if $a \in \Z_2$, then $(-1)^a=(-1)^{(a \bmod{2})}$.
We also need to know something about the squares in $\ring$.
Because the residue field is perfect and of characteristic $2$, every element $a$ of $\ring$ is a square modulo $2$, that is, $a\equiv b^2 \pmod{2}$ for some $b \in \ring$.
Thus, without loss of generality, every form $a\Sq$ with $a \in \ringunits$ is equivalent to a form $b\Sq$ where $b\equiv 1 \pmod{2}$, so we shall always insist that such coefficients are $1$ modulo $2$ in this section.
Furthermore, every element of the form $1+8 a$ with $a \in R$ is a square (so $\unitsmodsquares$ is finite), but there is a unit $\xi\in R$ such that $1+4 \xi$ is not a square (see \cite[\S 63:1 and 64:4]{O'Meara-1963} for proofs of these facts).
In fact, it is an easy consequence of Hilbert's Theorem 90 that the units $\xi$ such that $1+4\xi$ is not square are precisely those for which $\Tr(\xi) \equiv 1 \pmod{2}$.

On Table \ref{Melissa} we list the possible unimodular quadratic forms up to equivalence (see Corollary \ref{Frank} below for a proof).  A unimodular quadratic form is equivalent to {\it at least} one form on our table: there is some duplication, which would bring no convenience to us to eliminate.
\begin{table}[ht]
\caption{Possible Unimodular Forms over Unramified $2$-Adic Fields (up to equivalence)}\label{Melissa}
\begin{center}
\vspace{-3mm}
\begin{tabular}{|c|c|c|c|}
\multicolumn{4}{c}{$\xi \in \ringunits$ with $\Tr(\xi) \equiv 1 \pmod{2}$} \\
\multicolumn{4}{c}{$a\equiv b \equiv 1 \pmod{2}$} \\
\hline
rank $r$ & norm & discriminant & possible forms \\
\hline
\hline
\multirow{2}{*}{even} & $2\ring$ (for $r\not=0$) & $(-1)^{r/2}$ & $\Hyp^{r/2}$ \\
& or $0$ (for $r=0$) &  $(-1)^{r/2} (1+4\xi)$ & $\Ell\oplus\Hyp^{(r-2)/2}$ \\
\hline
\multirow{2}{*}{even} & \multirow{2}{*}{$\ring$} & $(-1)^{(r-2)/2} a b$ & $a\Sq\oplus b\Sq\oplus\Hyp^{(r-2)/2}$ \\
& &  $(-1)^{(r-2)/2} (1+4\xi) a b$ & $a\Sq\oplus b\Sq \oplus \Ell\oplus\Hyp^{(r-4)/2}$ \\
\hline
\multirow{2}{*}{odd} & \multirow{2}{*}{$\ring$} & $(-1)^{(r-1)/2} a$ & $a\Sq\oplus\Hyp^{(r-1)/2}$ \\
& &  $(-1)^{(r-1)/2} (1+4\xi) a$ & $a\Sq\oplus \Ell\oplus\Hyp^{(r-3)/2}$ \\
\hline
\end{tabular}
\end{center}
\end{table}
\FloatBarrier

In order to use our theorem below to calculate the local zeta function, one needs to know how to express the direct sum of any pair of unimodular forms from Table \ref{Melissa} as another form on Table \ref{Melissa}.  If $Q_1$ and $Q_2$ are two such forms, then $Q_1 \oplus Q_2$ is another unimodular form whose rank, norm, and discriminant are respectively the sum, the sum (as ideals), and the product of those invariants for the two constituent forms.  However, these three invariants are not always enough to identify a unimodular form.  To precisely determine $Q_1 \oplus Q_2$, one can make use of the following ``addition rules.''
\begin{lemma}\label{Eric}
Suppose that $p=2$ and does not ramify in $\ring$.  Let $a,b,c \in \ringunits$.  Then
\begin{enumerate}[(i).]
\item $\Ell\oplus\Ell\cong \Hyp\oplus\Hyp$, and
\item\label{Jane} $a\Sq \oplus b\Sq \oplus c\Sq \cong (-a b c)\Sq \oplus \Hyp$ if there are $r,s,t \in \ring$ with $a r^2 + b s^2 + c t^2 \equiv - a b c \pmod{8}$.  Otherwise $a\Sq \oplus b\Sq \oplus c\Sq \cong (-a b c)(1+4\xi)\Sq \oplus \Ell$.
\end{enumerate}
\end{lemma}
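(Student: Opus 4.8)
The plan is to read off rank, norm, and discriminant on both sides and invoke the classification of unimodular forms stated with Table~\ref{Melissa} (proved as Corollary~\ref{Frank}), which guarantees that each side is equivalent to a form on the table. Since equivalent forms share these three invariants, this narrows the possibilities to the (at most two) table forms carrying the correct invariants, and the only remaining task is to select the right one.

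For part~(i), I would compute that $\Ell\oplus\Ell$ has rank $4$, norm $2\ring$ (because $\Ell$ already has norm $2\ring$), and discriminant $\disc(\Ell)^2$, which is a square and hence the trivial class. Scanning Table~\ref{Melissa}, the only unimodular form of rank $4$ with norm $2\ring$ and trivial discriminant is $\Hyp^2$: the companion norm-$2\ring$ form $\Ell\oplus\Hyp$ has discriminant $1+4\xi$, a nonsquare, and the norm-$\ring$ rows are excluded. Hence $\Ell\oplus\Ell\cong\Hyp^2$, and nothing further is needed.

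For part~(ii), the form $a\Sq\oplus b\Sq\oplus c\Sq$ has rank $3$, norm $\ring$, and discriminant $abc$, and the two table forms with these invariants are precisely $(-abc)\Sq\oplus\Hyp$ and $(-abc)(1+4\xi)\Sq\oplus\Ell$ (a quick check confirms both have discriminant $abc$). So the form is equivalent to one of these, and I must decide which. I would use the value $-abc$ as the deciding feature. First, $a\Sq\oplus b\Sq\oplus c\Sq$ represents $-abc$ over $\ring$ exactly when there are $r,s,t\in\ring$ with $ar^2+bs^2+ct^2\equiv -abc\pmod 8$: one direction is immediate, and for the other I would write the represented value as $-abc(1+8w)$ and use $1+8\ring\subseteq\ringsquareunits$ to absorb the factor $1+8w$ by rescaling the variables by a unit square root. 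Next, $(-abc)\Sq\oplus\Hyp$ plainly represents $-abc$ (set the square variable to $1$ and the hyperbolic variables to $0$), so, since representation is invariant under equivalence, it remains to show that $(-abc)(1+4\xi)\Sq\oplus\Ell$ does \emph{not} represent $-abc$ over $\ring$. Granting this, the form is equivalent to the hyperbolic table form exactly when it represents $-abc$, i.e.\ exactly when the displayed congruence is solvable, and to the elliptic one otherwise.

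The crux, and the step I expect to be the main obstacle, is this non-representation claim. I would argue by valuations. Since $\Ell$ equals $2$ times the norm form of the unramified quadratic extension of $\field$, and that norm form takes only values of even valuation, every nonzero value of $\Ell$ has odd valuation. In the hypothetical equation $-abc(1+4\xi)u^2+\Ell(v,w)=-abc$ the term $\Ell(v,w)$ then has odd valuation or vanishes; vanishing would force $u^2=(1+4\xi)^{-1}$, a nonsquare, which is impossible, so $\Ell(v,w)$ has odd valuation, and this forces $u$ to be a unit and then $u\equiv 1\pmod 2$. Writing $u=1+2k$ I would expand $(1+4\xi)u^2-1=4\bigl(\xi+k(k+1)(1+4\xi)\bigr)$ and show the parenthesized factor is a unit, so $-abc(1+4\xi)u^2+abc=\Ell(v,w)$ has valuation exactly $2$, contradicting its odd valuation. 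The unit claim is exactly where the residue-field hypotheses enter: the reduction of $\xi$ has nonzero absolute trace (this is the stated criterion, $\Tr(\xi)\equiv 1\pmod 2$, for $1+4\xi$ to be a nonsquare), while $k^2+k$ reduces to an element of absolute trace $0$, so their sum has trace $1$ and is in particular nonzero. Assembling these pieces yields part~(ii).
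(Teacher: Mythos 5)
Your proposal is correct and follows essentially the same route as the paper: both parts reduce to the classification of unimodular forms (Corollary \ref{Frank}, resp.\ Proposition \ref{Kurt}) by matching rank, norm, and discriminant, and part (ii) is decided by showing that $(-abc)(1+4\xi)\Sq\oplus\Ell$ fails to represent $-abc$, using that nonzero values of $\Ell$ have odd valuation (Remark \ref{Gabriel}) together with the trace computation $\Tr(\xi+k+k^2)\equiv 1\pmod{2}$. The only cosmetic difference is that you phrase the deciding criterion as exact representation of $-abc$ over $\ring$ and pass to the mod-$8$ congruence via $1+8\ring\subseteq\ringsquareunits$, whereas the paper works with representation modulo $8$ directly.
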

This is proved in Section \ref{Leonard}.  For rule \eqref{Jane}, since there are only finitely many elements modulo $8$, one can work out which case one obtains with finitely many trials.  As an example, Table \ref{Roderick} shows how the addition rules work when $\ring=\Z_2$, the ring of $2$-adic integers.

\begin{table}[ht]
\caption{Addition Rules Unimodular Quadratic Forms over $\Z_2$}\label{Roderick}
\begin{center}
\vspace{-3mm}
{\renewcommand{\arraystretch}{1.1}
\begin{tabular}{|c|c|}
\hline
\multicolumn{2}{|c|}{$\Ell\oplus\Ell\cong\Hyp\oplus\Hyp$} \\
\hline
\multicolumn{2}{c}{} \\
\hline
\multicolumn{2}{|c|}{$a x^2+b y^2+c z^2 \cong d w^2\oplus P$} \\
\hline
$(a,b,c)$ & $(d,P)$ \\
\hline
$(1,3,5)$ \quad $(1,1,7)$ \quad $(5,5,7)$ & $(1,\Hyp)$ \\
\hline
$(1,3,7)$ \quad $(3,3,5)$ \quad $(5,7,7)$ & $(3,\Hyp)$ \\
\hline
$(1,5,7)$ \quad $(3,5,5)$ \quad $(1,1,3)$ & $(5,\Hyp)$ \\
\hline
$(3,5,7)$ \quad $(1,7,7)$ \quad $(1,3,3)$ & $(7,\Hyp)$ \\
\hline
$(3,3,3)$ \quad $(3,7,7)$ & $(1,\Ell)$ \\
\hline
$(1,1,1)$ \quad $(1,5,5)$ & $(3,\Ell)$ \\
\hline
$(7,7,7)$ \quad $(3,3,7)$ & $(5,\Ell)$ \\
\hline
$(5,5,5)$ \quad $(1,1,5)$ & $(7,\Ell)$ \\
\hline
\end{tabular}
}
\end{center}
\end{table}

Now we are ready to compute the local zeta function with the following theorem, which uses Table \ref{Violet}.  We use a shorthand for quadratic forms in that table: $\Planes(+)$ means a direct sum of hyperbolic planes (with the correct number needed to achieve a specified rank) and $\Planes(-)$ is the direct sum of a single elliptic plane and the correct number of hyperbolic planes to achieve a specified rank.  For example, if we say that $\rank(Q_i)=r_i$ and $Q_i=a\Sq\oplus\Planar{i}$, then we mean that either $Q_i=a\Sq\oplus \Hyp^{(r_i-1)/2}$ (if $\pm_i=+$) or $Q_i=a\Sq\oplus \Ell\oplus\Hyp^{(r_i-3)/2}$ (if $\pm_i=-$).
\newpage
\begin{theorem}\label{Ursula}
Suppose that $p=2$ and does not ramify in $\ring$.
Consider the quadratic polynomial $Q=\bigoplus_{i\in\N} \unif^i Q_i \oplus L$, where each $Q_i$ is a unimodular quadratic form of rank $r_i$ over $\ring$, where there is a positive integer $\omega$ such that $Q_i=0$ for $i > \omega$, and where $L$ is a linear form involving at most one indeterminate.
For each $j \in \N$, we let
\[
Q_{(j)}  = \!\!\!\!\!\!\!\! \bigopluss{0 \leq i \leq j \\ i \equiv j \!\!\!\! \pmod{2}} Q_i,  \quad\quad\quad r_{(j)} =\rank(Q_{(j)})=\!\!\!\!\!\!\!\! \sums{0 \leq i \leq j \\ i \equiv j \!\!\!\! \pmod{2}} \!\!\!\!\! r_i, \quad\quad\text{and}\quad\quad q_{(j)}  = q^{\,\sum_{0 \leq i < j} r_{(i)}}.
\]
We have the following forms for the Igusa local zeta function $Z_Q(t)$ of $Q$ expressed using the terms $I(P_0,P_1,P_2)$ from Table \ref{Violet}.
\begin{itemize}
\item If $L=0$, let $r=\sum_{i \in \N} r_i$, and then 
\begin{multline*}
\quad\quad\quad Z_Q(t)=\sum_{0 \leq i < \omega-1} \frac{t^i}{q_{(i)}} I(Q_{(i)},Q_{(i+1)},Q_{i+2}) \\
+ \left(\frac{t^{\omega-1}}{q_{(\omega-1)}} I(Q_{(\omega-1)},Q_{(\omega)},0) + \frac{t^\omega}{q_{(\omega)}} I(Q_{(\omega)},Q_{(\omega-1)},0)\right) \left(1-\frac{t^2}{q^r}\right)^{-1}.
\end{multline*}
\item If $L=a x$ with $2 \leq \twoval(a)<\infty$, write $\lambda=\twoval(a)$, and then 
\begin{multline*}
\quad\quad\quad Z_Q(t)=\sum_{0 \leq i < \lambda-2} \frac{t^i}{q_{(i)}} I(Q_{(i)},Q_{(i+1)},Q_{i+2}) + \frac{t^{\lambda-2}}{q_{(\lambda-2)}} I(Q_{(\lambda-2)},Q_{(\lambda-1)},\Sq) \\ + \frac{t^{\lambda-1}}{q_{(\lambda-1)}} I(Q_{(\lambda-1)},\Sq,\Sq) + \frac{t^\lambda}{q_{(\lambda)}} \igrp.
\end{multline*}
\item If $L=a x$ with $\twoval(a)=1$, then 
\[
Z_Q(t)=I(Q_{(0)},\Sq,\Sq)  + \frac{t}{q^{r_0}} \igrp.
\]
\item If $L=a x$ with $\twoval(a)=0$, then $Z_Q(t)=\igrp$.
\end{itemize}
\end{theorem}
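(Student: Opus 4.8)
The plan is to work entirely inside the algebra $\mathcal G$ of $p$-adic generating functions developed in Section~\ref{Henry}, to compute the generating function $G_Q$ of $Q=\bigoplus_{i\in\N}\pi^i Q_i\oplus L$, and to apply the forgetful specialization $G\mapsto Z$ only at the very end. The leverage is that $G$ is multiplicative over direct sums: since the variables of the $Q_i$ and of $L$ are pairwise disjoint, $G_Q$ is the product in $\mathcal G$ of the generating functions of the graded pieces $\pi^i Q_i$ together with that of $L$. I would therefore first assemble $G_{\pi^i Q_i}$ for each $i$ and $G_L$, and then multiply. For the unimodular pieces, these generating functions are exactly the objects computed in Section~\ref{Carlos} (with technical proofs in the Appendix); raising a unimodular form $Q_i$ to level $i$ is the level-shift operation in the algebra, so $G_{\pi^i Q_i}$ is obtained from $G_{Q_i}$ formally.

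The structural heart of the argument is a self-similarity: scaling every variable by $\pi$ carries $\bigoplus_{i\ge 0}\pi^i Q_i$ to $\bigoplus_{i\ge 0}\pi^{i+2}Q_i$, i.e.\ it shifts every level up by two while multiplying the measure by $q^{-r}$ and the valuation-tracking variable by $t^2$. This period-two self-similarity is what organizes the unimodular data by parity into the accumulated forms $Q_{(j)}$, $r_{(j)}$, and the scaling bookkeeping into $q_{(j)}=q^{\sum_{0\le i<j} r_{(i)}}$. Reading off the specialization level by level, the contribution from level $i$ is $\frac{t^i}{q_{(i)}}I(Q_{(i)},Q_{(i+1)},Q_{i+2})$, where $I(P_0,P_1,P_2)$ is the specialized generating-function contribution of the truncated local model whose order-$0$, order-$1$, and order-$2$ parts are $P_0$, $P_1$, $P_2$: here $P_0=Q_{(i)}$ is the unit-level form governing $\val(Q)=i$, $P_1=Q_{(i+1)}$ governs descent to the next level, and $P_2=Q_{i+2}$ is the form that becomes order-$0$ after one more $\pi$-scaling. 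Once $i\ge\omega-1$ the sequence of pieces is exhausted ($P_2=0$), the recursion becomes exactly periodic with the two parities alternating, and summing the resulting geometric series produces the factor $\left(1-t^2/q^r\right)^{-1}$ in the $L=0$ case.

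Next I would incorporate the linear form by cases on $\lambda=\twoval(a)$. The case $\lambda=0$ is a clean base case that can be checked directly, bypassing the machinery: if $a$ is a unit then, with all other variables fixed, $ax$ ranges bijectively over $\ring$ as $x$ does, so $\int_\ring|ax+(\text{rest})|^s\,dx=\int_\ring|y|^s\,dy=\igrp$ regardless of the rest, giving $Z_Q=\igrp$. For $\lambda\ge 2$, the linear summand sits at level $\lambda$ and is invisible to the truncated model at every level $i<\lambda-2$, so those terms are the generic $I(Q_{(i)},Q_{(i+1)},Q_{i+2})$. As $i$ passes through $\lambda-2$ and $\lambda-1$, the level-$\lambda$ linear variable progressively enters the slots of $I$; because it is a nondegenerate rank-one term whose reduction modulo $2$ is a bijection---just as squaring is a bijection on the characteristic-$2$ residue field---a direct computation shows that it contributes there exactly as a $\Sq$ form, producing the transitional terms $I(Q_{(\lambda-2)},Q_{(\lambda-1)},\Sq)$ and $I(Q_{(\lambda-1)},\Sq,\Sq)$. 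Finally, once $i=\lambda$ the unit-coefficient linear variable dominates and contributes the free-variable factor $\igrp$, exactly as in the base case. The case $\lambda=1$ is handled identically and collapses to $I(Q_{(0)},\Sq,\Sq)+\frac{t}{q^{r_0}}\igrp$.

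The main obstacle is computing $I(P_0,P_1,P_2)$ correctly for $p=2$, which is where this problem is genuinely harder than the odd-$p$ setting of Theorem~\ref{Odilia}. Over an unramified $2$-adic field the rank and discriminant do not classify a unimodular form---the norm is an essential extra invariant (Table~\ref{Melissa})---so the level-by-level generating-function contribution must track the norm of each accumulated form $Q_{(i)}$ and resolve the resulting ambiguities using the addition rules of Lemma~\ref{Eric} and Table~\ref{Melissa}. The delicate point is that a single coefficient $I(P_0,P_1,P_2)$ entangles three consecutive parity classes, so the computation in Section~\ref{Carlos} must be carried out with enough uniformity that the pieces glue correctly across levels. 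Getting the squares-versus-norm bookkeeping right there, rather than the final assembly and the geometric summation, is the true crux of the proof.
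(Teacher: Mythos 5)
Your proposal follows essentially the same route as the paper: build $G_Q$ multiplicatively from the graded unimodular pieces and the linear form, use the period-two self-similarity (Lemma \ref{Heinrich}, Proposition \ref{Katherine}) to organize the data by parity into the $Q_{(j)}$ and read off the level-$i$ contributions $\frac{t^i}{q_{(i)}} I(Q_{(i)},Q_{(i+1)},Q_{i+2})$, sum the exhausted tail geometrically to obtain $(1-t^2/q^r)^{-1}$, and convert the transitional linear-form contributions into $\Sq$ slots exactly as the paper does in Section \ref{Wilbur} via Lemmata \ref{Robert} and \ref{Andrew}. The work you correctly defer---the computation of $I(P_0,P_1,P_2)$ with the norm bookkeeping over unramified $2$-adic fields---is precisely where the paper spends Appendix \ref{Mordecai}.
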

Theorem \ref{Ursula} is proved in Section \ref{Wilbur}.
\begin{table}[ht]
\caption{$I(Q_0,Q_1,Q_2)$ for Theorem \ref{Ursula}}\label{Violet}
\begin{center}
\vspace{-3mm}
\begin{tabular}{|c|c|}
\multicolumn{2}{c}{$r_i=\rank(Q_i)$ for $i=0,1,2$} \\
\multicolumn{2}{c}{$a \equiv b \equiv c \equiv d \equiv 1 \pmod{2}$}\\
\multicolumn{2}{c}{$\Planes(\pm)$ notation described before Theorem \ref{Ursula}}\\
\hline
\multicolumn{2}{|c|}{$Q_0 = \Planar{0}$} \\
\hline
$\norm(Q_1)\not=\ring$ & $\left(1\mp_0 \frac{1}{q^{r_0/2}}\right) \left(t \pm_0 \frac{t^2}{q^{r_0/2}}\right) \igrp$ \\
\hline
$\norm(Q_1)=\ring$ & $\left(1- \frac{1}{q^{r_0}}\right) t \igrp$ \\
\hline
\hline
\multicolumn{2}{|c|}{$Q_0 = a\Sq\oplus \Planar{0}$} \\
\hline
$\norm(Q_1)\not=\ring$ & $\left(1-\frac{t^2}{q^{r_0}} \pm_0 \frac{t^2-t}{q^{(r_0+1)/2}}\right) \igrp$ \\
\hline
$\norm(Q_1)=\ring$ & $\left(1- \frac{t}{q^{r_0}}\right) \igrp$ \\
\hline
\hline
\multicolumn{2}{|c|}{$Q_0 = a\Sq\oplus b\Sq \Planar{0}$ with $4 \mid a+b$ and $\sigma=(-1)^{\Tr((a+b)/(4 a))}$} \\
\hline
$Q_1=\Planar{1}$, $\norm(Q_2)\not=\ring$ & $\left(1 -\frac{t^2}{q^{r_0}} \pm_0 \frac{t^2-t}{q^{r_0/2}} \pm_1 \frac{\sigma (t^3-t^2)}{q^{r_0+r_1/2}}\right) \igrp$ \\
\hline
$\norm(Q_1)\not=\ring$, $\norm(Q_2)=\ring$ & $\left(1 -\frac{t^2}{q^{r_0}} \pm_0 \frac{t^2-t}{q^{r_0/2}}\right) \igrp$ \\
\hline
$Q_1=c\Sq\oplus\Planar{1}$ & \multirow{2}{*}{$\left(1- \frac{t}{q^{r_0}} \pm_1 \frac{\sigma (t^3-t^2)}{q^{r_0+(r_1+1)/2}} \right) \igrp$} \\
$\norm(Q_2)\not=\ring$ & \\
\hline
$Q_1=c\Sq\oplus d\Sq\oplus\Planar{1}$ & \multirow{3}{*}{$\left(1- \frac{t}{q^{r_0}} \pm_1 \frac{\sigma(t^3-t^2)}{q^{r_0+r_1/2}}\right) \igrp$} \\
$4 \mid a+b+c+d$ & \\
$\norm(Q_2)\not=\ring$  & \\
\hline
$Q_1=c\Sq\oplus d\Sq\oplus\Planar{1}$ & \multirow{3}{*}{$\left(1- \frac{t}{q^{r_0}}\right) \igrp$} \\
$4 \nmid a+b+c+d$ & \\
$\norm(Q_2)\not=\ring$  & \\
\hline
$\norm(Q_1)=\norm(Q_2)=\ring$ & $\left(1- \frac{t}{q^{r_0}}\right) \igrp$ \\
\hline
\hline
\multicolumn{2}{|c|}{$Q_0 = a\Sq\oplus b\Sq \Planar{0}$ with $4 \nmid a+b$} \\
\hline
$\norm(Q_1)\not=\ring$ & $\left(1-\frac{t^2}{q^{r_0}}\right) \igrp$ \\
\hline
$Q_1=c\Sq\oplus\Planar{1}$ & \multirow{3}{*}{$\left(1 - \frac{t}{q^{r_0}} \pm_1 \frac{\phi(t^3-t^2)}{q^{r_0+(r_1+1)/2}} \right)\igrp$} \\
$\phi=(-1)^{\Tr\left(\frac{c}{2} \left[\left(\frac{a+b}{2 c}\right)^q+\frac{a+b}{2 c}\right]\right)}$ & \\
$\norm(Q_2)\not=\ring$ & \\
\hline
$Q_1=c \Sq \oplus d \Sq \oplus \Planar{1}$ & \multirow{4}{*}{$\left(1-\frac{t}{q^{r_0}} \pm_1 \frac{\psi(t^3-t^2)}{q^{r_0+r_1/2}}\right) \igrp$} \\ 
$4\mid a+b+c+d$ & \\
$\psi=(-1)^{\Tr\left(\frac{c}{2}\left[\left(\frac{c+d}{2}\right)^q+\frac{a+b}{2 c}\right]\right)}$ & \\
$\norm(Q_2)\not=\ring$ & \\
\hline
$Q_1=c\Sq\oplus d\Sq\oplus\Planar{1}$ & \multirow{3}{*}{$\left(1- \frac{t}{q^{r_0}}\right) \igrp$}\\
$4 \nmid a+b+c+d$ & \\
$\norm(Q_2)\not=\ring$  & \\
\hline
$\norm(Q_1)=\norm(Q_2)=\ring$ & $\left(1- \frac{t}{q^{r_0}}\right) \igrp$\\
\hline
\end{tabular}
\end{center}
\end{table}
\FloatBarrier

\subsection{Location of Poles}

When $p$ is odd, if we specialize the quadratic polynomial in Theorem \ref{Odilia} to have constant term $c=0$, then the theorem reduces to the results of Igusa \cite[Theorem 1 and Corollary to Theorem 2]{Igusa-1994-Local}.
Let us compare our expression to Igusa's for $Z_Q(t)$ when $Q$ is a quadratic form.
If we adopt the notation of Theorem \ref{Odilia} here, then Igusa's expression for $Z_Q(t)$ is a sum of terms, one for each nonzero $Q_i$, such that Igusa's $i$th term is expressed as a polynomial divided by
\[
\left(1-\frac{t}{q}\right)\left(1-\frac{t^2}{q^{\sum_{0 \leq j < i} r_j}}\right)\left(1-\frac{t^2}{q^{\sum_{0 \leq j \leq i} r_j}}\right).
\]
Peter \cite[Proof of Lemma 3.7]{Peter-2000} notes that Igusa's expression makes it appear that the local zeta function might have many more poles than it actually has.  The advantage of our expression is that it makes clear that there are at most three poles.  Indeed, since every $I_a(r,d)$ in Table \ref{Gary} is expressible as $f(t)/(1-t/q)$ for some polynomial $f(t) \in \Q[t]$, our formula implies the following.
\begin{remark}\label{Percy}
The Igusa local zeta function of a quadratic form $Q$ of rank $r$ can be written as $\frac{f(t)}{\left(1-t/q\right)\left(1-t^2/q^r\right)}$ for some polynomial $f(t) \in \Q[t]$.
Note that we do not stipulate that $p$ be odd, for in fact, this is also true when $p=2$ (in both the unramified and ramified cases), as is shown in Section \ref{Michael}.
This fact also follows directly from Igusa's proof of the rationality of the local zeta function via resolution of singularities.
\end{remark}
When $p$ is odd, we can go further, and determine precisely the denominator of $Z_Q(t)$ when it is written in reduced form.
\begin{theorem}\label{Arthur}
Let $p$ be odd, let $\alpha \in \ringunits\smallsetminus\ringsquareunits$, and let $\eta(b)=1$ for $b \in \ringsquareunits$ and $\eta(b)=-1$ for $b\in\ringunits\smallsetminus\ringsquareunits$.
Let $Q$ be a quadratic form over $\ring$, equivalent to the form $\bigoplus_{i \in \N} \pi^i Q_i$, where each $Q_i$ is a unimodular quadratic form of rank $r_i$ and discriminant $d_i$, and almost all $Q_i$ are zero.  Let
\begin{align*}
r & = \sum_{i \in\N} r_i, \text{\quad\quad} r_{\even} = \sum_{i \in \N} r_{2 i}, \text{\quad\quad} r_{\odd} = \sum_{i \in \N} r_{2 i+1}, \\
d & = \prod_{i \in \N} d_i, \text{\quad\quad} d_{\even} = \prod_{i \in \N} d_{2 i}, \text{\quad\quad} d_{\odd} = \prod_{i \in \N} d_{2 i+1}.
\end{align*}
Write the Igusa local zeta function $Z_Q(t)$ of $Q$ as $f(t)/g(t)$, where $f(t), g(t) \in \Q[t]$ with $\gcd(f(t),g(t))=1$ and with the constant coefficient of $g(t)$ equal to $1$.
\begin{enumerate}[(a).]
\item If $r=0$, then $g(t)=1$.
\item If $r>0$ and $\{(r_{\even},d_{\even}), (r_{\odd},d_{\odd})\} \subseteq \{(0,1),(1,1),(1,\alpha),(2,-\alpha)\}$, then
\begin{enumerate}[(i).]
\item if $r_{\even}=r_{\odd}$, then $g(t)=1-t/q^{r/2}$, and
\item if $r_{\even}\not=r_{\odd}$, then $g(t)=1-t^2/q^r$.
\end{enumerate}
\item If $r > 0$ and $\{(r_{\even},d_{\even}),(r_{\odd},d_{\odd})\} \not\subseteq \{(0,1),(1,1),(1,\alpha),(2,-\alpha)\}$, then
\begin{enumerate}[(i).]
\item if $r_{\even} \equiv r_{\odd} \equiv 1\pmod{2}$, then $g(t)=1-t/q$,
\item if $r_{\even} \equiv r_{\odd} \equiv 0\pmod{2}$, then $g(t)=(1-t/q)(1-\eta((-1)^{r/2} d) t/q^{r/2})$, and
\item if $r_{\even} \not\equiv r_{\odd} \pmod{2}$, then $g(t)=(1-t/q)(1-t^2/q^r)$.
\end{enumerate}
\end{enumerate}
\end{theorem}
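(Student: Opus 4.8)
The plan is to feed the explicit formula of Theorem~\ref{Odilia} (in the case $L=0$, $c=0$) into the three-pole bound of Remark~\ref{Percy} and read off which candidate poles survive. Reading Table~\ref{Gary} with $\pi\mid a$, every term factors as $I_0(r_{(i)},d_{(i)})=J_0(r_{(i)},d_{(i)})\cdot\frac{1-1/q}{1-t/q}$, where $J_0(r,d)=1-t/q^r$ for $r$ odd and $J_0(r,d)=\bigl(1-\eta((-1)^{r/2}d)q^{-r/2}\bigr)\bigl(1+\eta((-1)^{r/2}d)q^{-r/2}t\bigr)$ for $r$ even. Factoring out the common $\frac{1-1/q}{1-t/q}$, I would write $Z_Q(t)=\frac{1-1/q}{1-t/q}\bigl[S(t)+T(t)/(1-t^2/q^r)\bigr]$, with $S(t)=\sum_{0\le i<\omega-1}\frac{t^i}{q_{(i)}}J_0(r_{(i)},d_{(i)})$ a polynomial and $T(t)=\frac{t^{\omega-1}}{q_{(\omega-1)}}J_0(r_{(\omega-1)},d_{(\omega-1)})+\frac{t^\omega}{q_{(\omega)}}J_0(r_{(\omega)},d_{(\omega)})$. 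Clearing denominators gives $Z_Q(t)=\frac{N(t)}{(1-t/q)(1-t^2/q^r)}$ with $N(t)=(1-1/q)\bigl[S(t)(1-t^2/q^r)+T(t)\bigr]$, so the theorem becomes the computation of which of the factors $1-t/q$ and $1\mp t/q^{r/2}$ divide $N$. Part~(a) is immediate, since $r=0$ forces $Q=0$ and $Z_Q=1$. The only collision among candidate poles is $q^{r/2}=q$, occurring exactly when $r=2$; that value, being also where the separation below degenerates, I would dispatch by direct substitution (the relevant low-rank instances, e.g.\ $\Hyp$, $\Ell$, $\Sq\oplus\Sq$, are few).

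For $r\ge 1$ with $r\ne 2$ the roots $\pm q^{r/2}$ are distinct from $q$, and since the $S$-term carries the factor $1-t^2/q^r$ one has $N(\pm q^{r/2})=(1-1/q)\,T(\pm q^{r/2})$; thus $1\mp t/q^{r/2}$ cancels iff $T(\pm q^{r/2})=0$. The key simplification is that $Q_i=0$ for $i>\omega$ forces the leading data to be the totals: $\{r_{(\omega)},r_{(\omega-1)}\}=\{r_{\even},r_{\odd}\}$, $\{d_{(\omega)},d_{(\omega-1)}\}=\{d_{\even},d_{\odd}\}$, and $q_{(\omega)}/q_{(\omega-1)}=q^{r_{(\omega-1)}}$. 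Writing $m=r_{(\omega-1)}$, $n=r_{(\omega)}$ and pulling out the nonzero $t^{\omega-1}/q_{(\omega-1)}$, I reduce to $U(t)=J_0(m,d_{(\omega-1)})+\frac{t}{q^m}J_0(n,d_{(\omega)})$ at $t=\pm q^{r/2}$. A short computation using $m+n=r$ gives three outcomes: if $m,n$ are both odd, $U(\pm q^{r/2})=0$ and neither $\pm q^{r/2}$ is a pole (case~(c)(i)); if $m,n$ are both even, $U(q^{r/2})=(\eta_m+\eta_n)(q^{n/2}-q^{-m/2})$ and $U(-q^{r/2})=(\eta_n-\eta_m)(q^{n/2}+q^{-m/2})$ with $\eta_m=\eta((-1)^{m/2}d_{(\omega-1)})$, $\eta_n=\eta((-1)^{n/2}d_{(\omega)})$, so (these factors being nonzero for $r>0$) exactly one pole survives, at $t=\eta_m\eta_n\,q^{r/2}$, and $\eta_m\eta_n=\eta((-1)^{r/2}d)$ identifies it with the factor $1-\eta((-1)^{r/2}d)t/q^{r/2}$ of cases~(b)(i) and~(c)(ii); and if $m,n$ have opposite parity, $U(\pm q^{r/2})\ne0$ for both signs, so $1-t^2/q^r$ survives intact, as in cases~(b)(ii) and~(c)(iii). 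I would also note here that $q^{r/2}$ is irrational precisely when $r$ is odd, so that $1-t^2/q^r$ is then $\Q$-irreducible and enters $g(t)$ as a single factor, which is why the mixed-parity cases are stated with $1-t^2/q^r$ rather than a linear factor.

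For the pole at $t=q$ the residue coefficient is $\lim_{t\to q}(1-t/q)Z_Q(t)=(1-1/q)B(q)$ with $B(q)=S(q)+T(q)/(1-q^{2-r})$, so $1-t/q$ survives iff $B(q)\ne0$. I would use the splitting $J_0(r_{(i)},d_{(i)})|_{t=q}=(1-q^{1-r_{(i)}})+\epsilon_i$, where $\epsilon_i=\eta_i(q-1)q^{-r_{(i)}/2}$ for $r_{(i)}$ even and $\epsilon_i=0$ for $r_{(i)}$ odd. With $A_i=q^{\,i-\sum_{j<i}r_{(j)}}$ the principal part telescopes, $\frac{q^i}{q_{(i)}}(1-q^{1-r_{(i)}})=A_i-A_{i+1}$, and the relation $A_{\omega+1}=A_{\omega-1}q^{2-r}$ collapses the boundary, leaving $B(q)=1+\sum_{0\le i<\omega-1}A_i\epsilon_i+\frac{A_{\omega-1}\epsilon_{\omega-1}+A_\omega\epsilon_\omega}{1-q^{2-r}}$. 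I would then show this vanishes exactly when both $Q_{\even}$ and $Q_{\odd}$ are anisotropic. The organizing principle is that for a single unimodular form $J_0(r,d)|_{t=q}=0$ precisely for the anisotropic types $(r,d)\in\{(0,1),(1,1),(1,\alpha),(2,-\alpha)\}$ of Table~\ref{Wendy}, and that $Q=\bigoplus_i\pi^iQ_i$ is isotropic over $\field$ iff $Q_{\even}$ or $Q_{\odd}$ is isotropic; the $t=q$ (that is, $s=-1$) pole detects exactly this isotropy, so that case~(b) gives $B(q)=0$ and case~(c) gives $B(q)\ne0$. Assembling the three computations, with $r\in\{0,2\}$ handled directly, then yields $g(t)$ in every listed case.

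I expect the $t=q$ analysis to be the main obstacle: one must show that the correction sum $\sum A_i\epsilon_i$ conspires with the boundary term to make $B(q)=0$ exactly in case~(b). The difficulty is that each $\epsilon_i$ depends on the intermediate partial forms $Q_{(i)}$, whose ranks and discriminants are \emph{not} invariants of $Q$ (the decomposition into rank-$1$ and rank-$2$ blocks is not unique), whereas $B(q)$ depends only on the totals $r_{\even},d_{\even},r_{\odd},d_{\odd}$. Establishing the required cancellation therefore calls for an induction on $\omega$ (or on the number of nonzero blocks) that tracks how appending one block $\pi^iQ_i$ changes $B(q)$, rather than a term-by-term evaluation. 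As an independent check, positivity of the measure (the pole of $\int_{\ring^n}|Q(x)|^s\,dx$ at $s=-1$ reflecting the contribution of the zero set of $Q$) confirms $B(q)\ne0$ exactly when $Q$ is isotropic, i.e.\ in case~(c).
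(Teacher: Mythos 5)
Your reduction is sound in outline, and your treatment of the candidate poles at $t=\pm q^{r/2}$ via the boundary polynomial $T$ is essentially the paper's own computation (Lemma \ref{Deborah}), carried out correctly, including the identification $\eta_m\eta_n=\eta((-1)^{r/2}d)$ and the observation that $1-t^2/q^r$ is $\Q$-irreducible for $r$ odd. The genuine gap is exactly where you flag it: you never establish that $B(q)\neq 0$ in case (c), and the two routes you sketch for doing so are, respectively, not carried out and not valid. The paper closes this step with neither a telescoping identity nor an induction on $\omega$, but with a positivity observation read off Table \ref{Gary}: for every pair $(n,e)$ the quantity $(1-t/q)\,I_0(n,e)$ evaluated at $t=q$ is \emph{nonnegative}, and it vanishes precisely for the four anisotropic types $(0,1),(1,1),(1,\alpha),(2,-\alpha)$. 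Since $Z_Q(t)$ is a positive $\Q$-combination of the terms $t^iI_0(r_{(i)},d_{(i)})/q_{(i)}$ together with the final term $M$, whose denominator $1-t^2/q^r$ is positive at $t=q$ once $r\geq 3$, no cancellation at $t=q$ can occur: in case (c) at least one contribution is strictly positive (one of $(r_{(\omega-1)},d_{(\omega-1)})$, $(r_{(\omega)},d_{(\omega)})$ equals whichever of $(r_{\even},d_{\even})$, $(r_{\odd},d_{\odd})$ is not anisotropic), so $1-t/q$ survives in $g(t)$; in case (b) every intermediate $(r_{(i)},d_{(i)})$ is automatically an anisotropic type (each $Q_{(i)}$ is a direct summand of $Q_{(\omega)}$ or $Q_{(\omega-1)}$), so the sum $S$ is a constant and $g(t)$ is read off from $M$ alone. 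This also disposes of your legitimate worry that the intermediate data $(r_{(i)},d_{(i)})$ are not invariants of $Q$: positivity makes their individual values irrelevant in case (c), and in case (b) they are forced into the anisotropic list.

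Two further cautions. First, your ``independent check'' that the pole at $t=q$ detects isotropy of $Q$ is false: the elliptic plane $\Ell$ has $Z_{\Ell}(t)=(1-1/q^2)/(1-t^2/q^2)$ and the anisotropic binary form $x^2+\pi y^2$ has $Z(t)=(1-1/q)/(1-t/q)$, so both are anisotropic yet both have $t=q$ as a pole; the coincidence between that pole and isotropy only sets in for $r\geq 3$. These are $r=2$ instances you defer, but the heuristic should not be used to organize the general argument, and indeed the correct dividing line is membership of $(r_{\even},d_{\even})$ and $(r_{\odd},d_{\odd})$ in the anisotropic list, not isotropy of $Q$. Second, the deferred $r=2$ analysis is not quite ``a few instances'': one must handle the infinite family $ux^2+\pi^b vy^2$ (the answer depends on $b$ only through its parity, but that requires the same structural argument as the general case) and must verify the double pole $(1-t/q)^2$ for the scaled hyperbolic planes in case (c)(ii). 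Neither point is fatal, but as written the central claim of case (c) concerning the factor $1-t/q$ remains unproved.
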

This is proved in Section \ref{Morton}.

\section{The Theory of the $p$-Adic Generating Function}\label{Henry}

Now we proceed to the general theory of the $p$-adic generating function, which we then use (in Section \ref{Elizabeth}) to obtain all the results of the previous section.
\subsection{Modular Generating Functions}

We let $\quotring{k} = \ring/\unif^{k}\ring$ and we let $\val(0)=\infty$ and formally regard $\unif^\infty$ as $0$, so that $\quotring{\infty}\cong R$.
Consider a polynomial $f(x_1,\ldots,x_n) \in \ring[x_1,\ldots,x_n]$.
We are interested in how many times $f(A_1,\ldots,A_n)$ assumes each value in $\quotring{k}$ as $(A_1,\ldots,A_n)$ runs through the $q^{n k}$ values of $\quotring{k}^n$.
To this end, we introduce the group $\Gamma_k=\{\gamma^A: A \in \quotring{k}\}$, with the multiplication rule $\gamma^A \gamma^B = \gamma^{A+B}$, so that $\Gamma_k$ is just a multiplicative version of the additive group of $\quotring{k}$.
Then we introduce the group $\C$-algebra\footnote{It would suffice for the purposes of this paper to define the group algebra over $\Q$, but sometimes it is useful to have the coefficients in $\C$ if we want to perform manipulations on expressions in the same manner that one factors polynomials over $\Q$ into factors over $\C$.} $\groupring{k}=\C[\gfgroup{k}]$, and we say that $F=\sum_{B \in \quotring{k}} F_B \gamma^B$ in $\groupring{k}$ is the {\it modulo $\pi^k$ generating function for $f$} if $f(A_1,\ldots,A_n)=B$ for precisely $q^{n k} F_B$ values of $(A_1,\ldots,A_n) \in (\quotring{k})^n$.
More explicitly, the modulo $\pi^k$ generating function for $f(x_1,\ldots,x_n)$ is $\frac{1}{q^{n k}} \sum_{(A_1,\ldots,A_n)\in (\quotring{k})^n} \gamma^{f(A_1,\ldots,A_n)}$.
Note that the coefficients of our generating function are rational numbers scaled so that $\sum_{B \in \quotring{k}} F_B=1$ rather than the more customary integer counts.
This has the advantage that if $f(x_1,\ldots,x_n)$ is conceived of as a polynomial in a larger polynomial ring $\ring[x_1,\ldots,x_{n^\prime}]$ where $n^\prime \geq n$, where the indeterminates $x_{n+1},\ldots,x_{n^\prime}$ do not appear in $f$, then the modulo $\pi^k$ generating function for $f$ remains unchanged.

The following easily verified rule is what makes generating functions useful.
\begin{remark}[Sum-Product Rule for Modular Generating Functions]\label{Alexander}
If $f(x_1,\ldots,x_n)$ and $g(y_1,\ldots,y_m)$ are two polynomials over $\ring$ involving distinct indeterminates $x_1,\ldots,x_n$, $y_1,\ldots,y_m$, and if $F$ and $G$ are their respective modulo $\pi^k$ generating functions, then the modulo $\pi^k$ generating function for $f\oplus g$ is $F G$.
\end{remark}

\subsection{$p$-Adic Generating Functions}\label{Lawrence}

When $j \leq k$, we have an epimorphism of $\C$-algebras $\phi_{k,j}\colon \groupring{k} \to \groupring{j}$ with $\phi(\gamma^B)=\gamma^C$, where $C$ is the unique element of $\quotring{j}$ (which is a coset of $\ncoset{j}$ in $\ring$) that contains $B$ (a coset of $\unif^k$ in $\ring$). 
Furthermore, if $f(x_1,\ldots,x_n) \in \ring[x_1,\ldots,x_n]$ and $G$ and $H$ are respectively the modulo $\pi^k$ and modulo $\pi^j$ generating functions for $f$, then one readily shows that $\phi_{k,j}(G)=H$.

We set $\limgroupring=\varprojlim\groupring{k}$, where for each $k \in \N$, we let $\phi_k\colon \limgroupring \to \groupring{k}$ be the $k$th coordinate map, an epimorphism of $\C$-algebras.
If $f(x_1,\ldots,x_n) \in \ring[x_1,\ldots,x_n]$, then the element $F \in \limgroupring$ such that $\phi_k(F)$ is the modulo $\pi^k$ generating function of $f$ for each $k \in \N$ is called the {\it $p$-adic generating function of $f$}.
The sum-product rule for modular generating functions (Remark \ref{Alexander}) immediately implies a sum-product rule for $p$-adic generating functions.
\begin{remark}[Sum-Product Rule for $p$-Adic Generating Functions]
Let $f(x_1,\ldots,x_n)$ and $g(y_1,\ldots,y_m)$ be polynomials over $\ring$ involving distinct indeterminates $x_1,\ldots,x_n,y_1,\ldots,y_m$.  If $F$ and $G$ are the $p$-adic respective generating functions for $f$ and $g$, then the $p$-adic generating function for $f\oplus g$ is $F G$.
\end{remark}
We now give an interpretation of the $p$-adic generating function $F$ for a polynomial $f(x_1,\ldots,x_n) \in \ring[x_1,\ldots,x_n]$.
Write
\begin{equation}\label{Madeline}
\phi_k(F) = \sum_{A \in \quotring{k}} F_A \gamma^A,
\end{equation}
for each $k \in \N$.
If ${\mathcal C}$ is a collection of pairwise disjoint cosets drawn from $\bigcup_{k \in \N} \quotring{k}$, and if $S=\bigcup_{C \in {\mathcal C}} C$, 
then $\sum_{C \in {\mathcal C}} F_C$ is the Haar volume of the preimage $f^{-1}(S)$ of $S$ by the polynomial $f$ (regarded as a function from $\ring^n$ to $\ring$).

\subsection{$p$-Adic Generating Functions for Constant and Linear Polynomials}

Consider the polynomial $f(x)=a x+b$ in a single variable $x$, where $a, b \in \ring$.
The modulo $\pi^k$ generating function for $f$ is
\[
\frac{1}{\#\{C \in \quotring{k}: C \cap (a R+b)\not=\emptyset\}} \sums{C \in \quotring{k} \\ C \cap (a R+b) \not=\emptyset} \gamma^C.
\]
We introduce the following notation: if $A$ is any coset of any $\quotring{j}$ (or if $A$ is a singleton set), we write $z^A$ for the $p$-adic generating function that has
\begin{equation}\label{Nancy}
\phi_k(z^A)=\frac{1}{\card{\{C \in \quotring{k}: C\cap A \not=\emptyset\}}} \sums{C \in \quotring{k} \\ C \cap A \not=\emptyset} \gamma^C
\end{equation}
for all $k \in \N$.
\begin{remark}\label{Otto}
The $p$-adic generating function of $f(x)=a x+b$ is $z^{a R+b}$.
\end{remark}
Note that $a R+b$ is the singleton set $\{b\}$ when $a=0$ (i.e., when $f(x)=a x+b$ is a constant polynomial).  We often write $z^b$ in place of $z^{\{b\}}$.
Recall our convention that $\unif^\infty=0$.
We can consider our singleton subsets of $\ring$ as elements of $\quotring{\infty}=\ring/\unif^\infty\ring$.
We sum cosets of the various $\quotring{k}$ in the usual group-theoretic sense: $A+B=\{a+b: a \in A, b \in B\}$.
So if $0 \leq j \leq k \leq \infty$, the sum of an element of $\quotring{j}$ and $\quotring{k}$ is an element of $\quotring{j}$.
The benefit of our notation $z^A$ is that it gives the following convenient arithmetic for $p$-adic generating functions for constant and linear polynomials, which is easy to check.
\begin{remark}
If $A \in \quotring{j}$ and $B \in \quotring{k}$, then $z^A z^B = z^{A+B}$ in $\limgroupring$.
\end{remark}
\begin{remark}[Coalescence]\label{Anne}
We note that if $A \in\quotring{j}$, $k\geq j$, and $W$ is a set of $q^{k-j}$ representatives for the cosets of $\quotring{k}$ that lie in $A$ (so $A=\bigsqcup_{w \in W} \coset{w}{k}$), then $\sum_{w \in W} \zcoset{w}{k} = q^{k-j} z^A$.  We say that we {\it coalesce cosets} when we simplify sums in this manner.
\end{remark}

\subsection{Relation to the Igusa Zeta Function}

Since $|a|_{\field}=q^{-\val(a)}$ for $a \in \field$, and since $t=q^{-s}$, the Igusa local zeta function \eqref{James} becomes 
\[
Z_f(t)=\int_{\ring^n} t^{\val(f(x_1,\ldots,x_n))} d x_1 \cdots d x_n.
\]
For $k \in \N$, let $U_k=\{(r_1,\ldots,r_n) \in \ring^n: \val(f(r_1,\ldots,r_n))=k\}$.
Then
\[
Z_f(t)=\sum_{k \in \N} t^k \Vol(U_k),
\]
where $\Vol(U)$ is the volume of $U$.
At the end of Section \ref{Lawrence}, we showed that if $F(z)$ is the $p$-adic generating function for $f$, and if the modulo $\pi^k$ generating function for $f$ is as given in \eqref{Madeline}, then
\[
\Vol(U_k)=F_{\ncoset{k}}-F_{\ncoset{k+1}}.
\]
We define a map $\Ig$ from $\limgroupring$ to the ring of power series $\C[[t]]$ as follows: if $F(z) \in\limgroupring$ with $\phi_k(F)$ as given in \eqref{Madeline}, then
\begin{equation}\label{Raphael}
\Ig(F(z))=\sum_{k \in \N} (F_{\ncoset{k}}-F_{\ncoset{k+1}}) t^k,
\end{equation}
so that $\Ig$ takes the $p$-adic generating function of $f$ to the Igusa zeta function of $f$.
$\Ig$ is clearly a $\C$-linear map because the maps $\phi_k$ are $\C$-linear.

\subsection{Igusa Local Zeta Functions for Constant and Linear Polynomials}

Now we can compute the local zeta functions for constant and linear polynomials from their $p$-adic generating functions.
\begin{lemma}\label{Priscilla}
Let $f(x)=a x+b$.  Then the Igusa local zeta function of $f$ is
\[
\begin{cases}
t^{\val(a)} \igrp & \text{if $\val(a) \leq \val(b)$,} \\
t^{\val(b)} & \text{if $\val(a) > \val(b)$,}
\end{cases}
\]
where $t^\infty$ is interpreted as $0$. 
\end{lemma}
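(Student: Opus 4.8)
The plan is to apply the map $\Ig$ of \eqref{Raphael} to the $p$-adic generating function of $f$, which by Remark \ref{Otto} equals $z^{a\ring+b}$. If I set $F=z^{a\ring+b}$ and write $\phi_k(F)=\sum_{A \in \quotring{k}} F_A \gamma^A$ as in \eqref{Madeline}, then $Z_f(t)=\Ig(F)=\sum_{k \in \N}(F_{\ncoset{k}}-F_{\ncoset{k+1}})t^k$, so the whole computation reduces to determining the single cumulative coefficient $F_{\ncoset{k}}$ for each $k$. By the volume interpretation at the end of Section \ref{Lawrence}, $F_{\ncoset{k}}$ is the Haar volume of $f^{-1}(\ncoset{k})=\{x \in \ring : \val(ax+b)\geq k\}$. (Alternatively one may read $F_{\ncoset{k}}$ off \eqref{Nancy} as $1/N_k$ when the zero coset $\ncoset{k}\in\quotring{k}$ meets $a\ring+b$ and as $0$ otherwise, where $N_k=\card{\{C \in \quotring{k} : C \cap (a\ring+b)\neq\emptyset\}}$.)

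The main step is a case split according to whether $\val(a)\leq\val(b)$ or $\val(a)>\val(b)$. When $\val(a)\leq\val(b)$, the element $b/a$ lies in $\ring$, so the measure-preserving substitution $y=x+b/a$ gives $\val(ax+b)=\val(a)+\val(y)$; hence $\{x:\val(ax+b)\geq k\}$ corresponds to $\{y:\val(y)\geq k-\val(a)\}$, whose volume is $q^{-\max(0,\,k-\val(a))}=\min(1,q^{\val(a)-k})$. Writing $\alpha=\val(a)$, this gives $F_{\ncoset{k}}=1$ for $k\leq\alpha$ and $F_{\ncoset{k}}=q^{\alpha-k}$ for $k>\alpha$. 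When $\val(a)>\val(b)$, the two summands $ax$ and $b$ always have distinct valuations, so $\val(ax+b)=\val(b)$ for every $x$; thus $f^{-1}(\ncoset{k})$ is all of $\ring$ when $k\leq\val(b)$ and empty when $k>\val(b)$, giving $F_{\ncoset{k}}=1$ for $k\leq\val(b)$ and $F_{\ncoset{k}}=0$ otherwise.

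Finally I would substitute these values into $\Ig$ and sum. In the case $\val(a)\leq\val(b)$, the difference $F_{\ncoset{k}}-F_{\ncoset{k+1}}$ vanishes for $k<\alpha$ and equals $q^{\alpha-k}(1-1/q)$ for $k\geq\alpha$, so summing the geometric series yields
\[
Z_f(t)=(1-1/q)\,t^{\alpha}\sum_{j\geq 0}(t/q)^{j}=t^{\val(a)}\igrp.
\]
In the case $\val(a)>\val(b)$ the only nonzero difference occurs at $k=\val(b)$, where it equals $1$, so $Z_f(t)=t^{\val(b)}$. I expect no serious obstacle here; the only care needed is in the bookkeeping at the boundary index $k=\alpha$ and in the degenerate cases $a=0$ or $b=0$, where $\val$ takes the value $\infty$ and one checks that the convention $t^{\infty}=0$ keeps the stated formula valid (for instance $a=b=0$ gives $F_{\ncoset{k}}=1$ for all $k$, hence $Z_f(t)=0=t^{\infty}\igrp$).
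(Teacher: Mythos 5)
Your proposal is correct and follows essentially the same route as the paper: the paper also applies $\Ig$ to the generating function $z^{a\ring+b}$ from Remark \ref{Otto}, merely outsourcing the coefficient computation to Lemma \ref{Stevie}, which evaluates $\Ig(z^A)$ for a general coset $A$ (here $a\ring+b=\ncoset{\val(a)}$ when $\val(a)\le\val(b)$ and $a\ring+b=\coset{b}{\val(a)}$ with $b\notin\ncoset{\val(a)}$ otherwise). Your inline determination of $F_{\ncoset{k}}$ and the resulting geometric series coincide with the computation in the proof of Lemma \ref{Stevie}, and your treatment of the degenerate cases via $t^\infty=0$ matches the paper's convention.
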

\begin{proof}
The $p$-adic generating function for $f$ is $z^{a R+b}$ by Remark \ref{Otto}.
We calculate $\Ig(z^{a R+b})$ in Lemma \ref{Stevie} below.
\end{proof}
\begin{remark}
Of course, it is easy to calculate the zeta functions in Lemma \ref{Priscilla} directly from the definition \eqref{James}.
\end{remark}
\begin{lemma}\label{Stevie}
Let $j \in \Ni$.  If $A=\ncoset{j}$, then
\[
\Ig(z^A) = t^j \igrp,
\]
where $t^\infty$ is interpreted as $0$.
If $A=\coset{a}{j}$ where $a\not\in\ncoset{j}$, then
\[
\Ig(z^A) = t^{\val(a)}.
\]
\end{lemma}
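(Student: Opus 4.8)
The plan is to compute $\Ig(z^A)$ directly from the definition \eqref{Raphael} by first working out the coefficients $(z^A)_{\ncoset{k}}$ appearing in \eqref{Madeline}, using the explicit description of $\phi_k(z^A)$ in \eqref{Nancy}. Recall that $\Ig(z^A) = \sum_{k \in \N} \left((z^A)_{\ncoset{k}} - (z^A)_{\ncoset{k+1}}\right) t^k$, so the whole computation reduces to understanding, for each $k$, the coefficient of $\gamma^{\ncoset{k}}$ in $\phi_k(z^A)$. By the interpretation at the end of Section \ref{Lawrence}, this coefficient $(z^A)_{\ncoset{k}}$ is just the Haar volume of the set of points sent into $\ncoset{k}$, which for the generating function $z^A$ of a linear polynomial $f(x)=ax+b$ equals the proportion of cosets meeting $A$ that lie inside $\ncoset{k}$. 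I would treat the two cases of the statement separately.

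First I would handle the case $A=\ncoset{j}$ with $j \in \N$ finite. Here $A = \unif^j \ring$, and for $k \leq j$ the entire coset $A$ is contained in $\ncoset{k}$, so the coefficient $(z^A)_{\ncoset{k}}$ equals $1$; for $k > j$, the set $A$ meets $\ncoset{k}$ in a fraction $q^{-(k-j)}$ of its volume, giving $(z^A)_{\ncoset{k}} = q^{-(k-j)}$. Feeding these into \eqref{Raphael}, the telescoping-style sum splits into the contribution from $0 \le k \le j-1$, where consecutive coefficients are both $1$ and cancel, plus the contribution from $k \ge j$. The tail is a geometric series in $t/q$ scaled by $t^j$, and I expect it to collapse to exactly $t^j \igr$ after summing. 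The case $A=\ncoset{\infty}=0$ is the singleton $\{0\}$: then $(z^A)_{\ncoset{k}}=1$ for every $k$, all differences vanish, and $\Ig(z^A)=0$, consistent with the convention $t^\infty=0$.

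Next I would treat $A=\coset{a}{j}$ with $a \notin \ncoset{j}$, so $\val(a) < j$. Writing $m=\val(a)$, the coset $A$ is contained in $\ncoset{m}$ but disjoint from $\ncoset{m+1}$, and in fact $A$ sits entirely inside $\ncoset{k}$ precisely when $k \leq m$, while $A \cap \ncoset{k}=\emptyset$ once $k > m$. Thus $(z^A)_{\ncoset{k}}=1$ for $k \leq m$ and $(z^A)_{\ncoset{k}}=0$ for $k > m$, so the only nonzero difference in \eqref{Raphael} occurs at $k=m$, yielding $\Ig(z^A)=t^m=t^{\val(a)}$.

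The main obstacle will be the bookkeeping in the first case: correctly identifying the coefficient $(z^A)_{\ncoset{k}}$ as a volume (rather than as a raw count of cosets) for $k$ both below and above $j$, and then verifying that the resulting series $\sum_{k \ge j}(q^{-(k-j)} - q^{-(k+1-j)}) t^k$ sums to $t^j(1-1/q)/(1-t/q)=t^j\igr$. Once the coefficients are pinned down, the algebra is a routine geometric-series manipulation, so the real care is in the combinatorial description of how the cosets of $\quotring{k}$ overlap with $A$ and with $\ncoset{k}$ for each $k$.
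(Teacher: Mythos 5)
Your proposal is correct and follows essentially the same route as the paper: compute the coefficients $F_{\ncoset{k}}$ of $\phi_k(z^A)$ from \eqref{Nancy} (getting $1$ for $k\le j$ and $q^{-(k-j)}$ for $k>j$ in the first case, and $1$ for $k\le\val(a)$ and $0$ afterwards in the second), then feed the differences into \eqref{Raphael} and sum the resulting geometric series. The explicit treatment of $j=\infty$ is a small additional care the paper leaves implicit, but nothing of substance differs.
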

\begin{proof}
Let $F(z)=z^A$.

First consider the case $A=\coset{a}{j}$ with $a\not\in \ncoset{j}$.  Consider \eqref{Nancy}, and note that if we write $\phi_k(F)$ as in \eqref{Madeline}, so that
\[
F_{\unif^k R}=\begin{cases} 
1 & \text{if $k \leq \val(a)$,} \\
0 & \text{if $k > \val(a)$.}
\end{cases}
\]
So
\[
F_{\unif^k \ring}-F_{\unif^{k+1} \ring} = \begin{cases}
1 & \text{if $k=\val(a)$,} \\
0 & \text{otherwise,}
\end{cases}
\]
and thus $\Ig(F(z))=t^{\val(a)}$ by \eqref{Raphael}.

Now suppose that $A=\ncoset{j}$.  Again, consider \eqref{Nancy} and write $\phi_k(F)$ as in \eqref{Madeline}, so that
\[
F_{\unif^k R}=\begin{cases} 
1 & \text{if $k \leq j$,} \\
\frac{1}{q^{k-j}} & \text{if $k > j$,}
\end{cases}
\]
and so for $k \in \N$, we have
\[
F_{\unif^k \ring}-F_{\unif^{k+1} \ring} = \begin{cases}
0 & \text{if $k<j$,} \\
\left(1-\frac{1}{q}\right) \frac{1}{q^{k-j}} & \text{if $k \geq j$,}
\end{cases}
\]
and thus \eqref{Raphael} gives
\[
\Ig(F(z))=\left(1-\frac{1}{q}\right) \sum_{k\geq j} \frac{t^k}{q^{k-j}}.\qedhere
\]
\end{proof}

\subsection{Normalization}

We often write an element $F$ of $\groupring{k}$ as $F(\gamma)$, inasmuch as it resembles a polynomial in $\gamma$.
If $F(\gamma)=\sum_{A \in \quotring{k}} F_A \gamma^A \in \groupring{k}$, we use the notation $F(1)$, which we call the {\it normalization of $F$}, to denote the sum of the coefficients, that is, $F(1)=\sum_{A \in \quotring{k}} F_A$.  If $F$ is a modular generating function, then $F(1)=1$.

Similarly, we often write an element $F$ of $\limgroupring$ as $F(z)$, especially if we are expressing it as a linear combination of terms of the form $z^A$.
By the nature of the homomorphisms $\phi_{k,j}$, we see that $(\phi_k(F))(1)$ is independent of $k$, and we let $F(1)$ denote the common value, which we call the {\it normalization of $F$}.
Indeed, if $F$ is expressed as a linear combination $F(z)=\sum_{C \in {\mathcal C}} F_C z^C$, then $F(1)=\sum_{C \in {\mathcal C}} F_C$.
If $F$ is a $p$-adic generating function, then $F(1)=1$.

We note that multiplying by the $p$-adic generating function $z^\ring$ for the linear polynomial $f(x)=x$ extracts the normalization of any other element of $\limgroupring$.
\begin{lemma}\label{Albert}
If $F(z)$ is an element of $\limgroupring$, then $F(z) z^\ring=F(1) z^\ring$.
\end{lemma}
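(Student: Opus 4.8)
The plan is to verify the identity coordinate-by-coordinate in the inverse limit $\limgroupring = \varprojlim \groupring{k}$, exploiting the fact that $\phi_k(z^\ring)$ is the ``uniform'' element of $\groupring{k}$ and that this element is absorbing under multiplication by group elements.

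First I would identify $\phi_k(z^\ring)$ explicitly. Since $\ring$ is the unique coset comprising $\quotring{0}$, every coset $C \in \quotring{k}$ satisfies $C \cap \ring \neq \emptyset$, so there are $q^k$ such cosets and \eqref{Nancy} gives
\[
\phi_k(z^\ring) = \frac{1}{q^k} \sum_{C \in \quotring{k}} \gamma^C =: u_k.
\]
The key algebraic observation is that $u_k$ is absorbing: for any $A \in \quotring{k}$, the map $C \mapsto A + C$ permutes $\quotring{k}$, so $\gamma^A u_k = \frac{1}{q^k}\sum_{C \in \quotring{k}} \gamma^{A+C} = u_k$.

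Next, writing $\phi_k(F) = \sum_{A \in \quotring{k}} F_A \gamma^A$ and using that $\phi_k$ is an algebra homomorphism, I would compute
\[
\phi_k(F(z)\, z^\ring) = \phi_k(F)\, u_k = \sum_{A \in \quotring{k}} F_A \left(\gamma^A u_k\right) = \left(\sum_{A \in \quotring{k}} F_A\right) u_k.
\]
The coefficient sum $\sum_{A \in \quotring{k}} F_A$ is exactly the normalization $(\phi_k(F))(1) = F(1)$, which is independent of $k$, so $\phi_k(F(z)\, z^\ring) = F(1)\, u_k = \phi_k(F(1)\, z^\ring)$.

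Finally, since this equality holds for every $k \in \N$ and an element of the inverse limit $\limgroupring$ is determined by all its images under the coordinate maps $\phi_k$, I would conclude $F(z)\, z^\ring = F(1)\, z^\ring$. There is no serious obstacle here; the only points requiring care are the absorption step and the recognition that the coefficient sum coincides with the level-independent normalization $F(1)$.
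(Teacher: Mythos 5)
Your proof is correct and is essentially the paper's own argument: both compute $\phi_k(F(z)z^\ring)$ as $\sum_{A} F_A\cdot\frac{1}{\card{\quotring{k}}}\sum_{C}\gamma^{A+C}$ and use the permutation $C\mapsto A+C$ to collapse this to $F(1)\phi_k(z^\ring)$ for every $k$. Making the ``absorbing'' property of the uniform element explicit is a nice touch but does not change the route.
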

\begin{proof}
For each $k \in \N$, we have $\phi_k(z^\ring)=\frac{1}{\card{\quotring{k}}} \sum_{A \in \quotring{k}} \gamma^A$, and write $\phi_k(F)=\sum_{B \in \quotring{k}} F_B \gamma^B$, so that $\phi_k(z^\ring F(z)) = \phi_k(z^\ring) \phi_k(F(z))$ becomes $\sum_{B \in \quotring{k}} F_B \cdot \frac{1}{\card{\quotring{k}}} \sum_{A \in \quotring{k}} \gamma^{A+B}$, which is $F(1) \phi_k(z^\ring)  = \phi_k(F(1) z^\ring)$.
\end{proof}

\subsection{Scaling}

Let $f$ be a polynomial and let $s \in R$.  If $k \leq \val(s)$, then the modulo $\pi^k$ generating function for $s f$ is $\gncoset{k}$.  Now suppose $k \geq \val(s)$.  If the modulo $\pi^{k-\val(s)}$ generating function for $f$ is
\[
\sum_{A \in \quotring{k-\val(s)}} F_A \gamma^A,
\]
then the modulo $\pi^k$ generating function for $s f$ is
\[
\sum_{A \in \quotring{k-\val(s)}} F_A \gamma^{s A}.
\]
Accordingly, we introduce the notation that if $F(z) \in \limgroupring$ with $\phi_k(F)$ as presented in \eqref{Madeline}, then $F(z^s)$ denotes the element of $\limgroupring$ given by
\begin{equation}\label{William}
\phi_k(F(z^s)) = \begin{cases}
F(1) \gncoset{k} & \text{for $k \leq \val(s)$,} \\
\sum_{A \in \quotring{k-\val(s)}} F_A \gamma^{s A}  & \text{for $k \geq \val(s)$.} \\
\end{cases}
\end{equation}
\begin{remark}
Note that if $F(z)=G(z) H(z)$, then $F(z^s)=G(z^s) H(z^s)$.
\end{remark}
\begin{remark}
If $F(z)$ is the $p$-adic generating function of $f$, then $F(z^s)$ is the $p$-adic generating function of $s f$.
\end{remark}
Now Lemma \ref{Albert} generalizes as follows.
\begin{remark}\label{Zachary}
If $j \in \Ni$ and $F(z)$ is an element of $\limgroupring$, then for $k \leq j$ we have $F(z^{\unif^j}) \zncoset{k}=F(1) \zncoset{k}$.
\end{remark}
We now show how scaling influences the Igusa local zeta function.
\begin{lemma}\label{Tina}
If $s \in \ring$ and $F(z) \in \limgroupring$, then $\Ig(F(z^s))=t^{\val(s)} \Ig(F(z))$, where $t^\infty$ is interpreted as $0$.
\end{lemma}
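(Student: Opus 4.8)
The plan is to compute $\Ig(F(z^s))$ directly from the scaling definition \eqref{William} and the definition \eqref{Raphael} of $\Ig$, reducing everything to identifying, for each $k$, the coefficient of $\gncoset{k}$ in $\phi_k(F(z^s))$. Write $v=\val(s)$. First I would dispose of the degenerate case $s=0$, where $v=\infty$: here every $k$ satisfies $k\leq\val(s)$, so \eqref{William} gives $\phi_k(F(z^s))=F(1)\gncoset{k}$ for all $k$, whence the coefficient of $\gncoset{k}$ equals $F(1)$ for every $k$. Feeding this into \eqref{Raphael} makes each summand $F(1)-F(1)$ vanish, so $\Ig(F(z^s))=0=t^\infty\Ig(F(z))$, as required.

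For $s\neq0$ with $v<\infty$, the heart of the argument is a short bookkeeping. For $k\leq v$, \eqref{William} again puts coefficient $F(1)$ on $\gncoset{k}$. For $k\geq v$, \eqref{William} writes $\phi_k(F(z^s))=\sum_{A\in\quotring{k-v}}F_A\gamma^{sA}$, and I would determine which cosets $A\in\quotring{k-v}$ contribute to the coefficient of $\gncoset{k}$. Writing $s=\unif^v u$ with $u$ a unit and $A=a+\ncoset{k-v}$, one finds $sA=sa+\ncoset{k}$ as an element of $\quotring{k}$ (here $u$ being a unit is what gives $s\ncoset{k-v}=\ncoset{k}$), so $sA=\ncoset{k}$ exactly when $\val(a)\geq k-v$, that is, exactly when $A=\ncoset{k-v}$. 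Hence the coefficient of $\gncoset{k}$ in $\phi_k(F(z^s))$ equals $F_{\ncoset{k-v}}$ for $k\geq v$; the two formulas agree at $k=v$ because $\quotring{0}$ is trivial, forcing $F_{\ncoset{0}}=F(1)$.

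With these coefficients in hand, I would substitute into \eqref{Raphael}. For $k<v$ consecutive coefficients are both $F(1)$, so those differences cancel and contribute nothing. For $k\geq v$ the difference of consecutive coefficients is $F_{\ncoset{k-v}}-F_{\ncoset{k-v+1}}$, so after reindexing by $m=k-v$ the surviving sum is $\sum_{m\in\N}(F_{\ncoset{m}}-F_{\ncoset{m+1}})t^{v+m}=t^v\Ig(F(z))$, which is the claim.

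The computation is entirely routine; the only point that needs care is the identification $sA=\ncoset{k}\iff A=\ncoset{k-v}$, which rests on $u$ being a unit, together with the clean vanishing of all the $k<v$ terms in \eqref{Raphael}. I expect no serious obstacle.
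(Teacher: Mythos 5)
Your proposal is correct and follows essentially the same route as the paper's proof: split off the $s=0$ case, use \eqref{William} to identify the coefficient of $\gncoset{k}$ in $\phi_k(F(z^s))$ as $F(1)$ for $k\leq\val(s)$ and $F_{\ncoset{k-\val(s)}}$ for $k\geq\val(s)$, then telescope and reindex in \eqref{Raphael}. The only difference is that you spell out the small verification that $sA=\ncoset{k}$ iff $A=\ncoset{k-\val(s)}$, which the paper treats as immediate.
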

\begin{proof}
Let $G(z)=F(z^s)$, and for each $k \in \N$ write
\begin{align*}
\phi_k(F) & = \sum_{A \in \quotring{k}} F_A \gamma^A, \text{ and} \\
\phi_k(G) & = \sum_{A \in \quotring{k}} G_A \gamma^A.
\end{align*}

If $s=0$, then $G(z)=F(1) z^0$.
For each $k \in \N$, we have $G_{\ncoset{k}}=F(1)$, so then by \eqref{Raphael}, we have $\Ig(G(z))=0=t^\infty \Ig(F(z))=t^{\val(s)} \Ig(F(z))$.

Now suppose that $\val(s)=j < \infty$.
Then \eqref{William} shows us that
\[
G_{\unif^k \ring}= \begin{cases}
F(1) & \text{if $k \leq j$,} \\
F_{\unif^{k-j} \ring} & \text{if $k \geq j$,}
\end{cases}
\]
so then
\[
G_{\unif^k\ring}-G_{\unif^{k+1}\ring} = \begin{cases}
0 & \text{if $k < j$,} \\
F_{\unif^{k-j}\ring}-F_{\unif^{k-j+1}\ring} & \text{if $k \geq j$,}
\end{cases}
\]
and so by \eqref{Raphael}, we have $\Ig(G(z)) = \sum_{k \geq j} (F_{\unif^{k-j}\ring}-F_{\unif^{k-j+1}\ring}) t^k = t^j \Ig(F(z))$.
\end{proof}

\subsection{Uniformization}

For $j \in \N$, we say that $F(z) \in \limgroupring$ is {\it $\pi^j$-uniform} if it can be written as a $\C$-linear combination of terms of the form $z^A$ with $A \in \quotring{j}$.  If $F(z)$ is a generating function for some polynomial $f$, saying that $F$ is $\pi^j$-uniform is equivalent to saying that for any $A \in \quotring{j}$, and $k \geq j$, the polynomial $f \pmod{\pi^k}$ represents all $B \in \quotring{k}$ with $B\subseteq A$ equally often.  Thus we say that the polynomial $f$ is {\it $\pi^j$-uniform} if its $p$-adic generating function is $\pi^j$-uniform.
\begin{remark}
If $i \leq j$, then a $\pi^i$-uniform element of $\limgroupring$ is $\pi^j$-uniform.
\end{remark}
\begin{remark}\label{Bertram}
The product of a $\pi^i$-uniform element of $\limgroupring$ and a $\pi^j$-uniform element of $\limgroupring$ is a $\pi^{\min(i,j)}$-uniform element of $\limgroupring$.
\end{remark}

If $F(z)$ is an arbitrary element of $\limgroupring$ and $j \in \N$, then there is a unique $\pi^j$-uniform element $G(z) \in \limgroupring$ called the {\it $\pi^j$-uniformization of $F(z)$}, written $F(z) \pmod{\pi^j}$,  such that $\phi_j(F(z))=\phi_j(G(z))$.
If $k \geq j$ and $\phi_k(F(z))$ is written as in \eqref{Madeline}, then we must have
\begin{equation}\label{Reginald}
\phi_k(F(z) \!\!\!\! \pmod{\pi^j}) = \sum_{A \in \quotring{j}} \frac{F_A}{q^{k-j}} \sums{B \in\quotring{k} \\ B \subseteq A} z^B.
\end{equation}
This is what one gets if one applies $\phi_j$ to $F(z)$ to get an element $\sum_{A \in \quotring{j}} F_A \gamma^A \in \groupring{j}$, and then replaces each instance of $\gamma$ with $z$ to produce the $\pi^j$-uniform element $\sum_{A \in \quotring{j}} F_A z^A$ of $\limgroupring$.
\begin{remark}
If $i \leq j$, then the $\pi^i$-uniformization of the $\pi^j$-uniformization of an element $F(z) \in \limgroupring$ is just the $\pi^i$-uniformization of $F(z)$.
\end{remark}
\begin{remark}
An element of $\limgroupring$ is $\pi^j$-uniform if and only if it is equal to its own $\pi^j$-uniformization.
\end{remark}
Uniformizations simplify generating functions, and so they are useful in calculations via the following principle.
\begin{lemma}\label{Una}
Suppose that $j \geq i$, and that $F(z), G(z) \in \limgroupring$ and $F(z)$ is $\pi^i$-uniform.  Then $F(z) G(z) = F(z) (G(z) \pmod{\unif^j})$, and $F(z) G(z)$ is $\pi^i$-uniform.
\end{lemma}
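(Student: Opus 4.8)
The plan is to reduce both assertions to a single computation involving the idempotent $\zncoset{i}$ and then to read off the uniformity claim from Remark \ref{Bertram}. First I would record that $\zncoset{i}$ acts as an identity on $\pi^i$-uniform elements: since $A + \ncoset{i} = A$ for any coset $A \in \quotring{i}$, the product rule $z^A z^B = z^{A+B}$ gives $z^A \zncoset{i} = z^A$, and hence $F(z)\zncoset{i} = F(z)$ for every $\pi^i$-uniform $F(z)$; in particular $\zncoset{i}$ is idempotent. Applying this on both sides, I rewrite $F(z)G(z) = F(z)\bigl(\zncoset{i}G(z)\bigr)$ and $F(z)(G(z) \pmod{\unif^j}) = F(z)\bigl(\zncoset{i}(G(z) \pmod{\unif^j})\bigr)$, so it suffices to prove $\zncoset{i}G(z) = \zncoset{i}(G(z) \pmod{\unif^j})$. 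Writing $H(z) = G(z) - (G(z) \pmod{\unif^j})$, this is exactly the statement $\zncoset{i}H(z) = 0$. By the definition of the uniformization we have $\phi_j(H) = 0$, and since $i \leq j$ it follows that $\phi_i(H) = \phi_{j,i}(\phi_j(H)) = 0$.

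The heart of the argument is thus the claim that $\phi_i(H) = 0$ forces $\zncoset{i}H = 0$, which I would verify level by level. For $k \leq i$ one has $\phi_k(H) = \phi_{i,k}(\phi_i(H)) = 0$, so $\phi_k(\zncoset{i}H) = 0$ trivially. For $k \geq i$, equation \eqref{Nancy} shows that $\phi_k(\zncoset{i})$ is the averaging idempotent $e = \tfrac{1}{q^{k-i}}\sum_{C}\gamma^C$, the sum running over the $q^{k-i}$ cosets $C \in \quotring{k}$ contained in $\ncoset{i}$. This $e$ satisfies $e\gamma^\delta = e$ for every $\delta \in \ncoset{i}$, so $e\gamma^B$ depends only on the image $\bar B \in \quotring{i}$ of $B$. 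Writing $\phi_k(H) = \sum_{B \in \quotring{k}} H_B \gamma^B$ and grouping the summands according to $\bar B$, we obtain $e\,\phi_k(H) = \sum_{\bar B \in \quotring{i}}\bigl(\sum_{B \subseteq \bar B} H_B\bigr) e\gamma^{\bar B}$; each inner sum $\sum_{B \subseteq \bar B} H_B$ is a coefficient of $\phi_{k,i}(\phi_k(H)) = \phi_i(H) = 0$, so $e\,\phi_k(H) = 0$. Hence $\phi_k(\zncoset{i}H) = 0$ for all $k$, giving $\zncoset{i}H = 0$ and therefore the identity $F(z)G(z) = F(z)(G(z) \pmod{\unif^j})$.

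Finally, the uniformity claim drops out of the identity just established. The element $G(z) \pmod{\unif^j}$ is $\pi^j$-uniform by construction and $F(z)$ is $\pi^i$-uniform by hypothesis, so by Remark \ref{Bertram} the product $F(z)(G(z) \pmod{\unif^j})$ is $\pi^{\min(i,j)}$-uniform, which is $\pi^i$-uniform because $i \leq j$; since this product equals $F(z)G(z)$, the latter is $\pi^i$-uniform as well. I expect the crux to be the core claim $\zncoset{i}H = 0$: the key realization is that the averaging idempotent $e$ collapses $\phi_k(H)$ onto its residues modulo $\pi^i$, so that multiplication by $\zncoset{i}$ "sees" $G$ only modulo $\pi^i$. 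Once that is in hand, the compatibility of the transition maps $\phi_{k,j}$ takes care of everything else.
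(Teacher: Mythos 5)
Your proof is correct. It takes the same level-by-level route as the paper --- both arguments come down to the observation that multiplying $\phi_k(G)$ by the averaged element $\phi_k(z^A)$ (for $A\in\quotring{i}$) collapses each $\gamma^B$ according to the residue class of $B$, so that only a coarser projection of $G$ survives --- but you package the computation differently. The paper reduces by linearity to $F(z)=z^A$ and performs a direct re-indexing of the double sum $\sum_{B,C} G_B\gamma^{B+C}$, grouping the $B$'s by their images in $\quotring{j}$ so as to recognize $\phi_k(z^A)\phi_k(G(z)\pmod{\unif^j})$ outright. You instead factor the argument through the idempotent $\zncoset{i}$: first that $F(z)\zncoset{i}=F(z)$ for $\pi^i$-uniform $F$, and then that $\zncoset{i}$ annihilates any $H\in\limgroupring$ with $\phi_i(H)=0$, applied to $H=G(z)-(G(z)\pmod{\unif^j})$. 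This buys a slightly sharper statement of the mechanism --- $F(z)G(z)$ is seen to depend only on $\phi_i(G)$, not merely on $\phi_j(G)$ (which is of course also the $j=i$ case of the lemma) --- and isolates the averaging identity $e\gamma^{B}=e\gamma^{B'}$ for $B\equiv B'\pmod{\unif^i}$ as the single point where the computation happens. Both treatments of the uniformity claim via Remark \ref{Bertram} coincide. All the individual steps (the idempotence of $\zncoset{i}$ via $z^Az^B=z^{A+B}$, the compatibility $\phi_i=\phi_{j,i}\circ\phi_j$, and the identification of the grouped coefficients with those of $\phi_i(H)$) check out.
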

\begin{proof}
By linearity, it suffices to show this when $F(z)=z^A$ for a single coset $A\in\quotring{i}$.  Let $k \in \N$ with $k \geq j$ be given.  Write $\phi_k(G(z))=\sum_{B \in \quotring{k}} G_B \gamma^B$, and use \eqref{Nancy} for $\phi_k(z^A)$ to get
\begin{align*}
\phi_k(z^A) \phi_k(G(z)) & = \frac{1}{q^{k-i}} \sums{B, C \in \quotring{k} \\ C \subseteq A} G_B \gamma^{B+C} = \frac{1}{q^{k-i}} \sum_{D \in \quotring{j}} \sums{B,C \in \quotring{k} \\ B \subseteq D \\ C \subseteq A} G_B \gamma^{B+C} \\
& = \frac{1}{q^{k-i}} \sum_{D\in\quotring{j}} \sums{B,E \in \quotring{k} \\ B \subseteq D \\ E \subseteq A+D} G_B \gamma^E = \frac{1}{q^{k-i}} \sum_{D \in \quotring{j}} G_D \sums{E \in \quotring{k} \\ E \subseteq A+D} \gamma^E
\\ & = \frac{1}{q^{k-i}} \sum_{D\in\quotring{j}} \frac{G_D}{q^{k-j}} \sums{B,C \in \quotring{k} \\ B\subseteq D \\ C \subseteq A} \gamma^{B+C} =\phi_k(z^A) \phi_k(G(z)\!\!\!\! \pmod{\unif^j}),
\end{align*}
where we use \eqref{Reginald} to recognize $\phi_k(G(z) \pmod{\unif^j})$ in the final step.  That $z^A (G(z) \pmod{\unif^j})$ is $\unif^i$-uniform follows from Remark \ref{Bertram}.
\end{proof}
\begin{corollary}\label{Philip}
Let $i \in \N$, and suppose that $F(z) \in\limgroupring$ is $\pi^i$-uniform.
If $j \geq i$ and if $G(z) \in \limgroupring$, then $F(z) G(z^{\pi^j})=F(z) G(1)$.
\end{corollary}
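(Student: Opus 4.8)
The plan is to exploit the $\C$-linearity of multiplication in $\limgroupring$ together with Remark \ref{Zachary}, which already handles exactly the case of a coset based at the origin. Since $F(z)$ is $\pi^i$-uniform, it is by definition a $\C$-linear combination of terms $z^A$ with $A\in\quotring{i}$, and both sides of the asserted identity are $\C$-linear in $F(z)$; so it suffices to establish $F(z)G(z^{\pi^j})=F(z)G(1)$ in the case $F(z)=z^A$ for a single coset $A\in\quotring{i}$.

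The key maneuver is then to translate this single coset to the origin. Writing $A=\coset{a}{i}$ for some representative $a\in\ring$, I would use the multiplication rule $z^A z^B=z^{A+B}$ to factor $z^A=z^a\,\zncoset{i}$. Applying Remark \ref{Zachary} with the roles $F\mapsto G$ and $k=i$ (which is legitimate because $i\leq j$) gives $\zncoset{i}\,G(z^{\pi^j})=G(1)\,\zncoset{i}$. Combining these by commutativity and associativity yields
\[
z^A\,G(z^{\pi^j}) = z^a\bigl(\zncoset{i}\,G(z^{\pi^j})\bigr) = z^a\,G(1)\,\zncoset{i}=G(1)\,z^a\,\zncoset{i}=G(1)\,z^A,
\]
which is exactly $F(z)G(z^{\pi^j})=F(z)G(1)$ in this case, and linearity then finishes the argument.

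I do not anticipate a genuine obstacle, as the corollary lies very close to the results it builds on; the only point requiring care is that Remark \ref{Zachary} is stated only for the origin-based cosets $\zncoset{k}$, so it cannot be applied to a general coset $A$ directly. The translation $z^A=z^a\,\zncoset{i}$ is what bridges this gap, and it works precisely because $z^a$ is a singleton generating function that commutes through the product and reassembles the shifted coset at the end.

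As an alternative route that stays closer to Lemma \ref{Una}, one could instead write $F(z)G(z^{\pi^j})=F(z)\bigl(G(z^{\pi^j})\!\!\pmod{\pi^j}\bigr)$ and then compute the $\pi^j$-uniformization of $G(z^{\pi^j})$ directly from the scaling formula \eqref{William}: applying $\phi_j$ to $G(z^{\pi^j})$ falls in the $k\le\val(\pi^j)$ branch and produces $G(1)\gamma^{\ncoset{j}}=G(1)\gamma^0$, so the uniformization is $G(1)\,\zncoset{j}$. Since $F(z)$ is $\pi^i$-uniform and $j\geq i$, multiplication by $\zncoset{j}$ acts as the identity on $F(z)$ (each coset $C\in\quotring{i}$ appearing in $F(z)$ satisfies $C+\ncoset{j}=C$ because $\ncoset{j}\subseteq\ncoset{i}$), again giving $F(z)G(1)$.
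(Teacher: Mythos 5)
Your proof is correct. Your alternative route at the end is precisely the paper's proof: apply Lemma \ref{Una} and observe that $G(z^{\unif^j}) \pmod{\unif^j} = G(1)\,\zncoset{j}$ (as you verify from \eqref{William}), which the $\pi^i$-uniform factor $F(z)$ then absorbs. Your primary route is genuinely different in its choice of key lemma: rather than uniformizing $G(z^{\unif^j})$, you decompose $F(z)$ into single terms $z^A$ with $A\in\quotring{i}$ by linearity, translate each coset to the origin by writing $z^A = z^a\,\zncoset{i}$, and invoke Remark \ref{Zachary} with $k=i\leq j$. This is valid; the translation trick is exactly the right bridge from the origin-based statement of Remark \ref{Zachary} to general $\pi^i$-uniform elements, and what the argument buys is that it bypasses the uniformization formalism entirely, resting only on the multiplication rule $z^Az^B=z^{A+B}$ and the scaling identity. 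What the paper's route buys in exchange is reusability: Lemma \ref{Una} lets one replace $G(z^{\unif^j})$ by its class modulo $\pi^j$ in far more general products, which is how the same mechanism is exploited repeatedly in Sections \ref{Elizabeth} and \ref{Carlos}. Both arguments are complete.
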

\begin{proof}
Apply Lemma \ref{Una} and note that $G(z^{\unif^j}) \pmod{\unif^j}=G(1) z^{\unif^j}$.
\end{proof}

\subsection{Partial Generating Functions, Heads, and Homogeneous Polynomials}

For $i \in \N$, a subset of $\ring^n$ is said to be {\it regular modulo $\pi^i$}, or just {\it $\pi^i$-regular}, if it is a union of sets of the form $A_1\times\cdots\times A_n$ where each $A_h$ is a coset of the form $\coset{a_h}{i}$ for $a_h \in \ring$.

Let $f(x_1,\ldots,x_n) \in \ring[x_1,\ldots,x_n]$ be homogeneous of degree $d$.
For each $k \in \N$, let $G_k(\gamma)$ be the modulo $\pi^k$ generating function for $f$, and let $G(z)$ be the $p$-adic generating function of $f$.
Recall that
\[
G_k(\gamma)=\frac{1}{q^{n k}} \sum_{(A_1,\ldots,A_n)\in (\quotring{k})^n} \gamma^{f(A_1,\ldots,A_n)}.
\]

If $S$ is a regular subset of $R^n$ modulo $\pi^i$ and $k \geq i$, then the {\it partial modulo $\pi^k$ generating function of $f$ on $S$} is defined to be
\[
\frac{1}{q^{n k}} \sums{(A_1,\ldots,A_n) \in (\quotring{k})^n \\ A_1 \times \cdots\times A_n \subseteq S} \gamma^{f(A_1,\ldots,A_n)}.
\]
Note that the $\pi^i$-regularity makes each $(A_1,\ldots,A_n) \in (\quotring{k})^n$ either contained in $S$ or disjoint from $S$.

For each $k \geq i$, we let $H_k(\gamma)$ denote the partial modulo $\pi^k$ generating function of $f$ on $S$.
It is straightforward to see that if $k \geq j \geq i$, then the standard projection map $\phi_{k,j}\colon \groupring{k} \to \groupring{j}$ carries $H_k$ to $H_j$.
Thus there is an element $H$ of $\limgroupring$ such that for each $k \geq i$, the projection $\phi_k(H)$ in $\groupring{k}$ is the partial modulo $\pi^k$ generating function of $f$ on $S$.
We call this $H$ the {\it partial $p$-adic generating function of $f$ on $S$}.

The partial modulo $\pi^k$ generating functions (for $k \geq 1$) and the partial $p$-adic generating function for $f$ associated to the $\pi$-regular region $S=\ring^n\smallsetminus (\unif\ring)^n$ are of special interest, and are called the {\it head of the modulo $\pi^k$ generating function of $f$} and the {\it head of the $p$-adic generating function of $f$}, respectively.  The head of the modulo $\pi^k$ generating function of $f$ is then
\[
\frac{1}{q^{n k}} \sums{(A_1,\ldots,A_n)\in (\quotring{k})^n \\ (A_1,\ldots,A_n) \not\equiv (0,\ldots,0) \!\!\!\! \pmod{\pi}} \gamma^{f(A_1,\ldots,A_n)}.
\]
This keeps track of how many times $f$ represents a given value modulo $\pi^k$ when we restrict the inputs so that they cannot all simultaneously be nonunits.
\begin{lemma}\label{Heinrich}
Suppose that there is a degree $d$ homogeneous polynomial $f(x_1,\ldots,x_n) \in \ring[x_1,\ldots,x_n]$, whose $p$-adic generating function $G(z)$ has head $H(z)$.  Then $G(z)=H(z)+\frac{1}{q^n} G(z^{\pi^d})$, and the Igusa local zeta function of $f$ is $\Ig(G(z))= \Ig(H(z))/(1-t^d/q^n)$.
\end{lemma}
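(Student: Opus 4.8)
The plan is to split the generating function $G(z)$ according to whether or not the input to $f$ lies in $(\unif\ring)^n$, and then to exploit the homogeneity of $f$ to recognize the resulting ``tail'' piece as a scaled copy of $G$ itself.

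First I would observe that $\ring^n$ is the disjoint union of $S=\ring^n\smallsetminus(\unif\ring)^n$ and $(\unif\ring)^n$, both of which are $\pi$-regular. Directly from the definition of the partial modulo $\unif^k$ generating function (the defining sum runs over those $A_1\times\cdots\times A_n$ contained in the region, and such a sum is additive over a disjoint union of regions), this yields $G(z)=H(z)+T(z)$, where $H(z)$ is the head and $T(z)$ is the partial $p$-adic generating function of $f$ on $(\unif\ring)^n$. The central step is then to identify $T(z)$ with $\tfrac{1}{q^n}G(z^{\unif^d})$. Conceptually, I would substitute $x_i=\unif y_i$, a measure-scaling bijection from $\ring^n$ onto $(\unif\ring)^n$ that multiplies Haar volume by $q^{-n}$; since $f$ is homogeneous of degree $d$, we have $f(\unif y_1,\ldots,\unif y_n)=\unif^d f(y_1,\ldots,y_n)$, so evaluating $f$ on $(\unif\ring)^n$ is the same as evaluating $\unif^d f$ on all of $\ring^n$, up to the $q^{-n}$ volume factor. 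By the Scaling remark, $G(z^{\unif^d})$ is exactly the $p$-adic generating function of $\unif^d f$, whence $T(z)=\tfrac{1}{q^n}G(z^{\unif^d})$. To make this rigorous I would verify it at each finite level: writing $\phi_k(T)$ as $\tfrac{1}{q^{nk}}\sum\gamma^{f(A_1,\ldots,A_n)}$ over the cosets with $A_1\times\cdots\times A_n\subseteq(\unif\ring)^n$, substituting $A_i=\unif B_i$, and coalescing the resulting cosets, one obtains exactly $\tfrac{1}{q^n}\phi_k(G(z^{\unif^d}))$ as computed from \eqref{William}. Combining gives the first assertion, $G(z)=H(z)+\tfrac{1}{q^n}G(z^{\unif^d})$.

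Finally I would apply the map $\Ig$. Since $\Ig$ is $\C$-linear and, by Lemma \ref{Tina}, $\Ig(G(z^{\unif^d}))=t^{\val(\unif^d)}\Ig(G(z))=t^d\Ig(G(z))$, applying $\Ig$ to both sides of the first assertion gives
\[
\Ig(G(z))=\Ig(H(z))+\frac{t^d}{q^n}\Ig(G(z)).
\]
Solving this linear equation for $\Ig(G(z))$ produces $\Ig(G(z))=\Ig(H(z))/(1-t^d/q^n)$, as claimed.

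The main obstacle, I expect, lies in the bookkeeping of the central step: one must correctly track how the normalization $1/q^{nk}$ interacts with the substitution $A_i=\unif B_i$ and with the reduction of $\unif^d f(B)$ modulo $\unif^k$, so that the factor $1/q^n$ emerges cleanly. The change-of-variables picture makes the identity transparent, but the level-by-level check requires care that the count of cosets, the scaling of the exponent, and the coalescence combine to give precisely $\tfrac{1}{q^n}G(z^{\unif^d})$ and not some other power of $q$.
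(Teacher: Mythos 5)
Your proposal is correct and follows essentially the same route as the paper: decompose $G$ into the head plus the partial generating function on $(\unif\ring)^n$, substitute $x_i=\unif y_i$ and use homogeneity to identify that tail with $\frac{1}{q^n}G(z^{\unif^d})$ at each finite level $k$, then apply $\Ig$ together with Lemma \ref{Tina} and solve. The level-by-level bookkeeping you flag as the main obstacle is exactly what the paper's proof carries out, counting how many classes of $\quotring{k-1}$ lie over each class of $\quotring{k-d}$ to extract the factor $q^{-n}$.
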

\begin{proof}
Consider the behavior of $f$ for inputs $(A_1,\ldots,A_n) \in (\quotring{k})^n$ such that $(A_1,\ldots,A_n) \equiv (0,\ldots,0) \pmod{\pi}$.
To see how many times $f(x_1,\ldots,x_n)$ represents elements in $\quotring{k}$ for such inputs, one could reparameterize $(x_1,\ldots,x_n)=\pi(y_1,\ldots,y_n)$, and then note that $f(\pi y_1,\ldots,\pi y_n)=\pi^d f(y_1,\ldots,y_n)$ by homogeneity.
If $f(y_1,\ldots,y_n)$ represents a particular element $B$ of $\quotring{k-d}$ for $N$ distinct inputs $(C_1,\ldots,C_n) \in \quotring{k-d}$, then $f(\pi y_1,\ldots,\pi y_n)$ represents $\pi^d B \in \quotring{k}$ for $N$ distinct inputs $(C_1,\ldots,C_n)$ of $\quotring{k-d}$, and so represents $\pi^d B \in \quotring{k}$ for $q^{n(d-1)} N$ distinct inputs $(D_1,\ldots,D_n) \in \quotring{k-1}$, so that $f(x_1,\ldots,x_n)$ represents $\pi^d B \in \quotring{k}$ for $q^{n(d-1)} N$ distinct inputs $(A_1,\ldots,A_n) \in \quotring{k}$ that vanish modulo $\pi$.
Keeping in mind our normalization, this makes a contribution of $q^{-n(k-d+1)} N \gamma^{\pi^d B}$ to the modulo $\pi^k$ generating function, and the coefficient for this contribution is $q^{-n}$ times the coefficient for $\gamma^B$ in the modulo $\pi^{k-d}$ generating function of $f$.
Thus when we account for the parts of the modulo $\pi^k$ generating function excluded from the head, we see that
\[
G_k(\gamma)-H_k(\gamma)=\frac{1}{q^n} G_{k-d}(\gamma^{\pi^d}),
\]
where $G_{k-d}(\gamma^{\pi^d})$ indicates the function obtained from $G_{k-d}(\gamma)$ by replacing each instance of $\gamma^A$ (with $A \in \quotring{k-d}$) with $\gamma^{\pi^d A}$ (in $\quotring{k}$).
If we take the inverse limit of both sides of our equation and recall the definition of $G(z^s)$ for $s \in \ring$, we obtain $G(z) - H(z) =\frac{1}{q^n} G(z^{\pi^d})$.  Apply $\Ig$ to both sides, and apply Lemma \ref{Tina} and rearrange to obtain $(1-t^d/q^n) \Ig(G(z))=\Ig(H(z))$.
\end{proof}
\begin{remark}
The relation between $\Ig(G(z))$ and $\Ig(H(z))$ in Lemma \ref{Heinrich} is well-known and can be derived directly from the definition of the zeta function via an elementary calculation.  For example, see \cite[eq.~(1)]{Denef-Meuser}.
\end{remark}

\subsection{Hensel's Lemma}

If $f(x_1,\ldots,x_n) \in \ring[x_1,\ldots,x_n]$ and $a=(a_1,\ldots,a_n) \in \ring^n$, then the {\it derivative of $f$ at $a$} is the vector of partial derivatives $(\partial f/\partial x_1,\ldots,\partial f/\partial x_n)$ evaluated at $(a_1,\ldots,a_n)$.  The {\it valuation} of the derivative of $f$ at $(a_1,\ldots,a_n)$ is the least valuation of the $n$ partial derivatives at $a$.
We now state a form of Hensel's Lemma in terms of our theory of $p$-adic generating functions: if the valuation of the derivative is constant on a certain region, then the partial $p$-adic generating function on that region is uniform.
\begin{lemma}\label{Gretchen}
Let $f(x_1,\ldots,x_n) \in \ring[x_1,\ldots,x_n]$, and let $j \in \N$.
Let $S$ be a $\pi^{j+1}$-regular subset of $\ring^n$ upon which the derivative of $f$ always has valuation $j$.
Then the partial $p$-adic generating function $H(z)$ of $f$ on $S$ is $\pi^{2 j+1}$-uniform, so it may be obtained by replacing instances of $\gamma$ with $z$ in the partial modulo $\pi^{2 j+1}$ generating function of $f$ on $S$.
If $W_{j+1}$ is a set of representatives for the cosets of $\pi^{j+1} R$ in $R$, then
\[
H(z)=\frac{1}{q^{n (j+1)}} \sum_{(a_1,\ldots,a_n) \in (W_{j+1})^n \cap S} z^{\coset{f(a_1,\ldots,a_n)}{2 j+1}}.
\]
\end{lemma}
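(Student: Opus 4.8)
The plan is to exploit the $\pi^{j+1}$-regularity of $S$ to reduce everything to a single product coset, and then to run a Hensel-style linear-approximation argument there. Since $S$ is $\pi^{j+1}$-regular, it is a disjoint union of product cosets $C=(\coset{a_1}{j+1})\times\cdots\times(\coset{a_n}{j+1})$ with $a=(a_1,\dots,a_n)$ ranging over $(W_{j+1})^n\cap S$, and the partial generating function is additive over this decomposition. So it suffices to prove that the partial $p$-adic generating function of $f$ on a single such $C$ equals $\frac{1}{q^{n(j+1)}}z^{\coset{f(a)}{2j+1}}$; summing over $a$ then yields the displayed formula, and exhibits $H(z)$ as a $\C$-linear combination of terms $z^A$ with $A\in\quotring{2j+1}$, that is, as a $\pi^{2j+1}$-uniform element. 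Note first that on $C$ some fixed coordinate, say the first, has $\val(\partial f/\partial x_1)=j$ throughout: the residue of $\partial f/\partial x_i$ modulo $\pi^{j+1}$ depends only on the input modulo $\pi^{j+1}$, so which coordinates realize the minimal valuation $j<j+1$ is constant on $C$, and this set is nonempty because the derivative of $f$ has valuation $j$ on $S\supseteq C$.

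Next I would Taylor-expand. Writing a general point of $C$ as $x=a+\pi^{j+1}h$ with $h\in\ring^n$, every monomial of $f$ contributes to $f(a+\pi^{j+1}h)$ a constant term, a linear term in $h$ carrying a factor $\pi^{j+1}$, and terms of degree $\geq 2$ in $h$ each divisible by $\pi^{2(j+1)}$. Hence
\[
f(a+\pi^{j+1}h)=f(a)+\pi^{j+1}\sum_{i=1}^n\frac{\partial f}{\partial x_i}(a)\,h_i + \pi^{2j+2}\,\rho(h)
\]
for some $\rho\in\ring[h_1,\dots,h_n]$. Because each $\partial f/\partial x_i(a)\in\pi^j\ring$, I can write $\partial f/\partial x_i(a)=\pi^j u_i$ with $u_i\in\ring$ and $u_1\in\ringunits$, giving
\[
f(a+\pi^{j+1}h)=f(a)+\pi^{2j+1}\Bigl(\sum_{i=1}^n u_i h_i+\pi\,\rho(h)\Bigr).
\]
In particular $f(x)\equiv f(a)\pmod{\pi^{2j+1}}$ for all $x\in C$, so all of the mass of $f$ on $C$ lands in the single coset $\coset{f(a)}{2j+1}$.

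The crux is then to show that this mass is spread \emph{uniformly} over $\coset{f(a)}{2j+1}$, i.e. that for every $k\ge 2j+1$ the values $f(x)\bmod\pi^k$ (for $x\in C$) hit each coset of $\quotring{k}$ inside $\coset{f(a)}{2j+1}$ equally often; this is precisely what $\pi^{2j+1}$-uniformity demands. Set $\Psi(h)=\sum_i u_i h_i+\pi\,\rho(h)$, so that $f(x)=f(a)+\pi^{2j+1}\Psi(h)$, and it suffices to prove that $\Psi$ is equidistributed on $\ring$ as $h$ ranges over $\ring^n$. Fixing $h_2,\dots,h_n$ and viewing $\Psi$ as a one-variable polynomial in $h_1$ over $\ring$, its reduction modulo $\pi$ is the affine map $\overline{u_1}\,h_1+(\text{const})$ with $\overline{u_1}\neq 0$, hence a bijection of $\F_q$, while its $h_1$-derivative $u_1+\pi\,\partial\rho/\partial h_1$ is a unit everywhere on $\ring$; by Hensel's lemma such a map is a measure-preserving bijection of $\ring$. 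Thus $\Psi(h)\bmod\pi^k$ is equidistributed for each fixed $(h_2,\dots,h_n)$, and averaging over those variables preserves equidistribution. Therefore the partial generating function on $C$ is $\Vol(C)\,z^{\coset{f(a)}{2j+1}}=\frac{1}{q^{n(j+1)}}z^{\coset{f(a)}{2j+1}}$, as claimed.

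I expect the only step requiring genuine care to be this last equidistribution: one must verify that the higher-order remainder $\pi\,\rho(h)$ cannot destroy the uniformity forced by the unit-valued partial derivative, which is exactly Hensel's lemma in the guise of a measure-preserving bijection having bijective reduction and everywhere-unit derivative. The coset reduction, the Taylor estimate, and the bookkeeping of the coefficient $q^{-n(j+1)}$ (noting that passing from modulus $\pi^{j+1}$ to $\pi^{2j+1}$ refines each coset by a factor $q^{nj}$) are then routine. Once $\pi^{2j+1}$-uniformity is in hand, the final explicit formula is just the statement that a $\pi^{2j+1}$-uniform element equals the $\gamma\mapsto z$ substitution applied to its image $\phi_{2j+1}(H)$.
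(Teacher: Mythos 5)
Your argument is correct and follows essentially the same route as the paper's: fix a representative $a$, Taylor-expand to see that $f(a+\pi^{j+1}h)-f(a)=\pi^{2j+1}\Psi(h)$ with the linear part of $\Psi$ carrying a unit coefficient in some coordinate, and then invoke Hensel's lemma to equidistribute $\Psi$ over $\ring$, which yields the coefficient $q^{-n(j+1)}$ and the $\pi^{2j+1}$-uniformity. The only difference is cosmetic: the paper establishes the equidistribution by counting lifts (exactly $q^{(n-1)(k-(2j+1))}$ solutions modulo $\pi^{k-j}$, obtained by iterative amelioration and linear algebra), whereas you isolate the distinguished coordinate and exhibit a measure-preserving bijection of $\ring$ in that single variable before averaging over the rest -- both are the same Hensel computation.
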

\begin{proof}
Fix $a=(a_1,\ldots,a_n) \in W_{j+1}^n \cap S$.
For any $b=(b_1,\ldots,b_n) \in R^n$, use the Taylor expansion of $f$ around $a$ to see that the first-order term of $f(a+\pi^{j+1} b)-f(a)$ has valuation at least $2 j+1$, with equality when $\pi^{j+1}$ does not divide the inner product of $b$ with the derivative of $f$ at $a$.
The second-order and higher-order terms of the Taylor expansion all have valuation at least $2 j+2$, so $f(a+\pi^{j+1} b)-f(a)$ is linear in $b$ modulo $\pi^{2 j+2}$.
Then one shows that for each $c \in \ring$ with $c \equiv f(a_1,\ldots,a_n) \pmod{\pi^{2 j+1}}$, there exist $\widetilde{a} \in \ring^n$ such that $f(\widetilde{a}) \equiv c \pmod{\pi^{2 j+2}}$ and $\widetilde{a}\equiv a \pmod{\pi^{j+1}}$, and the number of such $\widetilde{a}$ that are distinct modulo $\pi^{j+2}$ is precisely $q^{n-1}$ by linear algebra.

Thus by iterative amelioration of solutions, one shows that for each $k \geq 2 j+1$ and each $c \in \ring$ with $c \equiv f(a_1,\ldots,a_n) \pmod{\pi^{2 j+1}}$, there exist $\widetilde{a} \in \ring^n$ such that $f(\widetilde{a}) \equiv c \pmod{\pi^k}$ and $\widetilde{a}\equiv a \pmod{\pi^{j+1}}$, and the number of such $\widetilde{a}$ that are distinct modulo $\pi^{k-j}$ is precisely $q^{(n-1)(k-(2j+1))}$.

If we are thinking of the partial modulo $\pi^k$ generating function for $f$ on $S$, then the number of $(A_1,\ldots,A_n) \in (\quotring{k})^n$ with every $A_i \subseteq \coset{a_i}{j+1}$ and such that $f(A_1,\ldots,A_n)=\coset{c}{k}$ is equal to $q^{(n-1)(k-(2 j+1))} \cdot q^{n j}=q^{n(k-j-1)-(k-(2 j+1))}$.

As we let $a$ run through $(W_{j+1})^n\cap S$, we represent all the $n$-fold products of cosets of $\pi^{j+1}\ring$ that make up $S$, and so the partial modulo $\pi^k$ generating function of $f$ on $S$ is
\begin{align*}
H_k(\gamma)
& = \frac{1}{q^{n k}} \sum_{(a_1,\ldots,a_n) \in (W_{j+1})^n\cap S} \sums{\coset{c}{k} \in \quotring{k} \\ \coset{c}{k} \subseteq \coset{f(a_1,\ldots,a_n)}{2 j+1}} q^{n(k-j-1)-(k-(2 j+1))} \gamma^{\coset{c}{k}} \\
& = \frac{1}{q^{n (j+1)}} \sum_{(a_1,\ldots,a_n) \in (W_{j+1})^n\cap S} \sums{\coset{c}{k} \in \quotring{k} \\ \coset{c}{k} \subseteq \coset{f(a_1,\ldots,a_n)}{2 j+1}} q^{-(k-(2 j+1))} \gamma^{\coset{c}{k}},
\end{align*}
which by coalescence of cosets (see Remark \ref{Anne}) shows that the partial $p$-adic generating function of $f$ on $S$ is
\[
H(z)= \frac{1}{q^{n (j+1)}} \sum_{(a_1,\ldots,a_n) \in (W_{j+1})^n\cap S} z^{\coset{f(a_1,\ldots,a_n)}{2 j+1}}.\qedhere
\]
\end{proof}
This has a very useful application to unimodular quadratic forms that we use later.
\begin{corollary}\label{Hortense}
Let $\val(2)=\ell$, and let $W$ be a set of $q^{\ell+1}$ representatives for the cosets of $\pi^{\ell+1}\ring$ in $\ring$ .
Let $Q(x_1,\ldots,x_n)$ be a unimodular quadratic form of rank $n$ over $\ring$.
Then the head of the $p$-adic generating function of $Q$ is
\[
H(z)=\frac{1}{q^{n(\ell+1)}} \sums{(a_1,\ldots,a_n) \in W^n \\ (a_1,\ldots,a_n)\not\equiv (0,\ldots,0)\!\!\!\! \pmod{\pi}} z^{\coset{Q(a_1,\ldots,a_n)}{2 \ell+1}}.
\]
\end{corollary}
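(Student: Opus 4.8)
The plan is to identify the head of the $p$-adic generating function of $Q$ with the partial $p$-adic generating function of $Q$ on the region $S = \ring^n \smallsetminus (\pi\ring)^n$, exactly as in the definition of the head, and then to invoke the Hensel-type Lemma \ref{Gretchen} with $j = \ell = \val(2)$. To do this I must verify the two hypotheses of that lemma: that $S$ is $\pi^{j+1}$-regular, and that the derivative of $Q$ has constant valuation $j$ everywhere on $S$.

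First I would dispose of regularity. The region $S$ is cut out by a single condition modulo $\pi$ (not all coordinates lie in $\pi\ring$), so it is $\pi$-regular; since $\ell+1 \geq 1$, refining the defining cosets of $\pi\ring$ into cosets of $\pi^{\ell+1}\ring$ shows that $S$ is also $\pi^{\ell+1}$-regular, which is the regularity hypothesis of Lemma \ref{Gretchen} with $j=\ell$.

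The substantive point is the valuation of the derivative. Writing $Q(x) = x M x^T$ for the symmetric matrix $M$ associated to $Q$, the symmetry of $M$ gives $\partial Q/\partial x_k = 2(xM)_k$, so the valuation of the derivative of $Q$ at $a=(a_1,\ldots,a_n)$ is $\val(2) + \min_k \val((aM)_k) = \ell + \min_k \val((aM)_k)$. I claim $\min_k \val((aM)_k) = 0$ for every $a \in S$: because $Q$ is unimodular of rank $n$, $\det M \in \ringunits$, so $M$ is invertible modulo $\pi$, and hence $a \mapsto aM$ reduces to a bijection of $\resfield^n$. Thus $aM \equiv 0 \pmod{\pi}$ would force $a \equiv 0 \pmod{\pi}$, contradicting $a \in S$; so $aM$ has a unit entry, $\min_k \val((aM)_k)=0$, and the derivative has constant valuation $\ell$ on $S$. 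I expect this step, pinning the valuation to be exactly $\ell$ by unimodularity, to be the main (and essentially the only) point requiring thought.

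With both hypotheses verified, Lemma \ref{Gretchen} with $j=\ell$ gives that the partial $p$-adic generating function on $S$ is $\pi^{2\ell+1}$-uniform and equals
\[
\frac{1}{q^{n(\ell+1)}} \sum_{(a_1,\ldots,a_n) \in W^n \cap S} z^{\coset{Q(a_1,\ldots,a_n)}{2\ell+1}},
\]
since the chosen $W$ is precisely a set $W_{\ell+1}$ of $q^{\ell+1}$ representatives for the cosets of $\pi^{\ell+1}\ring$. Finally I would note that $W^n \cap S$ is exactly the set of $(a_1,\ldots,a_n) \in W^n$ with $(a_1,\ldots,a_n) \not\equiv (0,\ldots,0) \pmod{\pi}$, which rewrites the sum into the stated form and completes the proof.
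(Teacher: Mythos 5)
Your proposal is correct and follows essentially the same route as the paper: both identify the head as the partial generating function on $\ring^n\smallsetminus(\pi\ring)^n$, compute the derivative as $2M(a_1,\ldots,a_n)^T$, use unimodularity of $M$ to pin its valuation at exactly $\ell$ on that region, and then apply Lemma \ref{Gretchen} with $j=\ell$. The only difference is that you spell out the regularity check and the invertibility-modulo-$\pi$ argument that the paper leaves implicit.
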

\begin{proof}
If $M$ is the matrix associated to $Q$, then the derivative of $Q$ at $(a_1,\ldots,a_n)$ is $2 M (a_1,\ldots,a_n)^T$.  If $(a_1,\ldots,a_n) \in \ring^n\smallsetminus (\pi\ring)^n$, then since $M$ is unimodular, we see that the derivative has valuation $\val(2)=\ell$, so we may apply Lemma \ref{Gretchen}.
\end{proof}

\section{Quadratic Polynomials over $p$-Adic Fields}\label{Elizabeth}

This section assumes the basic facts about quadratic forms related in Section \ref{Martha}, one of which is that every quadratic form over $\ring$ is equivalent to $\bigoplus_{i=0}^\infty \pi^i Q_i$ for some unimodular quadratic forms $Q_0, Q_1,\ldots$, almost all of which are zero.  We first show how the $p$-adic generating function of $Q=\bigoplus_{i=0}^\infty \pi^i Q_i$ relates to those of the constituent unimodular forms $Q_i$.

\subsection{Generating Function for an Arbitrary Quadratic Form}

\begin{proposition}\label{Katherine}
Suppose $\val(2)=\ell$.
Consider the quadratic form $Q=\bigoplus_{i\in\N} \unif^i Q_i$, where each $Q_i$ is a unimodular quadratic form of rank $r_i$ over $\ring$, and let $\omega$ be a positive integer such that $Q_i=0$ for $i > \omega$.
For each $j \in \N$, we let
\[
Q_{(j)}  = \!\!\!\!\!\!\!\! \bigopluss{0 \leq i \leq j \\ i \equiv j \pmod{2}} Q_i,  \quad\quad\quad r_{(j)} =\rank(Q_{(j)})=\!\!\!\!\!\!\!\! \sums{0 \leq i \leq j \\ i \equiv j \pmod{2}} \!\!\!\!\! r_i, \quad\quad\quad q_{(j)}  = q^{\, \sum_{0 \leq i < j} r_{(i)}},
\]
and for any quadratic form $P$, use the term $G_P$ to denote the $p$-adic generating function of $P$, and $H_P$ for the head of $G_P$.

Then the $p$-adic generating function $G_Q(z)$ of $Q$ is 
\[
\sum_{0 \leq i < \omega-1} \frac{1}{q_{(i)}} H_{Q_{(i)}}(z^{\unif^i}) G_{Q_{(i+1)}}(z^{\unif^{i+1}}) \prod_{2 \leq j \leq 2 \ell} G_{Q_{i+j}}(z^{\unif^{i+j}}) + \frac{1}{q_{(\omega-1)}} G_{Q_{(\omega-1)}}(z^{\unif^{\omega-1}}) G_{Q_{(\omega)}}(z^{\unif^{\omega}}).
\]
\end{proposition}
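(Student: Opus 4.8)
The plan is to collapse $G_Q$ into a single finite product over levels and then peel off one head at a time, tracking an invariant shape. By the sum-product rule for $p$-adic generating functions together with the fact that $G_{\unif^i Q_i}=G_{Q_i}(z^{\unif^i})$, we have $G_Q(z)=\prod_{i=0}^{\omega} G_{Q_i}(z^{\unif^i})$, an honest finite product in the commutative algebra $\limgroupring$. For $0\le m\le \omega-1$ I would define the ``invariant'' element
\[
P_m(z) = \frac{1}{q_{(m)}}\, G_{Q_{(m)}}(z^{\unif^m})\, G_{Q_{(m+1)}}(z^{\unif^{m+1}}) \prod_{i=m+2}^{\omega} G_{Q_i}(z^{\unif^i}).
\]
Since $q_{(0)}=1$, $Q_{(0)}=Q_0$, and $Q_{(1)}=Q_1$, one checks immediately that $P_0(z)=G_Q(z)$. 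The claimed formula then follows once I prove the one-step identity $P_m = T_m + P_{m+1}$ for $0\le m\le \omega-2$, where $T_m$ denotes the $m$th summand of the claimed expression, and observe that $P_{\omega-1}$ is exactly the final term (its trailing product $\prod_{i=\omega+1}^{\omega}$ is empty).

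For the one-step identity I would apply the head recursion of Lemma \ref{Heinrich} to the lowest-level factor. As a direct sum of unimodular forms, $Q_{(m)}$ is unimodular of rank $r_{(m)}$, and being degree-$2$ homogeneous, Lemma \ref{Heinrich} gives $G_{Q_{(m)}}(z)=H_{Q_{(m)}}(z)+q^{-r_{(m)}}G_{Q_{(m)}}(z^{\unif^2})$; scaling by $\unif^m$ (scaling being multiplicative in the scalar) yields $G_{Q_{(m)}}(z^{\unif^m})=H_{Q_{(m)}}(z^{\unif^m})+q^{-r_{(m)}}G_{Q_{(m)}}(z^{\unif^{m+2}})$. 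Substituting this into $P_m$ splits it into a head part and a descended part. In the head part, Corollary \ref{Hortense} shows $H_{Q_{(m)}}$ is $\unif^{2\ell+1}$-uniform, so $H_{Q_{(m)}}(z^{\unif^m})$ is $\unif^{m+2\ell+1}$-uniform; by Lemma \ref{Una} its product with anything remains this uniform, and Corollary \ref{Philip} then lets me absorb every factor $G_{Q_i}(z^{\unif^i})$ with $i\ge m+2\ell+1$ (each contributing $G_{Q_i}(1)=1$). Exactly the factors at levels $m+1,\dots,m+2\ell$ survive, producing $T_m$ after the reindex $i=m+j$. In the descended part the prefactor becomes $q^{-r_{(m)}}/q_{(m)}=1/q_{(m+1)}$ by the definition of $q_{(\cdot)}$, and merging the level-$(m+2)$ factor via the sum-product and scaling rules gives $G_{Q_{(m)}}(z^{\unif^{m+2}})\,G_{Q_{m+2}}(z^{\unif^{m+2}})=G_{Q_{(m)}\oplus Q_{m+2}}(z^{\unif^{m+2}})=G_{Q_{(m+2)}}(z^{\unif^{m+2}})$, since $Q_{(m+2)}=Q_{(m)}\oplus Q_{m+2}$. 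What remains is precisely $P_{m+1}$.

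Unrolling $P_0 = T_0+P_1=\dots=\sum_{m=0}^{\omega-2}T_m+P_{\omega-1}$ then gives the result, with zero forms handled automatically (a vanishing $Q_{(m)}$ simply contributes $H_{Q_{(m)}}=0$ and a trivial merge). The real content, and the step I expect to demand the most care, is the bookkeeping in the descended part: one must \emph{see} that same-parity forms are exactly the ones that coalesce — because the head recursion advances the level by $\unif^2$ — so that $Q_{(m)}$ absorbs $Q_{m+2}$ into $Q_{(m+2)}$ while the accumulated scalar telescopes precisely into $1/q_{(m+1)}$. Getting the absorption cutoff right (levels $\ge m+2\ell+1$ vanish, levels $m+1,\dots,m+2\ell$ persist) and confirming it dovetails with the explicit product $\prod_{2\le j\le 2\ell}$ in the statement is the other place where the indices must be tracked carefully.
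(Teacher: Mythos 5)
Your argument is correct, but it reaches the formula by a genuinely different route from the paper. The paper proceeds by induction on $\omega$ from the top: it writes $Q=\widetilde{Q}\oplus\pi^{\omega}Q_{\omega}$ with $\widetilde{Q}=\bigoplus_{i<\omega}\pi^{i}Q_{i}$, applies the inductive formula to $\widetilde{Q}$, multiplies by $G_{Q_{\omega}}(z^{\pi^{\omega}})$, and then uses the $\pi^{2\ell+1+i}$-uniformity of $H_{Q_{(i)}}(z^{\pi^{i}})$ (Corollary \ref{Hortense} together with Corollary \ref{Philip}) to show that this new factor is absorbed when $i<\omega-2\ell$ and otherwise supplies exactly the factor missing from a restricted product $\prod_{j\neq\omega-i}$. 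You instead work bottom-up: starting from the full product $\prod_{i}G_{Q_{i}}(z^{\pi^{i}})$, you repeatedly split the lowest-level aggregated factor via the head recursion $G_{Q_{(m)}}(z)=H_{Q_{(m)}}(z)+q^{-r_{(m)}}G_{Q_{(m)}}(z^{\pi^{2}})$ of Lemma \ref{Heinrich} --- a lemma the paper does not invoke in this proof, reserving it for Lemma \ref{Paul} --- and your invariant $P_{m}$ telescopes, with the same uniformity-and-absorption machinery handling the head part and the identity $Q_{(m+2)}=Q_{(m)}\oplus Q_{m+2}$ handling the descended part. Your version makes the provenance of the heads and of the scalars $1/q_{(i)}$ transparent and the exponent bookkeeping ($q_{(m)}q^{r_{(m)}}=q_{(m+1)}$) immediate, and it sidesteps the paper's slightly delicate step of re-merging $Q_{\omega}$ into each $\widetilde{Q}_{(j)}$; the paper's induction is more routine to set up but must verify at each $i$ which slot of the product the top form fills. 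One cosmetic remark: when $\ell=0$ your ``surviving factors at levels $m+1,\dots,m+2\ell$'' form an empty range, whereas the stated summand still carries the factor $G_{Q_{(m+1)}}(z^{\pi^{m+1}})$; this is harmless, since that factor multiplies to the identity against the $\pi^{m+1}$-uniform head, but it is worth a sentence if you write the argument out in full.
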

\begin{proof}
We proceed by induction on $\omega$.  The $\omega=1$ case is just an application of the sum-product rule and scaling: the generating function of $Q_0\oplus \unif Q_1$ is the product of $G_{Q_0}(z)$ and $G_{Q_1}(z^\unif)$.

If $\omega > 1$, then $Q$ is the direct sum of the quadratic forms $\widetilde{Q}=\bigoplus_{i=0}^{\omega-1} \unif^i Q_i$ and $\unif^\omega Q_\omega$.  Thus by the sum-product rule, the generating function $G_Q(z)$ of $Q$ is the product of the generating function $G_{\widetilde{Q}}(z)$ of $\widetilde{Q}$ and the generating function $G_{Q_{\omega}}(z^{\unif^\omega})$ of $\unif^\omega Q_\omega$.
For each $i \in \N$, we let
\[
\widetilde{Q}_i = \begin{cases}
Q_i & \text{for $i\not=\omega$,} \\
0 & \text{for $i=\omega$,}
\end{cases}
\quad\quad\quad\quad
\widetilde{r}_i= \rank(\widetilde{Q}_i) = \begin{cases}
r_i & \text{for $i\not=\omega$,} \\
0 & \text{for $i=\omega$,}
\end{cases}
\]
and then for $j \in \N$, we set
\[
\widetilde{Q}_{(j)}  = \!\!\!\!\!\!\!\! \bigopluss{0 \leq i \leq j \\ i \equiv j \!\!\!\! \pmod{2}} \widetilde{Q}_i,  \quad\quad\quad \widetilde{r}_{(j)} =\rank(\widetilde{Q}_{(j)})=\!\!\!\!\!\!\!\! \sums{0 \leq i \leq j \\ i \equiv j \!\!\!\! \pmod{2}} \!\!\!\!\! \widetilde{r}_i, \quad\quad\quad \widetilde{q}_{(j)}  = q^{\,\sum_{0 \leq i < j} \widetilde{r}_{(i)}},
\]
so that induction shows that $G_{\widetilde{Q}}(z)$ is
\[
\sum_{0 \leq i < \omega-1} \frac{1}{\widetilde{q}_{(i)}} H_{\widetilde{Q}_{(i)}}(z^{\unif^i}) G_{\widetilde{Q}_{(i+1)}}(z^{\unif^{i+1}}) \prod_{2 \leq j \leq 2 \ell} G_{\widetilde{Q}_{i+j}}(z^{\unif^{i+j}}) + \frac{1}{\widetilde{q}_{(\omega-1)}} G_{\widetilde{Q}_{(\omega-1)}}(z^{\unif^{\omega-1}}) G_{\widetilde{Q}_{(\omega)}}(z^{\unif^{\omega}}).
\]
Now note that
\[
Q_{(i)} =\begin{cases}
\widetilde{Q}_{(i)} & \text{for $i < \omega$,} \\
\widetilde{Q}_{(i)} + Q_\omega & \text{for $i=\omega$,}
\end{cases}
\quad\,
r_{(i)} = \begin{cases}
\widetilde{r}_{(i)} & \text{for $i < \omega$,} \\
\widetilde{r}_{(i)}+r_\omega & \text{for $i=\omega$,}
\end{cases}
\quad\text{and}\quad
\widetilde{q}_{(i)} = q_{(i)} \text{ for $i \leq \omega$},
\]
so that $G_{\widetilde{Q}}(z)$ is
\[
\sum_{0 \leq i < \omega-1} \frac{1}{q_{(i)}} H_{Q_{(i)}}(z^{\unif^i}) G_{Q_{(i+1)}}(z^{\unif^{i+1}}) \prods{2 \leq j \leq 2 \ell \\ j\not=\omega-i} G_{Q_{i+j}}(z^{\unif^{i+j}}) \\
+ \frac{1}{q_{(\omega-1)}} G_{Q_{(\omega-1)}}(z^{\unif^{\omega-1}}) G_{\widetilde{Q}_{(\omega)}}(z^{\unif^{\omega}}).
\]
Recall that the generating function $G_Q(z)$ of $Q$ is the product of $G_{\widetilde{Q}}(z)$ and $G_{Q_{\omega}}(z^{\unif^\omega})$.
By the sum-product rule we know that
\[
G_{\widetilde{Q}_{(\omega)}}(z) G_{Q_\omega}(z)=G_{Q_{(\omega)}}(z),
\]
because $Q_{(\omega)}=\widetilde{Q}_{(\omega)} \oplus Q_\omega$.
When we multiply  $G_{\widetilde{Q}}(z)$ and $G_{Q_\omega}(z^{\unif^\omega})$ and use this principle, we get
\begin{multline*}
G_Q(z)=\sum_{0 \leq i < \omega-1} \frac{1}{q_{(i)}} H_{Q_{(i)}}(z^{\unif^i}) G_{Q_{(i+1)}}(z^{\unif^{i+1}}) G_{Q_{\omega}}(z^{\unif^\omega}) \prods{2 \leq j \leq 2 \ell \\ j\not=\omega-i} G_{Q_{i+j}}(z^{\unif^{i+j}}) \\
+ \frac{1}{q_{(\omega-1)}} G_{Q_{(\omega-1)}}(z^{\unif^{\omega-1}}) G_{Q_{(\omega)}}(z^{\unif^{\omega}}).
\end{multline*}
Now look at the first sum in the last expression.
We note that Corollary \ref{Hortense} shows that $H_{Q_{(i)}}(z^{\unif^i})$ is $\pi^{2 \ell+1+i}$-uniform, so that if $i < \omega-2\ell$, Corollary \ref{Philip} then shows that $H_{Q_{(i)}}(z^{\unif^i}) G_{Q_\omega}(z^{\pi^\omega})=H_{Q_{(i)}}(z^{\unif^i})$, so that we can drop the $G_{Q_\omega}(z^{\pi^\omega})$ term.  For $i < \omega-2\ell$, we can also drop the restriction $j\not=\omega-i$ in the product, since it has no effect.  On the other hand, when $\omega-2\ell \leq i < \omega-1$ in the first sum, the $G_{Q_\omega}(z^{\unif^\omega})$ term supplies precisely the term that the $j\not=\omega-i$ restriction in the product omits.  Thus we obtain precisely the expression for $G_Q(z)$ that we were to prove.
\end{proof}
The following result will allow us to apply $\Ig$ to the last term in the expression for the generating function given in the above proposition.
\begin{lemma}\label{Paul}
Let $Q_0$ and $Q_1$ be quadratic forms of ranks $r_0$ and $r_1$.  For each quadratic form $P$, let $G_P$ and $H_P$ be respectively the $p$-adic generating function and the head of the $p$-adic generating function of $P$.
Then
\[
G_{Q_0\oplus \pi Q_1}(z) = H_{Q_0}(z) G_{Q_1}(z^\unif)+\frac{1}{q^{r_0}} G_{Q_1}(z^{\unif}) G_{Q_0}(z^{\unif^2}),
\]
and
\[
G_{Q_0\oplus \pi Q_1}(z) = H_{Q_0}(z) G_{Q_1}(z^\unif)+\frac{1}{q^{r_0}} H_{Q_1}(z^\unif)G_{Q_0}(z^{\unif^2})+\frac{1}{q^{r_0 + r_1}}G_{Q_0\oplus\pi Q_1}(z^{\unif^2}).
\]
The Igusa local zeta function $\Ig(G_{Q_0\oplus \pi Q_1}(z))$ is then
\[
\left(\Ig(H_{Q_0}(z) G_{Q_1}(z^\unif))+\frac{t}{q^{r_0}} \Ig(H_{Q_1}(z)G_{Q_0}(z^{\unif}))\right) \left(1-\frac{t^2}{q^{r_0+r_1}}\right)^{-1}.
\]
\end{lemma}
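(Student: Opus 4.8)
The plan is to build up the three claims in sequence, each following from the previous by a single algebraic manipulation.

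First I would establish the opening identity. By the sum-product rule together with the scaling remarks, the $p$-adic generating function of $Q_0\oplus\pi Q_1$ factors as $G_{Q_0\oplus\pi Q_1}(z)=G_{Q_0}(z)\,G_{Q_1}(z^\unif)$, since the generating function of $\pi Q_1$ is $G_{Q_1}(z^\unif)$. Treating $Q_0$ as a homogeneous polynomial of degree $2$ in $r_0$ variables (the nondegenerate case relevant here, in which the number of variables coincides with the rank), Lemma \ref{Heinrich} supplies the head relation $G_{Q_0}(z)=H_{Q_0}(z)+\tfrac{1}{q^{r_0}}G_{Q_0}(z^{\unif^2})$. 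Substituting this into the factorization and distributing over $G_{Q_1}(z^\unif)$ yields the first displayed formula.

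Next I would derive the second identity from the first by expanding the remaining un-split factor. Applying Lemma \ref{Heinrich} to $Q_1$ and then scaling by $\unif$ gives $G_{Q_1}(z^\unif)=H_{Q_1}(z^\unif)+\tfrac{1}{q^{r_1}}G_{Q_1}(z^{\unif^3})$, using that the operation $F(z)\mapsto F(z^s)$ respects products and that scaling by $\unif^2$ followed by $\unif$ is scaling by $\unif^3$. I substitute this into the second term $\tfrac{1}{q^{r_0}}G_{Q_1}(z^\unif)G_{Q_0}(z^{\unif^2})$ of the first identity. The key recognition is that the leftover product $G_{Q_0}(z^{\unif^2})G_{Q_1}(z^{\unif^3})$ is exactly $G_{Q_0\oplus\pi Q_1}(z^{\unif^2})$, since this is just the factorization $G_{Q_0\oplus\pi Q_1}(z)=G_{Q_0}(z)G_{Q_1}(z^\unif)$ with $z$ scaled by $\unif^2$. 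Collecting terms produces the second displayed formula, which is self-referential in that $G_{Q_0\oplus\pi Q_1}$ reappears on the right, scaled by $\unif^2$.

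Finally, for the zeta function, I would apply the $\C$-linear map $\Ig$ to the second identity. The scaling lemma (Lemma \ref{Tina}) converts scalings into powers of $t$: since $H_{Q_1}(z^\unif)G_{Q_0}(z^{\unif^2})$ equals $F(z^\unif)$ for $F(z)=H_{Q_1}(z)G_{Q_0}(z^\unif)$, its image under $\Ig$ is $t\,\Ig(H_{Q_1}(z)G_{Q_0}(z^\unif))$, while the last term contributes $\tfrac{t^2}{q^{r_0+r_1}}\Ig(G_{Q_0\oplus\pi Q_1}(z))$ because $\val(\unif^2)=2$. This turns the self-referential identity into a scalar linear equation in the single unknown $\Ig(G_{Q_0\oplus\pi Q_1}(z))$; moving the $t^2/q^{r_0+r_1}$ term to the left and dividing by $1-t^2/q^{r_0+r_1}$ gives the claimed closed form. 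The only real obstacle is the bookkeeping: keeping the iterated scalings straight, and ensuring the exponent in the head relation's $1/q^n$ factor is the number of variables $r_0$ or $r_1$. For the unimodular forms to which this lemma is applied the number of variables equals the rank, so no discrepancy arises; otherwise one must read $r_0,r_1$ as the variable counts rather than the ranks.
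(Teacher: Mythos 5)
Your proposal is correct and follows essentially the same route as the paper: factor $G_{Q_0\oplus\pi Q_1}(z)=G_{Q_0}(z)G_{Q_1}(z^\unif)$ by the sum-product rule, apply Lemma \ref{Heinrich} first to $Q_0$ and then (after scaling) to $Q_1$, recognize $G_{Q_0}(z^{\unif^2})G_{Q_1}(z^{\unif^3})$ as $G_{Q_0\oplus\pi Q_1}(z^{\unif^2})$, and finally apply $\Ig$ with Lemma \ref{Tina} and solve the resulting linear equation. Your closing remark about reading $r_0,r_1$ as variable counts (which coincide with the ranks for the nondegenerate forms to which the lemma is applied) is a sensible precaution that the paper leaves implicit.
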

\begin{proof}
From Lemma \ref{Heinrich}, we have $G_{Q_0}(z) =H_{Q_0}(z)+ \frac{1}{q^{r_0}} G_{Q_0}(z^{\unif^2})$, and we use the sum-product rule to multiply this by the generating function $G_{Q_1}(z^\unif)$ of $\pi Q_1$ to obtain
\begin{equation}\label{Quentin}
G_{Q_0\oplus \pi Q_1}(z) = H_{Q_0}(z)G_{Q_1}(z^{\unif})+ \frac{1}{q^{r_0}} G_{Q_1}(z^\unif) G_{Q_0}(z^{\unif^2})
\end{equation}
the first relation we were to prove.  Then we use Lemma \ref{Heinrich} again to obtain $G_{Q_1}(z^\unif) =H_{Q_1}(z^\unif)+ \frac{1}{q^{r_1}} G_{Q_1}(z^{\unif^3})$, and substitute this into \eqref{Quentin} to obtain the second relation we were to prove.  When one applies $\Ig$ to both sides of the second relation, and uses Lemma \ref{Tina}, one obtains
\[
\Ig(G_{Q_0\oplus \pi Q_1}(z)) = \Ig(H_{Q_0}(z) G_{Q_1}(z^\unif)) +\frac{1}{q^{r_0}} \Ig(H_{Q_1}(z^\unif)G_{Q_0}(z^{\unif^2}))+\frac{t^2}{q^{r_0 + r_1}} \Ig(G_{Q_0\oplus\pi Q_1}(z)),
\]
whence one obtains the expression that was claimed for the Igusa local zeta function.
\end{proof}

\subsection{Generating Function for Quadratic Polynomials}

In this section, we consider what happens when we add linear and constant terms to a quadratic form (as defined in previous section) over a $p$-adic field.
We shall consider polynomials of the form
\begin{equation}\label{Boris}
\bigoplus_{i=0}^\infty \unif^i Q_i \oplus L + c,
\end{equation}
where each $Q_i$ is a unimodular quadratic form, and almost all $Q_i$ are zero, $L$ is a linear form with at most one variable ($L=0$ if zero variables), and $c \in \ring$ is a constant.
In Section \ref{Armand} below, we show that when $p$ is odd or when our ring $\ring$ is $\Z_2$, then any quadratic polynomial over $\ring$ is strongly isospectral to a polynomial of the form \eqref{Boris}.

\begin{theorem}\label{Victor}
Suppose $\val(2)=\ell$.
Consider the quadratic polynomial $Q=\bigoplus_{i\in\N} \unif^i Q_i \oplus L + c$, where each $Q_i$ is a unimodular quadratic form of rank $r_i$ over $\ring$, where there is a positive integer $\omega$ such that $Q_i=0$ for $i > \omega$, where $L$ is a linear form involving at most one indeterminate, and $c \in \ring$ is a constant.
For each $j \in \N$, we let
\[
Q_{(j)}  = \!\!\!\!\!\!\!\! \bigopluss{0 \leq i \leq j \\ i \equiv j \!\!\!\! \pmod{2}} Q_i,  \quad\quad\quad r_{(j)} =\rank(Q_{(j)})=\!\!\!\!\!\!\!\! \sums{0 \leq i \leq j \\ i \equiv j \!\!\!\! \pmod{2}} \!\!\!\!\! r_i, \quad\quad\quad q_{(j)}  = q^{\, \sum_{0 \leq i < j} r_{(i)}},
\]
and for any quadratic form $P$, use the term $G_P$ to denote the $p$-adic generating function of $P$, and $H_P$ for the head of $G_P$.
\begin{itemize}
\item If $L=0$, then the $p$-adic generating function of $Q$ is
\begin{multline*}
\quad\quad G_Q(z)=\sum_{0 \leq i < \omega-1} \frac{z^c}{q_{(i)}} H_{Q_{(i)}}(z^{\unif^i}) G_{Q_{(i+1)}}(z^{\unif^{i+1}}) \prod_{2 \leq j \leq 2 \ell} G_{Q_{i+j}}(z^{\unif^{i+j}}) \\
+ \frac{z^c}{q_{(\omega-1)}} G_{Q_{(\omega-1)}}(z^{\unif^{\omega-1}}) G_{Q_{(\omega)}}(z^{\unif^{\omega}}).
\end{multline*}
\item If $L=a x$ with $\val(a)=\lambda < \infty$, then the $p$-adic generating function of $Q$ is
\begin{multline*}
\quad\quad G_Q(z)=\sum_{0 \leq i < \lambda-2\ell} \frac{z^c}{q_{(i)}} H_{Q_{(i)}}(z^{\unif^i}) G_{Q_{(i+1)}}(z^{\unif^{i+1}}) \prod_{2 \leq j \leq 2 \ell} G_{Q_{i+j}}(z^{\unif^{i+j}}) \\
+ \sum_{\max\{0,\lambda-2\ell\} \leq i < \lambda} \frac{\zcoset{c}{\lambda}}{q_{(i)}} H_{Q_{(i)}}(z^{\unif^i}) G_{Q_{(i+1)}}(z^{\unif^{i+1}}) \prod_{2 \leq j \leq 2 \ell} G_{Q_{i+j}}(z^{\unif^{i+j}}) + \frac{\zcoset{c}{\lambda}}{q_{(\lambda)}}.
\end{multline*}
\end{itemize}
\end{theorem}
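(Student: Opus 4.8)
The plan is to peel off the linear and constant parts with the sum-product rule and reduce to Proposition \ref{Katherine}. Since the variable of $L$ and the constant $c$ are disjoint from the variables of the quadratic form, the sum-product rule gives $G_Q(z)=G_{\bigoplus_{i\in\N}\pi^i Q_i}(z)\cdot G_{L+c}(z)$, and by Remark \ref{Otto} we have $G_{L+c}(z)=z^c$ when $L=0$ and $G_{L+c}(z)=z^{aR+c}=\zcoset{c}{\lambda}=z^c\zncoset{\lambda}$ when $L=ax$ with $\val(a)=\lambda$ (using $aR=\pi^\lambda R$). The case $L=0$ is then immediate: multiplying the expression of Proposition \ref{Katherine} through by $z^c$ reproduces the first displayed formula term by term.

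For $L=ax$ I would first invoke Proposition \ref{Katherine} with its cutoff taken to be $\lambda$ rather than $\omega$; this is legitimate because $Q_i=0$ for $i>\omega$ and $\lambda>\omega$, so $\lambda$ is an admissible cutoff and the resulting element of $\limgroupring$ is unchanged by the choice. Writing $T_i=\frac{1}{q_{(i)}}H_{Q_{(i)}}(z^{\pi^i})G_{Q_{(i+1)}}(z^{\pi^{i+1}})\prod_{2\le j\le 2\ell}G_{Q_{i+j}}(z^{\pi^{i+j}})$, this gives
\[
G_{\bigoplus_{i\in\N}\pi^i Q_i}(z)=\sum_{0\le i<\lambda-1}T_i+\frac{1}{q_{(\lambda-1)}}G_{Q_{(\lambda-1)}}(z^{\pi^{\lambda-1}})G_{Q_{(\lambda)}}(z^{\pi^\lambda}).
\]
I would then multiply through by $\zcoset{c}{\lambda}=z^c\zncoset{\lambda}$ and sort the terms by uniformity. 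By Corollary \ref{Hortense} the head $H_{Q_{(i)}}(z^{\pi^i})$ is $\pi^{2\ell+1+i}$-uniform, and Lemma \ref{Una} propagates this uniformity to all of $T_i$. When $i<\lambda-2\ell$ we have $2\ell+1+i\le\lambda$, so $T_i$ is $\pi^\lambda$-uniform and Corollary \ref{Philip} (applied to the generating function $z^R$ of $x$, whose normalization is $1$) absorbs $\zncoset{\lambda}$, leaving $z^c\,T_i$: these are exactly the first sum. When $\max\{0,\lambda-2\ell\}\le i<\lambda-1$ the level $2\ell+1+i$ exceeds $\lambda$, so $\zncoset{\lambda}$ is not absorbed and the term stays $\zcoset{c}{\lambda}\,T_i$: these are the second-sum terms with $i\le\lambda-2$.

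It remains to process the term $\frac{1}{q_{(\lambda-1)}}G_{Q_{(\lambda-1)}}(z^{\pi^{\lambda-1}})G_{Q_{(\lambda)}}(z^{\pi^\lambda})$, which should supply both the $i=\lambda-1$ second-sum term and the final term $\frac{\zcoset{c}{\lambda}}{q_{(\lambda)}}$. Here I would apply Lemma \ref{Heinrich} to split $G_{Q_{(\lambda-1)}}(z^{\pi^{\lambda-1}})=H_{Q_{(\lambda-1)}}(z^{\pi^{\lambda-1}})+\frac{1}{q^{r_{(\lambda-1)}}}G_{Q_{(\lambda-1)}}(z^{\pi^{\lambda+1}})$, then multiply by $\zcoset{c}{\lambda}$. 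The head piece becomes $\frac{\zcoset{c}{\lambda}}{q_{(\lambda-1)}}H_{Q_{(\lambda-1)}}(z^{\pi^{\lambda-1}})G_{Q_{(\lambda)}}(z^{\pi^\lambda})$, which (for $\ell\ge1$) matches the $i=\lambda-1$ summand of the second sum once one observes that $Q_{\lambda-1+j}=0$ for $j\ge2$ makes its omitted product trivial. For the recursive piece, $\zncoset{\lambda}$ is $\pi^\lambda$-uniform, so Corollary \ref{Philip} collapses both $G_{Q_{(\lambda-1)}}(z^{\pi^{\lambda+1}})$ and $G_{Q_{(\lambda)}}(z^{\pi^\lambda})$ to their normalizations, each equal to $1$; using $q_{(\lambda)}=q_{(\lambda-1)}q^{r_{(\lambda-1)}}$ and $z^c\zncoset{\lambda}=\zcoset{c}{\lambda}$, what remains is precisely $\frac{\zcoset{c}{\lambda}}{q_{(\lambda)}}$.

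The hard part will be this final bookkeeping, where one must track the exact uniformity level $\pi^{2\ell+1+i}$ of each term to know precisely when $\zncoset{\lambda}$ is absorbed (the boundary $i=\lambda-2\ell$ separating the two sums) and when it survives, and then verify that the head recursion terminates at exactly level $\lambda$, producing the single residual term $\frac{\zcoset{c}{\lambda}}{q_{(\lambda)}}$ rather than a longer tail. One must also check the degenerate ranges: when $\ell=0$ (odd $p$) the second sum is empty and the head piece above instead falls into the first sum with coefficient $z^c$, since $H_{Q_{(\lambda-1)}}(z^{\pi^{\lambda-1}})$ is then $\pi^\lambda$-uniform and absorbs $\zncoset{\lambda}$; and when $\lambda\le 2\ell$ the $\max$ in the lower limit of the second sum takes effect and the first sum is empty.
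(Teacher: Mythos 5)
Your overall strategy matches the paper's: peel off $L+c$ by the sum--product rule and Remark \ref{Otto}, feed the quadratic part to Proposition \ref{Katherine}, and then use the $\pi^{2\ell+1+i}$-uniformity of $H_{Q_{(i)}}(z^{\unif^i})$ (Corollary \ref{Hortense} plus Lemma \ref{Una}) together with Corollary \ref{Philip} to decide which summands absorb $\zncoset{\lambda}$; your treatment of the boundary cases and of the last term via a single application of Lemma \ref{Heinrich} is a legitimate variant of the paper's computation. The gap is at the opening move of the $L=ax$ case: you invoke Proposition \ref{Katherine} with cutoff $\lambda$, justifying this by the inequality $\lambda>\omega$, which is \emph{not} a hypothesis of Theorem \ref{Victor}. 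The theorem (and its downstream use in Theorem \ref{John}; see also the remark before Theorem \ref{Odilia} that $\lambda>\omega$ is deliberately not assumed there) must cover polynomials having nonzero $Q_i$ with $i>\lambda$. For such polynomials $\lambda$ is not an admissible cutoff, and your first displayed identity for $G_{\bigoplus_i\unif^iQ_i}(z)$ is false as an identity in $\limgroupring$: for instance with $\ell=0$, $Q_0=Q_1=0$, $Q_2=\Sq$ and $\lambda=1$ it asserts that the generating function of $\unif^2x^2$ is $z^0$.

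The repair is exactly the step you elided. Apply Proposition \ref{Katherine} with a cutoff $\omega'>\lambda$ (enlarging $\omega$ by appending zero forms if necessary), multiply by $\zcoset{c}{\lambda}$, and use Remark \ref{Zachary} to collapse every summand with $i\geq\lambda$ to $H_{Q_{(i)}}(1)\,\zcoset{c}{\lambda}/q_{(i)}=(1-q^{-r_{(i)}})\,\zcoset{c}{\lambda}/q_{(i)}$, and the final term to $\zcoset{c}{\lambda}/q_{(\omega'-1)}$; since $q_{(i+1)}=q_{(i)}q^{r_{(i)}}$, these telescope to the single term $\zcoset{c}{\lambda}/q_{(\lambda)}$. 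After that substitution your uniformity analysis for the summands with $i<\lambda$ goes through verbatim, and the Heinsel--style splitting of $G_{Q_{(\lambda-1)}}(z^{\unif^{\lambda-1}})G_{Q_{(\lambda)}}(z^{\unif^{\lambda}})$ via Lemma \ref{Heinrich} becomes unnecessary, since the $i=\lambda-1$ summand already appears in the sum over $0\leq i<\lambda$.
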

\begin{proof}
We may assume $c=0$: the $p$-adic generating function of $c$ is $z^c$ by Remark \ref{Otto}, and the general case follows from the $c=0$ case by the sum-product rule.  The $L=0$ case is Proposition \ref{Katherine}.

So we suppose $L=a x$ with $\val(a)=\lambda < \infty$ henceforth.
By the sum-product rule, the $p$-adic generating function we seek is the product of the $p$-adic generating function of $\bigoplus_{i\in\N} \unif^i Q_i$ furnished by Proposition \ref{Katherine}, and the $p$-adic generating function of $L(x)=a x$, which is $\zncoset{\lambda}$, as shown in Remark \ref{Otto}.
When using Proposition \ref{Katherine}, we make sure to use $\omega$ large enough that $\omega > \lambda$.
When we multiply $\zncoset{\lambda}$ with terms from Proposition \ref{Katherine} of the form  $H_{Q_{(i)}}(z^{\unif^i}) G_{Q_{(i+1)}}(z^{\unif^{i+1}}) \prod_{2 \leq j \leq 2 \ell} G_{Q_{i+j}}(z^{\unif^{i+j}})$ with $i \geq \lambda$, we get $H_{Q_{(i)}}(1)\zncoset{\lambda}$ by Remark \ref{Zachary}, which equals $(1-1/q^{r_{(i)}})\zncoset{\lambda}$ because the head of a generating function for a form with $r_{(i)}$ variables records the form's values on a set of Haar volume $1-1/q^{r_{(i)}}$.
And when we multiply $\zncoset{\lambda}$ with the term $G_{Q_{(\omega-1)}}(z^{\unif^{\omega-1}}) G_{Q_{(\omega)}}(z^{\unif^{\omega}})$, we simply get $\zncoset{\lambda}$.
Thus the generating function of $Q$ is
\begin{multline*}
G_Q(z)=\sum_{0 \leq i < \lambda} \frac{\zncoset{\lambda}}{q_{(i)}} H_{Q_{(i)}}(z^{\unif^i}) G_{Q_{(i+1)}}(z^{\unif^{i+1}}) \prod_{2 \leq j \leq 2 \ell} G_{Q_{i+j}}(z^{\unif^{i+j}}) \\
+\sum_{\lambda \leq i < \omega-1} \frac{\zncoset{\lambda}}{q_{(i)}} \left(1-\frac{1}{q^{r_{(i)}}}\right)+ \frac{\zncoset{\lambda}}{q_{(\omega-1)}},
\end{multline*}
and then use the definition of $q_{(i)}$ and $r_{(i)}$ to see that the second sum and the final term telescope to give $\zncoset{\lambda}/q_{(\lambda)}$.

We then note that Corollary \ref{Hortense} shows that $H_{Q_{(i)}}(z^{\unif^i})$ is $\pi^{2 \ell+1+i}$-uniform, so that if $i < \lambda-2\ell$, Remark \ref{Zachary} shows that $H_{Q_{(i)}}(z^{\unif^i}) \zncoset{\lambda}=H_{Q_{(i)}}(z^{\unif^i})$, so that we can drop the $\zncoset{\lambda}$ term in these cases.
\end{proof}
\begin{corollary}\label{Matilda}
Let $p$ be odd.
Consider the quadratic polynomial $Q=\bigoplus_{i\in\N} \unif^i Q_i \oplus L + c$, where each $Q_i$ is a unimodular quadratic form of rank $r_i$ and discriminant $d_i$ over $\ring$, where there is a positive integer $\omega$ such that $Q_i=0$ for $i > \omega$, where $L$ is a linear form involving at most one indeterminate, and $c \in \ring$ is a constant.
For each $j \in \N$, we let
\begin{center}
\begin{tabular}{lll}
 & & \\
$Q_{(j)} = \!\!\!\!\! \bigoplusls{0 \leq i \leq j \\ i \equiv j \pmod{2}} \!\!\!\!\! Q_i,$ & & $d_{(j)} =\disc(Q_{(j)})=\!\!\!\!\! \prodls{0 \leq i \leq j \\ i \equiv j \pmod{2}} \!\!\!\!\! d_i$, \\
 & & \\
$r_{(j)} =\rank(Q_{(j)})=\!\!\!\!\! \sumls{0 \leq i \leq j \\ i \equiv j \pmod{2}} \!\!\!\!\! r_i$, & & $q_{(j)}  = q^{\,\sum_{0 \leq i < j} r_{(i)}}$, \\
& &  \\
\end{tabular}
\end{center}
and for any quadratic form $P$, use the term $G_P$ to denote the $p$-adic generating function of $P$, and $H_P$ for the head of $G_P$.
\begin{itemize}
\item If $L=0$, then the $p$-adic generating function of $Q$ is
\[
G_Q(z)=\sum_{0 \leq i < \omega-1} \frac{z^c}{q_{(i)}} H_{Q_{(i)}}(z^{\unif^i}) + \frac{z^c}{q_{(\omega-1)}} G_{Q_{(\omega-1)}}(z^{\unif^{\omega-1}}) G_{Q_{(\omega)}}(z^{\unif^{\omega}}).
\]
\item If $L=a x$ with $\val(a)=\lambda < \infty$, then the $p$-adic generating function of $Q$ is
\[
G_Q(z)=\sum_{0 \leq i < \lambda} \frac{z^c}{q_{(i)}} H_{Q_{(i)}}(z^{\unif^i}) + \frac{\zcoset{c}{\lambda}}{q_{(\lambda)}}.
\]
\end{itemize}
\end{corollary}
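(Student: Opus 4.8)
The plan is to deduce this directly from Theorem \ref{Victor} by specializing to the case $\ell = 0$. The key observation is that, since $p$ is odd, $2$ is a unit in $\ring$, so $\val(2) = 0$; thus the hypothesis $\val(2) = \ell$ of Theorem \ref{Victor} is met with $\ell = 0$. I would begin by writing the two expressions for $G_Q(z)$ furnished by Theorem \ref{Victor} and simplifying each by setting $\ell = 0$.

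Two simplifications are immediate. First, every product of the form $\prod_{2 \leq j \leq 2\ell}$ becomes the empty product $\prod_{2 \leq j \leq 0}$, hence equals $1$ and disappears. Second, in the $L = a x$ case with $\lambda = \val(a)$, the cutoff $\lambda - 2\ell$ becomes $\lambda$, so the first sum runs over $0 \leq i < \lambda$, while the second sum runs over the range $\lambda \leq i < \lambda$, which is empty (as $\lambda \geq 0$) and so vanishes. After these reductions, each expression is a sum of terms $\frac{z^c}{q_{(i)}} H_{Q_{(i)}}(z^{\unif^i}) G_{Q_{(i+1)}}(z^{\unif^{i+1}})$ together with an unchanged final term, namely the $G_{Q_{(\omega-1)}} G_{Q_{(\omega)}}$ term when $L = 0$ and the $\frac{\zcoset{c}{\lambda}}{q_{(\lambda)}}$ term when $L = a x$; these final terms already agree with those in the corollary.

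It remains to strip the factor $G_{Q_{(i+1)}}(z^{\unif^{i+1}})$ from each summand, which is the one step demanding care. As noted in the proof of Theorem \ref{Victor}, Corollary \ref{Hortense} shows that $H_{Q_{(i)}}(z^{\unif^i})$ is $\pi^{2\ell+1+i}$-uniform, hence $\pi^{i+1}$-uniform once $\ell = 0$. Taking $F(z) = H_{Q_{(i)}}(z^{\unif^i})$, $j = i+1$, and $G = G_{Q_{(i+1)}}$ in Corollary \ref{Philip}---whose hypothesis $j \geq i+1$ is met with equality by the uniformity index---gives $H_{Q_{(i)}}(z^{\unif^i}) G_{Q_{(i+1)}}(z^{\unif^{i+1}}) = H_{Q_{(i)}}(z^{\unif^i}) G_{Q_{(i+1)}}(1)$. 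Since $G_{Q_{(i+1)}}$ is a $p$-adic generating function, its normalization is $G_{Q_{(i+1)}}(1) = 1$, so the factor reduces to $H_{Q_{(i)}}(z^{\unif^i})$. Applying this to every summand collapses the sums into $\sum \frac{z^c}{q_{(i)}} H_{Q_{(i)}}(z^{\unif^i})$, producing exactly the two formulas claimed. The argument is a routine specialization; the only point meriting attention is the bookkeeping that matches the $\pi^{i+1}$-uniformity of the scaled head against the $\pi^{i+1}$ scaling on $G_{Q_{(i+1)}}$, so that Corollary \ref{Philip} applies.
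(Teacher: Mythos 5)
Your proposal is correct and follows essentially the same route as the paper: specialize Theorem \ref{Victor} to $\ell=\val(2)=0$, observe the products over $2\le j\le 2\ell$ and the second sum in the $L=ax$ case are empty, and then use Corollary \ref{Hortense} (giving $\pi^{2\ell+1+i}=\pi^{i+1}$-uniformity of $H_{Q_{(i)}}(z^{\pi^i})$) together with Corollary \ref{Philip} and the normalization $G_{Q_{(i+1)}}(1)=1$ to strip the factors $G_{Q_{(i+1)}}(z^{\pi^{i+1}})$. The paper's proof is just a terser statement of exactly this argument.
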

\begin{proof}
This follows from Theorem \ref{Victor}, where we have $\ell=\val(2)=0$.
Corollary \ref{Hortense} says that if $P_0$ is a unimodular quadratic form, then $H_{P_0}(z)$ is $\pi$-uniform, so if $P_1$ is another unimodular quadratic form, then Corollary \ref{Philip} shows that $H_{P_0}(z) G_{P_1}(z^\unif)=H_{P_0}(z)$.
\end{proof}

\subsection{Local Zeta Function for a Quadratic Polynomial}
We now use what we know about the $p$-adic generating function of a quadratic polynomial to determine its local zeta function.
\begin{theorem}\label{John}
Suppose $\val(2)=\ell$.
Consider the quadratic polynomial $Q=\bigoplus_{i\in\N} \unif^i Q_i \oplus L + c$, where each $Q_i$ is a unimodular quadratic form of rank $r_i$ over $\ring$, where there is a positive integer $\omega$ such that $Q_i=0$ for $i > \omega$, where $L$ is a linear form involving at most one indeterminate, and $c \in \ring$ is a constant.
For each $j \in \N$, we let
\[
Q_{(j)}  = \!\!\!\!\!\!\!\! \bigopluss{0 \leq i \leq j \\ i \equiv j \!\!\!\! \pmod{2}} Q_i,  \quad\quad\quad r_{(j)} =\rank(Q_{(j)})=\!\!\!\!\!\!\!\! \sums{0 \leq i \leq j \\ i \equiv j \!\!\!\! \pmod{2}} \!\!\!\!\! r_i, \quad\quad\quad q_{(j)}  = q^{\, \sum_{0 \leq i < j} r_{(i)}},
\]
and for any quadratic form $P$, use the term $G_P$ to denote the $p$-adic generating function of $P$, and $H_P$ to denote the head of $G_P$.  For $a \in \ring$, $\mu\in\N$, and quadratic forms $P_0,P_1,\ldots,P_{2\ell}$, let $I_a(P_0,P_1,\ldots,P_{2\ell})=\Ig(z^a H_{P_0}(z) G_{P_1}(z^\unif)\ldots G_{P_{2\ell}}(z^{\unif^{2\ell}}))$, and let $I_a^\mu=\Ig(\zcoset{a}{\mu} H_{P_0}(z) G_{P_1}(z^\unif)\ldots G_{P_{2\ell}}(z^{\unif^{2\ell}}))$.
\begin{itemize}
\item If $L=0$ and $c=0$, let $r=\sum_{i \in \N} r_i$, and then the Igusa local zeta function for $Q$ is
\begin{multline*}
\quad\enspace Z_Q(t)=\sum_{0 \leq i < \omega-1} \frac{t^i}{q_{(i)}} I_0(Q_{(i)},Q_{(i+1)},Q_{i+2},Q_{i+3},\ldots,Q_{i+2\ell}) \\
\quad\quad\enspace + \left(\frac{t^{\omega-1}}{q_{(\omega-1)}} I_0(Q_{(\omega-1)},Q_{(\omega)},0,0,\ldots,0)  + \frac{t^\omega}{q_{(\omega)}} I_0(Q_{(\omega)},Q_{(\omega-1)},0,0,\ldots,0)\!\right)\!\! \left(1-\frac{t^2}{q^r}\right)^{-1}\!\!\!\!\!.
\end{multline*}
\item If $L(x)=b x$ for some $b$ with $\val(b)=\lambda < \infty$, and if $\val(b) \leq \val(c)$, then the Igusa local zeta function for $Q$ is
\begin{multline*}
\quad\quad\quad Z_Q(t)=\sum_{0 \leq i < \lambda-2\ell} \frac{t^i}{q_{(i)}} I_0(Q_{(i)},Q_{(i+1)},Q_{i+2},Q_{i+3},\ldots,Q_{i+2\ell}) \\
\quad\quad\quad\quad\quad\enspace + \!\!\!\!\!\!\!\! \sum_{\max\{0,\lambda-2\ell\} \leq i < \lambda} \frac{t^i}{q_{(i)}} I_0^{\lambda-i}(Q_{(i)},Q_{(i+1)},Q_{i+2},Q_{i+3},\ldots,Q_{i+2\ell}) 
+ \frac{t^\lambda}{q_{(\lambda)}} \igrp.
\end{multline*}
\item If $L(x)=b x$ for some $b$ with $\val(b)=\lambda < \infty$, and if $\val(c) < \val(b) \leq \val(c)+2\ell$, then let $\kappa=\val(c)$, and then the Igusa local zeta function for $Q$ is
\begin{multline*}
\quad\quad\quad Z_Q(t)=\sum_{0 \leq i < \lambda-2\ell} \frac{t^i}{q_{(i)}} I_{c/\pi^i}(Q_{(i)},Q_{(i+1)},Q_{i+2},Q_{i+3},\ldots,Q_{i+2\ell}) \\
+ \sum_{\max\{0,\lambda-2\ell\} \leq i \leq \kappa} \frac{t^i}{q_{(i)}} I_{c/\pi^i}^{\lambda-i}(Q_{(i)},Q_{(i+1)},Q_{i+2},Q_{i+3},\ldots,Q_{i+2\ell})
+ \frac{t^\kappa}{q_{(\kappa+1)}}.
\end{multline*}
\item If $L(x)=b x$ for some $b$ with $\val(b) > \val(c)+2 \ell$ (this includes the case where $L=0$, $c\not=0$), let $\kappa=\val(c)$, and then
\[
Z_Q(t)=\sum_{0 \leq i \leq \kappa} \frac{t^i}{q_{(i)}} I_{c/\pi^i}(Q_{(i)},Q_{(i+1)},Q_{i+2},Q_{i+3},\ldots,Q_{i+2\ell}) + \frac{t^\kappa}{q_{(\kappa+1)}}.
\]
\end{itemize}
\end{theorem}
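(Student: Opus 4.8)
The plan is to apply the $\C$-linear map $\Ig$ to the generating-function formulas of Theorem \ref{Victor}, term by term, and to recognize the images as the quantities $I_a$ and $I_a^\mu$. Write $\kappa=\val(c)$ and set
\[
B_i(z)=H_{Q_{(i)}}(z)\,G_{Q_{(i+1)}}(z^{\unif})\prod_{2\le j\le 2\ell}G_{Q_{i+j}}(z^{\unif^{j}}),
\]
so that $I_a(Q_{(i)},\dots,Q_{i+2\ell})=\Ig(z^aB_i(z))$ by definition, while the generic block in Theorem \ref{Victor} is $B_i(z^{\unif^i})$ because scaling is multiplicative over products and scalings compose (remarks following \eqref{William}). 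Thus everything reduces to computing $\Ig$ on summands of the shapes $\tfrac{1}{q_{(i)}}z^cB_i(z^{\unif^i})$ and $\tfrac{1}{q_{(i)}}\zcoset{c}{\lambda}B_i(z^{\unif^i})$, the two boundary terms of the form case, and the constant final term.

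First I would dispose of the summands with $i\le\kappa$, which produce the honest $I$-terms. Since $c/\unif^i\in\ring$ there, Remark \ref{Otto} and multiplicativity of scaling give $z^c=(z^{c/\unif^i})(z^{\unif^i})$ and $\zcoset{c}{\lambda}=(z^{\coset{c/\unif^i}{\lambda-i}})(z^{\unif^i})$, so Lemma \ref{Tina} yields
\[
\Ig\!\left(\tfrac{1}{q_{(i)}}z^cB_i(z^{\unif^i})\right)=\tfrac{t^i}{q_{(i)}}\,I_{c/\unif^i}(Q_{(i)},\dots,Q_{i+2\ell}),
\]
and likewise the $\zcoset{c}{\lambda}$ summands give $\tfrac{t^i}{q_{(i)}}I_{c/\unif^i}^{\lambda-i}$. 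In the regime $\val(b)\le\val(c)$ every index obeys $i<\lambda\le\kappa$, so all summands are of this type; moreover $c\in\unif^\lambda\ring$ forces $c/\unif^i\in\unif^{\lambda-i}\ring$, which collapses $I_{c/\unif^i}$ to $I_0$ (the representative is absorbed by the $\unif^{2\ell+1}$-uniform head of Corollary \ref{Hortense}) and $I_{c/\unif^i}^{\lambda-i}$ to $I_0^{\lambda-i}$ (the coset $\coset{c/\unif^i}{\lambda-i}$ is just $\ncoset{\lambda-i}$). The surviving final term $\zcoset{c}{\lambda}/q_{(\lambda)}=\zncoset{\lambda}/q_{(\lambda)}$ contributes $\tfrac{t^\lambda}{q_{(\lambda)}}\igrp$ by Lemma \ref{Stevie}, completing the $\val(b)\le\val(c)$ case.

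For the pure form case ($L=0$, $c=0$) the preceding paragraph handles every summand except the last, $\tfrac{1}{q_{(\omega-1)}}G_{Q_{(\omega-1)}}(z^{\unif^{\omega-1}})G_{Q_{(\omega)}}(z^{\unif^\omega})$. This equals $\tfrac{1}{q_{(\omega-1)}}G_{Q_{(\omega-1)}\oplus\unif Q_{(\omega)}}(z^{\unif^{\omega-1}})$, so Lemma \ref{Tina} sends it to $\tfrac{t^{\omega-1}}{q_{(\omega-1)}}\Ig(G_{Q_{(\omega-1)}\oplus\unif Q_{(\omega)}}(z))$, and Lemma \ref{Paul} (with $Q_0=Q_{(\omega-1)}$, $Q_1=Q_{(\omega)}$, using $r_{(\omega-1)}+r_{(\omega)}=r$ and $q_{(\omega)}=q_{(\omega-1)}q^{r_{(\omega-1)}}$) rewrites this as the claimed bracketed pair of $I_0$-terms divided by $1-t^2/q^r$. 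This is where the single geometric factor of the form case is produced.

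The crux, and the step I expect to be the main obstacle, is the tail: the summands with $i>\kappa$ together with the constant final term, which appear exactly when $\val(c)<\val(b)$. Here $c/\unif^i\notin\ring$, so the scaling factorization fails and I would instead argue by valuation. Since $B_i(z^{\unif^i})$ is supported on cosets inside $\unif^i\ring$, the product $z^{c+\unif^\lambda\ring}B_i(z^{\unif^i})$ (or $z^cB_i(z^{\unif^i})$ when $\lambda=\infty$) is supported on $c+\unif^i\ring$, every element of which has valuation exactly $\kappa=\val(c)<i$; by the interpretation \eqref{Raphael}, $\Ig$ sends any generating function supported on valuation-$\kappa$ cosets to its normalization times $t^\kappa$, and since $B_i(1)=H_{Q_{(i)}}(1)=1-1/q^{r_{(i)}}$ each tail summand contributes $\tfrac{1}{q_{(i)}}(1-1/q^{r_{(i)}})t^\kappa$, while the final term contributes $\tfrac{t^\kappa}{q_{(\lambda)}}$ (or $\tfrac{t^\kappa}{q_{(\omega-1)}}$ when $\lambda=\infty$) because $\Ig(z^{c+\unif^\lambda\ring})=t^\kappa$ by Lemma \ref{Stevie}. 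Writing $\tfrac{1}{q_{(i)}}(1-1/q^{r_{(i)}})=\tfrac{1}{q_{(i)}}-\tfrac{1}{q_{(i+1)}}$ (via $q_{(i+1)}=q_{(i)}q^{r_{(i)}}$) telescopes the entire tail to $\tfrac{t^\kappa}{q_{(\kappa+1)}}$. The remaining bookkeeping is to verify that the index ranges of Theorem \ref{Victor} break at $i=\kappa$ precisely as the statement demands---the split of $\val(c)<\val(b)$ into $\val(b)\le\val(c)+2\ell$ and $\val(b)>\val(c)+2\ell$ being exactly whether the breakpoint $\lambda-2\ell$ sits at or below $\kappa$ or strictly above it---so that the honest-$I$ range $i\le\kappa$ and the telescoping range $i>\kappa$ partition the two sums correctly.
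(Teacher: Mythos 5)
Your proposal is correct and follows essentially the same route as the paper: apply $\Ig$ to the formulas of Theorem \ref{Victor}, use Lemma \ref{Tina} to convert each scaled block into a $t^i I_{c/\pi^i}$ or $t^i I_{c/\pi^i}^{\lambda-i}$ term when $i\leq\kappa$, invoke Lemma \ref{Paul} for the bracketed pair with the $(1-t^2/q^r)^{-1}$ factor, and telescope the tail using $H_{Q_{(i)}}(1)=1-1/q^{r_{(i)}}$. The only (harmless) local variation is in the $\val(b)\leq\val(c)$ case, where the paper reparameterizes $x'=x-c/b$ to set $c=0$ while you instead absorb $c/\pi^i$ into the $\pi^{2\ell+1}$-uniform block directly; both arguments are valid and yield the same formulas.
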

\begin{proof}
We apply $\Ig$ to the $p$-adic generating functions supplied by Theorem \ref{Victor} (when $p=2$) or Corollary \ref{Matilda} (when $p$ is odd).

We note that when $a \in \ring$ and $F(z) \in \limgroupring$, Lemma \ref{Tina} tells us that the term $\Ig(z^a F(z^{\unif^{i}}))$ becomes $t^i \Ig(z^{a/\pi^i} F(z))$ when $\val(a) \geq i$, and it becomes $F(1) t^{\val(a)}$ when $\val(a) < i$.  And similarly, if $\mu \in \N$, then $\Ig(z^{a+\pi^\mu\ring} F(z^{\unif^{i}}))$ becomes $t^i \Ig(z^{a/\pi^i + \pi^{\mu-i} \ring} F(z))$ when $\val(a),\mu \geq i$, and it becomes $F(1) t^{\val(a)}$ when $\val(a) < \mu, i$.

When $L=0$ and $c=0$, we use Lemmata \ref{Paul} and \ref{Tina} to apply $\Ig$ to the last term $G_{Q_{(\omega-1)}}(z^{\unif^{\omega-1}}) G_{Q_{(\omega)}}(z^{\unif^\omega})$, and note that $r_{(\omega-1)}+r_{(\omega)}=r$.

When $L= b x$ with for some $b$ with $\val(b)=\lambda < \infty$, and when $\val(c) \geq \val(b)$, then a reparameterization with $x'=(x-c/b)$ eliminates the constant term, so we may take $c=0$ in this case.  Then we apply $\Ig$ and use Lemma \ref{Stevie} for the last term.

When $L(x)=b x$ for some $b$ with $\val(b)=\lambda < \infty$, and when $\val(c) < \lambda$, and we let $\kappa=\val(c)$, then we again use the fact that $\Ig(z^c F(z^{\unif^{i}}))=F(1) t^\kappa$ whenever $i > \kappa$ to obtain either
\begin{multline*}
\!\!\!\! \Ig(G_Q(z))=\sum_{0 \leq i < \lambda-2\ell} \frac{t^i}{q_{(i)}} I_{c/\pi^i}(Q_{(i)},Q_{(i+1)},Q_{i+2},Q_{i+3},\ldots,Q_{i+2\ell}) \\
+ \!\!\!\!\!\!\!\!\!\! \sum_{\max\{0,\lambda-2\ell\} \leq i \leq \kappa} \frac{t^i}{q_{(i)}} I_{c/\pi^i}^{\lambda-i}(Q_{(i)},Q_{(i+1)},Q_{i+2},Q_{i+3},\ldots,Q_{i+2\ell}) 
+ \sum_{\kappa < i < \lambda} \frac{t^\kappa}{q_{(i)}} H_{Q_{(i)}}(1) + \frac{t^\kappa}{q_{(\lambda)}},
\end{multline*}
if $\lambda - 2\ell \leq \kappa < \lambda$, or else
\[
\Ig(G_Q(z))=\sum_{0 \leq i \leq \kappa} \frac{t^i}{q_{(i)}} I_{c/\pi^i}(Q_{(i)},Q_{(i+1)},Q_{i+2},Q_{i+3},\ldots,Q_{i+2\ell}) + \sum_{\kappa < i < \lambda} \frac{t^\kappa}{q_{(i)}} H_{Q_{(i)}}(1) + \frac{t^\kappa}{q_{(\lambda)}},
\]
if $\kappa < \lambda-2\ell$, and again note that $H_{Q_{(i)}}(1)=1-\frac{1}{q^{r_{(i)}}}$, and the last sum and final term of each form telescope to give the desired form.

When $L=0$ and $c\not=0$, we make sure to use $\omega > \kappa+1$, so that we obtain
\begin{multline*}
\Ig(G_Q(z))=\sum_{0 \leq i \leq \kappa} \frac{t^i}{q_{(i)}} I_{c/\pi^i}(Q_{(i)},Q_{(i+1)},Q_{i+2},Q_{i+3},\ldots,Q_{i+2\ell}) \\ + \sum_{\kappa < i < \omega-1} \frac{1}{q_{(i)}} H_{Q_{(i)}}(1) t^\kappa + \frac{1}{q_{(\omega-1)}} t^\kappa,
\end{multline*}
and then note that $H_{Q_{(i)}}(1)=1-\frac{1}{q^{r_{(i)}}}$ because the head of a generating function for a form with $r_{(i)}$ variables records the form's values on a set of Haar volume $1-1/q^{r_{(i)}}$.  Then one sees that the second sum and last term telescope to give the desired form.
\end{proof}

\subsection{Proof of Theorem \ref{Odilia}}\label{Winston}

This follows from Theorem \ref{John}, where we have $\ell=\val(2)=0$, so no term of the form $I_a^\mu(P_0)$ occurs, and the term $I_a(P_0)$ is just $\Ig(z^a H_{P_0}(z))$.  If $P_0$ is of rank $n$ and discriminant $e$, then we write this as $I_a(n,e)$.  The calculation of the values $I_a(n,e)$ in Table \ref{Gary} is in Lemma \ref{Ophelia} in the next section.

\subsection{Proof of Theorem \ref{Ursula}}\label{Wilbur}

We apply Theorem \ref{John} with $\ell=1$, $\pi=2$, and $c=0$.
For the cases where $L(x)\not=0$, the terms $I_0^2(Q_{(\lambda-2)},Q_{(\lambda-1)},Q_\lambda)$ and $I_0^1(Q_{(\lambda-1)},Q_{(\lambda)},Q_{\lambda+1})$ are dealt with specially.
We note that if $P_0$, $P_1$, and $P_2$ are unimodular quadratic forms, then $H_{P_0}(z) G_{P_1}(z^2) G_{P_2}(z^4) z^{4 \ring}=H_{P_0}(z) G_{P_1}(z^2) z^{4\ring}$ by Remark \ref{Zachary}, which is the same as $H_{P_0}(z) G_{P_1}(z^2) G_{\Sq}(z^4)$ by Lemma \ref{Robert} of Appendix \ref{Mordecai}, so that $I_0^2(P_0,P_1,P_2)=I_0(P_0,P_1,\Sq)$.
And similarly, $H_{P_0}(z) G_{P_1}(z^2) G_{P_2}(z^4) z^{2 \ring}=H_{P_0}(z) z^{2\ring}$ by Remark \ref{Zachary}, which is the same as $H_{P_0}(z) G_{\Sq}(z^2) G_{\Sq}(z^4)$ by Lemma \ref{Andrew}, so that $I_0^1(P_0,P_1,P_2)=I_0(P_0,\Sq,\Sq)$.
So all our terms $I_0^\mu(P_0,P_1,P_2)$ can be replaced with $I_0(P_0,A,B)$ for some unimodular quadratic forms $A$ and $B$, and since the subscript on $I$ is always zero for Theorem \ref{Ursula} (since $c=0$), we suppress it.
The formulae in Theorem \ref{Ursula} for $\lambda=0$ and $1$ are then easy obtained.  The values of $I(P_0,P_1,P_2)$ in Table \ref{Violet} are calculated in Appendix \ref{Mordecai} below.

\subsection{Location of Candidate Poles}\label{Michael}

Remark \ref{Percy} follows if we can show that terms of the form $I_a(P_0,\ldots,P_{2\ell})$ and $I^\mu_a(P_0,\ldots,F_{2\ell})$ as defined in Theorem \ref{John} are equal to polynomials in $\Q[t]$ divided by $1-t/q$.  This will follow from Lemma \ref{Stevie} if we show that for any unimodular quadratic forms $P_0,\ldots,P_{2\ell}$, the term $H_{P_0}(z) G_{P_1}(z^\unif) \cdots G_{P_{2\ell}}(z^{\unif^{2\ell}})$ is $\pi^k$-uniform for some $k \in \N$.  But this follows from Corollary \ref{Hortense} (applied to $P_0$) and Lemma \ref{Una}.

\subsection{Proof of Theorem \ref{Arthur}}\label{Morton}

We use Theorem \ref{Odilia}, which shows that the local zeta function is a $\Q$-linear combination (with positive coefficients) of terms of the form $t^i I_0(n,e)$ with $I_0(n,e)$ from Table \ref{Gary}, plus one final term involving a division by $(1-t^2/q^r)$ that we analyze now.

\begin{lemma}\label{Deborah}
With the assumptions and notations of Theorem \ref{Odilia}, let $r=\sum_{i \in \N} r_i$ and $d=\prod_{i \in \N} d_i$, and let
\[
M=\left(\frac{t^{\omega-1}}{q_{(\omega-1)}} I_0(r_{(\omega-1)},d_{(\omega-1)}) + \frac{t^\omega}{q_{(\omega)}} I_0(r_{(\omega)},d_{(\omega)})\right) \left(1-\frac{t^2}{q^r}\right)^{-1}.
\]
\begin{itemize}
\item If $r_{(\omega-1)}$ and $r_{(\omega)}$ are both odd, then
\[
M=\left(\frac{t^{\omega-1}}{q_{(\omega-1)}}\right)\igrp.
\]
\item If $r_{(\omega-1)}$ and $r_{(\omega)}$ are both even, then
\[
M=\left(\frac{t^{\omega-1}}{q_{(\omega-1)}}\right) \left[1+ \frac{\eta((-1)^{r_{(\omega-1)}/2} d_{(\omega-1)}) (t-1)}{q^{r_{(\omega-1)}/2}\left(1- \frac{\eta((-1)^{r/2} d) t}{q^{r/2}}\right)} \right]\igrp.
\]
\item If $r_{(\omega-1)}$ is even and $r_{(\omega)}$ is odd, then
\[
M=\left(\frac{t^{\omega-1}}{q_{(\omega-1)}}\right) \left[1+ \frac{\eta((-1)^{r_{(\omega-1)}/2} d_{(\omega-1)}) (t-1)}{q^{r_{(\omega-1)}/2}  \left(1-\frac{t^2}{q^r}\right)}  \right]\igrp.
\]
\item If $r_{(\omega-1)}$ is odd and $r_{(\omega)}$ is even, then
\[
M=\left(\frac{t^{\omega-1}}{q_{(\omega-1)}}\right) \left[1+ \frac{\eta((-1)^{r_{(\omega)}/2} d_{(\omega)}) t(t-1)}{q^{(r_{(\omega-1)}+r)/2} \left(1-\frac{t^2}{q^r}\right)}  \right]\igrp.
\]
\end{itemize}
\end{lemma}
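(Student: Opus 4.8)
The plan is to substitute the explicit values of $I_0(r,d)$ from Table \ref{Gary} into the definition of $M$ and simplify separately in each of the four parity cases. Since the subscript on every $I_0$ here is $0$ and $\pi\mid 0$, I always use the ``$\pi\mid a$'' rows of Table \ref{Gary}: for odd rank $I_0(r,d)=\left(1-t/q^r\right)\igr$, and for even rank $I_0(r,d)=\left(1-\epsilon/q^{r/2}\right)\left(1+\epsilon t/q^{r/2}\right)\igr$, where I abbreviate $\epsilon=\eta((-1)^{r/2}d)$. Every value carries the common factor $\igr$, which I factor out at the start and restore at the end.

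Before the cases I would record three bookkeeping facts. First, since $q_{(j)}=q^{\sum_{0\le i<j} r_{(i)}}$ we have $q_{(\omega)}/q_{(\omega-1)}=q^{r_{(\omega-1)}}$, so the two terms of $M$ share the factor $t^{\omega-1}/q_{(\omega-1)}$ and
\[
M=\frac{t^{\omega-1}}{q_{(\omega-1)}}\left(I_0(r_{(\omega-1)},d_{(\omega-1)})+\frac{t}{q^{r_{(\omega-1)}}}I_0(r_{(\omega)},d_{(\omega)})\right)\left(1-\frac{t^2}{q^r}\right)^{-1}.
\]
Second, because $Q_i=0$ for $i>\omega$ and the index sets defining $r_{(\omega-1)}$ and $r_{(\omega)}$ partition $\{0,\dots,\omega\}$ by parity, we have $r=r_{(\omega-1)}+r_{(\omega)}$, and likewise $d=d_{(\omega-1)}d_{(\omega)}$. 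Third, since $\eta$ is a character, in the even/even case the two signs multiply as $\eta((-1)^{r_{(\omega-1)}/2}d_{(\omega-1)})\,\eta((-1)^{r_{(\omega)}/2}d_{(\omega)})=\eta((-1)^{r/2}d)$.

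With these in hand each case is a direct simplification of the bracketed expression above. In the odd/odd case the cross terms cancel so the bracket becomes $\left(1-t^2/q^r\right)\igr$, which cancels the factor $\left(1-t^2/q^r\right)^{-1}$ and yields the first formula. In the even/odd and odd/even cases, after clearing the factor $t/q^{r_{(\omega-1)}}$ one finds the bracket equals $\left(1-t^2/q^r\right)$ plus a single surviving term, divided by $1-t^2/q^r$; this gives $1$ plus a fraction with denominator $1-t^2/q^r$, and the surviving numerators $\eta((-1)^{r_{(\omega-1)}/2}d_{(\omega-1)})(t-1)/q^{r_{(\omega-1)}/2}$ and $\eta((-1)^{r_{(\omega)}/2}d_{(\omega)})\,t(t-1)/q^{(r_{(\omega-1)}+r)/2}$ match the stated formulas once the powers of $q$ are rewritten using $r=r_{(\omega-1)}+r_{(\omega)}$.

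The only case needing genuine care, and the one I expect to be the main obstacle, is even/even, where both $I_0$ factors are quadratic in $t$. Here the key device is to factor the denominator as $1-t^2/q^r=\left(1-\epsilon t/q^{r/2}\right)\left(1+\epsilon t/q^{r/2}\right)$ using $\epsilon=\eta((-1)^{r/2}d)$ from the third bookkeeping fact. After expanding the degree-two bracket and clearing this denominator, I would verify that the numerator equals $\left(1-t^2/q^r\right)+\eta((-1)^{r_{(\omega-1)}/2}d_{(\omega-1)})(t-1)\left(1+\epsilon t/q^{r/2}\right)/q^{r_{(\omega-1)}/2}$, so that dividing by $\left(1-\epsilon t/q^{r/2}\right)\left(1+\epsilon t/q^{r/2}\right)$ cancels the factor $1+\epsilon t/q^{r/2}$ and leaves precisely the claimed $1+\eta((-1)^{r_{(\omega-1)}/2}d_{(\omega-1)})(t-1)/\bigl(q^{r_{(\omega-1)}/2}(1-\epsilon t/q^{r/2})\bigr)$. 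This cancellation of a spurious factor is what collapses the apparent denominator $1-t^2/q^r$ to the true denominator $1-\epsilon t/q^{r/2}$, matching the pole structure exploited later in Theorem \ref{Arthur}.
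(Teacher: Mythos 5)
Your proposal is correct and follows exactly the paper's own route, which is simply to substitute the $\pi\mid a$ entries of Table \ref{Gary} into $M$ and simplify case by case; the bookkeeping identities $r=r_{(\omega-1)}+r_{(\omega)}$, $d=d_{(\omega-1)}d_{(\omega)}$, $\eta((-1)^{r_{(\omega-1)}/2}d_{(\omega-1)})\eta((-1)^{r_{(\omega)}/2}d_{(\omega)})=\eta((-1)^{r/2}d)$, and the factorization $1-t^2/q^r=(1-\epsilon t/q^{r/2})(1+\epsilon t/q^{r/2})$ in the even/even case are precisely the simplifications the paper leaves implicit. Your worked-out cancellations all check out.
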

\begin{proof}
Substitute the values of $I_0(r_{(\omega-1)},d_{(\omega-1)})$ and $I_0(r_{(\omega)},d_{(\omega)})$ from Table \ref{Gary} and simplify.
\end{proof}

{\it Proof of Theorem \ref{Arthur}:}  The rank $0$ case is trivial, so we assume positive rank henceforth.  As mentioned above Theorem \ref{Odilia} shows that the local zeta function is a $\Q$-linear combination (with positive coefficients) of terms of the form $t^i I_0(n,e)$ with $I_0(n,e)$ from Table \ref{Gary}, plus the final term involving a division by $(1-t^2/q^r)$ that we just analyzed in Lemma \ref{Deborah}.  Perusal of Table \ref{Gary} shows that $I_0(n,e)$ is zero when $n=0$, is a positive constant when $(n,e) \in \{(1,1),(1,\alpha),(2,-\alpha)\}$, and otherwise $(1-t/q) I_0(n,e)$ is a polynomial whose value at $t=q$ is positive.

First suppose that $\{(r_{\even},d_{\even}), (r_{\odd},d_{\odd})\} \subseteq \{(0,1),(1,1),(1,\alpha),(2,-\alpha)\}$.  Then Theorem \ref{Odilia} shows that the local zeta function is a polynomial plus the final term analyzed in Lemma \ref{Deborah}.  So the denominator $g(t)$ we seek is the denominator of that term when (written in reduced form), and this can readily be deduced for the finitely many possibilities for $(r_{\even},d_{\even})$ and $(r_{\odd},d_{\odd})$ under consideration here by consulting Table \ref{Gary}.

So henceforth assume that $\{(r_{\even},d_{\even}), (r_{\odd},d_{\odd})\} \not\subseteq \{(0,1),(1,1),(1,\alpha),(2,-\alpha)\}$.  This makes $r \geq 2$.  If $r=2$, we must have $\{(r_{\even},d_{\even}),(r_{\odd},d_{\odd})\}=\{(0,1),(2,-1)\}$, and we let $\omega$ be the greatest index such that $Q_\omega\not=0$.  Then Theorem \ref{Odilia} and the fact that $I_0(r_{(i)},d_{(i)})$ is a constant for $i < \omega$ (and is zero when $i\not\equiv \omega \pmod{2}$) show that the local zeta function is a polynomial plus a positive constant times $t^\omega I_0(r_{(\omega)},d_{(\omega)})/(1-t^2/q^2)$, and since $(r_{(\omega)},d_{(\omega)})=(2,-1)$, Table \ref{Gary} tells us that this last term (in reduced form) has denominator $(1-t/q)^2$.

So we may assume that $r \geq 3$ henceforth.  The factors $1\pm t/q^{r/2}$ can no longer occur in the denominator of any $I_0(n,e)$, so they will be present in the local zeta function for $Q$ if and only if they are present in the denominator of the final term analyzed in Lemma \ref{Deborah}.  Since $I_0(r_{(i)},d_{(i)})$ is not a constant for some $i$, when we multiply the local zeta function by $1-t/q$ and evaluate at $t=q$, this term gives a positive contribution, and the other terms give a nonnegative one (we are including the final term, whose denominator is positive at $t=q$ in view of the rank).  So the local zeta function must have $1-t/q$ in its denominator when written in reduced form.

\subsection{Comparison with Igusa's Results}

Igusa calculated the local zeta function for a polynomial of the form $Q\oplus L$ where $Q$ is a quadratic form, $L$ is a linear form, and $p$ is odd in Theorem 1 and Corollary to Theorem 2 of \cite{Igusa-1994-Local}, which is the special case of Theorem \ref{Odilia} when $c=0$.
He also has some calculations in his monograph \cite[Corollary 10.2.1]{Igusa-2000}, which give a special case of his results for $p$ odd in \cite{Igusa-1994-Local}, but also give local zeta functions for a restricted family of quadratic forms over $2$-adic fields (including ramified ones) that behave similarly to unimodular quadratic forms over $p$-adic fields with $p$ odd.  In our notation, when $p=2$, Igusa restricts his attention to quadratic forms equal to $1/2$ times one of the following: $\Hyp^n$, $\Ell\oplus \Hyp^{n-1}$, $\Hyp^n \oplus 2 u\Sq$, or $\Ell\oplus \Hyp^{n-1}\oplus 2 u\Sq$, where $u$ is a unit in $\ring$.  The factor of $1/2$ merely causes the zeta function to be scaled by $t^{-\val(2)}$.  When there is no $2 u\Sq$ term present, Igusa's result follows directly from our calculations of zeta functions for unimodular forms in Lemma \ref{Rebecca} below.  When the $2 u\Sq$ term is present, write $Q_+=\Hyp^{(r-1)/2}$ and $Q_-=\Ell\oplus\Hyp^{(r-3)/2}$ so that $Q=Q_\pm \oplus 2 u \Sq$.  Then multiply the $p$-adic generating functions $G_{Q_\pm}(z)$ and $G_{u\Sq}(z^2)$ of $Q_\pm$ and $2u\Sq$ from Lemma \ref{Rebecca} and Corollary \ref{Hortense} and coalesce cosets (see Remark \ref{Anne}) to show that the $p$-adic generating function for $Q$ is
\[
G_Q(z)=\left(1-\frac{1}{q^{r-1}}\right) z^{2\ring} + \frac{1}{q^{r+\ell}} G_{Q_\pm}(z^{\unif^2}) \sum_{\tau \in T_\ell^*} z^{2u \tau^2+8\unif \ring} + \frac{1}{q^r} G_Q(z^{\unif^2}),
\]
where $\ell=\val(2)$, $T$ is a set of Teichm\"uller representatives for $\F_q$ in $K$ (i.e., $T$ contains all $(q-1)$th roots of unity and zero), and $T_\ell^*=\{t_0+t_1 \pi+\cdots+t_\ell\pi^\ell: t_0,\ldots,t_\ell \in \teich, t_0\not=0\}$.  Apply $\Ig$ to both sides, recognizing that $G_{Q_\pm}(z^{\unif^2})=G(z^{2\unif^2})$ for some $G(z) \in \limgroupring$, so that $\Ig$ applied to the second term gives $(q-1) t^\ell/q^r$, and also note that $\Ig$ applied to the last term gives $t^2 \Ig(G_Q(z))/q^r$ by Lemma \ref{Tina}, and so we rearrange to get $\Ig(G_Q(z))=\frac{t^\ell (1-1/q)(1-t/q^r)}{(1-t/q)(1-t^2/q^r)}$, which matches Igusa's result when we divide by $t^\ell$ to account for his difference in scaling.

\subsection{Strong Isospectrality for $p$ odd and $\Z_2$}\label{Armand}

Recall from Section \ref{Gordon} that two polynomials $f$ and $g$ over $\ring$ are said to be {\it strongly isospectral} if for each $k \in \N$, the reductions of $f$ and $g$ modulo $\pi^k$ represent each value in $\ring/\pi^k\ring$ the same number of times.  This is equivalent to saying that $f$ and $g$ have the same $p$-adic generating function.
We prove that in certain cases, quadratic polynomials over $\ring$ are isospectral to ones where no variable appears in both the linear and quadratic part.
\begin{proposition}\label{Theresa}
Suppose that $p$ is odd, or else that $p=2$ and $\ring=\Z_2$.  Let $Q$ be a quadratic polynomial over $\ring$.  Then $f$ is strongly isospectral to a polynomial of the form $\bigoplus_{i=0}^{\omega} \pi^i Q_i \oplus \pi^{\lambda} L + c$, for some unimodular quadratic forms $Q_0, Q_1,\ldots,Q_\omega$, where $L$ is a linear form involving at most one variable, $\lambda > \omega$, and $c$ is a constant in $\ring$.
\end{proposition}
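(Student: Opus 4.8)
The plan is to realize strong isospectrality concretely: since two polynomials are strongly isospectral exactly when they share a $p$-adic generating function, and since any Haar-measure-preserving bijection of $\ring^n$ leaves the generating function unchanged, it suffices to exhibit a chain of such bijections carrying a given $f$ into the shape \eqref{Angelo}. The two families I will use are (i) invertible affine $\ring$-changes of coordinates, and (ii) single-variable ``Hensel'' substitutions $x \mapsto x + \pi^{j}(a/u)x^2$, which are bijections of $\ring$ precisely when the factor $1 + \pi^{j}(a/u)(x+y)$ arising in $\psi(x)-\psi(y)$ is always a unit, i.e.\ when $j \geq 1$.

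Write $f = Q + L + c$ with $Q$ the quadratic form, $L$ linear, and $c$ constant; the constant merely translates values, so I carry it along and deliver it as the output constant. Assume first that $p$ is odd. By \cite{O'Meara-1963} an invertible linear change brings $Q$ to $\bigoplus_i \pi^i a_{i,k} x_{i,k}^2$ with each $a_{i,k}\in\ringunits$, transforming $L$ into a new linear form; choosing the change to respect the radical splits the variables into those $x_{i,k}$ appearing in $Q$ and free variables appearing only in $L$. Now I process each $x_{i,k}$ against its linear coefficient $b$: if $\val(b)\geq i$ I complete the square via the integral shift $x_{i,k}\mapsto x_{i,k}-b/(2\pi^i a_{i,k})$, folding the residue into $c$; if $\val(b)<i$ I apply the Hensel substitution above with $j=i-\val(b)\geq 1$, turning $\pi^i a_{i,k}x_{i,k}^2+bx_{i,k}$ into the pure linear term $bx_{i,k}$. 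After this pass the surviving quadratic part is a unimodular-block sum $\bigoplus_i\pi^i\tilde Q_i$ in one set of variables, while every remaining linear monomial sits in a disjoint set of free or demoted variables; a triangular linear change among the latter consolidates them into a single monomial $\pi^\lambda u\,y_0$, where $\lambda$ is the least valuation among the coefficients.

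It then remains to force $\lambda>\omega$, and here I pass to the generating-function calculus. By Remark \ref{Otto} the monomial $\pi^\lambda u\,y_0$ has generating function $\zncoset{\lambda}$, which is $\pi^\lambda$-uniform, so by Lemma \ref{Una} multiplying it by the generating function of any block $\pi^i\tilde Q_i$ with $i\geq\lambda$ has no effect: Corollary \ref{Philip} and Remark \ref{Zachary} give $\zncoset{\lambda}\,G_{\tilde Q_i}(z^{\pi^i})=G_{\tilde Q_i}(1)\,\zncoset{\lambda}=\zncoset{\lambda}$. Hence every block of level $\geq\lambda$ may be deleted without changing the generating function, and after discarding them the top surviving block has level $\omega<\lambda$; setting $Q_i=\tilde Q_i$ for $i\leq\omega$ produces the required form with $\lambda>\omega$. (If no block survives, the polynomial is affinely linear and one takes an empty quadratic part; if $L=0$ the form is $\bigoplus\pi^iQ_i+c$ and the inequality is vacuous.)

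The main obstacle is $p=2$, where I restrict to $\ring=\Z_2$. Now $Q$ does not diagonalize over squares, so I must treat the three block types $a\Sq$, $\Hyp$, and $\Ell$ separately once a linear term is attached. For $\Hyp=2xy$ (and similarly $\Ell$) a low linear coefficient lets me factor and apply a unit-valued shear $(x,y)\mapsto\bigl(x(1+\text{unit}\cdot y),\,y\bigr)$ that linearizes the whole plane, after which consolidation proceeds as before, while completing the square still works whenever $\val(b)\geq i+\val(2)$. The genuinely delicate point, and the reason the statement is confined to $\Z_2$, is a rank-one block $2^i a x^2+bx$ with $\twoval(b)=i$ exactly: here the Hensel substitution degenerates, since $x\mapsto x+(a/u)x^2$ is two-to-one modulo $2$, so the term cannot be linearized in place. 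Instead $ax^2+ux$ takes all its values in $2\Z_2$, and dividing out $2$ exhibits a two-to-one map onto a level-$(i+1)$ rank-one structure, so the reduction must recurse one level higher. I expect to close this using the explicit arithmetic of squares in $\Z_2$ (units are $\equiv 1\bmod 8$ up to squares, and $1+4\xi$ is non-square iff $\Tr(\xi)$ is odd), exactly the data recorded in Table \ref{Melissa} and Lemma \ref{Eric}; verifying that this recursion terminates in a form of type \eqref{Angelo} is the crux of the $\Z_2$ argument.
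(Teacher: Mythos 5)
Your argument for odd $p$ is sound and runs parallel to the paper's: where you use the explicit measure-preserving bijection $x\mapsto x+\pi^{j}(a/u)x^{2}$ (a bijective isometry of $\ring$ for $j\geq 1$) to absorb a quadratic block into a lower-valuation linear term, the paper reaches the same conclusion through Hensel's lemma (Lemma \ref{Gretchen}, packaged as Lemma \ref{Lester}); and where you delete blocks of level $\geq\lambda$ via the identity $\zncoset{\lambda}\,G_{\tilde Q_i}(z^{\pi^i})=\zncoset{\lambda}$, the paper uses the elementary counting statement of Lemma \ref{Ludwig}. Both substitutions are legitimate and neither costs you anything.

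The genuine gap is in the case you yourself flag as the crux: over $\Z_2$, a block $2^{i}ax^{2}+bx$ with $\twoval(b)=i$ exactly. You correctly observe that the shear degenerates and that completing the square fails, but you then leave the case open, proposing a ``recursion one level higher'' whose termination you admit you have not verified. This recursion is both unnecessary and based on a misreading of the value distribution. Substituting $x=(b/(2^{i}a))y$ (an invertible linear change, since $b/(2^{i}a)\in\ringunits$) turns the block into $(b^{2}/(2^{i}a))(y^{2}+y)$, and over $\Z_2$ the polynomial $y^{2}+y$ is strongly isospectral to $2y$ \emph{outright}: its derivative $1+2y$ is a unit at every point of $\Z_2$, so Lemma \ref{Gretchen} (with $j=0$) gives its $p$-adic generating function as $\tfrac12\sum_{a\in\{0,1\}}z^{a^{2}+a+2\Z_2}=z^{2\Z_2}$, which is exactly the generating function of $2y$. (This is where $\ring=\Z_2$ is used: for $q>2$ the values $a^{2}+a \bmod 2$ fill only the trace-zero subset, and the reduction to a linear form fails.) The map $y\mapsto y^{2}+y$ is indeed two-to-one onto its image, but the image counted with multiplicity is $2\Z_2$ distributed \emph{uniformly} at every level $2^{k}$ --- i.e.\ a linear structure of valuation one higher, not a new rank-one quadratic structure. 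So the problematic block collapses in a single step to the linear term $2by$ of valuation $i+1$ (this is the middle case of the paper's Lemma \ref{Loren}), and no recursion ever starts. Your proof is complete once you replace the speculative recursion with this one application of Hensel's lemma; you should also state explicitly the shear/completion dichotomy for the $\Hyp$ and $\Ell$ blocks when $p=2$ (the paper's Lemmata \ref{Lester} and \ref{Leslie} supply the residual cases), since ``similarly $\Ell$'' hides a division by $c(1-4\sigma)$ that needs the unit $1-4\sigma$.
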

\begin{proof}
In \cite[\S 91C]{O'Meara-1963} it is shown that for any quadratic form, there is an invertible $\ring$-linear change of variables that transforms it to a form $\bigoplus_{i=0}^{\omega} \pi^i P_i$, where each $P_i$ is a unimodular quadratic form.  In \cite[\S 91C, 92:1]{O'Meara-1963}) it is shown that one can arrange that each $P_i$ be a direct sum of unimodular quadratic forms of rank $1$ (when $p$ is odd) or ranks $1$ and $2$ (when $p=2$).
We apply such a transformation to our quadratic polynomial, thus bringing the quadratic portion into this convenient form.
This change of variables transforms the linear portion of our polynomial to another linear form $M$.

Now for each rank $1$ or $2$ direct summand of the quadratic portion, say $f(x)$ or $g(x,y)$, consider $f(x)+a x$ or $g(x,y)+a x + b y$, where the $a x$ or $a x+b y$ is the portion of $M$ involving $x$ or $x$ and $y$.
Since a rank $2$ summand that cannot be decomposed into two rank $1$ summands must be $\Hyp$ or $\Ell$ by Corollary \ref{Frank}, Lemmata \ref{Loren}--\ref{Leslie} below show that $f(x)+a x$ or $g(x,y)+a x+b y$ is strongly isospectral to a linear form plus a constant or a quadratic form plus a constant.

Thus we can assume that there are no variables in common between the linear and quadratic portions of our polynomial.  Then note that any nonzero linear form is isospectral to a linear form with a single variable by an invertible $\ring$-linear change of variables.

If our single-variable linear term is $a x$, and there is a quadratic term of the form $\pi^\mu P$ with $\mu \geq \val(a)$, Lemma \ref{Ludwig} shows that we can remove the $\pi^\mu P$ term.
\end{proof}
We conclude with some technical lemmata used above.
\begin{lemma}\label{Ludwig}
Let $f(x_1,\ldots,x_n)=a x_1+ b P(x_2,\ldots,x_n)$ with $a,b \in \ring$, and $P(x_2,\ldots,x_n) \in \ring[x_2,\ldots,x_n]$.
If $\val(a) \leq \val(b)$, then $f(x_1,\ldots,x_n)$ is strongly isospectral to $a x_1$.
\end{lemma}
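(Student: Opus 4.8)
\emph{Proof proposal.} The plan is to show that $f$ and $a x_1$ have the same $p$-adic generating function; by the discussion in Section~\ref{Armand} this is precisely what strong isospectrality means. Since $x_1$ does not appear in $P(x_2,\ldots,x_n)$, we have the direct sum decomposition $f = a x_1 \oplus b P(x_2,\ldots,x_n)$, so the sum-product rule for $p$-adic generating functions gives $G_f(z) = G_{a x_1}(z)\, G_{b P}(z)$. By Remark~\ref{Otto} the first factor is $G_{a x_1}(z) = z^{a\ring}$, and writing $\mu = \val(a)$ we have $a\ring = \ncoset{\mu}$, so $G_{a x_1}(z) = \zncoset{\mu}$, which is a single term $z^A$ with $A \in \quotring{\mu}$ and is therefore $\unif^\mu$-uniform.

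The idea is then that the coarse factor $\zncoset{\mu}$ will absorb $G_{bP}$ entirely, because every ``exponent'' occurring in $G_{bP}$ is divisible by $b$ and $\val(b) \geq \mu$. To make this precise, I would write $b = \unif^\nu u$ with $u \in \ringunits$ and $\nu = \val(b) \geq \mu$ (one may assume $b \neq 0$, since otherwise $f = a x_1$ outright, and likewise $a \neq 0$, since $\val(a) \leq \val(b)$ would otherwise force $b = 0$). Then $b P = \unif^\nu (u P)$ with $u P$ a genuine polynomial over $\ring$, so the scaling rule identifies $G_{bP}(z) = G_{uP}(z^{\unif^\nu})$. Applying Corollary~\ref{Philip} with the $\unif^\mu$-uniform element $\zncoset{\mu}$, with $i = \mu \leq \nu = j$, and with $G = G_{uP}$ yields
\[
G_f(z) = \zncoset{\mu}\, G_{uP}(z^{\unif^\nu}) = \zncoset{\mu}\, G_{uP}(1) = \zncoset{\mu},
\]
the last equality because $G_{uP}(1) = 1$ for any $p$-adic generating function. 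Since $\zncoset{\mu} = z^{a\ring} = G_{a x_1}(z)$, this gives $G_f = G_{a x_1}$, which is the claim.

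The only genuine obstacle is the bookkeeping in the second paragraph: one must resist applying the scaling ``$z^b$'' directly, since $b$ is not a prime power and Corollary~\ref{Philip} does not apply to a scaling by $b$. The fix is to peel off the unit by writing $b = \unif^\nu u$, so that the scaling becomes one by the prime power $\unif^\nu$ with $\nu \geq \mu$; once this rewriting is in place, Corollary~\ref{Philip} applies verbatim and the collapse is immediate. Everything else is routine invocation of the sum-product rule, Remark~\ref{Otto}, and the normalization fact $G_{uP}(1)=1$.
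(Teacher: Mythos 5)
Your proof is correct, but it takes a genuinely different route from the paper's. The paper's own proof is a two-line elementary argument: it uses $\val(a)\le\val(b)$ only to divide out $a$ (so that $b/a\in\ring$ and one may assume $a=1$), and then observes that for each fixed residue of $(x_2,\ldots,x_n)$ modulo $\pi^k$, the map $x_1\mapsto x_1+bP(x_2,\ldots,x_n)$ permutes $\ring/\pi^k\ring$, so $f$ and $x_1$ represent every value equally often. You instead stay entirely inside the generating-function calculus: sum-product rule, the observation that $G_{ax_1}=\zncoset{\mu}$ is $\pi^\mu$-uniform, and Corollary \ref{Philip} to collapse $\zncoset{\mu}\,G_{uP}(z^{\unif^\nu})$ to $\zncoset{\mu}$. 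Your bookkeeping is sound: peeling the unit off $b$ so that the scaling is by the prime power $\unif^\nu$ is exactly the right move to make Corollary \ref{Philip} apply as stated, and your disposal of the degenerate cases $a=0$ and $b=0$ is correct. What the paper's approach buys is brevity and independence from the machinery of Section \ref{Henry}; what yours buys is a clean illustration of how the formalism mechanizes such reductions, and it makes the role of the hypothesis $\val(a)\le\val(b)$ (as $\mu\le\nu$ feeding the uniformity argument) more transparent. Either proof would be acceptable.
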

\begin{proof}
By scaling, we may assume that $a=1$.
Let $k \in \N$.  Since $x_1$ does not occur in $b P(x_2,\ldots,x_n)$, both the polynomial $f(x_1,\ldots,x_n)$ and the polynomial $g(x_1,\ldots,x_n)=x_1$, when taken modulo $\pi^k$, represent each element of $R/\pi^k R$ equally often.
\end{proof}
\begin{lemma}\label{Lester}
Let $f(x_1,\ldots,x_n)=a x_1+ b L(x_2,\ldots,x_n) + c P(x_1,\ldots,x_n)$ with $a,b, c\in \ring$, and $L(x_2,\ldots,x_n)$, $P(x_1,\ldots,x_n) \in \ring[x_1,\ldots,x_n]$ with $L(x_2,\ldots,x_n)$ a linear form.
If $\val(a) \leq \val(b)$ and $\val(a) < \val(c)$, then $f(x_1,\ldots,x_n)$ is strongly isospectral to $a x_1$.
\end{lemma}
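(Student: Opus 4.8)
The plan is to show that $f$ and $ax_1$ have the same $p$-adic generating function, which by the remark opening Section~\ref{Armand} is exactly what strong isospectrality means (this refines Lemma~\ref{Ludwig}, where the higher-order part carried no $x_1$). Set $j=\val(a)$ and record the consequences of the hypotheses: $\val(b)\ge j$ and $\val(c)\ge j+1$. First I would check that the derivative of $f$ has valuation exactly $j$ at every point of $\ring^n$. Indeed $\partial f/\partial x_1 = a + c\,\partial P/\partial x_1$ has valuation $j$ because $\val(c\,\partial P/\partial x_1)\ge\val(c)>j=\val(a)$, while for $i\ge 2$ the partial $\partial f/\partial x_i = b\,\partial L/\partial x_i + c\,\partial P/\partial x_i$ has valuation at least $\min(\val(b),\val(c))\ge j$; hence the least valuation among the partials is exactly $j$. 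Since $\ring^n$ is $\unif^{j+1}$-regular, Lemma~\ref{Gretchen} (with $S=\ring^n$) applies and tells us that the $p$-adic generating function $H(z)$ of $f$ is $\unif^{2j+1}$-uniform and equals $\frac{1}{q^{n(j+1)}}\sum_{(a_1,\dots,a_n)\in W_{j+1}^n} z^{\coset{f(a_1,\dots,a_n)}{2j+1}}$.

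On the other side, by Remark~\ref{Otto} the $p$-adic generating function of $ax_1$ is $z^{a\ring}=\zncoset{j}$, which is $\unif^j$-uniform and hence $\unif^{2j+1}$-uniform. Since a $\unif^{2j+1}$-uniform element is recovered from its image under $\phi_{2j+1}$ (by replacing each $\gamma$ with $z$), it suffices to verify $\phi_{2j+1}(H)=\phi_{2j+1}(\zncoset{j})$. From \eqref{Nancy} the right-hand side is $\frac{1}{q^{j+1}}\sum_{v}\gamma^{v}$, where $v$ runs over the $q^{j+1}$ cosets of $\ncoset{2j+1}$ contained in $\ncoset{j}$; note that every value $f(a_1,\dots,a_n)$ lies in $\ncoset{j}$ because each of $ax_1$, $bL$, and $cP$ does. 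Thus everything reduces to the combinatorial claim that, as $(a_1,\dots,a_n)$ ranges over $W_{j+1}^n$, the residue $f(a_1,\dots,a_n)\bmod \unif^{2j+1}$ hits each coset of $\ncoset{2j+1}$ inside $\ncoset{j}$ exactly $q^{(n-1)(j+1)}$ times.

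I would prove this by fibering over $(a_2,\dots,a_n)$: for each fixed tuple I claim the map $x_1\mapsto f(x_1,a_2,\dots,a_n)\bmod\unif^{2j+1}$ is a bijection from $\quotring{j+1}$ onto the set of cosets of $\ncoset{2j+1}$ in $\ncoset{j}$. Both sets have $q^{j+1}$ elements, so it is enough to prove injectivity. Writing $P(x_1,\bar a)-P(x_1',\bar a)=(x_1-x_1')\,Q$ for a divided-difference polynomial $Q\in\ring[x_1,x_1',x_2,\dots,x_n]$, one gets $f(x_1,\bar a)-f(x_1',\bar a)=(x_1-x_1')(a+cQ)$, and here $\val(a+cQ)=j$ precisely because $\val(cQ)\ge\val(c)>j=\val(a)$ — this is the one place the strict inequality $\val(a)<\val(c)$ is used. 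Hence a congruence modulo $\unif^{2j+1}$ forces $\val(x_1-x_1')\ge j+1$, giving injectivity. Summing the resulting bijections over the $q^{(n-1)(j+1)}$ choices of $(a_2,\dots,a_n)$ yields the required uniform count, so $\phi_{2j+1}(H)=\frac{1}{q^{j+1}}\sum_v\gamma^v=\phi_{2j+1}(\zncoset{j})$ and the two $\unif^{2j+1}$-uniform generating functions coincide. I expect the main obstacle to be exactly this bijectivity step: the injectivity argument is where the hypothesis must be invoked correctly, and one must take care that the divided difference $Q$ genuinely has coefficients in $\ring$ so that $\val(a+cQ)=j$ holds uniformly in $x_1$, $x_1'$, and $\bar a$.
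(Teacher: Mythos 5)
Your proof is correct, and it rests on the same key tool as the paper's proof --- Lemma \ref{Gretchen} --- but the route you take after invoking it is genuinely different. The paper first scales: since $\val(a)\leq\val(b)$ and $\val(a)<\val(c)$, one may divide by $a$ (using $G_f(z)=G_{f/a}(z^a)$ and $G_{a x_1}(z)=G_{x_1}(z^a)$) to assume $a=1$ and $\unif\mid c$; then the $x_1$-partial of $f$ is a unit everywhere, Lemma \ref{Gretchen} with $j=0$ shows the $p$-adic generating function depends only on $f\bmod\unif = x_1+bL$, and an invertible $\ring$-linear change of variables reduces to $x_1$. You instead work at the general level $j=\val(a)$ and establish the needed equidistribution by hand: your verification that the derivative has valuation exactly $j$ everywhere is right, and your divided-difference identity $f(x_1,\bar{a})-f(x_1',\bar{a})=(x_1-x_1')(a+cQ)$ with $\val(a+cQ)=j$ correctly isolates the one place the strict inequality $\val(a)<\val(c)$ is used; the resulting fiberwise bijection onto the $q^{j+1}$ cosets of $\ncoset{2j+1}$ inside $\ncoset{j}$ then gives $\phi_{2j+1}(G_f)=\phi_{2j+1}(\zncoset{j})$, which suffices since both sides are $\unif^{2j+1}$-uniform. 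What the paper's scaling buys is brevity --- it collapses your entire bijectivity step into the observation that $f\equiv x_1+bL\pmod{\unif}$ followed by a change of variables; what your version buys is an explicit, self-contained count at level $j$ that never leaves the original polynomial. Both arguments are valid.
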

\begin{proof}
By scaling, we may assume that $a=1$ and $\pi \mid c$.
Then the partial derivative of $f$ at $x_1$ never vanishes modulo $\pi$, so Lemma \ref{Gretchen} shows that the $p$-adic generating function for $f$ is the same that of $x_1+ b L(x_2,\ldots,x_n)$, which by an invertible $\ring$-linear change of variables is the same as that of $x_1$.
\end{proof}
\begin{lemma}\label{Loren}
Let $f(x)=a x^2+b x$ for some $a,b \in \ring$.
If $\val(b) < \val(a)$, then $f(x)$ is strongly isospectral to $b x$.
If $\val(b)=\val(a)$ and $\ring=\Z_2$, then $f(x)$ is strongly isospectral to $2 b x$.
If $\val(b) \geq \val(2 a)$, then $f(x)$ is strongly isospectral to $a x^2 - \frac{b^2}{4 a}$.
\end{lemma}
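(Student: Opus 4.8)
The plan is to treat the three cases separately, in each exhibiting the claimed strong isospectrality either through a change of variables or through a direct reading of the $p$-adic generating function via Hensel's Lemma (Lemma \ref{Gretchen}). Throughout I will use the facts, recorded in Section \ref{Armand}, that strong isospectrality of $f$ and $g$ is exactly equality of their $p$-adic generating functions, equivalently that $f$ and $g$ represent each residue in $\ring/\unif^k\ring$ equally often for every $k$. I will also use two preservation principles: first, strong isospectrality is preserved by any substitution $x\mapsto\Phi(x)$ that induces a bijection of $\ring/\unif^k\ring$ for every $k$ (since then $\Phi$ merely permutes the inputs computing each residue count); second, it is both preserved and reflected by nonzero scaling, because the scaling operation $F(z)\mapsto F(z^s)$ is injective for $s\neq0$ (for $k\geq\val(s)$ the data in \eqref{William} records each coefficient $F_A$ against the distinct coset $sA$).

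First I would dispose of the case $\val(b)\geq\val(2a)$ by completing the square. Here the hypothesis gives $b/(2a)\in\ring$ and $b^2/(4a)\in\ring$, so $\Psi(x)=x+b/(2a)$ is an invertible affine substitution over $\ring$, and the identity $ax^2+bx=a\,\Psi(x)^2-b^2/(4a)$ exhibits $f$ as $ax^2-b^2/(4a)$ precomposed with $\Psi$. Since $\Psi$ induces a bijection of $\ring/\unif^k\ring$ for every $k$, the strong isospectrality is immediate.

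Next I would handle $\val(b)<\val(a)$, where the linear term dominates. Then $a/b\in\unif\ring$, so I factor $f(x)=b\,h(x)$ with $h(x)=x+(a/b)x^2$. The key point is that $h$ induces a bijection of $\ring/\unif^k\ring$ for each $k$: from $h(x)-h(y)=(x-y)\bigl(1+(a/b)(x+y)\bigr)$ and the fact that $1+(a/b)(x+y)$ is a unit, $h$ is injective modulo $\unif^k$ and hence bijective. Therefore, for each $k$, the multiset $\{b\,h(x)\bmod\unif^k\}$ coincides with $\{b y\bmod\unif^k\}$, so $f=b\,h$ and $bx$ represent each residue equally often. (Alternatively, $f'(x)=2ax+b$ has constant valuation $\val(b)$ on all of $\ring$, so Lemma \ref{Gretchen} forces $G_f$ to be $\unif^{2\val(b)+1}$-uniform, matching the linear polynomial $bx$.)

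The hard part will be the case $\val(b)=\val(a)$ with $\ring=\Z_2$. I would first scale: writing $a=2^m a'$ and $b=2^m b'$ with $a',b'\in\ringunits$, the injectivity of $F(z)\mapsto F(z^{2^m})$ reduces the claim $f\sim 2bx$ to the case $m=0$, namely that $a'x^2+b'x$ is strongly isospectral to $2b'x$ for odd $a',b'$. Now $f'(x)=2a'x+b'$ is a unit for every $x\in\Z_2$, so the derivative of $f$ has constant valuation $0$ on all of $\ring$, and Lemma \ref{Gretchen} with $j=0$ shows that $G_f$ is $\unif$-uniform and is the $\unif$-uniformization of the modulo $2$ generating function of $f$. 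But over $\Z_2$ one has $f(0)=0$ and $f(1)=a'+b'\equiv0\pmod2$, so that modulo $2$ generating function is simply $\gamma^0$, whose $\unif$-uniformization is $z^{2\ring}$; since $b'$ is a unit, $z^{2b'\ring}=z^{2\ring}$ is also the generating function of $2b'x$ by Remark \ref{Otto}, and the two agree. This last case is the genuine obstacle, and it is exactly where $\ring=\Z_2$ is indispensable: the argument hinges on $a'x^2+b'x$ vanishing identically modulo $2$, which holds for odd $a',b'$ only because the residue field is $\F_2$. Over an unramified $2$-adic ring with a larger residue field the reduction $\bar a x^2+\bar b x$ is not identically zero, so $G_f$ is no longer concentrated on $z^{2\ring}$ and the clean conclusion fails; the only steps requiring care are verifying this vanishing and reading off the $\unif$-uniformization correctly.
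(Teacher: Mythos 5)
Your proof is correct and takes essentially the same route as the paper's: completing the square when $\val(b)\geq\val(2a)$, and, for the case $\val(a)=\val(b)$ over $\Z_2$, reducing to unit coefficients and applying Hensel's Lemma (Lemma \ref{Gretchen}) together with the observation that $a'x^2+b'x$ vanishes identically modulo $2$. The only cosmetic difference is in the first case, where the paper cites Lemma \ref{Lester} while you give a direct bijection argument via the factorization $f=b\,h(x)$ with $h(x)=x+(a/b)x^2$; both work.
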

\begin{proof}
Lemma \ref{Lester} handles the first case, and the third case is obtained by completing the square.
In the second case, we reparameterize with $x= b y/a$ to obtain $(b^2/a) (y^2+y)$.  It suffices to show that $y^2+y$ is strongly isospectral to $2 y$, for then $f$ will be strongly isospectral to $2 b (b/a) y$, which is isospectral to $2 b y$, since $b/a$ is a unit.  Note that the derivative of $y+y^2$ is always $1$ modulo $2$, and so by Lemma \ref{Gretchen}, the $p$-adic generating function of $y^2+y$ is $z^{2\ring}$, which is the $p$-adic generating function of $2 x$ by Remark \ref{Otto}.
\end{proof}
\begin{lemma}
Let $f(x)=a x+ b y + c x y$ for some $a, b, c \in \ring$.
If $\val(a), \val(b) \geq \val(c)$, then $f(x,y)$ is strongly isospectral to $c x y-\frac{a b}{c}$.
Otherwise, $f(x,y)$ is strongly isospectral to $a x + b y$.
\end{lemma}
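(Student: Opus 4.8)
The plan is to use the characterization from Section~\ref{Armand} that $f$ and $g$ are strongly isospectral precisely when they have the same $p$-adic generating function, equivalently the same modulo $\pi^k$ generating function for every $k\in\N$. The two cases of the statement call for two different mechanisms: an affine change of variables in the first case, and a slicewise comparison of one-variable linear generating functions in the second.

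First I would treat the case $\val(a),\val(b)\ge\val(c)$ (so in particular $c\neq 0$). Here $a/c,b/c\in\ring$, and completing the product gives the identity $ax+by+cxy=c(x+b/c)(y+a/c)-ab/c$. Hence if $g(x,y)=cxy-ab/c$, then $f(x,y)=g(x+b/c,\,y+a/c)$. Because $a/c,b/c\in\ring$, the affine substitution $(x,y)\mapsto(x+b/c,\,y+a/c)$ is a bijection of $\ring^2$ and, reduced modulo $\pi^k$, a bijection of $\quotring{k}^2$; it therefore preserves the number of representations of each value modulo $\pi^k$. Thus $f$ and $g$ have identical modulo $\pi^k$ generating functions for all $k$, and $f$ is strongly isospectral to $cxy-ab/c$.

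For the remaining case, by the symmetry $x\leftrightarrow y$, $a\leftrightarrow b$ I may assume $\val(a)<\val(c)$ and aim to show $f$ is strongly isospectral to $g(x,y)=ax+by$. The key observation is that for every fixed $B\in\ring$ the one-variable slice $A\mapsto f(A,B)=(a+cB)A+bB$ is linear in $A$ with leading coefficient $a+cB$ of valuation exactly $\val(a)$, since $\val(cB)\ge\val(c)>\val(a)$. Its $p$-adic generating function is therefore $z^{(a+cB)\ring+bB}=z^{\ncoset{\val(a)}+bB}$ by Remark~\ref{Otto}, which is exactly the generating function of the corresponding slice $A\mapsto aA+bB$ of $g$, whose coset is $a\ring+bB=\ncoset{\val(a)}+bB$. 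Since the modulo $\pi^k$ generating function of a linear polynomial depends only on this coset (see the display before Remark~\ref{Otto}), the slices of $f$ and $g$ agree modulo $\pi^k$ for each $B$.

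Finally I would assemble the slices: writing the modulo $\pi^k$ generating function of the two-variable polynomial as $\frac{1}{q^{k}}\sum_{B\in\quotring{k}}\bigl[\frac{1}{q^{k}}\sum_{A\in\quotring{k}}\gamma^{f(A,B)}\bigr]$, the inner bracket is precisely the modulo $\pi^k$ generating function of the slice $A\mapsto f(A,B)$ (with $B$ viewed as a fixed constant), and by the previous step it matches the corresponding bracket for $g$ term by term in $B$. Summing over $B$ gives equality of the full modulo $\pi^k$ generating functions for every $k$, hence strong isospectrality of $f$ and $g$. The main point to be careful about is that one cannot invoke Hensel's Lemma (Lemma~\ref{Gretchen}) globally on $\ring^2$, since the valuation $\min(\val(a+cy),\val(b+cx))$ of the derivative is not constant there; this is exactly why I slice in the variable whose partial derivative $a+cy$ has constant valuation rather than appealing to a single uniformization. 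I would also record the degenerate readings $c=0$ (which forces the all-zero polynomial in the first case and otherwise lands in the second case) and $b=0$, both of which the same arguments cover.
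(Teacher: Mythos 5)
Your proof is correct. In the first case you use exactly the paper's argument: the identity $ax+by+cxy=c(x+b/c)(y+a/c)-ab/c$ is the change of variables $x=u-b/c$, $y=v-a/c$ from the paper's proof, and the observation that an $\ring$-affine substitution permutes $\quotring{k}^2$ for every $k$ is all that is needed. In the second case you take a genuinely different route. The paper disposes of the residual cases by reducing (after the symmetry $x\leftrightarrow y$ and a comparison with $ax+by$ via Lemma \ref{Ludwig}) to Lemma \ref{Lester}, which in turn rests on the Hensel-type uniformity statement of Lemma \ref{Gretchen}; note that this works because once one also arranges $\val(a)\leq\val(b)$, the full derivative $(a+cy,\,b+cx)$ does have constant valuation $\val(a)$ on all of $\ring^2$, so your caveat about Hensel failing globally applies only if one skips that normalization. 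Your alternative -- fixing $B$, observing that the slice $A\mapsto(a+cB)A+bB$ has leading coefficient of exact valuation $\val(a)$ and hence the same coset $a\ring+bB$ and the same generating function (Remark \ref{Otto}) as the slice of $ax+by$, then summing the inner generating functions over $B$ -- is more elementary and self-contained, at the cost of being slightly longer than the paper's one-line appeal to an already-proved lemma. Both arguments are valid; yours has the minor advantage of not needing the auxiliary hypothesis $\val(a)\leq\val(b)$, and you are right to flag the degenerate reading $c=0$ (where $-ab/c$ must be interpreted as $0$, since $\val(a),\val(b)\geq\val(c)=\infty$ forces $a=b=0$), which the paper leaves implicit.
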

\begin{proof}
In the first case, use the change of variables $x=u-b/c$ and $y=v-a/c$, and handle the residual cases with Lemma \ref{Lester}.
\end{proof}
\begin{lemma}\label{Leslie}
Let $\sigma \in \ring$ such that $z^2+ z + \sigma \pmod{\pi}$ is irreducible in $\resfield$ and set $e(x,y)=(x^2+x y+\sigma y^2)$, so that $2 e(x)$ is an elliptic plane.  Let $f(x,y) = a x + b y + c e(x,y)$.
If $\val(a), \val(b) \geq \val(c)$, then $f(x,y)$ is strongly isospectral to $c e(x,y) + \frac{\sigma a^2-a b + b^2}{c(1-4\sigma)}$.
Otherwise, $f(x,y)$ is strongly isospectral to $a x + b y$.
\end{lemma}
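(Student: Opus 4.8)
\emph{Proof proposal.} The plan is to mirror the proof of the preceding hyperbolic lemma (for $c x y$), replacing the simple shift by the translation that completes the square for the elliptic form $e$. Throughout the first case I treat $c\neq 0$ (if $c=0$ the hypothesis $\val(a),\val(b)\geq\val(c)=\infty$ forces $a=b=0$, so $f$ is the zero polynomial and there is nothing to prove). The key preliminary observation is that $1-4\sigma$ is a unit of $\ring$: when $p$ is odd, irreducibility of $z^2+z+\sigma$ over $\resfield$ forces its discriminant $1-4\sigma$ to be a nonzero nonsquare in $\resfield$, hence a unit; when $p=2$ we have $1-4\sigma\equiv 1\pmod{\pi}$, again a unit. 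This is precisely the step where the irreducibility hypothesis is used.

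For the case $\val(a),\val(b)\geq\val(c)$, I would introduce the affine change of coordinates $x\mapsto x+p$, $y\mapsto y+r$ with
\[
p=\frac{2\sigma a-b}{c(1-4\sigma)}, \qquad r=\frac{2b-a}{c(1-4\sigma)}.
\]
Since $a/c,b/c\in\ring$ by hypothesis and $1-4\sigma$ is a unit, we have $p,r\in\ring$, so this is an invertible translation of $\ring^2$; it descends to a bijection of $(\ring/\pi^k\ring)^2$ for every $k$ and therefore preserves strong isospectrality. One checks that $p,r$ solve the linear system $2p+r=-a/c$, $p+2\sigma r=-b/c$ whose coefficient determinant is $4\sigma-1$; these two equations are exactly the conditions that the $x$- and $y$-coefficients of $f(x+p,y+r)$ vanish. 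Hence $f(x+p,y+r)=c\,e(x,y)+K$ with $K=f(p,r)$. A routine completing-the-square computation (equivalently $K=-\tfrac14(a,b)M^{-1}(a,b)^T$, where $M$ is the symmetric matrix of $c\,e$ with $\det M=c^2(4\sigma-1)/4$) yields $K=\frac{\sigma a^2-ab+b^2}{c(1-4\sigma)}$, which is the claimed constant.

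For the remaining case $\min\{\val(a),\val(b)\}<\val(c)$, I would reduce to Lemma \ref{Lester} after choosing the pivot variable to be the one with smaller-valuation coefficient. If $\val(a)\leq\val(b)$, then $\val(a)<\val(c)$, and Lemma \ref{Lester} applied with $x_1=x$, linear form $L=y$, and $P=e(x,y)$ shows $f$ is strongly isospectral to $a x$; since $\val(a)\leq\val(b)$, Lemma \ref{Ludwig} gives that $a x+b y$ is strongly isospectral to $a x$, so $f$ is strongly isospectral to $a x+b y$. If instead $\val(b)<\val(a)$, I would run the same argument with the roles of $x$ and $y$ interchanged; this is legitimate even though $e$ is not symmetric, because Lemma \ref{Lester} permits the pivot variable to occur in $P$. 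This gives $f$ strongly isospectral to $b y$, and then to $a x+b y$ by Lemma \ref{Ludwig}.

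The only genuinely delicate points are verifying that $1-4\sigma$ is a unit in both residue characteristics and carrying out the completing-the-square bookkeeping that produces the stated constant; everything else is a direct citation of Lemmata \ref{Lester} and \ref{Ludwig} together with translation-invariance of strong isospectrality. The main obstacle is ensuring the affine shift lands in $\ring^2$, i.e.\ that the system determinant $1-4\sigma$ is invertible, which is exactly where the irreducibility of $z^2+z+\sigma$ enters.
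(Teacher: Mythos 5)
Your proposal is correct and follows essentially the same route as the paper: the translation you write down, $x\mapsto x+(2\sigma a-b)/(c(1-4\sigma))$, $y\mapsto y+(2b-a)/(c(1-4\sigma))$, is exactly the paper's change of variables (up to the sign convention of the shift), and the residual case is handled by Lemma \ref{Lester} in both arguments. Your added verifications that $1-4\sigma$ is a unit in both residue characteristics and that the constant term comes out to $\frac{\sigma a^2-ab+b^2}{c(1-4\sigma)}$ are accurate fillings-in of details the paper leaves implicit.
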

\begin{proof}
In the first case, use the change of variables $x=u-(b-2\sigma a)/(c(1-4\sigma))$ and $y=v-(a-2 b)/(c(1-4\sigma))$, and handle the residual cases with Lemma \ref{Lester}
\end{proof}

\section{Unimodular Quadratic Forms}\label{Carlos}

Since Theorem \ref{Victor} expresses the $p$-adic generating function and Igusa zeta function of a quadratic polynomial in terms of the $p$-adic generating functions (and their heads) of unimodular quadratic forms, we now analyze what unimodular forms look like.  This will eventually enable us to calculate the entries of Tables \ref{Gary} and \ref{Violet} used by Theorems \ref{Odilia} and \ref{Ursula}.

We set down some basic assumptions and notations that shall hold in Sections \ref{George}--\ref{Leonard}.
We assume the basic facts about quadratic forms presented in Section \ref{Gordon}.  We also recall from Section \ref{Olivia} that when $p$ is odd, $\unitsmodsquares$ is a group of order $2$, and we fix $\alpha\in\ringunits\smallsetminus\ringsquareunits$, and we use the extended character $\eta$ defined in in Section \ref{Olivia}.

When $p=2$, the residue field is perfect and of characteristic $2$, so every element $a$ of $\ring$ is a square modulo $2$, that is, $a\equiv b^2 \pmod{2}$ for some $b \in \ring$.  So $a\Sq$ is always equivalent to $b\Sq$ for some $b\in\ring$ with $b\equiv 1 \pmod{2}$. 
Furthermore, every element of the form $1+4\pi a$ with $a \in R$ is a square (so $\unitsmodsquares$ is finite), but we can (and do) fix a unit $\xi\in R$ such that $1+4 \xi$ is not a square (see \cite[\S 63:1 and 64:4]{O'Meara-1963} for proofs of these facts).
In fact, it is an easy consequence of Hilbert's Theorem 90 that the elements $a \in \ring$ such that $1+4 a$ is square are precisely those such that if $\bar{a} \in \resfield$ is the reduction of $a$ modulo $\pi$, then the absolute trace of $\bar{a}$ is $0$.
When $p=2$ and is unramified in $\ring$, we shall use $\Tr$ to denote the absolute trace $\Tr\colon \field \to \Q_2$, so if $a \in \ring$, then $1+4 a$ is a square if and only if $\Tr(a) \equiv 0 \pmod{2}$.
We use the convention that if $a \in \Z_2$, then $(-1)^a=(-1)^{(a \bmod{2})}$.

Throughout this section, we let $\teich$ be a set of Teichm\"uller representatives for $\resfield$ in $\field$, that is, $T$ contains all the $(q-1)$th roots of unity and $0$.
We let $\teichu=T\smallsetminus\{0\}$.
When $p=2$ and does not ramify in $\ring$, we let $S=\{\tau \in \teich: \Tr(\tau) \equiv 0 \pmod{2}\}$.  Note that $\card{S}=q/2$.

\subsection{Unimodular Quadratic Forms of Rank $1$}\label{George}

A unimodular quadratic form of rank $1$ is $u \Sq$ for some $u \in\ringunits$.
Let us examine the $p$-adic generating functions for these quadratic forms when $p$ is odd and in the unramified $2$-adic case.
\begin{lemma}\label{Beatrice}
For $p$ odd and $a \in \ringunits$, the head of the $p$-adic generating function of $a\Sq$ is 
\[
H_{a \Sq}(z)=z^\ring- \frac{1}{q} z^{\unif\ring} + \frac{1}{q} \sum_{\tau \in \teich} \eta(a \tau) z^{\tau+\unif\ring},
\]
and the $p$-adic generating function satisfies $G_{a \Sq}(z)=H_{a \Sq}(z) + \frac{1}{q} G_{a \Sq}(z^{\unif^2})$.
\end{lemma}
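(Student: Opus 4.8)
The plan is to read off the head directly from Corollary \ref{Hortense} and then match coefficients against the claimed expression, one coset of $\unif\ring$ at a time. Since $p$ is odd we have $\ell=\val(2)=0$, and $a\Sq=ax^2$ is a unimodular form of rank $n=1$. Taking $\teich$ as the set $W$ of representatives for the cosets of $\unif\ring$ in Corollary \ref{Hortense} (which has exactly $q=q^{\ell+1}$ elements), the corollary gives
\[
H_{a\Sq}(z)=\frac{1}{q}\sum_{\tau\in\teichu} z^{a\tau^2+\unif\ring},
\]
the sum running over the nonzero Teichm\"uller representatives, since $(a_1)\in W$ with $a_1\not\equiv 0\pmod{\unif}$ means $a_1\in\teichu$, and $2\ell+1=1$.

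First I would compute, for each $\nu\in\teich$, the coefficient of $z^{\nu+\unif\ring}$ in this sum. That coefficient is $\tfrac{1}{q}$ times the number of $\tau\in\teichu$ with $a\tau^2\equiv\nu\pmod{\unif}$, i.e.\ with $\bar\tau^2=\bar\nu/\bar a$ in $\resfield$. For $\nu=0$ there is no such $\tau$, so the coefficient of $z^{\unif\ring}$ is $0$. For $\nu\in\teichu$, the squaring map $\tau\mapsto\bar\tau^2$ is two-to-one onto the nonzero squares of $\resfieldunits$ (as $q$ is odd), so there are exactly two solutions when $\bar\nu/\bar a$ is a square and none otherwise. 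Since $\eta(a\nu)=\eta(\nu/a)$ detects precisely whether $\nu/a$ is a square, the coefficient of $z^{\nu+\unif\ring}$ equals $\tfrac{2}{q}$ when $\eta(a\nu)=1$ and $0$ when $\eta(a\nu)=-1$; compactly, it is $\tfrac{1}{q}\bigl(1+\eta(a\nu)\bigr)$ for every $\nu\in\teichu$.

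Next I would expand the claimed right-hand side. By coalescence (Remark \ref{Anne}), from $\ring=\bigsqcup_{\nu\in\teich}(\nu+\unif\ring)$ we get $z^\ring=\tfrac{1}{q}\sum_{\nu\in\teich} z^{\nu+\unif\ring}$, so the claimed expression becomes $\tfrac{1}{q}\sum_{\nu\in\teich} z^{\nu+\unif\ring}-\tfrac{1}{q}z^{\unif\ring}+\tfrac{1}{q}\sum_{\tau\in\teich}\eta(a\tau)z^{\tau+\unif\ring}$. Reading off the coefficient of $z^{\nu+\unif\ring}$: for $\nu=0$ it is $\tfrac{1}{q}-\tfrac{1}{q}+\tfrac{1}{q}\eta(0)=0$ (using the convention $\eta(0)=0$), and for $\nu\in\teichu$ it is $\tfrac{1}{q}\bigl(1+\eta(a\nu)\bigr)$. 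These agree term-by-term with the coefficients found above, which establishes the first formula.

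Finally, the relation $G_{a\Sq}(z)=H_{a\Sq}(z)+\tfrac{1}{q}G_{a\Sq}(z^{\unif^2})$ is immediate from Lemma \ref{Heinrich} applied to the homogeneous polynomial $ax^2$ of degree $d=2$ in $n=1$ variable: that lemma yields $G(z)=H(z)+\tfrac{1}{q^n}G(z^{\unif^d})$, which is exactly what is required. I expect the only genuine bookkeeping to be the square-counting via the quadratic character $\eta$ in the coefficient comparison (and remembering the $\eta(0)=0$ convention for the $\nu=0$ coset); every other step is a direct appeal to Corollary \ref{Hortense}, Remark \ref{Anne}, and Lemma \ref{Heinrich}.
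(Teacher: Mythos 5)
Your proof is correct and follows essentially the same route as the paper's: Corollary \ref{Hortense} for the head, counting solutions of $a\tau^2\equiv\nu\pmod{\unif}$ via the quadratic character $\eta$, coalescence of cosets (Remark \ref{Anne}) to match the stated form, and Lemma \ref{Heinrich} for the recursion. The only cosmetic difference is that you verify the identity by expanding the claimed expression and comparing coefficients coset by coset, whereas the paper collects the sum $\frac{1}{q}\sum_{\tau\in\teichu}(1+\eta(a\tau))z^{\tau+\unif\ring}$ into the stated form directly; these are the same computation.
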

\begin{proof}
From Corollary \ref{Hortense} and the fact that $a x^2 \pmod{\pi}$ represents each element of $a \resfieldsquareunits$ twice as $x$ runs through $\teichu$, we deduce that the head is $(1/q)\sum_{\tau \in \teichu} (1+\eta(a\tau)) z^{\tau+\pi\ring}$, and coalesce cosets (see Remark \ref{Anne}) to obtain the desired form.  Then Lemma \ref{Heinrich} gives the relation for $G_{a\Sq}$.
\end{proof}
\begin{lemma}\label{Diana}
Suppose that $p=2$ and that $\field$ is unramified.  If $a \in \ringunits$, then the head of the $p$-adic generating function of $a \Sq$ is
\[
H_{a \Sq}(z) = \frac{2}{q^2} \sums{\tau \in \teichu \\ s \in S} z^{a \tau (1+4 s)+ 8 \ring}.
\]
The $p$-adic generating function satisfies $G_{a \Sq}(z)=H_{a \Sq}(z) + \frac{1}{q} G_{a \Sq}(z^{\unif^2})$.
\end{lemma}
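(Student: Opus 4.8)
The head formula will come straight from Corollary \ref{Hortense}. Since $p=2$ is unramified we have $\pi=2$ and $\ell=\val(2)=1$, so applying that corollary to the rank-one form $a\Sq=a x^2$ (with $2\ell+1=3$ and $\pi^{\ell+1}=4$) gives
\[
H_{a\Sq}(z)=\frac{1}{q^2}\sums{w \in W \\ w\not\equiv 0\pmod{2}} z^{a w^2+8\ring},
\]
where $W$ is a set of $q^2$ representatives for $\ring/4\ring$. The whole statement then reduces to an identity of multisets of cosets modulo $8$: I must show that as $w$ ranges over the $q^2-q$ units of $\ring/4\ring$, the coset $a w^2+8\ring$ runs through $\{a\tau(1+4s)+8\ring : \tau\in\teichu,\ s\in S\}$, hitting each such coset exactly twice. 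A term count is already reassuring, since there are $q^2-q$ units while $2\cdot(q-1)\cdot(q/2)=q^2-q$ once the factor $2$ is included.

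For the parametrization, every unit of $\ring/4\ring$ is uniquely $w=\tau(1+2u)$ with $\tau\in\teichu$ the Teichm\"uller lift of its residue and $u$ ranging over $\ring/2\ring\cong\F_q$. Squaring gives the exact identity $w^2=\tau^2\bigl(1+4(u+u^2)\bigr)$, so modulo $8$ only $\tau^2$ and the residue of $u+u^2$ matter. Because Frobenius $x\mapsto x^2$ is a bijection of $\F_q$, the map $\tau\mapsto\tau^2$ is a bijection of $\teichu$, and I may replace $\tau^2$ by a variable again called $\tau$ ranging over all of $\teichu$.

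The crux is the Artin--Schreier computation governing the factor $1+4(u+u^2)$. As $u$ ranges over $\F_q$, the map $\wp(u)=u+u^2$ is two-to-one (its kernel is $\F_2$) onto the subgroup of trace-zero elements of $\F_q$. Here I would invoke that $\field$ is unramified to identify the defining condition $\Tr(s)\equiv 0\pmod{2}$ for $s\in S$ with the vanishing of the residue-field trace $\Tr_{\F_q/\F_2}$, so that the image of $\wp$ is exactly the set of residues of $S$. This compatibility of the absolute $2$-adic trace with the residue trace in the unramified case is the one point that needs genuine care; everything else is bookkeeping. Granting it, $1+4(u+u^2)\equiv 1+4s\pmod{8}$ for a unique $s\in S$, and each $s$ arises from exactly two values of $u$, which supplies both the sum over $S$ and the multiplicity $2$.

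To finish, I would verify that the map $(\tau,s)\mapsto a\tau(1+4s)+8\ring$ is injective (reduce modulo $2$ to pin down $\tau$, then compare the coefficients of the $4(\,\cdot\,)$ terms to pin down $s$), so the counted cosets are genuinely distinct; dividing the displayed sum by $q^2$ then yields exactly the claimed expression for $H_{a\Sq}(z)$, with no coalescence needed since all cosets already sit at modulus $8$. The relation $G_{a\Sq}(z)=H_{a\Sq}(z)+\frac{1}{q}G_{a\Sq}(z^{\unif^2})$ is immediate from Lemma \ref{Heinrich} applied to the degree-$2$ form $a x^2$ in $n=1$ variable, where $1/q^n=1/q$ and $z^{\pi^d}=z^{\unif^2}$.
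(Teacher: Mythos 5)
Your argument is correct and follows essentially the same route as the paper: both apply Corollary \ref{Hortense} with $\ell=1$, write each unit representative modulo $4$ as a Teichm\"uller representative times $1+2u$ so that squaring yields $\tau^2\bigl(1+4(u+u^2)\bigr)$, and then use the bijectivity of Frobenius on $\teichu$ together with the two-to-one Artin--Schreier map onto the trace-zero elements to land on the sum over $\teichu\times S$ with multiplicity $2$, finishing with Lemma \ref{Heinrich} for the recursion. The extra remarks you add (the compatibility of the absolute trace with the residue-field trace in the unramified case, and the injectivity of $(\tau,s)\mapsto a\tau(1+4s)+8\ring$) are sound but do not change the substance of the argument.
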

\begin{proof}
By Corollary \ref{Hortense}, we have
\[
H_{a \Sq}(z)=\frac{1}{q^2} \sums{\tau_0 \in \teichu \\ \tau_1 \in \teich} z^{a(\tau_0+2 \tau_1)^2+8\ring},
\]
and then note that $(\tau_0+2 \tau_1)^2 = \tau_0^2(1+4(r+r^2))$ where $r=\tau_1/\tau_0$.  As $(\tau_0,\tau_1)$ runs through $\teichu \times \teich$, we note that $(\tau_0^2,r)$ runs through $\teichu \times \teich$, and as $r$ runs through $\teich$, we note that $r^2+r \pmod{2}$ runs through $S$, necessarily taking each value twice (since a quadratic polynomial cannot take any value more than twice, and there are $q/2$ values of trace zero).  Thus as $(\tau_0,\tau_1)$ runs through $\teichu \times \teich$, $(\tau_0+2 \tau_1)^2 \pmod{8}$ becomes congruent to each $\tau(1+4 s)$ with $(\tau,s) \in \teichu \times S$ two times.
\end{proof}

\subsection{Unimodular Quadratic Forms of Rank $2$}\label{Jules}

Let us first compute the $p$-adic generating function for hyperbolic and elliptic planes.
\begin{lemma}\label{Norman}
The $p$-adic generating function of the hyperbolic plane satisfies
\[
G_{\Hyp}(z) = \left(1-\frac{1}{q}\right) z^{2 \ring} + \frac{1}{q} G_{\Hyp}(z^\unif).
\]
The head of the $p$-adic generating function is
\[
H_{\Hyp}(z) = \left(1-\frac{1}{q}\right) \left(z^{2 \ring}+\frac{1}{q} z^{2\unif\ring}\right),
\]
and the $p$-adic generating function satisfies $G_{\Hyp}(z)=H_{\Hyp}(z)+\frac{1}{q^2} G_{\Hyp}(z^{\unif^2})$.
When $p$ is odd, then $2 \ring=\ring$, so the above instances of $2 R$ and $2 \pi R$ may be replaced with $R$ and $\pi R$, respectively.
\end{lemma}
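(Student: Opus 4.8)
The plan is to obtain the first relation by conditioning on whether the first variable of $\Hyp(x,y)=2xy$ is a unit, then to deduce the head from this relation together with the homogeneity recursion of Lemma \ref{Heinrich}, and finally to read off the third relation directly from that same lemma. The single fact driving everything is that for a fixed unit $u\in\ringunits$ the map $w\mapsto 2uw$ is a bijection from $\ring$ onto $2\ring$, so $2uw$ is uniformly distributed over $2\ring$ as $w$ runs through $\ring$; in generating-function terms this uniform distribution is exactly $z^{2\ring}$.

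First I would partition $\ring^2$ into the $\unif$-regular regions $\ringunits\times\ring$ and $\unif\ring\times\ring$, of Haar volumes $1-1/q$ and $1/q$. On the first region the observation above (with $u=x$, $w=y$) shows that $2xy$ is uniform over $2\ring$, so the partial $p$-adic generating function there is $(1-1/q)z^{2\ring}$. On the second region I would substitute $x=\unif x'$, so that $2xy=\unif(2x'y)=\unif\,\Hyp(x',y)$ with $(x',y)$ ranging over all of $\ring^2$; by the reparameterization bookkeeping used in the proof of Lemma \ref{Heinrich}, the partial generating function on this region is $\tfrac1q$ times the full generating function of $\unif\,\Hyp$, namely $\tfrac1q G_{\Hyp}(z^\unif)$. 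Summing the two pieces gives the first relation.

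To obtain the head I would apply the scaling operation $z\mapsto z^\unif$ to the first relation; this is legitimate because scaling is $\C$-linear and multiplicative, and because $G_{\Hyp}(z^\unif)$ is the generating function of $\unif\,\Hyp$, so scaling it once more yields the generating function $G_{\Hyp}(z^{\unif^2})$ of $\unif^2\,\Hyp$. This produces $G_{\Hyp}(z^\unif)=(1-1/q)z^{2\unif\ring}+\tfrac1q G_{\Hyp}(z^{\unif^2})$, which I substitute back into the first relation to get
\[
G_{\Hyp}(z)=(1-1/q)\left(z^{2\ring}+\tfrac1q z^{2\unif\ring}\right)+\tfrac{1}{q^2}G_{\Hyp}(z^{\unif^2}).
\]
Since $\Hyp$ is homogeneous of degree $d=2$ in $n=2$ variables, Lemma \ref{Heinrich} gives $G_{\Hyp}(z)=H_{\Hyp}(z)+\tfrac{1}{q^2}G_{\Hyp}(z^{\unif^2})$, which is simultaneously the third relation of the lemma; subtracting the common term $\tfrac{1}{q^2}G_{\Hyp}(z^{\unif^2})$ from the displayed identity then forces $H_{\Hyp}(z)=(1-1/q)(z^{2\ring}+\tfrac1q z^{2\unif\ring})$. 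The final remark is immediate: when $p$ is odd one has $\val(2)=0$, so $2$ is a unit and $2\ring=\ring$, $2\unif\ring=\unif\ring$.

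The step most in need of care is the claim that the partial generating function on $\unif\ring\times\ring$ equals $\tfrac1q G_{\Hyp}(z^\unif)$: one must verify at each finite level $k$ that reparameterizing $x=\unif x'$ matches the counts and the normalization of the modulo $\unif^k$ generating function of $\unif\,\Hyp$, exactly as in the reparameterization argument of Lemma \ref{Heinrich}, and then pass to the inverse limit. The uniformity claims themselves become routine once phrased over $\quotring{k}$, where multiplication by a unit is a bijection. As an alternative that sidesteps the scaling identity, the head could instead be computed directly as the partial generating function on $\ring^2\smallsetminus(\unif\ring)^2$, splitting that region into $\ringunits\times\ring$ and $\unif\ring\times\ringunits$ and applying the same uniformity observation to each; this duplicates the conditioning computation but is entirely elementary.
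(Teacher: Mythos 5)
Your proposal is correct and follows essentially the same route as the paper: condition on whether the first variable is a unit, reparameterize $x=\unif x'$ on the non-unit region to produce the $\tfrac1q G_{\Hyp}(z^\unif)$ term, iterate the resulting relation once, and invoke Lemma \ref{Heinrich} to identify the head and the third relation. The only cosmetic difference is that the paper computes the unit-region contribution by applying Lemma \ref{Gretchen} to the form $x y$ and scaling by $2$ at the very end, whereas you work with $2 x y$ directly and observe that $2 u y$ is uniformly distributed over $2\ring$; both yield $(1-1/q)z^{2\ring}$.
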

\begin{proof}
Let us consider the partial generating function for $x y$ on the region $S=(\ring\smallsetminus \unif\ring)\times \ring$, that is, where $x$ is a unit.
Then the derivative of $x y$ always has valuation $0$, and so we may use Lemma \ref{Gretchen} to show that the partial $p$-adic generating function on $S$ is $\frac{q-1}{q^2} \sum_{\tau \in T} z^{\tau+\unif\ring} = \frac{q-1}{q} z^\ring$.

Now consider $x y$ on the region $S^c = \ring^2\smallsetminus S$, that is, where $x$ is a multiple of $\pi$, and suppose we are interested in the partial modulo $\pi^k$ generating function of $x y$ on $S^c$.
We write $x = \pi \widetilde{x}$, and count the number of times $(\pi \widetilde{x}) y$ takes each value in $\quotring{k}$ as $\widetilde{x}$ runs through $\quotring{k-1}$ and $y$ runs through $\quotring{k}$, which is $q$ times the number of times $\pi(\widetilde{x} y)$ takes each value in $\quotring{k}$ as $\widetilde{x}$ and $y$ run through $\quotring{k-1}$.
Thus if the modulo $\pi^{k-1}$ generating function for $\widetilde{x} y$ is $F_{k-1}(\gamma)$, then the partial modulo $\pi^k$ generating function for $x y$ on $S^c$ is $\frac{1}{q} F_{k-1}(\gamma^{\pi})$.  (The $\gamma^\pi$ means we should replace each term $\gamma^A$ where $A \in \groupring{k-1}$ with $\gamma^{\pi A}$.  And the $\frac{1}{q}$ comes about, since we get the counting factor of $q$ just mentioned, but also pick up two factors of $q$ in the denominator due to our choice of normalization of modular generating functions.)  Taking limits, we see if the $p$-adic generating function for $\widetilde{x} y$ is $G(z)$, then the partial $p$-adic generating function for $x y$ on $S^c$ is $\frac{1}{q} G(z^\pi)$.  But of course $\widetilde{x} y$ and $x y$ represent the same form $G(z)$, so if we add the partial $p$-adic generating functions of $x y$ on $S$ and on $S^c$, then we see that the $p$-adic generating function of $x y$ satisfies $G(z)=\frac{q-1}{q} z^\ring + \frac{1}{q} G(z^\unif)$.  Now scale by $2$ to get the first relation we were to prove for the $p$-adic generating function of $\Hyp$.

We apply this relation to its own second term to get
\[
G_{\Hyp}(z) = \left(1-\frac{1}{q}\right) \left(z^{2 \ring}+\frac{1}{q} z^{2\pi\ring}\right) + \frac{1}{q^2} G_{\Hyp}(z^{\unif^2}),
\]
which shows that the head of the $p$-adic generating is exactly what we claim it to be by Lemma \ref{Heinrich}.
\end{proof}
Recall that whenever $f(X)$ is a quadratic polynomial over $\ring$ whose reduction modulo $\pi$ is an irreducible quadratic polynomial over $\resfield$, the rank $2$ form $2 y^2 f(x/y)$ is called the elliptic plane $\Ell$.  All such forms are equivalent, regardless of the choice of $f$.  Some standard forms for $\Ell$ are $x^2-\alpha y^2$ when $p$ is odd and $2(x^2+x y - \xi y^2)$ when $p=2$, where $\alpha$ and $\xi$ are as defined at the beginning of Section \ref{Carlos}.
We now compute the $p$-adic generating function for an elliptic plane.
\begin{lemma}\label{Ellen}
The $p$-adic generating function of the elliptic plane satisfies
\[
G_{\Ell}(z) = \left(1+\frac{1}{q}\right) z^{2 \ring} - \frac{1}{q} G_{\Ell}(z^\unif).
\]
The head of the $p$-adic generating function is
\[
H_{\Ell}(z) = \left(1+\frac{1}{q}\right) \left(z^{2 \ring}-\frac{1}{q} z^{2\unif\ring}\right),
\]
and the $p$-adic generating function satisfies $G_{\Ell}(z)=H_{\Ell}(z)+\frac{1}{q^2} G_{\Ell}(z^{\unif^2})$.
When $p$ is odd, then $2 \ring=\ring$, so the above instances of $2 R$ and $2 \pi R$ may be replaced with $R$ and $\pi R$, respectively.
\end{lemma}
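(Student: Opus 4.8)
The plan is to establish the three displayed assertions in the same order and by the same template as Lemma~\ref{Norman}, exploiting that the elliptic plane is \emph{anisotropic} exactly where the hyperbolic plane is isotropic; this single difference is what flips the sign from $+1/q$ to $-1/q$. Since $\Ell$ is homogeneous of degree $2$ in $2$ indeterminates and is unimodular, the third relation $G_{\Ell}(z)=H_{\Ell}(z)+\frac{1}{q^2}G_{\Ell}(z^{\unif^2})$ is immediate from Lemma~\ref{Heinrich} with $d=2$ and $n=2$, and needs no separate argument.

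Next I would compute the head via Corollary~\ref{Hortense}. The head is the partial $p$-adic generating function of $\Ell$ on $\ring^2\smallsetminus(\unif\ring)^2$, and on that region the derivative of the unimodular form $\Ell$ has valuation exactly $\ell=\val(2)$, so Corollary~\ref{Hortense} applies and shows the head is determined by the values of $\Ell$ modulo $\unif^{2\ell+1}$ there. The crux is to show that these values are equidistributed over the elements of valuation exactly $\ell$, i.e. uniform on $2\ring\smallsetminus 2\unif\ring$ with total mass $1-1/q^2$; coalescing cosets (Remark~\ref{Anne}) then repackages this as $H_{\Ell}(z)=\left(1+\frac1q\right)\left(z^{2\ring}-\frac1q z^{2\unif\ring}\right)$. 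For odd $p$ (so $\ell=0$) this equidistribution is standard: the reduction of $\Ell$ modulo $\unif$ is the norm form of the residue extension, so it represents each nonzero residue exactly $q+1$ times and $0$ once (the norm $\F_{q^2}^{*}\to\resfield^{*}$ is surjective with fibers of size $q+1$); summing over $\teichu$ and coalescing gives the stated head. For $p=2$ unramified one argues exactly as in Lemma~\ref{Diana}, writing $\Ell=2e(x,y)$ with $e=x^2+xy-\xi y^2$, expanding $e$ on units modulo $4$, and tracking the absolute trace to see that $\Ell$ meets each valuation-$1$ class modulo $8$ equally often.

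Finally I would deduce the first relation from the head by telescoping. Iterating the third relation gives $G_{\Ell}(z)=\sum_{m\ge 0}q^{-2m}H_{\Ell}(z^{\unif^{2m}})$ in $\limgroupring$, the tail $q^{-2M}G_{\Ell}(z^{\unif^{2M}})$ vanishing in every coordinate $\phi_k$ because $G_{\Ell}(1)=1$. Combining this with the same series for $G_{\Ell}(z^\unif)$ and merging even and odd indices yields $G_{\Ell}(z)+\frac1q G_{\Ell}(z^\unif)=\sum_{j\ge 0}q^{-j}H_{\Ell}(z^{\unif^{j}})$; substituting $H_{\Ell}(z^{\unif^{j}})=\left(1+\frac1q\right)\left(z^{2\unif^{j}\ring}-\frac1q z^{2\unif^{j+1}\ring}\right)$ makes the sum telescope, leaving only the $j=0$ head term $\left(1+\frac1q\right)z^{2\ring}$. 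This is precisely the first relation $G_{\Ell}(z)=\left(1+\frac1q\right)z^{2\ring}-\frac1q G_{\Ell}(z^\unif)$. The remark about replacing $2R$ and $2\unif R$ by $R$ and $\unif R$ for odd $p$ is just the identity $2\ring=\ring$ there.

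The one genuinely nonroutine step is the head computation when $p=2$: proving that the anisotropic plane distributes its values uniformly over the valuation-$1$ residues modulo $8$. I expect this to be the main obstacle, since it requires the same delicate trace bookkeeping (governing when $1+4(\cdot)$ is a square) that underlies Lemma~\ref{Diana}; for odd $p$ the corresponding count falls out at once from the surjectivity of the residue-field norm.
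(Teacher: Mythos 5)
Your outline of the third relation (immediate from Lemma \ref{Heinrich}) and of the first relation (iterate the recursion and telescope against the head) is sound and close in spirit to the paper, which packages the telescoping as the identity $I(z)=\frac{1}{q^2}I(z^{\unif^2})$ for $I(z)=G_{\Ell}(z)+\frac{1}{q}G_{\Ell}(z^\unif)-\left(1+\frac{1}{q}\right)z^{2\ring}$ and then kills $\phi_{2k}(I)$ by a finite coordinate-wise argument; your infinite-sum version needs a word about convergence of the tails $q^{-2M}G_{\Ell}(z^{\unif^{2M}})$ in each $\groupring{k}$, but that is easily supplied.

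The genuine gap is in the head computation. You apply Corollary \ref{Hortense} to $\Ell$ itself, whose derivative has valuation $\ell=\val(2)$ on the head region, so for $p=2$ you are forced to determine the values of $\Ell$ modulo $\unif^{2\ell+1}$ there --- a computation you explicitly defer (``the main obstacle''), propose to do only in the unramified case by trace bookkeeping as in Lemma \ref{Diana}, and never address for ramified $2$-adic fields, even though the lemma is asserted for every $p$-adic field. The paper sidesteps all of this: it computes the head of $\frac{1}{2}\Ell=y^2f(x/y)$, whose derivative has valuation $0$ on $\ring^2\smallsetminus(\unif\ring)^2$ because the reduction of $f$ is an irreducible quadratic (so in particular its linear coefficient is a unit when $p=2$). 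Lemma \ref{Gretchen} with $j=0$ then says the partial generating function is $\pi$-uniform, so only the values modulo $\unif$ matter, and these are exactly the values of the residue-field norm form $\beta N(\delta-\epsilon\theta)$, which sweeps out $\resfieldunits$ with multiplicity $q+1$ for \emph{every} residue characteristic and every ramification. Scaling the resulting head $\frac{q+1}{q^2}\sum_{\tau\in\teichu}z^{\tau+\unif\ring}$ by $2$ gives $H_{\Ell}(z)$ in all cases at once. You should adopt this reduction: as written, your proposal proves the lemma only for odd $p$ and leaves the $p=2$ cases (the ones the lemma is chiefly needed for in Section \ref{Caesar}) unestablished.
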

\begin{proof}
Let us compute the head of the $p$-adic generating function for $\frac{1}{2}\Ell$, whose derivative has valuation $0$ on $\ring^2\smallsetminus (\pi\ring)^2$ (see proof of Corollary \ref{Hortense}).
Suppose that $f(X)$ is a quadratic polynomial over $\ring$ whose reduction modulo $\pi$, say $g(X)$, is an irreducible quadratic polynomial over $\resfield$.
Then express $\frac{1}{2}\Ell$ as $Q(x,y)=y^2 f(x/y)$.
Let $\theta$ and $\bar{\theta}$ be the roots of $g(x)$ in $\F_{q^2}$, and then note that $y^2 g(x/y)=\beta(x-\theta y)(x-\bar{\theta} y)$ for some $\beta \in \resfieldunits$.
Now suppose that $d,e \in \ring$ reduce modulo $\pi$ to $\delta,\epsilon \in \resfield$ respectively.
Then $Q(d,e) \pmod{\pi} = \beta N(\delta-\epsilon\theta)$, where $N$ is the Galois-theoretic norm from $\F_{q^2}$ to $\F_q$.
Furthermore, as $(d,e)$ runs through a set of representatives modulo $\pi$ of $\ring^2\smallsetminus(\pi\ring)^2$, our $(\delta,\epsilon)$ runs through $\F_q^2\smallsetminus\{(0,0)\}$, and so $\delta-\epsilon\theta$ runs through $\F_{q^2}^*$, and so $N(\delta-\epsilon\theta)$ runs through $\resfieldunits$, taking each value $q+1$ times.
Thus, by Lemma \ref{Gretchen}, we have $H_{\frac{1}{2}\Ell}(z)=\frac{q+1}{q^2} \sum_{\tau \in \teichu} z^{\tau+\pi\ring}$, and then we coalesce cosets and scale by $2$ to get the claimed expression for $H_{\Ell}(z)$.

The recursive expression $G_{\Ell}(z)=H_{\Ell}(z)+\frac{1}{q^2} G_{\Ell}(z^{\unif^2})$ comes immediately from Lemma \ref{Heinrich}.
Now set $I(z)=G_{\Ell}(z)+\frac{1}{q}G_{\Ell}(z^\unif)-\left(1+\frac{1}{q}\right) z^{2\ring}$, and use the recursion to obtain
\[
I(z) = H_{\Ell}(z) +\frac{1}{q^2} G_{\Ell}(z^{\unif^2})+\frac{1}{q} H_{\Ell}(z^\unif) + \frac{1}{q^3} G_{\Ell}(z^{\unif^3}) - \left(1+\frac{1}{q}\right) z^{2\ring},
\]
which is seen to be $\frac{1}{q^2} I(z^{\unif^2})$ when one substitutes the value of $H_{\Ell}(z)$.
Thus $I(1)=0$, and furthermore, we can iterate this relation to see that $I(z)=\frac{1}{q^{2 k}} I(z^{\unif^{2 k}})$ for every $k \in \N$.
Now if we apply $\phi_{2 k}$ to $I(z)$ to get an element of $\groupring{2 k}$, the fact that $I(z)$ is equal to $\frac{1}{q^{2 k}} I(z^{\pi^{2 k}})$ shows that the only term $\gamma^{\coset{\tau}{2 k}}$ of $\phi_{2 k}(I(z))$ that could have a nonzero coefficient would be $\gamma^{\unif^{2 k}\ring}$.  But the coefficient for this must also be zero since $I(1)=0$.  So $\phi_{2 k}(I(z))=0$ for every $k \in\N$, so $I(z)=0$.
This proves the first claim of our lemma.
\end{proof}
\begin{remark}\label{Gabriel}
We can repeatedly apply $G_{\Ell}(z)=H_{\Ell}(z)+\frac{1}{q^2} G_{\Ell}(z^{\unif^2})$ and use the value of $H_{\Ell}(z)$ from Lemma \ref{Ellen} to see that for each $k \in \N$, we have
\[
G_{\Ell}(z)=\sum_{i=0}^{k-1} \frac{1}{q^{2 i}} \left(1+\frac{1}{q}\right) \left(z^{2\unif^{2 i} \ring}-\frac{1}{q} z^{2\unif^{2 i+1}\ring}\right) + \frac{1}{q^{2 k}} G_{\Ell}(z^{\unif^{2 k}}).
\]
This shows that $\Ell$ represents precisely the elements $r \in \ring$ whose valuation has the same parity as $\val(2)$ (and also $\Ell$ represents $0$, but only trivially).
\end{remark}

When $p$ is odd, there are only two unimodular forms of rank $2$ in $R[x,y]$ up to equivalence (see \cite[\S 92:1a]{O'Meara-1963}), and these are evidently $\Hyp$ and $\Ell$ (these having different discriminants).

For the rest of this section, we assume that $p=2$.
By \cite[\S 93:17]{O'Meara-1963}, an arbitrary rank $2$ form in $R[x,y]$ is equivalent to a form $\Pl(a,b)$ given by $a x^2 + 2 x y - b a^{-1} y^2$, where $a,b \in R$ with $\val(a) \leq \min\{\val(2),\val(b)/2\}$, and $b$ equal to either $0$, or $4 \xi$, or else $b$ is an element with $\val(b)$ odd and less than $\val(4)$.
Note that the discriminant of $\Pl(a,b)$ is $-(1+b)$ and its norm is the ideal $a \ring$.
The restrictions we place on $a$ and $b$ stem from $a$ being the {\it norm generator} and $b$ generating a {\it quadratic defect}, which are discussed in \cite[\S 93:3, \S 63A]{O'Meara-1963}.

In the special case where $b=0$ and $\val(a)=\val(2)$, a simple coordinate transform shows that $\Pl(a,b)$ is equivalent to the hyperbolic plane.
Similarly, when $b=4\xi$ and $\val(a)=\val(2)$, it is not hard to show that $\Pl(a,b)$ is an elliptic plane.
If $a$ is a unit, we may complete the square to show that $\Pl(a,b)$ is equivalent to $a x^2 + c y^2$ for some unit $c$.
Note that $\Hyp$ and $\Ell$ have norm $2 R$ and $a x^2+ c y^2$ with $\val(a)=0$ has norm $R$.  All other $\Pl(a,b)$ have norm strictly between $\ring$ and $2\ring$.

When $K$ is unramified, our constraints on $a$ and $b$ then show that $\Pl(a,b)$ is either $\Hyp$ (when $\val(a)=1$ and $b=0$), $\Ell$ (when $\val(a)=1$ and $b=4\xi$), or of the form $a x^2+c y^2$ with $a, c\in \ringunits$ (when $\val(a)=0$).  The generating functions of the first two forms have been calculated explicitly above, and it will be useful to have an explicit calculation of the third.
\begin{lemma}\label{Earl}
Suppose that $p=2$ and does not ramify in $\ring$.  Let $a, b \in \ringunits$ with $a\equiv b \equiv 1 \pmod{2}$.
  
If $4 \mid a+b$, let $\sigma =(-1)^{\Tr((a+b)/(4 a))}$, and then the head of the $p$-adic generating function for $a \Sq \oplus b \Sq$ is
\[
H_{a \Sq\oplus b \Sq}(z)=z^\ring-\frac{1}{q} z^{2\ring} + \frac{q-1-\sigma}{q^2} z^{4 \ring} + \frac{\sigma}{q^2} z^{8 \ring}.
\]

If $4 \nmid a+b$, then the head of the $p$-adic generating function for $a \Sq \oplus b \Sq$ is
\[
H_{a \Sq\oplus b \Sq}(z) = \frac{2}{q^2} \sums{\tau \in \teichu \\ s \in S} z^{\tau\left(a + \frac{4}{a+b} s\right) + 4 \ring} + \frac{2}{q^3} \sums{\tau\in\teichu \\ s \in S} z^{\tau \left(a+b+4 s\right)+8\ring}.
\]
In all cases, the $p$-adic generating function satisfies
\[
G_{a \Sq\oplus b\Sq}(z)=H_{a \Sq\oplus b\Sq}(z) + \frac{1}{q^2} G_{a \Sq\oplus b\Sq}(z^{\unif^2}).
\]
\end{lemma}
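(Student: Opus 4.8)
The plan is to start from Corollary \ref{Hortense}, specialized to the rank $2$ form $Q(x,y)=ax^2+by^2$ over an unramified field, where $\ell=\val(2)=1$ and $\unif=2$. This expresses the head as
\[
H_{a\Sq\oplus b\Sq}(z)=\frac{1}{q^4}\sums{(d,e)\in W^2 \\ (d,e)\not\equiv(0,0)\pmod{2}} z^{\coset{ad^2+be^2}{3}},
\]
where $W$ is a set of $q^2$ representatives for the cosets of $4\ring$. First I would parametrize each representative by Teichm\"uller digits, writing $d=d_0+2d_1$ and $e=e_0+2e_1$ with $d_0,d_1,e_0,e_1\in\teich$, so that the excluded case $(d,e)\equiv(0,0)\pmod 2$ is exactly $d_0=e_0=0$. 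The whole computation then reduces to determining the coset of $ad^2+be^2$ modulo $8$, and its valuation, as the digits vary. The essential identity, already used in Lemma \ref{Diana}, is $(\tau_0+2\tau_1)^2=\tau_0^2(1+4(r+r^2))$ with $r=\tau_1/\tau_0$ when $\tau_0\neq 0$, together with the fact that $r+r^2 \pmod 2$ ranges over the trace-zero set $S$, hitting each value exactly twice.

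The core of the argument is a case analysis on the leading digits $d_0,e_0$. When exactly one of them vanishes, the value $ad^2+be^2$ is a unit whose class is pinned down by the squared-digit identity; this gives valuation-$0$ contributions. When both $d_0,e_0$ are nonzero, reduction modulo $2$ gives $ad^2+be^2\equiv(d_0+e_0)^2 \pmod 2$, so the value is again a unit \emph{unless} $d_0=e_0$, in which case cancellation forces a descent to higher valuation. Here the dichotomy between $4\mid a+b$ and $4\nmid a+b$ enters: setting $d_0=e_0=\tau$ gives $ad^2+be^2\equiv(a+b)\tau^2 \pmod 4$, so $\val(ad^2+be^2)=1$ precisely when $4\nmid a+b$, and otherwise the value is divisible by $4$ and must be examined modulo $8$. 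I expect this last regime --- both leading digits equal and $4\mid a+b$ --- to be the main obstacle, since there one must expand
\[
ad^2+be^2=4\tau^2\left[\tfrac{a+b}{4}+(r_d+r_d^2)+(r_e+r_e^2)\right]\pmod 8
\]
and sort the pairs $(r_d,r_e)$ by whether the bracket is even or odd. The count of each type is governed by the absolute trace, and this is exactly where the sign $\sigma=(-1)^{\Tr((a+b)/(4a))}$ is produced, using $\Tr((a+b)/(4a))\equiv\Tr((a+b)/4)\pmod 2$ and the trace-zero property of $r+r^2$.

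Having tabulated, for each valuation, how many digit-tuples land in each residue class, I would assemble the terms and coalesce cosets (Remark \ref{Anne}) to collapse the unit-valued contributions into the clean monomials $z^\ring$, $z^{2\ring}$, $z^{4\ring}$, $z^{8\ring}$ in the $4\mid a+b$ case, and into the two Teichm\"uller-indexed sums in the $4\nmid a+b$ case. As a consistency check throughout, the normalization $H_{a\Sq\oplus b\Sq}(1)=1-1/q^2$ must hold, since a rank $2$ form takes nonzero values on a set of Haar volume $1-1/q^2$. Finally, the recursion $G_{a\Sq\oplus b\Sq}(z)=H_{a\Sq\oplus b\Sq}(z)+\tfrac{1}{q^2}G_{a\Sq\oplus b\Sq}(z^{\unif^2})$ is immediate from Lemma \ref{Heinrich} applied to this degree-$2$, rank-$2$ form, where $1/q^n=1/q^2$.
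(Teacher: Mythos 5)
Your overall architecture matches the paper's: Corollary \ref{Hortense} with $\ell=1$, Teichm\"uller digit expansion, a case split on the leading digits, trace-counting to produce $\sigma$, coalescence at the end, and the recursion via Lemma \ref{Heinrich}; the normalization check is a good sanity test. But there is a genuine gap in the regime you pass over most quickly: both leading digits nonzero and \emph{distinct}. There the value is $a d^2+b e^2=a d_0^2+b e_0^2+4\bigl(a d_0^2(r+r^2)+b e_0^2(r'+r'^2)\bigr)$, and while your squared-digit identity together with the trace-zero counting (Lemma \ref{Bartholomew}, Corollary \ref{Hildegard}) shows the correction term equidistributes over $4\teich$ modulo $8$, nothing in your stated toolkit determines the residue of the leading unit $a d_0^2+b e_0^2$ modulo $4$ (let alone $8$) as $(d_0,e_0)$ ranges over distinct pairs. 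That is exactly what the paper's Lemma \ref{Elijah} supplies --- a mod-$8$ addition formula for Teichm\"uller representatives, proved via the Kummer--Anton analysis of the binomial coefficients $\binom{q}{j}$ --- and without it you can neither show that the unit-valued contributions coalesce to $z^\ring-\frac{1}{q}z^{2\ring}$ when $4\mid a+b$, nor derive the structured sum $\frac{2}{q^2}\sum_{\tau,s} z^{\tau\left(a+\frac{4}{a+b}s\right)+4\ring}$ when $4\nmid a+b$.

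Relatedly, you locate the dichotomy $4\mid a+b$ versus $4\nmid a+b$ only in the regime $d_0=e_0$, but it equally governs the unit-valued regime: when $4\nmid a+b$ the unit values concentrate on only half of the unit cosets modulo $4$ (those indexed by $\frac{4}{a+b}S$), which is precisely why the first sum in the statement does not collapse to $z^\ring-\frac{1}{q}z^{2\ring}$. Your plan for the $d_0=e_0$, $4\mid a+b$ regime, where $\sigma$ arises from whether $\frac{a+b}{4}+s$ can vanish modulo $2$ for $s\in S$, is sound and agrees with the paper. Two smaller points: the paper first scales to the form $\Sq\oplus w\Sq$ with $w=b/a$ before computing (harmless either way), and it treats $q=2$ separately because the mod-$8$ addition formula is only stated for $q\geq 4$; your sketch would need to account for that case as well.
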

This is proved in Appendix \ref{Karen}.

\subsection{Unimodular Quadratic Forms of Arbitrary Rank}

As mentioned in Section \ref{Gordon}, every unimodular form is equivalent to a direct sum of forms of ranks $1$ and $2$.
We now look at this principle more specifically so that we can calculate the $p$-adic generating functions of forms of higher rank.
It turns out that any form of rank greater than $4$ is always equivalent to a form containing a hyperbolic plane as a direct summand.
Therefore, it will be useful to compute generating functions of direct sums of hyperbolic planes.
\begin{lemma}\label{Rebecca}
Let $\ell=\val(2)$.  Let $r \in \N$ be even, let $Q_+=\Hyp^{r/2}$, and let $Q_-=\Ell\oplus\Hyp^{(r-2)/2}$.
The $p$-adic generating function of $Q_\pm$ satisfies
\[
G_{Q_\pm}(z) = \left(1\mp \frac{1}{q^{r/2}}\right) z^{2 \ring} \pm \frac{1}{q^{r/2}} G_{Q_\pm}(z^\unif), \quad\,\,\text{and}\quad\,\, G_{Q_\pm}(z)=H_{Q_\pm}(z)+\frac{1}{q^r} G_{Q_\pm}(z^{\unif^2}),
\]
where the head of the $p$-adic generating function of $Q_\pm$ is
\[
H_{Q_\pm}(z) = \left(1\mp \frac{1}{q^{r/2}}\right) \left(z^{2 \ring}\pm\frac{1}{q^{r/2}} z^{2\unif\ring}\right),
\]
with
\[
\Ig(H_{Q_\pm}(z)) =  \left(1\mp \frac{1}{q^{r/2}}\right) \left(t^{\ell} \pm\frac{t^{\ell+1}}{q^{r/2}}\right) \igrp.
\]
The Igusa local zeta function of $Q_\pm$ is 
\[
\Ig(G_{Q_\pm}(z))=\frac{\Ig(H_{Q_\pm}(z))}{1-t^2/q^r}.
\]
When $p$ is odd (i.e., when $\ell=0$), then $2 \ring=\ring$, so the above instances of $2 R$ and $2 \pi R$ may be replaced with $R$ and $\pi R$, respectively.
\end{lemma}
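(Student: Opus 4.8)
The plan is to establish the two recursions first and then read off the head, its Igusa transform, and the zeta function as quick consequences. The first recursion is the crux, and I would prove it by induction on $r$, using the sum-product rule to strip off one hyperbolic plane at a time. The base cases $r=2$ are exactly Lemma \ref{Norman} (for $Q_+=\Hyp$) and Lemma \ref{Ellen} (for $Q_-=\Ell$), which already supply $c+d=1$ in the notation below. The inductive engine is a purely algebraic identity: if two $p$-adic generating functions $G,\tilde G$ of forms (so $G(1)=\tilde G(1)=1$) satisfy $G(z)=c\,z^{2\ring}+d\,G(z^\unif)$ and $\tilde G(z)=\tilde c\,z^{2\ring}+\tilde d\,\tilde G(z^\unif)$ with $c+d=\tilde c+\tilde d=1$, then their product satisfies a relation of the same shape with coefficients $D=d\tilde d$ and $C=c\tilde c+c\tilde d+d\tilde c$, and since $C+D=(c+d)(\tilde c+\tilde d)=1$ the invariant $c+d=1$ propagates. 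Expanding the product, I would use $z^{2\ring}z^{2\ring}=z^{2\ring}$ (sum of the ideal with itself), the scaling compatibility $G(z^\unif)\tilde G(z^\unif)=(G\tilde G)(z^\unif)$, and the cross-term collapse $z^{2\ring}G(z^\unif)=z^{2\ring}$ together with $z^{2\ring}\tilde G(z^\unif)=z^{2\ring}$.

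The cross-term collapse $z^{2\ring}G(z^\unif)=z^{2\ring}$ is the one subtle point and the main obstacle. Since $z^{2\ring}=z^{\unif^\ell\ring}$ is $\pi^\ell$-uniform, Corollary \ref{Philip} gives $z^{2\ring}G(z^{\unif^m})=z^{2\ring}G(1)=z^{2\ring}$ immediately whenever $m\ge\ell$; in particular for the odd-$p$ and unramified cases ($\ell\le 1$) it applies directly at $m=1$. For general $\ell$ I would push the scaling exponent upward: the recursion for $G$ gives $z^{2\ring}G(z^{\unif^m})=c\,z^{2\ring}+d\,z^{2\ring}G(z^{\unif^{m+1}})$, using $2\ring+2\unif^m\ring=2\ring$, and a short downward induction starting at $m=\ell$, where the identity is already known, combined with $c+d=1$, yields it for every $m\ge 1$. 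Tracking the coefficient $D=d\tilde d$ through the induction on $r$, multiplication by each hyperbolic plane multiplies $d$ by $1/q$, so starting from $d_\Hyp=1/q$ gives $d=1/q^{r/2}$ for $Q_+$, while starting from $d_\Ell=-1/q$ gives $d=-1/q^{r/2}$ for $Q_-$. With $c=1-d$ this is precisely $G_{Q_\pm}(z)=\left(1\mp \tfrac{1}{q^{r/2}}\right)z^{2\ring}\pm\frac{1}{q^{r/2}}G_{Q_\pm}(z^\unif)$.

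The remaining claims then follow with little effort. The second recursion $G_{Q_\pm}(z)=H_{Q_\pm}(z)+\frac{1}{q^r}G_{Q_\pm}(z^{\unif^2})$ is immediately Lemma \ref{Heinrich} applied to the degree-$2$, $r$-variable homogeneous form $Q_\pm$. For the head, I would substitute the first recursion into its own $G(z^\unif)$ term to get $G_{Q_\pm}(z)=c\,z^{2\ring}+cd\,z^{2\unif\ring}+d^2\,G_{Q_\pm}(z^{\unif^2})$, where $c=1\mp\tfrac{1}{q^{r/2}}$, $d=\pm\tfrac{1}{q^{r/2}}$, and $d^2=1/q^r$; comparing with the second recursion and invoking the uniqueness of the head decomposition in Lemma \ref{Heinrich} identifies $H_{Q_\pm}(z)=c\left(z^{2\ring}\pm\tfrac{1}{q^{r/2}}z^{2\unif\ring}\right)$, the stated formula. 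Applying $\Ig$ and using Lemma \ref{Stevie} with $2\ring=\unif^\ell\ring$ and $2\unif\ring=\unif^{\ell+1}\ring$ gives $\Ig(z^{2\ring})=t^\ell\igrp$ and $\Ig(z^{2\unif\ring})=t^{\ell+1}\igrp$, so by $\C$-linearity $\Ig(H_{Q_\pm}(z))=\left(1\mp\tfrac{1}{q^{r/2}}\right)\left(t^\ell\pm\tfrac{t^{\ell+1}}{q^{r/2}}\right)\igrp$. Finally, $\Ig(G_{Q_\pm}(z))=\Ig(H_{Q_\pm}(z))/(1-t^2/q^r)$ is exactly the second conclusion of Lemma \ref{Heinrich} with $d=2$ and $n=r$. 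The odd-$p$ statement is the specialization $\ell=0$, where $2$ is a unit, so $2\ring=\ring$ and $2\unif\ring=\unif\ring$.
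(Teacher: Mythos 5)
Your proof is correct and follows essentially the same route as the paper's: induction on $r$ via the sum-product rule, stripping off one plane at a time with base cases from Lemmata \ref{Norman} and \ref{Ellen}, then Lemma \ref{Heinrich} to identify the head and Lemma \ref{Stevie} to compute the Igusa transform. The only cosmetic difference is in justifying the cross-term collapse $z^{2\ring}G(z^\unif)=z^{2\ring}$: the paper observes that the factors have norm $2\ring$, so $G(z^\unif)$ has the form $G'(z^{2\unif})$ and Remark \ref{Zachary} applies immediately, whereas you reach the same conclusion via Corollary \ref{Philip} plus a downward induction on the scaling exponent through the recursion; both are valid.
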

\begin{proof}
We begin by proving the first relation for $G_{Q_\pm}$.  We proceed by induction on $r$, with the $r=0$ case trivial, and the $r=2$ cases given by Lemmata \ref{Norman} and \ref{Ellen}.
For $r > 2$, we set $\widetilde{Q}_+=\Hyp^{(r-2)/2}$ and $\widetilde{Q}_-=\Ell\oplus\Hyp^{(r-4)/2}$, and let $G_{\widetilde{Q}_\pm}(z)$ and $G_{\Hyp}(z)$ be the $p$-adic generating functions of $\widetilde{Q}_\pm$ and $\Hyp$.
Then the sum-product rule tells us that we can multiply these together to get the generating function for $Q_\pm$, so 
\[
G_{Q_\pm}(z) = \left[\left(1-\frac{1}{q}\right) z^{2 \ring} + \frac{1}{q} G_{\Hyp}(z^\unif)\right] \left[\left(1\mp \frac{1}{q^{(r-2)/2}}\right) z^{2 \ring} \pm \frac{1}{q^{(r-2)/2}} G_{\widetilde{Q}_\pm}(z^\unif)\right].
\]
Since hyperbolic and elliptic planes are of norm $2\ring$, any generating function like $G_{\Hyp}(z^\unif)$ or $G_{\widetilde{Q}_\pm}(z^\pi)$ is equal to a generating function of the form $G(z^{2 \unif})$.
Thus, when we multiply out the above expression and use Remark \ref{Zachary}, we obtain our first relation, $G_{Q_\pm}(z)= \left(1\mp\frac{1}{q^{r/2}}\right)z^{2\ring} \pm \frac{1}{q^{r/2}} G_{Q_\pm}(z^\unif)$.  We apply this relation to its own second term to get
\[
G_{Q_\pm}(z)= \left(1\mp\frac{1}{q^{r/2}}\right)\left(z^{2\ring} \pm \frac{1}{q^{r/2}} z^{2\pi\ring}\right) + \frac{1}{q^r} G_{Q_\pm}(z^{\unif^2}),
\]
and then use Lemma \ref{Heinrich} to show that the head of the $p$-adic generating function of $Q_\pm$ is exactly what we claim it to be, and to get $\Ig(G_{Q_\pm})$ in terms of $\Ig(H_{Q_\pm})$, which we calculate with Lemma \ref{Stevie}.
\end{proof}
When $p$ is odd, for any given rank there are only two unimodular forms of that rank up to equivalence, one of each discriminant.
\begin{proposition}\label{Jason}
Let $p$ be odd and $r$ be a positive integer.  Then up to equivalence there are two distinct unimodular forms of rank $r$.  If $r$ is even, these are $\Hyp^{r/2}$ of discriminant $(-1)^{r/2}$ and $\Ell\oplus \Hyp^{(r-2)/2}$ of discriminant $(-1)^{r/2} \alpha$.  If $r$ is odd, these are $\Sq \oplus \Hyp^{(r-1)/2}$ of discriminant $(-1)^{(r-1)/2}$ and $\alpha \Sq \oplus \Hyp^{(r-1)/2}$ of discriminant $(-1)^{(r-1)/2} \alpha$.
\end{proposition}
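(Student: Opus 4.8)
The plan is to reduce the whole statement to the classification theorem cited from O'Meara, under which a unimodular form over $\ring$ (for odd $p$) is determined up to equivalence by its rank together with its discriminant, an element of $\ringunits/\ringsquareunits$ (see \cite[\S 92:1a]{O'Meara-1963}). First I would recall that when $p$ is odd the group $\unitsmodsquares$ has order $2$, with representatives $1$ and $\alpha$. A unimodular form of positive rank $r$ has a unit determinant, so its discriminant lies in $\ringunits/\ringsquareunits$ and can take only one of two values. Thus the uniqueness half of the classification already gives at most two equivalence classes of unimodular forms of rank $r$, and it remains to exhibit two inequivalent forms realizing both discriminant classes, with the discriminants claimed in the statement.

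Next I would compute discriminants of the building blocks from the standard forms fixed earlier. The hyperbolic plane $\Hyp=2xy$ has associated matrix $\left(\begin{smallmatrix}0&1\\1&0\end{smallmatrix}\right)$, so $\disc(\Hyp)=-1$; the standard elliptic plane $x^2-\alpha y^2$ has matrix $\left(\begin{smallmatrix}1&0\\0&-\alpha\end{smallmatrix}\right)$, so $\disc(\Ell)=-\alpha$; and plainly $\disc(\Sq)=1$ and $\disc(\alpha\Sq)=\alpha$. Since the matrix of a direct sum is block diagonal, the determinant, and hence the discriminant, is multiplicative, giving $\disc(\Hyp^{r/2})=(-1)^{r/2}$, $\disc(\Ell\oplus\Hyp^{(r-2)/2})=(-\alpha)(-1)^{(r-2)/2}=(-1)^{r/2}\alpha$, $\disc(\Sq\oplus\Hyp^{(r-1)/2})=(-1)^{(r-1)/2}$, and $\disc(\alpha\Sq\oplus\Hyp^{(r-1)/2})=(-1)^{(r-1)/2}\alpha$. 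Each of these determinants is a unit, so every listed form is genuinely unimodular, and a quick count of summands shows each has the asserted rank.

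Finally, for each parity of $r$ the two listed forms have discriminants differing exactly by the factor $\alpha$; since $\alpha\notin\ringsquareunits$, these represent distinct classes in $\ringunits/\ringsquareunits$, and because the discriminant is an equivalence invariant the two forms are inequivalent. Combined with the ``at most two'' bound from the classification, this yields exactly two unimodular forms of rank $r$ up to equivalence, namely the ones listed, with the stated discriminants. I expect no real obstacle here: the argument is bookkeeping once the classification is invoked, and the only care needed is in fixing the standard representatives of $\Hyp$ and $\Ell$ so that the signs in the discriminant computations come out precisely as claimed.
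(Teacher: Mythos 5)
Your proposal is correct and follows essentially the same route as the paper: both invoke the classification from \cite[\S 92:1a]{O'Meara-1963} to cap the number of equivalence classes of a given rank at two, and then distinguish the two exhibited forms by their discriminants. The only difference is that you spell out the block-diagonal determinant computations explicitly (all of which check out), whereas the paper leaves them implicit.
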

\begin{proof}
In \cite[\S 92:1a]{O'Meara-1963}, it is shown that (up to equivalence) there are precisely two unimodular quadratic forms of any given rank, and we have exhibited two that are inequivalent since they have different discriminants.
\end{proof}
We may now express the $p$-adic generating functions of these unimodular forms for odd $p$ in terms of their discriminants.  We also calculate the values of $I_a(r,d)$ in Table \ref{Gary} that are used in Theorem \ref{Odilia} to express the local zeta functions for quadratic polynomials over $\ring$ when $p$ is odd.  Recall from Section \ref{Winston} that $I_a(r,d)=\Ig(z^a H_Q(z))$ where $Q$ is a unimodular quadratic form of rank $r$ and discriminant $d$ and $H_Q(z)$ is the head of its $p$-adic generating function.
\begin{lemma}\label{Ophelia}
Let $p$ be odd, let $Q$ be a quadratic form over $R$ of rank $r$ and discriminant $d$, and let $a\in\ring$.
\renewcommand{\labelitemi}{$\blacktriangleright$}
\begin{itemize}
\item If $r$ is even, then the head of the $p$-adic generating function of $Q$ is
\[
H_Q(z) = \left(1 - \frac{\eta((-1)^{r/2} d)}{q^{r/2}}\right) \left(z^{\ring}+\frac{\eta((-1)^{r/2} d)}{q^{r/2}} z^{\unif\ring}\right),
\]
\renewcommand{\labelitemii}{$\bullet$}
\begin{itemize}
\item and if $\pi\mid a$, then
\[
\Ig(z^a H_Q(z)) = \left(1 - \frac{\eta((-1)^{r/2} d)}{q^{r/2}}\right) \left(1+\frac{\eta((-1)^{r/2} d)}{q^{r/2}} t\right) \igrp,
\]
\item but if $\pi\nmid a$, then
\[
\Ig(z^a H_Q(z))=\left(1 - \frac{\eta((-1)^{r/2} d)}{q^{r/2}}\right) \left(\igrp + \frac{\eta((-1)^{r/2} d)}{q^{r/2}}\right).
\]
\end{itemize}
\item If $r$ is odd, then the head of the $p$-adic generating function of $Q$ is
\[
H_Q(z)= z^\ring -\frac{1}{q^r} z^{\unif \ring} + \frac{\eta((-1)^{(r-1)/2} d)}{q^{(r+1)/2}} \sum_{t \in \teich} \eta(t) z^{t+\unif \ring},
\]
\begin{itemize}
\item and if $\pi\mid a$, then
\[
\Ig(z^ a H_Q(z)) = \left(1-\frac{t}{q^r} \right) \left( \igr\right),
\]
\item but if $\pi\nmid a$, then
\[
\Ig(z^a H_Q(z)) =\left(1 + \frac{\eta(a(-1)^{(r+1)/2} d)}{q^{(r+1)/2}} t\right) \igrp -\frac{1}{q^r} -\frac{\eta(a(-1)^{(r+1)/2} d)}{q^{(r+1)/2}}.
\]
\end{itemize}
\end{itemize}
\renewcommand{\labelitemi}{$\bullet$}
\renewcommand{\labelitemii}{$-$}
In any case, the $p$-adic generating function satisfies $G_Q(z)=H_Q(z)+\frac{1}{q^r} G_Q(z^{\unif^2})$, so that the Igusa local zeta function for $Q$ is
\[
\Ig(G_Q(z))=\frac{\Ig(H_Q(z))}{1-t^2/q^r}.
\]
\end{lemma}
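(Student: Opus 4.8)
The plan is to split on the parity of the rank $r$, using the classification of unimodular forms for odd $p$ (Proposition \ref{Jason}), so that in each parity there are only two forms to handle, distinguished by the discriminant $d$. Throughout I will read off heads from the rank $1$ and $2$ building blocks, reduce the evaluation of $\Ig$ to term-by-term applications of Lemma \ref{Stevie}, and finish with the degree-$2$ case of Lemma \ref{Heinrich}.

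For $r$ even, every unimodular form is equivalent to $Q_+=\Hyp^{r/2}$ or $Q_-=\Ell\oplus\Hyp^{(r-2)/2}$, which are exactly the forms $Q_\pm$ of Lemma \ref{Rebecca}; since equivalent forms have the same $p$-adic generating function, I can take $H_Q(z)$ from that lemma (using $2\ring=\ring$ and $2\unif\ring=\unif\ring$ for odd $p$). The one real point is bookkeeping: $Q_+$ has discriminant $(-1)^{r/2}$ and $Q_-$ has discriminant $(-1)^{r/2}\alpha$, so the sign $\pm$ of Lemma \ref{Rebecca} is precisely $\eta((-1)^{r/2}d)$, and this rewrites that lemma's head in the claimed closed form. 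To obtain $\Ig(z^a H_Q(z))$ I multiply $H_Q$ by $z^a$ (so $z^\ring\mapsto z^{a+\ring}=z^\ring$ and $z^{\unif\ring}\mapsto z^{a+\unif\ring}$) and apply Lemma \ref{Stevie} to each term, splitting on $\unif\mid a$ versus $\unif\nmid a$: if $\unif\mid a$ then $a+\unif\ring=\unif\ring$ contributes $t\,\igrp$, whereas if $\unif\nmid a$ the coset $a+\unif\ring$ has valuation $0$ and contributes $t^0=1$. These two evaluations give the two displayed even-rank formulas.

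For $r$ odd the forms are $u\Sq\oplus\Hyp^{(r-1)/2}$ with $\eta(u)=\eta((-1)^{(r-1)/2}d)$, and here I would compute $H_Q$ from scratch via Corollary \ref{Hortense} with $\ell=\val(2)=0$: the head is $q^{-r}\sum z^{Q(a)+\unif\ring}$ over $a\in\teich^r$ with $a\not\equiv 0\pmod\unif$, hence it is controlled by the number $N(b)$ of solutions of $Q(x)=b$ over $\resfield$. The classical Gauss-sum count for a nondegenerate odd-rank form gives $N(b)=q^{r-1}+q^{(r-1)/2}\eta((-1)^{(r-1)/2}b\det Q)$ for $b\neq 0$ and $N(0)=q^{r-1}$; since $\det Q\equiv u(-1)^{(r-1)/2}$ modulo squares, the character collapses to $\eta(b)\eta(u)$. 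Removing the single excluded point $x=0$ (which lands in $b=0$) and coalescing cosets (Remark \ref{Anne}) through $\sum_{\tau\in\teich}z^{\tau+\unif\ring}=q\,z^\ring$, the character-free part becomes $z^\ring-q^{-r}z^{\unif\ring}$ and the $\eta$-weighted part becomes $q^{-(r+1)/2}\eta(u)\sum_{\tau\in\teich}\eta(\tau)z^{\tau+\unif\ring}$, which is exactly the claimed $H_Q$ (specializing to $r=1$ recovers Lemma \ref{Beatrice}).

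Applying $\Ig$ to $z^a H_Q(z)$ in the odd case again uses Lemma \ref{Stevie} together with $\sum_{\tau\in\teichu}\eta(\tau)=0$. When $\unif\mid a$ each $a+\tau+\unif\ring$ equals $\tau+\unif\ring$, so the $\eta$-weighted valuation-zero contributions sum to zero and only $z^\ring-q^{-r}z^{\unif\ring}$ survives, giving $(1-t/q^r)\igrp$. When $\unif\nmid a$ there is a unique $\tau_0\in\teich$ with $\tau_0\equiv-a\pmod\unif$, so $a+\tau_0+\unif\ring=\unif\ring$ contributes $t\,\igrp$ while every other $\tau$ contributes $1$; by $\sum_\tau\eta(\tau)=0$ the weighted sum reduces to $\eta(\tau_0)(t\,\igrp-1)$, and since $\eta(u)\eta(\tau_0)=\eta(a(-1)^{(r+1)/2}d)$ this yields the displayed odd-rank formula. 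Finally $G_Q(z)=H_Q(z)+q^{-r}G_Q(z^{\unif^2})$ and $\Ig(G_Q)=\Ig(H_Q)/(1-t^2/q^r)$ follow at once from Lemma \ref{Heinrich} for the degree-$2$, rank-$r$ form $Q$. I expect the main obstacle to be the odd-rank step: first the Gauss-sum evaluation of $N(b)$ with the correct discriminant-dependent character, and then the delicate $\unif\nmid a$ bookkeeping, where one must isolate the unique residue $\tau_0$ making $a+\tau_0$ a nonunit and invoke $\sum_\tau\eta(\tau)=0$ to collapse the remaining character sum.
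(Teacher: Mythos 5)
Your argument is correct, and the even-rank half plus the concluding applications of Lemmata \ref{Stevie} and \ref{Heinrich} coincide with what the paper does: classify via Proposition \ref{Jason}, read the head off Lemma \ref{Rebecca} with the sign $\pm$ identified as $\eta((-1)^{r/2}d)$, and evaluate $\Ig$ coset by coset. Where you genuinely diverge is the odd-rank head. The paper never counts points: it writes $Q\cong Q_1\oplus d\Sq$ with $Q_1$ of even rank $r-1$, multiplies the recursive expressions for $G_{Q_1}$ (from the even case) and $G_{d\Sq}$ (from Lemma \ref{Beatrice}) using the sum--product rule and Remark \ref{Zachary}, and then extracts the head via Lemma \ref{Heinrich}. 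You instead apply Corollary \ref{Hortense} with $\ell=0$ directly and invoke the classical count $N(b)=q^{r-1}+q^{(r-1)/2}\eta((-1)^{(r-1)/2}b\det Q)$ for a nondegenerate odd-rank form over $\resfield$; your identification $\det Q\equiv u(-1)^{(r-1)/2}$ and the bookkeeping for the excluded origin and the coalescence $\sum_{\tau\in\teich}z^{\tau+\unif\ring}=q\,z^\ring$ all check out, as does the isolation of the unique $\tau_0\equiv -a\pmod{\unif}$ and the use of $\sum_\tau\eta(\tau)=0$ in the $\unif\nmid a$ case, with $\eta(u)\eta(-a)=\eta(a(-1)^{(r+1)/2}d)$. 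The trade-off: your route imports one standard external fact (the Gauss-sum point count) but makes the head formula transparent in a single step and generalizes Lemma \ref{Beatrice} directly, whereas the paper's route stays entirely inside its own calculus of generating functions at the cost of an extra multiplication and simplification.
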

\begin{proof}
The calculation of the head of the $p$-adic generating function for the even rank case follows directly from Proposition \ref{Jason} and Lemma \ref{Rebecca}.
If $Q$ has odd rank, then Proposition \ref{Jason} tells us that it equivalent to the direct sum of a quadratic form $Q_1$ of rank $r-1$ and discriminant $1$ with a quadratic form $Q_2$ of rank $1$ and discriminant $d$ (viz., the form $d \Sq$).  By the even rank case of this lemma, and by Lemma \ref{Beatrice}, these forms have generating functions
\begin{align*}
G_{Q_1}(z) & = \left(1-\frac{1}{\sigma}\right) \left(z^\ring+\frac{1}{\sigma} z^{\unif \ring}\right) + \frac{1}{\sigma^2} G_{Q_1}(z^{\unif^2}) \\
G_{Q_2}(z) & =z^\ring -\frac{1}{q} z^{\unif \ring} + \frac{1}{q} \sum_{t \in \teich} \eta(d t) z^{t+\unif \ring} + \frac{1}{q} G_{Q_2}(z^{\unif^2}),
\end{align*}
where $\sigma=\eta((-1)^{(r-1)/2})q^{(r-1)/2}$.
We use the sum-product rule to obtain $G_Q(z)$ as the product $G_{Q_1}(z) G_{Q_2}(z)$, and we apply Remark \ref{Zachary} when multiplying out the right hand sides to obtain
\[
G_Q(z) = z^\ring -\frac{1}{q \sigma^2} z^{\unif \ring} + \frac{1}{q \sigma} \sum_{t \in \teich} \eta(d t) z^{t+\unif \ring} + \frac{1}{q \sigma^2} G_Q(z^{\unif^2}),
\]
and in view of Lemma \ref{Heinrich}, this shows that the head of the $p$-adic generating is exactly what we claim it to be.  We use Lemma \ref{Stevie} to compute $\Ig(z^c H_Q(z))$ in each case, and the relation between $\Ig(G_Q(z))$ and $\Ig(H_Q(z))$ comes from Lemma \ref{Heinrich}.
\end{proof}
For the rest of this section, we assume that $p=2$, and examine the unimodular quadratic forms that can arise in $2$-adic fields.
\begin{proposition}\label{Kurt}
Let $p=2$.  If $r$ is even, then any unimodular quadratic form of rank $r$ is equivalent to one of the following:
\begin{enumerate}[(i).]
\item $\Hyp^{r/2}$,
\item $\Pl(a,b) \oplus \Hyp^{r/2-1}$,
\item\label{Vladimir} $\Pl(a,b) \oplus \Pl(\pi^j,0) \oplus \Hyp^{r/2-2}$, or
\item\label{Yuri} $\Pl(a,b) \oplus \Pl(\pi^j,4\xi) \oplus \Hyp^{r/2-2}$,
\end{enumerate}
where $\Pl(a,b)$ is as defined in Section \ref{Jules}, and $a,b \in R$ with $\val(a) \leq \min\{\val(2),\val(b)/2\}$, and $b$ equal to either $0$, or $4 \xi$, or else $b$ is an element with $\val(b)$ odd and less than $\val(4)$, and $\val(a) < j \leq \val(2)$ with $\val(a)+j$ odd in cases \eqref{Vladimir} and \eqref{Yuri}.

If $r$ is odd, then any unimodular quadratic form of rank $r$ is equivalent to one of the following:
\begin{enumerate}[(i).]
\item $a \Sq \oplus \Hyp^{(r-1)/2}$,
\item $a \Sq \oplus \Pl(\pi^j,0) \oplus \Hyp^{(r-3)/2}$, or
\item $a \Sq \oplus \Pl(\pi^j,4\xi) \oplus \Hyp^{(r-3)/2}$,
\end{enumerate}
where $a \in \ringunits$ and $0 < j \leq \val(2)$.
\end{proposition}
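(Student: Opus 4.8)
The plan is to derive this from the structure theory of dyadic lattices in \cite{O'Meara-1963} together with the binary normalization already recorded in Section \ref{Jules}. First I would apply \cite[\S 91C, 92:1]{O'Meara-1963} to write $Q$ as a direct sum of unimodular forms of ranks $1$ and $2$, and then use \cite[\S 93:17]{O'Meara-1963} to put each rank $2$ summand into the shape $\Pl(a,b)$ with $b$ equal to $0$, $4\xi$, or an element of odd valuation less than $\val(4)$, and with $\val(a) \le \min\{\val(2),\val(b)/2\}$. This expresses $Q$ as a direct sum of unit squares $u\Sq$ (with $u \in \ringunits$) and planes $\Pl(a,b)$, so the whole problem becomes one of collapsing this direct sum to the short lists in the statement. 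The rank parity is preserved throughout, so an odd-rank form retains an odd number of rank $1$ constituents (hence at least one $a\Sq$, forcing norm $\ring$) and an even-rank form an even number.

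Second, I would organize the summands by their \emph{norm level} $\val(a)$, assigning level $0$ to each $u\Sq$ and level $\val(2)$ to each $\Hyp$ and $\Ell$, and then reduce the number of planes that are neither hyperbolic nor (for even rank) a single elliptic plane. The relations that do the work are $\Ell\oplus\Ell\cong\Hyp\oplus\Hyp$, the absorption of unit squares three at a time (as in Lemma \ref{Eric}, in its general dyadic form), and the binary combination identities of \cite[\S 93:15--93:16]{O'Meara-1963}, which let me trade a pair of planes sharing a norm level for a single plane at that level together with a plane of norm $2\ring$. The structural input that bounds the count is the fact recorded before Lemma \ref{Rebecca}, that any unimodular form of rank greater than $4$ splits off a hyperbolic plane: this lets me strip hyperbolic planes until the remaining core has rank at most $4$ and classify it directly. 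For even rank the core is all hyperbolic, one exceptional plane plus hyperbolics, or two exceptional planes plus hyperbolics; for odd rank, splitting off the single unit square $a\Sq$ forced by the odd parity leaves an even-rank remainder (now with no unit-norm plane, since any such plane is absorbed against the spare $a\Sq$) to which the same analysis applies, yielding the three odd-rank forms.

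Finally I would verify the normalization constraints in cases \eqref{Vladimir} and \eqref{Yuri}, namely that the higher plane may be taken as $\Pl(\pi^j,0)$ or $\Pl(\pi^j,4\xi)$ with $\val(a) < j \le \val(2)$ and $\val(a)+j$ odd. Here I would track the norm generator and the quadratic defect (\cite[\S 93:3, \S 63A]{O'Meara-1963}) through the combination rules: the inequality $\val(a) < j$ records that the lower plane $\Pl(a,b)$ carries the norm of $Q$, while the parity condition $\val(a)+j$ odd is exactly the obstruction to a further merge, since two planes whose norm levels have equal parity can be replaced, via \cite[\S 93:15]{O'Meara-1963}, by one plane plus a hyperbolic plane and thereby drop into case (ii). I expect the main obstacle to be precisely this ramified bookkeeping: when $\val(2) > 1$ the intermediate-level planes $\Pl(\pi^j,\cdot)$ with $0 < j < \val(2)$ genuinely occur, and showing both that two exceptional planes always suffice and that $\val(a)+j$ odd is the sharp non-redundancy condition requires O'Meara's full dyadic reduction rather than the elementary unit-square manipulations that suffice in the unramified setting.
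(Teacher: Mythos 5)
Your outline is, in substance, a reconstruction of O'Meara's own derivation of the canonical form for unimodular lattices over general dyadic fields, whereas the paper's proof is a one-line citation: the stated lists are exactly the canonical decomposition of \cite[\S 93:18]{O'Meara-1963} (built on the binary normalization of \S 93:17), so the authors simply invoke that theorem and add the single observation that $\xi$ may be replaced by $-\xi$, since $(1+4\xi)/(1-4\xi)\equiv 1\pmod{8}$ is a square; that normalization step is needed to match O'Meara's quadratic-defect generator to the form $\Pl(\pi^j,4\xi)$ exactly as written, and it is the one detail your sketch does not address. Your route --- split into rank $1$ and $2$ pieces via \S 91C and 92:1, normalize each binary piece via \S 93:17, strip hyperbolic planes down to a core of rank at most $4$, and collapse the core with the combination identities of \S 93:15--16 and the general dyadic ternary absorption rule --- is essentially how O'Meara proves 93:18, so it is sound in outline and correctly identifies the parity condition $\val(a)+j$ odd as the obstruction to a further merge; but, as you concede, the hard content (that two exceptional planes always suffice, and that the parity condition is sharp) still rests on those same O'Meara lemmas, so the reconstruction buys little over the direct citation while incurring substantial ramified bookkeeping. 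If you do carry it out, justify the rank-greater-than-$4$ hyperbolic splitting independently (via isotropy of quinary forms over local fields and \cite[\S 82:17]{O'Meara-1963}-type arguments) rather than reading it off the classification, to avoid circularity.
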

\begin{proof}
This is a consequence of \cite[\S 93:17--18]{O'Meara-1963}, where we note that $\xi$ can be always be replaced with $-\xi$, for the ratio $(1+4\xi)/(1-4\xi) \equiv 1 \pmod{8}$, and so is a square.  Thus $1-4\xi$ is a nonsquare because $1+4\xi$ is.
\end{proof}
\begin{corollary}\label{Frank}
Suppose that $p=2$ and does not ramify in $\ring$.
If $r$ is even, then any unimodular quadratic form of rank $r$ is equivalent to one of the following:
\begin{enumerate}[(i).]
\item $\Hyp^{r/2}$,
\item $\Ell \oplus \Hyp^{r/2-1}$,
\item $a \Sq \oplus b \Sq \oplus \Hyp^{r/2-1}$, or
\item $a \Sq \oplus b \Sq \oplus \Ell \oplus \Hyp^{r/2-2}$,
\end{enumerate}
where $a, b \in \ringunits$ and are both congruent to $1$ modulo $2$.

If $r$ is odd, then any unimodular quadratic form of rank $r$ is equivalent to one of the following:
\begin{enumerate}[(i).]
\item $a \Sq \oplus \Hyp^{(r-1)/2}$,
\item $a \Sq \oplus \Ell \oplus \Hyp^{(r-3)/2}$,
\end{enumerate}
where $a \in \ringunits$ and is congruent to $1$ modulo $2$.
\end{corollary}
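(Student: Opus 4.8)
The plan is to obtain this as a specialization of Proposition \ref{Kurt} to the unramified case, in which $\unif = 2$ and $\ell = \val(2) = 1$, and then to recognize each generic plane $\Pl(a,b)$ occurring there as one of $\Hyp$, $\Ell$, or a diagonal sum $a\Sq \oplus c\Sq$ of two unit rank-$1$ forms, using the identifications already recorded in Section \ref{Jules}.

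First I would feed each form listed in Proposition \ref{Kurt} through its constraints with $\val(2)=1$. In the even-rank double-plane cases, the requirement $\val(a) < j \leq \val(2) = 1$ with $\val(a)+j$ odd forces $\val(a)=0$ and $j=1$; in the odd-rank cases, $0 < j \leq 1$ forces $j = 1$. Hence every auxiliary plane $\Pl(\unif^j, 0)$ is $\Pl(2,0)$, which Section \ref{Jules} identifies with $\Hyp$, and every $\Pl(\unif^j, 4\xi)$ is $\Pl(2,4\xi)$, which is $\Ell$. Moreover the leading plane $\Pl(a,b)$ in these cases has $\val(a)=0$, so by completing the square (again Section \ref{Jules}) it is equivalent to $a\Sq \oplus c\Sq$ with $a,c \in \ringunits$.

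Next I would treat the single-plane even-rank case $\Pl(a,b)\oplus\Hyp^{r/2-1}$ by splitting on $\val(a)$. If $\val(a)=0$, completing the square gives $a\Sq\oplus c\Sq\oplus\Hyp^{r/2-1}$, namely form (iii) of the corollary. If instead $\val(a)=\val(2)=1$, the constraint $\val(a)\le\val(b)/2$ forces $\val(b)\ge 2$, excluding the odd-valuation choice of $b$ and leaving $b\in\{0,4\xi\}$; then $\Pl(a,0)=\Hyp$ yields $\Hyp^{r/2}$ (form (i)) and $\Pl(a,4\xi)=\Ell$ yields $\Ell\oplus\Hyp^{r/2-1}$ (form (ii)). Assembling all branches, the even-rank forms of Proposition \ref{Kurt} collapse exactly onto (i)--(iv), and the odd-rank forms onto (i)--(ii). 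Finally, since the residue field is perfect of characteristic $2$, every unit is a square modulo $2$, so each coefficient of a rank-$1$ summand may be rescaled to be $\equiv 1 \pmod 2$, matching the normalization in the statement.

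The work here is essentially bookkeeping rather than a genuine obstacle: the one point that needs care is verifying that the valuation constraints genuinely collapse to $j=1$, $\val(a)=0$ (and in particular are non-vacuous, so that none of the listed families is lost), and that the hyperbolic- and elliptic-plane multiplicities tally correctly after each $\Pl$ is rewritten. No new quadratic-form theory beyond Proposition \ref{Kurt} and the explicit $\Pl$-identifications of Section \ref{Jules} is required.
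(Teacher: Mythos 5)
Your proposal is correct and follows essentially the same route as the paper: the paper's proof likewise deduces the corollary from Proposition \ref{Kurt} together with the identifications in Section \ref{Jules} of $\Pl(a,b)$ as $\Hyp$ (when $\val(a)=1$, $b=0$), $\Ell$ (when $\val(a)=1$, $b=4\xi$), or $u\Sq\oplus v\Sq$ (when $\val(a)=0$), followed by the observation that every unit is a square modulo $2$. Your version merely makes explicit the valuation bookkeeping ($j=1$, $\val(a)=0$ in the double-plane cases) that the paper leaves implicit.
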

\begin{proof}
This follows from Proposition \ref{Kurt}.
In Section \ref{Jules}, we noted that $\Pl(a,b)$ is always equivalent to $\Hyp$ (when $\val(a)=1$ and $b=0$), $\Ell$ (when $\val(a)=1$ and $b=4\xi$), or $u \Sq \oplus v \Sq$ with $u$ and $v$ units (when $\val(a)=0$).
Furthermore we may take the units $u$ and $v$ to be $1$ modulo $2$ by scaling $x$ and $y$, since every unit is a square modulo $2$.
\end{proof}
When $p=2$ and is not ramified in $\ring$, all unimodular forms have norm $\ring$, $2\ring$, or $0$.  Those with norm $2 \ring$ and $0$ are covered by Lemma \ref{Rebecca} above (one uses $Q_+$ with $r=0$ to get the form $0$).
So we now calculate the $p$-adic generating functions and local zeta functions for unimodular forms of norm $\ring$ over unramified $2$-adic fields.
\begin{lemma}\label{Christine}
Suppose that $p=2$ and does not ramify in $\ring$.  Let $a \in \ringunits$.
Let $r \in \N$ be odd, let $Q_+=a \Sq \oplus \Hyp^{(r-1)/2}$, and if $r \geq 3$, then let $Q_-=a \Sq \oplus \Ell \oplus \Hyp^{(r-3)/2}$.  The head of the $p$-adic generating function of $Q_\pm$ is
\[
H_{Q_\pm}(z)=\left(1\mp \frac{1}{q^{(r-1)/2}}\right) \left[z^\ring \pm \frac{1}{q^{(r+1)/2}} \sum_{\tau \in \teich} z^{a \tau+4\ring}\right] + \frac{2}{q^{r+1}} \sums{\tau \in \teichu \\ s \in S} z^{a\tau (1+4 s)+ 8 \ring},
\]
with
\[
\Ig(H_{Q_\pm}(z))=\left(1-\frac{t^2}{q^r} \pm \frac{t^2-t}{q^{(r+1)/2}}\right) \igrp.
\]
The $p$-adic generating function satisfies $G_{Q_\pm}(z)=H_{Q_\pm}(z)+\frac{1}{q^r} G_{Q_\pm}(z^4)$.
The Igusa local zeta function for $Q_\pm$ is
\[
\Ig(G_{Q_\pm}(z)) = \frac{\Ig(H_{Q_\pm}(z))}{1-t^2/q^r}.
\]
\end{lemma}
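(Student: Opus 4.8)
The plan is to realize $Q_\pm$ as the direct sum $a\Sq \oplus P_\pm$, where $P_\pm$ is the even-rank, norm-$2\ring$ form of rank $r-1$ treated in Lemma~\ref{Rebecca} (so $P_+ = \Hyp^{(r-1)/2}$ and $P_- = \Ell\oplus\Hyp^{(r-3)/2}$), and to exploit the sum-product rule $G_{Q_\pm}(z) = G_{a\Sq}(z)\,G_{P_\pm}(z)$. The recursion $G_{Q_\pm}(z) = H_{Q_\pm}(z) + \tfrac{1}{q^r}G_{Q_\pm}(z^4)$ and the zeta-function identity $\Ig(G_{Q_\pm}) = \Ig(H_{Q_\pm})/(1-t^2/q^r)$ are immediate: the first is Lemma~\ref{Heinrich} applied to the degree-$2$, $r$-variable form $Q_\pm$ (recall $\unif=2$, so $z^{\unif^2}=z^4$), and the second follows by applying $\Ig$, invoking Lemma~\ref{Tina}, and rearranging. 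So the whole problem reduces to identifying $H_{Q_\pm}(z)$ explicitly and then computing $\Ig(H_{Q_\pm}(z))$.

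To find the head, I would multiply out $G_{Q_\pm}(z) = G_{a\Sq}(z)G_{P_\pm}(z)$ using the head-plus-tail forms of the two factors, namely $G_{a\Sq}(z) = H_{a\Sq}(z) + \tfrac1q G_{a\Sq}(z^4)$ from Lemma~\ref{Diana} and $G_{P_\pm}(z) = H_{P_\pm}(z) + \tfrac{1}{q^{r-1}}G_{P_\pm}(z^4)$ from Lemma~\ref{Rebecca}. The resulting product has four cross terms; the all-tails term is $\tfrac{1}{q^r}G_{a\Sq}(z^4)G_{P_\pm}(z^4) = \tfrac1{q^r}G_{Q_\pm}(z^4)$ by the sum-product rule, which is exactly the tail in the recursion above. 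Hence $H_{Q_\pm}(z)$ equals the sum of the three remaining cross terms $H_{a\Sq}(z)H_{P_\pm}(z) + \tfrac1{q^{r-1}}H_{a\Sq}(z)G_{P_\pm}(z^4) + \tfrac1q G_{a\Sq}(z^4)H_{P_\pm}(z)$, and each is computable by coset arithmetic. Using Lemma~\ref{Rebecca}, $H_{P_\pm}(z) = (1\mp q^{-(r-1)/2})(z^{2\ring}\pm q^{-(r-1)/2}z^{4\ring})$, so I would reduce $H_{a\Sq}(z)z^{2\ring}$ and $H_{a\Sq}(z)z^{4\ring}$ by coalescing cosets (Remark~\ref{Anne}) and collapsing the sum over $s \in S$ (which supplies the factor $\card S = q/2$), reduce $G_{a\Sq}(z^4)z^{2\ring}$ and $G_{a\Sq}(z^4)z^{4\ring}$ by absorption (Remark~\ref{Zachary}), and simplify the middle term by using the norm-$2\ring$ property to write $G_{P_\pm}(z^4)=G_{\frac12 P_\pm}(z^{\unif^3})$, whereupon Corollary~\ref{Philip} collapses $H_{a\Sq}(z)G_{P_\pm}(z^4)$ to $H_{a\Sq}(z)$ (since $H_{a\Sq}$ is $\unif^3$-uniform by Lemma~\ref{Diana}). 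Assembling the three pieces, the two $z^{2\ring}$ contributions cancel and the $\tau=0$ coset merges into the sum over $\teich$, producing exactly the claimed form for $H_{Q_\pm}(z)$, with last piece $\tfrac{1}{q^{r-1}}H_{a\Sq}(z) = \tfrac{2}{q^{r+1}}\sum_{\tau\in\teichu,\,s\in S}z^{a\tau(1+4s)+8\ring}$.

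With $H_{Q_\pm}(z)$ in hand, $\Ig(H_{Q_\pm}(z))$ is a termwise application of Lemma~\ref{Stevie}: $z^\ring$ contributes $\igrp$, each unit coset $z^{a\tau+4\ring}$ and $z^{a\tau(1+4s)+8\ring}$ contributes $t^0=1$, and the single coset $z^{4\ring}$ (the $\tau=0$ term) contributes $t^2\,\igrp$. Counting the $q-1$ values of $\tau\in\teichu$ and the $q/2$ values of $s\in S$ and collecting the $\igrp$ and constant parts yields $\bigl(1 - t^2/q^r \pm (t^2-t)/q^{(r+1)/2}\bigr)\,\igrp$, as claimed. I expect the main obstacle to be the coset bookkeeping in the three cross terms: correctly tracking the factor $\card S = q/2$ coming from Lemma~\ref{Diana}, reindexing $G_{P_\pm}(z^4)$ via the norm-$2\ring$ rescaling so that $\unif^3$-uniformity makes Corollary~\ref{Philip} applicable, and verifying that the $z^{2\ring}$ terms cancel exactly between the first and third pieces.
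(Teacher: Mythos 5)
Your proposal is correct and follows essentially the same route as the paper: decompose $Q_\pm = a\Sq\oplus P_\pm$, multiply the head-plus-tail expressions from Lemmata \ref{Diana} and \ref{Rebecca} via the sum-product rule, simplify the cross terms with Remark \ref{Zachary} and coalescence, identify the head through Lemma \ref{Heinrich}, and apply Lemma \ref{Stevie} termwise. The cancellations and counting factors ($\card{S}=q/2$, $\card{\teichu}=q-1$) you flag are exactly the ones the paper's computation relies on.
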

\begin{proof}
Let $\widetilde{Q}_+=\Hyp^{(r-1)/2}$ and when $r\geq 3$, let $\widetilde{Q}_-=\Ell\oplus\Hyp^{(r-3)/2}$.  Let $G_{\widetilde{Q}_\pm}(z)$ and $G_{a\Sq}(z)$ be the respective $p$-adic generating functions of $\widetilde{Q}_\pm$ and $a\Sq$.
By the sum-product rule, the $p$-adic generating function for $Q_\pm$ is $G_{Q_\pm}(z)=G_{a \Sq}(z) G_{\widetilde{Q}_\pm}(z)$. Taking the values of these latter two generating functions are described in Lemmata \ref{Diana} and \ref{Rebecca}, we see that $G_{Q_\pm}(z)$ is 
\[
\left(\frac{2}{q^2} \sums{\tau \in \teichu \\ s \in S} z^{a\tau (1+4 s)+ 8 \ring} + \frac{1}{q} G_{a \Sq}(z^4)\right)\!\! \left[\! \left(1\mp \frac{1}{q^{(r-1)/2}}\right)\!\!\left(z^{2 \ring}\pm \frac{1}{q^{(r-1)/2}} z^{4 \ring}\right)\!\!+\! \frac{1}{q^{r-1}} G_{\widetilde{Q}_\pm}(z^4)\right]\! .
\]
One can multiply out and simplify the products using Remark \ref{Zachary}, keeping in mind that $G_{\widetilde{Q}_{\pm}}(z^4)$ is equal to $G(z^8)$ for some $G(z) \in \limgroupring$ since $\widetilde{Q}_\pm$ is of norm $2\ring$ or $0$.
When one does this, and coalesces $\sum_{\tau \in \teich} z^{\tau+2\ring}$ into $q z^\ring$ (see Remark \ref{Anne}), one obtains
\[
\left(1\mp \frac{1}{q^{(r-1)/2}}\right) \left[z^\ring \pm \frac{1}{q^{(r+1)/2}} \sum_{\tau \in \teich} z^{a\tau+4\ring}\right] + \frac{2}{q^{r+1}} \sums{\tau \in \teichu \\ s \in S} z^{a\tau (1+4 s)+ 8 \ring} + \frac{1}{q^r} G_{Q_\pm}(z^4), 
\]
which, by Lemma \ref{Heinrich} makes the head what it is claimed to be.  We use Lemma \ref{Stevie} to calculate that $\Ig(H_{Q_\pm}(z))$ is
\[
\left(1\mp \frac{1}{q^{(r-1)/2}}\right) \left[\igrp \pm \frac{1}{q^{(r+1)/2}} \left(t^2\igrp+\sum_{\tau \in \teichu} 1\right)\right] + \frac{2}{q^{r+1}} \sums{\tau \in \teichu \\ s \in S} 1,
\]
and since $\card{\teichu}=q-1$ and $\card{S}=q/2$, this is
\[
\left(1\mp \frac{1}{q^{(r-1)/2}}\right) \left(1 \pm  \frac{t^2}{q^{(r+1)/2}} \right) \igrp  + \left(\pm \frac{1}{q^{(r+1)/2}} - \frac{1}{q^r}\right)(q-1) + \frac{q-1}{q^r},
\]
which is
\[
\left[\left(1\mp \frac{1}{q^{(r-1)/2}}\right) \left(1 \pm  \frac{t^2}{q^{(r+1)/2}} \right)\pm \frac{q-t}{q^{(r+1)/2}} \right] \igrp,
\]
which simplifies to give the value of $\Ig(H_{Q_\pm}(z))$ that we were to prove.  The relation between $\Ig(G_{Q_\pm}(z))$ and $\Ig(H_{Q_\pm}(z))$ comes from Lemma \ref{Heinrich}.
\end{proof}
\begin{lemma}\label{Eveline}
Suppose that $p=2$ and does not ramify in $\ring$  Let $a, b \in \ringunits$ with $a\equiv b \equiv 1 \pmod{2}$.
Let $r \in \N$ be even, let $Q_+=a \Sq \oplus b \Sq \oplus \Hyp^{(r-2)/2}$ for $r \geq 2$, and let $Q_-=a \Sq \oplus b \Sq \oplus \Ell\oplus\Hyp^{(r-4)/2}$ for $r \geq 4$.

If $4 \mid a+b$, let $\sigma =(-1)^{\Tr((a+b)/(4 a))}$, and then the head of the $p$-adic generating function for $Q_\pm$ is
\[
H_{Q_\pm}(z) = z^\ring \mp \frac{1}{q^{r/2}} z^{2 \ring} \pm \frac{1}{q^{r/2}}  z^{4 \ring} -\frac{\sigma+1}{q^r} z^{4\ring} + \frac{\sigma}{q^r} z^{8 \ring},
\]
with
\[
\Ig(H_{Q_\pm}(z)) = \left(1 -\frac{t^2}{q^r} \pm \frac{t^2-t}{q^{r/2}} + \frac{\sigma (t^3-t^2)}{q^r}\right) \igrp.
\]

If $4 \nmid a+b$, then the head of the $p$-adic generating function for $Q_\pm$ is
\begin{multline*}
H_{Q_\pm}(z) = \left(1\mp \frac{1}{q^{(r-2)/2}}\right) \left(z^\ring \pm \frac{1}{q^{r/2}} z^{2 \ring}\right) \\
\pm \frac{2}{q^{(r+2)/2}} \sums{\tau \in \teichu \\ s \in S} z^{\tau\left(a + \frac{4}{a+b} s\right) + 4 \ring}
+ \frac{2}{q^{r+1}} \sums{\tau\in\teichu \\ s \in S} z^{\tau \left(a+b+4 s\right)+8\ring},
\end{multline*}
with
\[
\Ig(H_{Q_\pm}(z)) = \left(1-\frac{t^2}{q^r}\right) \igrp.
\]

In all cases, the $p$-adic generating function satisfies $G_{Q_\pm}(z)=H_{Q_\pm}(z)+\frac{1}{q^r} G_{Q_\pm}(z^4)$.
The Igusa local zeta function for $Q_\pm$ is
\[
\Ig(G_{Q_\pm}(z)) = \frac{\Ig(H_{Q_\pm}(z))}{1-t^2/q^r}.
\]
\end{lemma}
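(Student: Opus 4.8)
The plan is to imitate the proof of Lemma \ref{Christine}, exhibiting $Q_\pm$ as the direct sum of the norm-$\ring$ rank-$2$ piece $a\Sq\oplus b\Sq$ with a norm-$2\ring$ piece. Write $\widetilde{Q}_+=\Hyp^{(r-2)/2}$ and $\widetilde{Q}_-=\Ell\oplus\Hyp^{(r-4)/2}$, so that $Q_\pm=(a\Sq\oplus b\Sq)\oplus\widetilde{Q}_\pm$; by the sum-product rule, $G_{Q_\pm}(z)=G_{a\Sq\oplus b\Sq}(z)\,G_{\widetilde{Q}_\pm}(z)$. I would substitute the value of $G_{a\Sq\oplus b\Sq}(z)$ from Lemma \ref{Earl} (in the two cases $4\mid a+b$ and $4\nmid a+b$) and, for the second factor, the relation $G_{\widetilde{Q}_\pm}(z)=\left(1\mp q^{-(r-2)/2}\right)z^{2\ring}\pm q^{-(r-2)/2}\,G_{\widetilde{Q}_\pm}(z^\pi)$ furnished by Lemma \ref{Rebecca}, together with its head recursion $G_{\widetilde{Q}_\pm}(z)=H_{\widetilde{Q}_\pm}(z)+q^{-(r-2)}G_{\widetilde{Q}_\pm}(z^4)$.

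First I would expand this product coset by coset. The computation is controlled by two facts: $\widetilde{Q}_\pm$ has norm $2\ring=\pi\ring$, so $G_{\widetilde{Q}_\pm}(z^\pi)$ is supported on $\pi^2\ring$ and $G_{\widetilde{Q}_\pm}(z^4)$ has the form $G(z^8)$; and, via Remark \ref{Zachary} together with $z^{\pi^j\ring}z^C=z^{\pi^j\ring}$ whenever $C\subseteq\pi^j\ring$, multiplying a deep coset term of the first factor against these pieces collapses them to their normalization. The genuinely interactive products are those in which a level-$4$ or level-$8$ coset of $G_{a\Sq\oplus b\Sq}(z)$ meets the coarse $z^{2\ring}$ head of the second factor; for instance $z^{4\ring}\,G_{\widetilde{Q}_\pm}(z)=\left(1\mp q^{-(r-2)/2}\right)z^{2\ring}\pm q^{-(r-2)/2}z^{4\ring}$ because $\pi^2\ring+\pi\ring=\pi\ring$, and it is this feedback that produces the $q^{-r/2}$-scale coefficients in the head. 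After collecting coefficients and coalescing the $\teichu\times S$ sums with Remark \ref{Anne} in the $4\nmid a+b$ case, the product takes the form $\Phi(z)+\tfrac{1}{q^r}G_{Q_\pm}(z^4)$, where the self-referential term arises from $\tfrac{1}{q^2}G_{a\Sq\oplus b\Sq}(z^4)$ times $\tfrac{1}{q^{r-2}}G_{\widetilde{Q}_\pm}(z^4)$ via the sum-product rule. Since $Q_\pm$ is homogeneous of degree $2$ and rank $r$, Lemma \ref{Heinrich} (with $n=r$, $d=2$) gives $G_{Q_\pm}(z)=H_{Q_\pm}(z)+\tfrac{1}{q^r}G_{Q_\pm}(z^4)$, so by uniqueness $\Phi=H_{Q_\pm}$, yielding the two claimed closed forms.

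I would then apply $\Ig$ termwise, using its $\C$-linearity and Lemma \ref{Stevie}: a term $z^{\pi^j\ring}$ gives $t^j\igrp$, and a proper coset $z^{c+\pi^j\ring}$ with $\val(c)<j$ gives $t^{\val(c)}$. In the $4\mid a+b$ case every coset is a full power-ideal, so $\Ig(H_{Q_\pm})$ is immediately $\bigl(1-t^2/q^r\pm(t^2-t)/q^{r/2}+\sigma(t^3-t^2)/q^r\bigr)\igrp$. In the $4\nmid a+b$ case the first $\teichu\times S$ sum sits at unit cosets (valuation $0$) and the second at cosets of valuation $1$ (since $4\nmid a+b$ forces $\val(a+b)=1$), so by $\card{\teichu}=q-1$ and $\card{S}=q/2$ these contribute the bare constants $\pm(q-1)/q^{r/2}$ and $(q-1)t/q^r$. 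Finally $\Ig(G_{Q_\pm})=\Ig(H_{Q_\pm})/(1-t^2/q^r)$ is immediate from Lemma \ref{Heinrich} and Lemma \ref{Tina}.

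The main obstacle is twofold bookkeeping. The harder half is the coset-by-coset expansion in the second paragraph---keeping track of which cross-terms feed back into coarser cosets and verifying that the coefficients assemble precisely into the stated heads, especially the $\sigma$-dependent $z^{4\ring}$ and $z^{8\ring}$ terms when $4\mid a+b$. The second half is the final simplification of $\Ig(H_{Q_\pm})$ in the $4\nmid a+b$ case: one folds each bare constant back over the common denominator using $\igrp=(q-1)/(q-t)$ (so that, e.g., $(q-1)/q^{r/2}=(q-t)\igrp/q^{r/2}$), after which the $\pm$-dependent and sum-dependent contributions cancel and leave the clean $\bigl(1-t^2/q^r\bigr)\igrp$. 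This is exactly the manipulation performed at the end of the proof of Lemma \ref{Christine}, and I would organize it identically.
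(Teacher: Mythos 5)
Your proposal is correct and follows essentially the same route as the paper: the same decomposition $Q_\pm=(a\Sq\oplus b\Sq)\oplus\widetilde{Q}_\pm$, the sum-product rule applied to the expressions from Lemmata \ref{Earl} and \ref{Rebecca}, simplification of the cross-terms via Remark \ref{Zachary} (using that $G_{\widetilde{Q}_\pm}(z^4)$ has the form $G(z^8)$), identification of the head through the recursion $G_{Q_\pm}(z)=H_{Q_\pm}(z)+\frac{1}{q^r}G_{Q_\pm}(z^4)$ from Lemma \ref{Heinrich}, and termwise application of $\Ig$ via Lemma \ref{Stevie}. The details you flag as the main bookkeeping (the $\sigma$-dependent coefficients of $z^{4\ring}$ and $z^{8\ring}$, and the cancellation of the $\pm$-dependent contributions in the $4\nmid a+b$ case after folding the constants over $1-t/q$) work out exactly as you describe and match the paper's computation.
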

\begin{proof}
Let $\widetilde{Q}_+=\Hyp^{(r-2)/2}$ when $r\geq 2$, and let $\widetilde{Q}_-=\Ell\oplus \Hyp^{(r-4)/2}$ when $r \geq 4$.
Let $G_{\widetilde{Q}_\pm}(z)$ and $G_{a\Sq\oplus b\Sq}(z)$ be the respective $p$-adic generating functions of $\widetilde{Q}_\pm$ and $a\Sq\oplus b\Sq$.
By the sum-product rule, the $p$-adic generating function for $Q_\pm$ is $G_{Q_\pm}(z)=G_{a \Sq\oplus b\Sq}(z) G_{\widetilde{Q}_\pm}(z)$.
The values of these latter two generating functions are described in Lemmata \ref{Earl} and \ref{Rebecca}.

First consider the case where $4 \mid a+b$, so that $G_{Q_\pm}(z)$ is
\[
z^\ring-\frac{1}{q} z^{2\ring} + \frac{q-1-\sigma}{q^2} z^{4 \ring} + \frac{\sigma}{q^2} z^{8 \ring} + \frac{1}{q^2} G_{a \Sq\oplus b \Sq}(z^4)
\]
times
\[
\left(1\mp \frac{1}{q^{(r-2)/2}}\right) \left(z^{2 \ring}\pm \frac{1}{q^{(r-2)/2}} z^{4 \ring}\right) + \frac{1}{q^{r-2}} G_{\widetilde{Q}_\pm}(z^4).
\]
One can multiply out and simplify the products using Remark \ref{Zachary}, keeping in mind that $G_{\widetilde{Q}_{\pm}}(z^4)$ is equal to $G(z^8)$ for some $G(z) \in \limgroupring$ since $\widetilde{Q}_\pm$ is of norm $2\ring$ or $0$.
When one does this, one obtains
\[
z^\ring \mp \frac{1}{q^{r/2}} z^{2 \ring} \pm \frac{1}{q^{r/2}}  z^{4 \ring} -\frac{\sigma+1}{q^r} z^{4\ring} + \frac{\sigma}{q^r} z^{8 \ring} +\frac{1}{q^r} G_{Q_\pm}(z^4),
\]
which, by Lemma \ref{Heinrich} makes the head what it is claimed to be.
We use Lemma \ref{Stevie} to calculate 
\[
\Ig(H_{Q_\pm}(z)) = \left(1 \mp \frac{t}{q^{r/2}}  \pm \frac{t^2}{q^{r/2}}  -\frac{(\sigma+1) t^2}{q^r} + \frac{\sigma t^3}{q^r} \right) \igrp,
\]
which simplifies to the claimed value.

In the case where $4 \nmid a+b$, we use the appropriate generating function for $a \Sq\oplus b\Sq$ from Lemma \ref{Earl} to see that $G_{Q_\pm}(z)$ is
\[
\frac{2}{q^2} \sums{\tau \in \teichu \\ s \in S} z^{\tau\left(a + \frac{4}{a+b} s\right) + 4 \ring} + \frac{2}{q^3} \sums{\tau\in\teichu \\ s \in S} z^{\tau \left(a+b+4 s\right)+8\ring} + \frac{1}{q^2} G_{a \Sq\oplus b\Sq}(z^4)
\]
times
\[
\left(1\mp \frac{1}{q^{(r-2)/2}}\right) \left(z^{2 \ring}\pm \frac{1}{q^{(r-2)/2}} z^{4 \ring}\right) + \frac{1}{q^{r-2}} G_{\widetilde{Q}_\pm}(z^4).
\]
One can multiply out and simplify the products using Remark \ref{Zachary}, keeping in mind that $G_{\widetilde{Q}_{\pm}}(z^4)$ is equal to $G(z^8)$ for some $G(z) \in \limgroupring$ since $\widetilde{Q}_\pm$ is of norm $2\ring$ or $0$.
When one does this, and coalesces cosets (see Remark \ref{Anne}), one obtains
\begin{multline*}
\left(1\mp \frac{1}{q^{(r-2)/2}}\right) \left(z^\ring \pm \frac{1}{q^{r/2}} z^{2 \ring}\right) 
\pm \frac{2}{q^{(r+2)/2}} \sums{\tau \in \teichu \\ s \in S} z^{\tau\left(a + \frac{4}{a+b} s\right) + 4 \ring}
\\+ \frac{2}{q^{r+1}} \sums{\tau\in\teichu \\ s \in S} z^{\tau \left(a+b+4 s\right)+8\ring} +\frac{1}{q^r} G_{Q_{\pm}}(z^4),
\end{multline*}
which, by Lemma \ref{Heinrich} makes the head what it is claimed to be.
We use Lemma \ref{Stevie} to calculate that $\Ig(H_{Q_\pm}(z))$ is
\[
\left(1\mp \frac{1}{q^{(r-2)/2}}\right) \left(1 \pm \frac{t}{q^{r/2}} \right) \igrp \pm \frac{2}{q^{(r+2)/2}} \sums{\tau \in \teichu \\ s \in S} 1 + \frac{2}{q^{r+1}} \sums{\tau\in\teichu \\ s \in S} t,
\]
and since $\card{\teichu}=q-1$ and $\card{S}=q/2$, this is
\[
\left(1\mp \frac{1}{q^{(r-2)/2}}\right) \left(1 \pm \frac{t}{q^{r/2}} \right) \igrp \pm \frac{q-1}{q^{r/2}} + \frac{(q-1)t}{q^r},
\]
which is
\[
\left[\left(1\mp \frac{1}{q^{(r-2)/2}}\right) \left(1 \pm \frac{t}{q^{r/2}} \right) \pm \frac{q-t}{q^{r/2}} + \frac{(q-t)t}{q^r} \right] \igrp,
\]
which simplifies to give the value of $\Ig(H_{Q_\pm}(z))$ that we were to prove.

In each case, the relation between $\Ig(G_{Q_\pm}(z))$ and $\Ig(H_{Q_\pm}(z))$ comes from Lemma \ref{Heinrich}.
\end{proof}

\subsection{Addition Rules for Unimodular Quadratic Forms over Unramified $2$-Adic Fields}\label{Leonard}

In order to use Theorem \ref{Ursula} to calculate the local zeta function of a quadratic polynomial over an unramified $2$-adic ring, one needs to know how to express the direct sum of any pair of unimodular forms on Table \ref{Melissa} as another form on Table \ref{Melissa}.  In Lemma \ref{Eric} we proposed ``addition rules'' that make this possible.  We prove Lemma \ref{Eric} here.

The first addition rule is actually true for all $p$-adic fields.
\begin{lemma}
The quadratic form $\Ell\oplus \Ell$ is equivalent to the quadratic form $\Hyp\oplus \Hyp$.
\end{lemma}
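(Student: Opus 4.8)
The plan is to realize both rank-four forms as $\Hyp\oplus\Hyp$ directly, by repeatedly splitting off hyperbolic planes, rather than by invoking a classification theorem; this keeps the argument uniform across all $p$-adic fields (odd, unramified dyadic, and ramified dyadic). The two invariants I would track are the discriminant and the norm. Since $\Ell$ and $\Hyp$ are unimodular, $\disc(\Hyp)=-1$ while $\disc(\Ell)$ is a unit, so $\disc(\Hyp\oplus\Hyp)=1$ and $\disc(\Ell\oplus\Ell)=\disc(\Ell)^2=1$ in $\field^*/\ringsquareunits$. Both forms have norm $\norm(\Hyp)+\norm(\Ell)$, which equals $2\ring$ in the dyadic case and $\ring$ when $p$ is odd; I record in particular that every value of $\Ell\oplus\Ell$ lies in $2\ring$ (resp.\ $\ring$).

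The key technical step is a splitting sublemma: if $(L,Q)$ is a unimodular lattice over $\ring$ of rank at least two, every value of which lies in $2\ring$ (an automatic condition when $p$ is odd, since then $2$ is a unit), and if $L$ has a primitive vector $v$ with $Q(v)=0$, then $L\cong\Hyp\oplus L'$ for a unimodular complement $L'$. To prove it I would use that $v$ is primitive and $M$ is unimodular to find $u\in L$ with $B(v,u)=1$, where $B(x,y)=x^T M y$ is the bilinear form of $Q$; since $Q(u)$ is a value of $L$ we get $Q(u)/2\in\ring$ in every case (because $2$ is invertible when $p$ is odd, and $Q(u)\in 2\ring$ in the dyadic case), so replacing $u$ by $u'=u-(Q(u)/2)v$ yields $Q(u')=0$ and $B(v,u')=1$. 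Then $\langle v,u'\rangle$ has Gram matrix $\bigl(\begin{smallmatrix}0&1\\1&0\end{smallmatrix}\bigr)$, i.e.\ is a copy of $\Hyp$, and being unimodular it splits off orthogonally. The role of the norm hypothesis is exactly to guarantee $Q(u)/2\in\ring$ in the dyadic case, which is what makes the construction produce $\Hyp$ rather than another binary unimodular form.

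To apply this to $\Ell\oplus\Ell$, I would first produce a primitive isotropic vector. By Remark~\ref{Gabriel} the elliptic plane represents every element whose valuation has the parity of $\val(2)$; in particular it represents both $2$ and $-2$, say $\Ell(v_1)=2$ and $\Ell(v_2)=-2$. Each such representing vector is primitive: $\pm 2$ has the least valuation $\val(2)$ attained by any nonzero value of $\Ell$ (its norm being $2\ring$), and a non-primitive vector $\unif\tilde v$ would give a value $\unif^2\Ell(\tilde v)$ of strictly smaller valuation, a contradiction. Thus $(v_1,v_2)$ is a primitive isotropic vector of $\Ell\oplus\Ell$, and the sublemma yields $\Ell\oplus\Ell\cong\Hyp\oplus W$ with $W$ unimodular binary. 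Finally $\disc(W)=\disc(\Ell\oplus\Ell)/\disc(\Hyp)=-1$, so $W$ is isotropic over $\field$ and hence has a primitive isotropic vector over $\ring$; as $W$ is a sublattice of $\Ell\oplus\Ell$ all of its values lie in $2\ring$, so the sublemma applies again and forces $W\cong\Hyp$. This gives $\Ell\oplus\Ell\cong\Hyp\oplus\Hyp$.

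I expect the main obstacle to be the dyadic bookkeeping in the splitting sublemma, specifically verifying that the hyperbolic pair one extracts really is $\Hyp=2xy$ and not one of the other binary unimodular dyadic forms. This is precisely where the condition $Q(u)\in 2\ring$ is essential, and it is the only place the argument is sensitive to ramification; everything else (the discriminant computation, the primitivity of the representing vectors, and the orthogonal splitting of a unimodular sublattice) is uniform and elementary.
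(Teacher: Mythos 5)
Your proof is correct, but it takes a genuinely different route from the paper's. The paper disposes of the lemma by appealing to the classification of unimodular forms already established for other purposes: Proposition \ref{Jason} (for odd $p$) and Proposition \ref{Kurt} together with the discussion of the binary forms $\Pl(a,b)$ in Section \ref{Jules} (for $p=2$) show that the only rank-$4$ unimodular forms of norm $2\ring$ are, up to equivalence, $\Hyp\oplus\Hyp$ and $\Ell\oplus\Hyp$, and these two are separated by the discriminant, which for $\Ell\oplus\Ell$ is a square. You avoid the classification entirely and argue by direct hyperbolic splitting: Remark \ref{Gabriel} furnishes vectors with $\Ell(v_1)=2$ and $\Ell(v_2)=-2$, the norm condition $\norm(\Ell)\subseteq 2\ring$ forces these vectors to be primitive, your splitting sublemma turns the resulting primitive isotropic vector of $\Ell\oplus\Ell$ into an orthogonal summand with Gram matrix $\bigl(\begin{smallmatrix}0&1\\1&0\end{smallmatrix}\bigr)$, and the leftover binary unimodular complement has discriminant $-1$, hence is isotropic over $\field$, hence splits the same way. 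Your observation that the hypothesis ``all values lie in $2\ring$'' is exactly what prevents the extracted plane from being something like $x^2-y^2$ (isotropic and unimodular over $\Z_2$, but of norm $\Z_2$ and so not $\Hyp$) is the one genuinely delicate point, and you handle it correctly. What your approach buys is uniformity and self-containedness: it works verbatim for odd, unramified dyadic, and ramified dyadic $\ring$ without invoking O'Meara's classification. What the paper's approach buys is brevity. Two cosmetic slips: the common norm is $\norm(\Hyp)+\norm(\Hyp)$ resp.\ $\norm(\Ell)+\norm(\Ell)$ (both equal to $2\ring$), not $\norm(\Hyp)+\norm(\Ell)$; and in the primitivity argument the contradiction is that $\Ell(\tilde v)=2/\unif^2$ would be a value of $\Ell$ of valuation less than $\val(2)$, rather than ``$\unif^2\Ell(\tilde v)$ of strictly smaller valuation'' as written. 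Neither affects the argument.
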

\begin{proof}
When $p$ is odd, Proposition \ref{Jason} shows that up to equivalence the only forms of rank $4$ are $\Hyp\oplus\Hyp$ and $\Ell\oplus\Hyp$.  When $p=2$, consult Proposition \ref{Kurt}, and recall from Section \ref{Jules} that if $\Pl(a,b)$ has norm $2\ring$, then it must be $\Hyp$ or $\Ell$, and so up to equivalence the only forms or rank $4$ and norm $2 \ring$ are $\Hyp\oplus\Hyp$ and $\Ell\oplus\Hyp$.  Of the two candidates, only $\Hyp\oplus\Hyp$ has the same discriminant as $\Ell\oplus\Ell$.
\end{proof}
We prove the second addition rule specifically for the unramified $2$-adic case.
\begin{lemma}
Suppose that $p=2$ and does not ramify in $\ring$.
Suppose that $Q(x,y,z)=a x^2+b y^2+c z^2$ with $a,b,c \in \ringunits$.  If there are $d,e,f \in \ring$ with $Q(d,e,f) \equiv -a b c \pmod{8}$, then $Q(x,y,z)$ is equivalent to  $(-a b c) \Sq \oplus \Hyp$.  Otherwise $Q(x,y,z)$ is equivalent to $(-a b c)(1+4\xi) \Sq \oplus \Ell$.
\end{lemma}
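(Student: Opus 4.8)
The plan is to pin down $Q=a\Sq\oplus b\Sq\oplus c\Sq$ among the two admissible rank~$3$ shapes and then use a representation criterion to decide between them. First I would observe that $Q$ is unimodular of rank~$3$ and norm $\ring$ (its matrix is $\mathrm{diag}(a,b,c)$ with unit determinant $abc$, and $Q(1,0,0)=a$ is a unit), so Corollary~\ref{Frank} shows that $Q$ is equivalent to $u\Sq\oplus\Hyp$ or $u\Sq\oplus\Ell$ for some unit $u\equiv 1\pmod 2$. Matching discriminants narrows these possibilities: since $\disc(\Hyp)=-1$ and $\disc(\Ell)=-(1+4\xi)$ while $\disc(Q)=abc$, the first shape must be $(-abc)\Sq\oplus\Hyp$ and the second $(-abc)(1+4\xi)\Sq\oplus\Ell$ (using that $(1+4\xi)^{-1}\equiv 1+4\xi$ modulo squares). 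These are exactly the two forms in the statement, so it remains only to determine which one $Q$ is.

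The bridge between the stated mod~$8$ criterion and an honest over-$\ring$ statement is Hensel's Lemma in the form of Lemma~\ref{Gretchen}, applied with $j=\val(2)=1$. Any vector at which a unimodular form takes a unit value modulo $8$ must be primitive (a vector in $(\unif\ring)^3$ yields a value in $4\ring$), and at a primitive vector the derivative $2Mv$ of a unimodular form has valuation exactly $\val(2)=1$; hence Lemma~\ref{Gretchen} (with $2j+1=3$, so $\unif^{2j+1}=8$) shows that a unimodular form over $\ring$ represents a given unit over $\ring$ if and only if it represents it modulo $8$. Thus the hypothesis ``there are $d,e,f$ with $Q(d,e,f)\equiv-abc\pmod 8$'' is equivalent to ``$Q$ represents $-abc$ over $\ring$,'' and equivalent forms represent exactly the same elements of $\ring$. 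The form $(-abc)\Sq\oplus\Hyp$ visibly represents $-abc$ (set the square variable to $1$ and the hyperbolic variables to $0$), so the whole lemma follows once I show that $(-abc)(1+4\xi)\Sq\oplus\Ell$ does \emph{not} represent $-abc$: then $Q$ represents $-abc$ modulo $8$ precisely when $Q\cong(-abc)\Sq\oplus\Hyp$, and otherwise $Q\cong(-abc)(1+4\xi)\Sq\oplus\Ell$.

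The heart of the argument, and the step I expect to be the main obstacle, is the nonrepresentation claim for the elliptic shape. Writing $m=-abc$ and $\gamma=1+4\xi$, a representation $m\gamma w^2+\Ell(x,y)=m$ would force $\Ell(x,y)=m(1-\gamma w^2)$, and by Remark~\ref{Gabriel} the elliptic plane represents only elements whose valuation has the same parity as $\val(2)=1$, that is, only elements of odd valuation (together with $0$ trivially). The plan is to rule out every $w$ by computing $\val(1-\gamma w^2)$: if $w$ is a nonunit, or a unit whose residue is not $1$, then $1-\gamma w^2$ is a unit of valuation $0$; and if $w=1+2s$ is a unit with residue $1$, then $1-\gamma w^2\equiv-4(\xi+s+s^2)\pmod 8$. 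The decisive point is that $\xi+s+s^2$ is always a unit, since the Frobenius-invariance of the trace gives $\Tr(s^2)=\Tr(s)$, so $\Tr(s+s^2)\equiv 0$, whereas $\Tr(\xi)\equiv 1$ by the defining property of $\xi$; hence $\val(1-\gamma w^2)=2$ in this last case. In every case $m(1-\gamma w^2)$ is a nonzero element of even valuation (it is nonzero because $\gamma$ is a nonsquare, so $\gamma w^2=1$ is impossible), and therefore it cannot be a value of $\Ell$. This contradiction shows that $(-abc)(1+4\xi)\Sq\oplus\Ell$ does not represent $-abc$, and assembling the pieces gives the dichotomy asserted in the lemma.
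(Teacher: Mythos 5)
Your proof is correct and follows essentially the same route as the paper's: reduce to the two candidate forms via Corollary~\ref{Frank} and discriminant matching, then show the elliptic candidate cannot represent $-abc$ because $1-(1+4\xi)r^2$ always has even valuation (the key point being $\Tr(\xi+s+s^2)\equiv\Tr(\xi)\equiv 1\pmod{2}$), while $\Ell$ represents only $0$ and elements of odd valuation by Remark~\ref{Gabriel}. The only difference is cosmetic: the paper runs the non-representation argument directly modulo $8$ (a value of valuation $0$ or $2$ cannot be congruent mod $8$ to a value of $\Ell$), so it never needs your Hensel's-Lemma step upgrading mod-$8$ representation of a unit to exact representation over $\ring$.
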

\begin{proof}
Since $Q$ has rank $3$ and norm $\ring$, Corollary \ref{Frank} shows that it must be equivalent either to $(-a b c) \Sq \oplus \Hyp$ or $(-a b c)(1+4 \xi)\Sq \oplus \Ell$, where the coefficient of the $\Sq$ term has been chosen in view of the fact that the discriminant must be preserved.
We note that $(-a b c) \Sq \oplus \Hyp$ obviously represents $-a b c$.
We claim that $(-a b c)(1+4\xi)\Sq\oplus\Ell$ does not represent $-a b c \pmod{8}$.  For if it did, then since $\Ell$ represents only zero and elements of odd valuation by Remark \ref{Gabriel}, we would need to have some $r \in \ring$ such that $f(r)=(-a b c)(1+4\xi) r^2+a b c$ is either zero or of odd valuation.  The valuation of $f(r)$ will be $0$ unless $r \equiv 1 \pmod{2}$.  Write $r=1+2 s$ with $s \in \ring$, so that $f(r) \equiv -4 a b c (\xi+s+s^2) \pmod{8}$, and then note that $\Tr(\xi+s+s^2) \equiv 1 \pmod{2}$, so that $\twoval(f(r))=2$.
\end{proof}

\appendix

\section{Proof of Lemma \ref{Earl}}\label{Karen}

Throughout this section, we assume that $p=2$ and does not ramify in $\ring$.
We let $\teich$ be a set of Teichm\"uller representatives for $\resfield$ in $\field$, that is, $T$ contains all the $(q-1)$th roots of unity and $0$.
We let $\teichu=T\smallsetminus\{0\}$, and we let $S=\{\tau \in \teich: \Tr(\tau) \equiv 0 \pmod{\pi}\}$.  Note that $\card{S}=q/2$.

In order to prove Lemma \ref{Earl}, we need some preliminary results concerning the arithmetic of Teichm\"uller representatives.  We present these as technical lemmata, some of which are used here, and some of which find use later in Appendix \ref{Mordecai}.
\begin{lemma}\label{Richard}
Let $a \in R$, and suppose that $a_0 \in T$ with $a \equiv a_0 \pmod{2}$.  Then $a^q \equiv a_0 \pmod{4}$.  If $a$ is a unit, or if the residue field for $\ring$ has order $q \geq 4$, then $a^q \equiv a_0 \pmod{8}$.
\end{lemma}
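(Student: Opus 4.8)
The plan is to expand $a^q$ by the binomial theorem and control the $2$-adic valuations of the resulting terms. Since $\field$ is unramified of residue characteristic $2$, we have $q=2^m$ for some $m\ge 1$ and $\unif=2$, and the Teichm\"uller representative satisfies $a_0^q=a_0$. First I would write $a=a_0+2b$ with $b\in\ring$ and expand
\[
a^q=\sum_{k=0}^{q}\binom{q}{k}a_0^{q-k}(2b)^k=a_0+\sum_{k=1}^{q}\binom{q}{k}2^k a_0^{q-k}b^k,
\]
using $a_0^q=a_0$, so that everything reduces to bounding $\twoval\!\left(\binom{q}{k}2^k\right)$ for $k\ge 1$.

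The key step is the standard valuation formula $\twoval\binom{2^m}{k}=m-\twoval(k)$ for $1\le k\le 2^m$ (Kummer's theorem), which gives $\twoval\!\left(\binom{q}{k}2^k\right)=k+m-\twoval(k)$. Since $k\ge 2^{\twoval(k)}\ge \twoval(k)+1$, this valuation is at least $m+1$, with equality exactly at $k=1$ and $k=2$. Because $m\ge 1$, every term with $k\ge 1$ is divisible by $4$, which yields $a^q\equiv a_0\pmod 4$ at once. When $q\ge 4$ we have $m\ge 2$, so the same bound gives divisibility by $8$, proving $a^q\equiv a_0\pmod 8$ in that case (regardless of whether $a$ is a unit).

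The remaining, and most delicate, case is $a$ a unit with $q=2$, where $m=1$, $\ring=\Z_2$, and $a_0=1$; here the valuation bound only forces divisibility by $4$, so the extra factor of $2$ must come from elsewhere. The plan is to compute directly $a^2-a_0=4a_0 b+4b^2=4(b+b^2)$ and then invoke the idempotent identity in the residue field: since $\bar b\in\F_2$ satisfies $\bar b^2=\bar b$, we get $b^2\equiv b\pmod 2$, hence $b+b^2\equiv 2b\equiv 0\pmod 2$ and $a^2\equiv a_0\pmod 8$. I expect this last step to be the main obstacle, since it is exactly where the two hypotheses ($a$ a unit, forcing $a_0=1$, versus $q\ge 4$) diverge: the idempotent trick is special to $\F_2$ and fails for nonunits when $q=2$ (e.g.\ $a=2$ gives $a^2=4\not\equiv 0\pmod 8$), which is precisely why the hypothesis is stated the way it is.
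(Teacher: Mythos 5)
Your proof is correct and follows essentially the same route as the paper's: write $a=a_0+2b$, expand $(a_0+2b)^q$ binomially, control the $2$-adic valuation of $\binom{q}{k}2^k$ via Kummer's theorem, and handle the residual case $q=2$ with $a$ a unit by a direct computation (your $4(b+b^2)\equiv 0\pmod 8$ is just the standard fact that odd squares in $\Z_2$ are $1$ bmod $8$, which is what the paper invokes). Your explicit valuation bound $k+m-\twoval(k)\ge m+1$ is a slightly more detailed justification of the paper's assertion that only the $k\le 2$ terms survive modulo $8$, but the argument is the same.
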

\begin{proof}
Write $a=a_0+2 r$ for some $r \in R$, and consider the binomial expansion of $(a_0+2 r)^q$ modulo $8$, which is $a_0^q + 2 q a_0^{q-1} r + 2 q(q-1) a_0^{q-2} r^2$.  The last two terms always vanish modulo $4$, and they vanish modulo $8$ when $q \geq 4$.  If $q=2$ and $a$ is a unit, then $a_0=1$, and it is easy to check that $a^2 \equiv 1 \pmod{8}$ for every unit in $\Z_2$.
\end{proof}
\begin{lemma}\label{Elijah}
Suppose that $a, b, c \in T$ with $a+b \equiv c \pmod{2}$.
Then
\begin{align*}
a + b & \equiv c + 2 (a b)^{q/2} \\
& \equiv c + 2 a + 2 (a c)^{q/2} \pmod{4}.
\end{align*}
If the residue field for $\ring$ is of order $q \geq 4$, then
\begin{align*}
a + b & \equiv c + 2 (a b)^{q/2} + 4 (a b)^{q/4} (a^{q/2}+b^{q/2}) \\
& \equiv c + 2 a - 2 (a c)^{q/2} + 4 (a c)^{q/4} (a^{q/2}+c^{q/2}) \pmod{8}.
\end{align*}
\end{lemma}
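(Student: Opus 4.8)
The plan is to reduce everything to a single binomial expansion of $(a+b)^q$ together with the Teichm\"uller identity $\tau^q = \tau$, which holds for every $\tau \in T$ because $T^*$ consists of $(q-1)$th roots of unity. The starting point is that the hypothesis $a+b\equiv c \pmod 2$ lets us apply Lemma \ref{Richard} to the element $a+b$ (with $c$ in the role of $a_0$): this gives $(a+b)^q \equiv c \pmod 4$ unconditionally, and $(a+b)^q \equiv c \pmod 8$ as soon as $q \geq 4$ (that case of Lemma \ref{Richard} does not require $a+b$ to be a unit). Hence it suffices to expand $(a+b)^q = \sum_{k=0}^{q} \binom{q}{k} a^k b^{q-k}$ and discard the terms annihilated by the relevant power of $2$, using $a^q=a$, $b^q=b$, and $a^j b^j = (ab)^j$.

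First I would treat the two congruences modulo $4$. Writing $q = 2^m$, Kummer's theorem gives $\twoval\binom{q}{k} = m - \twoval(k)$ for $0 < k < q$, so modulo $4$ the only surviving interior term is $k = q/2$, with $\binom{q}{q/2} \equiv 2 \pmod 4$. This yields $c \equiv a + b + 2(ab)^{q/2} \pmod 4$, and moving the correction across (with $-2 \equiv 2$) gives the first congruence. The second modulo $4$ congruence then follows by symmetry: since $a+b\equiv c \pmod 2$ also forces $a+c\equiv b \pmod 2$, I apply the first congruence to the pair $(a,c)$, whose sum reduces to $b$, solve for $b$, and form $a+b = 2a + c + 2(ac)^{q/2}$.

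For the modulo $8$ congruences (now with $q \geq 4$, so $q/4 \in \N$), the same expansion retains the interior terms with $\twoval\binom{q}{k} \leq 2$, which by the valuation formula are exactly $k \in \{q/4,\, q/2,\, 3q/4\}$. The outer two coefficients $\binom{q}{q/4}$ and $\binom{q}{3q/4}$ have $\twoval = 2$, hence are $\equiv 4 \pmod 8$, and their terms combine as $a^{q/4}b^{3q/4}+a^{3q/4}b^{q/4} = (ab)^{q/4}(a^{q/2}+b^{q/2})$. Collecting gives $c \equiv a + b + \binom{q}{q/2}(ab)^{q/2} + 4(ab)^{q/4}(a^{q/2}+b^{q/2}) \pmod 8$, and moving the corrections across (using $-\binom{q}{q/2}\equiv 2$ and $-4 \equiv 4 \pmod 8$) yields the first modulo $8$ congruence, provided $\binom{q}{q/2} \equiv 6 \pmod 8$. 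The second modulo $8$ congruence again follows by applying the first to $(a,c)$ with sum $b$ and forming $a+b$, where $-4 \equiv 4 \pmod 8$ flips the sign of the order-$8$ correction so that it matches the claimed expression.

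The main obstacle is the sharp value $\binom{q}{q/2} \equiv 6 \pmod 8$ for $q = 2^m$ with $m \geq 2$, since the valuation formula alone only pins it down to $2$ or $6 \pmod 8$. I would establish it from the factorization $\binom{2^m}{2^{m-1}} = 2\binom{2^m-1}{2^{m-1}-1}$ together with the claim $\binom{2^m-1}{2^{m-1}-1} \equiv 3 \pmod 4$, proved by induction on $m$ (for instance by tracking $(1+x)^{2^m}$ modulo $8$, where squaring $(1+x)^{2^{m-1}}$ regenerates a central coefficient congruent to $6$). Everything else reduces to routine bookkeeping with the Teichm\"uller property and the elementary facts $-2 \equiv 2 \pmod 4$ and $-4 \equiv 4 \pmod 8$.
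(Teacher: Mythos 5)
Your proof is correct and follows essentially the same route as the paper: apply Lemma \ref{Richard} to get $(a+b)^q\equiv c$ modulo $4$ (resp.\ $8$), expand $(a+b)^q$ binomially with $a^q=a$, $b^q=b$, and keep only the coefficients $\binom{q}{k}$ of small $2$-adic valuation --- the paper cites Kummer and Anton for the values $1,4,-2,4,1$ modulo $8$ at $k=0,q/4,q/2,3q/4,q$, whereas you supply a short self-contained proof that $\binom{q}{q/2}\equiv -2\pmod{8}$. The only organizational difference is that you derive each second congruence by applying the first one to the triple $(a,c,b)$ and solving for $b$ (using $-2\equiv 2\pmod 4$ and $-4\equiv 4\pmod 8$), while the paper substitutes $b=c-a$ into the first congruence; both are valid.
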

\begin{proof}
We prove the asserted congruences modulo $8$ for the case when $q \geq 4$: these imply the asserted congruences modulo $4$, which are easy to show when $q=2$, since $T=\{0,1\}$.
By Lemma \ref{Richard}, $(a+b)^q \equiv c \pmod{8}$, so then $a + b  \equiv c +\left((a+b)- (a+b)^q \right) \pmod{8}$, and we use binomial expansion.  From the theorems of Kummer \cite{Kummer-1852} and Anton \cite{Anton-1869}, we know that the only binomial coefficients $\binom{q}{j}$ that do not vanish modulo $8$ are those with $j=0$, $q/4$, $q/2$, $3 q/4$, and $q$, and the values of these are $1$, $4$, $-2$, $4$, and $1$ modulo $8$, respectively.  This (and the fact that $a^q=a$ and $b^q=b$ since $a, b \in T$), suffice to prove the first congruence.

To prove the second congruence, we replace $b$ with $c-a$ and use the same techniques.  This does not change the value of our expression modulo $8$, for $b \equiv c-a \pmod{2}$, whence one easily shows that $b^2 \equiv (c-a)^2 \pmod{4}$, so that $b^{q/2} \equiv (c-a)^{q/2} \pmod{4}$.
\end{proof}
\begin{corollary}\label{Martin}
Suppose that $a \in \ring$ and $b, c \in T$ with $a+b \equiv c \pmod{2}$.
Then
\begin{align*}
a + b & \equiv c + a + a^q + 2 (a c)^{q/2} \pmod{4}.
\end{align*}
\end{corollary}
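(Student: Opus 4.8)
The plan is to reduce the statement to the all-Teichm\"uller case already handled in Lemma \ref{Elijah}, by replacing the general ring element $a$ with its Teichm\"uller lift. First I would pick $a_0 \in \teich$ with $a \equiv a_0 \pmod{2}$, which is possible since $\teich$ is a full set of representatives for $\resfield$. Because $a + b \equiv c \pmod 2$ and $a \equiv a_0 \pmod 2$, we obtain $a_0 + b \equiv c \pmod 2$; now all three of $a_0, b, c$ lie in $\teich$, so the second mod-$4$ congruence of Lemma \ref{Elijah} applies (with its ``$a$'' taken to be $a_0$) and yields
\[
a_0 + b \equiv c + 2 a_0 + 2 (a_0 c)^{q/2} \pmod{4}.
\]

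Next I would transport this back to $a$. Writing $a = a_0 + 2 r$ for some $r \in \ring$, we have $a + b = (a_0 + b) + 2 r$, so the display above gives $a + b \equiv c + 2 a_0 + 2 (a_0 c)^{q/2} + 2 r \pmod 4$. It then suffices to verify that the proposed right-hand side $c + a + a^q + 2(ac)^{q/2}$ agrees with this modulo $4$. Here I would invoke Lemma \ref{Richard}, which gives $a^q \equiv a_0 \pmod 4$, so that $a + a^q \equiv (a_0 + 2r) + a_0 = 2 a_0 + 2 r \pmod 4$; and I would use that $a \equiv a_0 \pmod 2$ forces $(ac)^{q/2} \equiv (a_0 c)^{q/2} \pmod 2$, hence $2(ac)^{q/2} \equiv 2(a_0 c)^{q/2} \pmod 4$. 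Substituting these into $c + a + a^q + 2(ac)^{q/2}$ produces exactly $c + 2 a_0 + 2(a_0 c)^{q/2} + 2 r$ modulo $4$, matching the expression just found for $a + b$.

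Everything here is routine once the lift $a_0$ is in hand; the computations are only a matter of keeping track of which mod-$2$ facts lift to mod-$4$ facts. The only points that require the earlier machinery are that $a^q$ may be exchanged for $a_0$ modulo $4$ (exactly Lemma \ref{Richard}) and that the $q/2$-power is compatible with congruence modulo $2$, so that the $2(ac)^{q/2}$ term is unaffected by replacing $a$ with $a_0$. Since both of these are already available, I anticipate no genuine obstacle; the mild care needed is purely bookkeeping in the substitution step.
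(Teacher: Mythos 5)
Your proposal is correct and follows essentially the same route as the paper's proof: write $a=a_0+2r$ with $a_0\in\teich$, apply the mod-$4$ congruence of Lemma \ref{Elijah} to $a_0+b\equiv c$, and then convert $a_0$ back to $a^q$ via Lemma \ref{Richard}, noting that $2(a_0c)^{q/2}\equiv 2(ac)^{q/2}\pmod{4}$. The only difference is cosmetic (you verify the two right-hand sides agree rather than chaining congruences), so no further comment is needed.
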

\begin{proof}
Write $a=a_0+2 a_1$ with $a_0 \in \teich$ and $a_1 \in \ring$.
Then $a_0+b\equiv c\pmod{2}$, so that Lemma \ref{Elijah} tells us that $a_0 + b  \equiv c + 2 a_0 + 2 (a_0 c)^{q/2} \pmod{4}$.  So $a+b \equiv c+ 2 a_0 + 2 a_1 + 2 (a_0 c)^{q/2} \pmod{4}$, which is the same as $c+a+a_0 + 2(a c)^{q/2} \pmod{4}$, which is the same as $c+a+a^q +2 (ac)^{q/2} \pmod{4}$ by Lemma \ref{Richard}.
\end{proof}
\begin{lemma}\label{Bartholomew}
Let $a \in R$.  If $a\not\equiv 0 \pmod{2}$, then as $b$ runs through $T$, the quantity $b^2+a b \pmod{2}$ runs through the values of $\{a^2 s \pmod{2}: s \in S\}$, taking each value twice.  If $a \equiv 0 \pmod{2}$, then as $b$ runs through $T$, the quantity $b^2+ a b \pmod{2}$ runs through the values of $\resfield$, taking each value once.
\end{lemma}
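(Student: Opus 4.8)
The plan is to reduce everything modulo $2$ and recognize the map $b\mapsto b^2+ab$ as a twist of the Artin--Schreier map on the residue field. Since $p=2$ is unramified we have chosen $\unif=2$, so reduction modulo $2$ is reduction to $\resfield=R/2R$, and as $b$ runs through $\teich$ its image $\bar b$ runs bijectively through all of $\resfield$. Thus I need only analyze the function $\bar b\mapsto \bar b^2+\bar a\,\bar b$ on $\resfield$, where $\bar a$ is the reduction of $a$. First I would record that $S$ reduces modulo $2$ to the kernel of the absolute trace $\Tr\colon\resfield\to\F_2$: this is an $\F_2$-subspace of index $2$ (consistent with $\card{S}=q/2$), and it is exactly the set of $\bar s$ for which $x^2+x=\bar s$ has a solution in $\resfield$, which is the Hilbert~90 / Artin--Schreier fact already invoked in Section~\ref{Carlos}.

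When $a\equiv 0\pmod 2$, the map collapses to $\bar b\mapsto \bar b^2$, which is the Frobenius automorphism of the perfect field $\resfield$ and hence a bijection. So $b^2+ab\pmod 2$ takes each value of $\resfield$ exactly once, which is the second assertion.

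When $a\not\equiv 0\pmod 2$, the element $a$ is a unit and $\bar a\neq 0$, so I would substitute $\bar b=\bar a\,u$ (legitimate since $u=\bar b/\bar a$ ranges over all of $\resfield$ as $\bar b$ does) to rewrite
\[
\bar b^2+\bar a\,\bar b=\bar a^2\,(u^2+u)=\bar a^2\,\wp(u),\qquad \wp(u):=u^2+u.
\]
The Artin--Schreier map $\wp$ is additive with kernel $\{0,1\}=\F_2$, hence is exactly two-to-one onto its image, and that image is precisely $\ker\Tr$. Multiplication by the nonzero constant $\bar a^2$ is a bijection of $\resfield$, so $\bar b^2+\bar a\,\bar b$ ranges over $\bar a^2\ker\Tr=\{\bar a^2\bar s:s\in S\}$, attaining each of these $q/2$ values exactly twice. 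Since $b\mapsto\bar b$ is a bijection $\teich\to\resfield$, this is exactly the first assertion.

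There is no substantial obstacle here; the only points needing care are the identification of $S\bmod 2$ with $\ker\Tr$ and the exact two-to-oneness of $\wp$, both of which are the standard description of $x^2+x$ over a finite field of characteristic $2$. I would verify the two-to-one claim directly: $\wp(x)=\wp(y)$ forces $(x+y)(x+y+1)=0$ in characteristic $2$, so $y\in\{x,x+1\}$, giving fibers of size exactly $2$ and an image of size $q/2$ equal to $\ker\Tr$.
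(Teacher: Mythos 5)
Your proof is correct and follows essentially the same route as the paper's: both reduce to the residue field, substitute $b=a c$ to rewrite the quantity as $a^2(c^2+c)$, and use that the Artin--Schreier map $c\mapsto c^2+c$ is exactly two-to-one onto the trace-zero elements (which is $S$ modulo $2$). Your explicit verification of the two-to-one property and of the image being $\ker\Tr$ is slightly more detailed than the paper's terse counting argument, but the substance is identical.
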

\begin{proof}
The claim when $a \equiv 0 \pmod{2}$ is clear because $b^2 + a b \equiv b^2 \pmod{2}$, and $x \mapsto x^2$ is a permutation of $\resfield$.  So we assume $a \not\equiv 0 \pmod{2}$ henceforth.  Write $c=a^{-1} b$, so that we are looking at the quantity $a^2 (c^2+c) \pmod{2}$ as $c$ runs through $a^{-1} T$, which is the same as $T$ modulo $2$.
As $c$ runs through $T$, the quantity $c^2+c \pmod{2}$ takes each value in $\resfield$ with zero trace twice since it cannot take any such value more than twice.
\end{proof}
\begin{lemma}\label{Eustace}
For $c \in \ring$, write $c S \pmod{2}$ to mean the set $\{c s \pmod{2}: s\in S\}$.  Then $c S \pmod{2}$ is a subgroup of the additive group of $\resfield$, and if $a, b\in \ringunits$ with $a\not\equiv b \pmod{2}$, we have $a S \pmod{2}+ b S\pmod{2} =\resfield$.
\end{lemma}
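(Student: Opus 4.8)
The plan is to reduce the statement to linear algebra over $\F_2$ by reinterpreting $S \pmod 2$ inside the residue field $\resfield$, regarded as a vector space over its prime field $\F_2$. Since $\field$ is unramified over $\Q_2$, the absolute trace reduces modulo $2$ to the residue-field trace $\Tr\colon\resfield\to\F_2$, so under the bijective reduction map $\teich\to\resfield$ the set $S$ corresponds exactly to $\ker(\Tr)$, the trace-zero elements of $\resfield$. Because $\Tr$ is a nonzero $\F_2$-linear functional, I would first record that $\bar S := S \pmod 2 = \ker(\Tr)$ is an $\F_2$-subspace of $\resfield$ of codimension $1$, which is consistent with $\card{S}=q/2$.

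For the first assertion I would observe that for $c\in\ring$ with reduction $\bar c\in\resfield$ we have $cS \pmod 2 = \bar c\,\bar S$: indeed $cs \bmod 2 = \bar c\,\bar s$, and as $s$ runs over $S$ the reduction $\bar s$ runs over $\bar S$. Multiplication by $\bar c$ is an $\F_2$-linear endomorphism of $\resfield$, and the image of an $\F_2$-subspace under an $\F_2$-linear map is again an $\F_2$-subspace; hence $cS\pmod 2$ is an $\F_2$-subspace, in particular an additive subgroup (the trivial subgroup when $\bar c=0$, a codimension-one hyperplane when $\bar c\neq 0$).

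For the second assertion, take $a,b\in\ringunits$ with $\bar a\neq \bar b$ in $\resfield^{*}$. Then $aS\pmod 2=\bar a\bar S$ and $bS\pmod 2=\bar b\bar S$ are two $\F_2$-hyperplanes of $\resfield$. I would invoke the standard fact that the sum of two \emph{distinct} hyperplanes equals the whole space: since $\bar a\bar S$ is a maximal proper subspace, distinctness forces $\bar b\bar S\not\subseteq\bar a\bar S$, so $\bar a\bar S+\bar b\bar S$ strictly contains $\bar a\bar S$ and hence equals $\resfield$. Thus the whole problem collapses to showing these two hyperplanes are distinct.

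This distinctness is the crux. I would note that $\bar a\bar S=\{y\in\resfield:\Tr(\bar a^{-1}y)=0\}$ is the kernel of the $\F_2$-functional $y\mapsto \Tr(\bar a^{-1}y)$, and likewise for $\bar b$. Over $\F_2$ the only nonzero scalar is $1$, so two $\F_2$-functionals with equal kernel are equal; therefore $\bar a\bar S=\bar b\bar S$ would force $\Tr((\bar a^{-1}-\bar b^{-1})y)=0$ for all $y\in\resfield$. By nondegeneracy of the trace form on the (separable) extension $\resfield/\F_2$ this yields $\bar a^{-1}=\bar b^{-1}$, contradicting $\bar a\neq\bar b$. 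Hence the hyperplanes are distinct and their sum is $\resfield$. The main obstacle is exactly this functional-duality step; once $\bar S$ is identified with $\ker(\Tr)$, the remainder is routine linear algebra over $\F_2$.
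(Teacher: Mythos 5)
Your proof is correct and follows essentially the same route as the paper's: identify $cS \pmod 2$ with the kernel of the $\F_2$-linear functional $x \mapsto \Tr(x/\bar c)$ on $\resfield$, observe these are index-two subgroups for $\bar c \neq 0$, and conclude that distinct reductions $\bar a \neq \bar b$ give distinct hyperplanes whose sum is all of $\resfield$. You merely spell out in more detail the step the paper compresses into ``changing $c$ modulo $2$ changes the functional, hence its kernel'' (nondegeneracy of the trace form plus the fact that over $\F_2$ two functionals with the same kernel coincide).
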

\begin{proof}
Let $\bar{c}$ be the reduction of $c$ modulo $2$.  If $\bar{c}=0$, then $c S \pmod{2}=\{0\}$, and if $\bar{c}\not=0$, then $c S \pmod{2}$ is the kernel of the $\F_2$-linear functional $x \mapsto \Tr(x/\bar{c})$, hence an additive subgroup of $\resfield$ of index $2$.  Changing $c$ modulo $2$ changes the functional, hence its kernel, and so $a S \pmod{2} + b S \pmod{2}$ must be the entire group $\resfield$.
\end{proof}
\begin{corollary}\label{Hildegard}
Let $a, b \in R$ with $a\not\equiv b\pmod{2}$.  Then as $(c,d)$ runs through $T^2$, the quantity $c^2+ a c + d^2 + b d \pmod{2}$ runs through the values of $\resfield$, with each value being taken $q$ times.
\end{corollary}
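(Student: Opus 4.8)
The plan is to separate the two variables, apply Lemma \ref{Bartholomew} to each of the one-variable quantities $c^2+ac$ and $d^2+bd$, and then recombine the resulting value-distributions by a short convolution count.

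First I would record, for each $u \in \resfield$, the counts $N_1(u)=\#\{c \in \teich : c^2 + a c \equiv u \pmod 2\}$ and $N_2(w)=\#\{d \in \teich : d^2 + b d \equiv w \pmod 2\}$. By Lemma \ref{Bartholomew}, if $a \not\equiv 0 \pmod 2$ then $N_1$ equals $2$ on the set $H_a := a^2 S \pmod 2$ and $0$ elsewhere, whereas if $a \equiv 0 \pmod 2$ then $N_1 \equiv 1$ on all of $\resfield$; the same holds for $N_2$ with $H_b := b^2 S \pmod 2$. The quantity to control is $N(v) := \sum_{u \in \resfield} N_1(u)\,N_2(v-u)$, which is exactly the number of pairs $(c,d) \in \teich^2$ with $c^2+ac+d^2+bd \equiv v \pmod 2$, and the goal is $N(v) = q$ for every $v$.

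Since $a \not\equiv b \pmod 2$, at most one of $a,b$ is $\equiv 0 \pmod 2$, which splits the argument into two cases. If one of them, say $a$, is $\equiv 0 \pmod 2$, then $\bar b \neq \bar a = 0$ forces $b$ to be a unit modulo $2$, so $N_1 \equiv 1$ and $N(v) = \sum_{w} N_1(v-w)\,N_2(w) = \sum_w N_2(w) = |\teich| = q$ immediately. If instead both $a$ and $b$ are units modulo $2$, then by Lemma \ref{Eustace} the sets $H_a$ and $H_b$ are index-$2$ subgroups of $(\resfield,+)$, and because $\bar a \neq \bar b$ gives $\overline{a^2} \neq \overline{b^2}$ (Frobenius is injective in characteristic $2$), that lemma also yields $H_a + H_b = \resfield$. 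In this case $N_1 = 2$ on $H_a$ and $N_2 = 2$ on $H_b$, so $N(v) = 4\,\#\{u \in H_a : v - u \in H_b\} = 4\,|H_a \cap (v+H_b)|$, using $v - H_b = v + H_b$ in characteristic $2$.

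The main point, and the place that requires care, is the evaluation $|H_a \cap (v+H_b)| = q/4$. I would argue that two distinct index-$2$ subgroups of the elementary abelian $2$-group $(\resfield,+)$ satisfy $|H_a + H_b| = |H_a|\,|H_b|/|H_a \cap H_b|$, so the equality $H_a + H_b = \resfield$ forces $|H_a \cap H_b| = q/4$; and that $H_a + H_b = \resfield$ also guarantees that every coset $v + H_b$ meets $H_a$, whence $H_a \cap (v + H_b)$ is a nonempty coset of $H_a \cap H_b$ and therefore has size $q/4$. Substituting back gives $N(v) = 4 \cdot (q/4) = q$ in this case as well, completing the proof.
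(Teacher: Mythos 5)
Your proof is correct and follows essentially the same route as the paper's: reduce to the one-variable distributions via Lemma \ref{Bartholomew}, dispose of the case where one of $a,b$ vanishes modulo $2$ immediately, and in the unit--unit case invoke Lemma \ref{Eustace} to see that the two index-$2$ subgroups $a^2S$ and $b^2S$ sum to all of $\resfield$. The only difference is that you spell out the convolution/coset count $4\cdot\card{H_a\cap(v+H_b)}=4\cdot q/4=q$ that the paper compresses into ``Lemma \ref{Eustace} completes the proof,'' which is a welcome clarification rather than a deviation.
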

\begin{proof}
By Lemma \ref{Bartholomew}, if either $a$ or $b$ vanishes modulo $2$, then this is clear.
Otherwise, $(c^2 + a c,d^2+ b d) \pmod{2}$ runs through $a^2 S \pmod{2} \times b^2 S \pmod{2}$ with each value taken four times, and then Lemma \ref{Eustace} completes the proof.
\end{proof}

{\it Proof of Lemma \ref{Earl}:}  Once we determine the head, the rest follows immediately from Lemma \ref{Heinrich}.
It suffices to determine the head for $\Sq\oplus w \Sq$ for $w\in\ringunits$ with $w\equiv 1 \pmod{2}$, for we can use $w=b/a$ and then scale the generating functions by $a$ to get the desired forms, once we check that when $4\nmid a+b$ we have $4 a/(1+(b/a)) = 4 a^2 /(a+b) \equiv 4/(a+b) \pmod{8}$ and $4 a \equiv 4 \pmod{8}$ because $a \equiv 1 \pmod{2}$.

We can use use Corollary \ref{Hortense} to determine the head as 
\[
H(z)=\frac{1}{q^4} \sums{s_0,s_1,\tau_0,\tau_1 \in \teich \\ (s_0,\tau_0)\not=(0,0)} z^{(s_0+2 s_1)^2 + w (\tau_0+2 \tau_1)^2 + 8 \ring}.
\]
When $q=2$, we have $\teich=\{0,1\}$ and $S=\{0\}$, and the square of any unit is always $1 \pmod{8}$, so if we break the sum into the cases where $(s_0,\tau_0)=(1,0)$, $(0,1)$, and $(1,1)$, respectively, we get
\[
\frac{1}{8} \sum_{\tau_1 \in \teich} z^{1+4 w \tau_1^2+8\ring} + \frac{1}{8} \sum_{s_1 \in \teich} z^{4 s_1^2 +w+8\ring} + \frac{1}{4} z^{1+w+8\ring}.
\]
Then note that $4 w \equiv 4 \pmod{8}$, and that $\tau\mapsto\tau^2$ is a permutation of $\teich$ to get $(z^{1+4\ring} + z^{w+4\ring} + z^{1+w+8\ring})/4$, and it is not hard to show that this matches the two general forms to be proved (under their respective hypotheses) when one sets $q$ equal to $2$.

We assume $q \geq 4$ henceforth.
Write $w\equiv 1+2 w_1+4 w_2 \pmod{8}$ with $w_0, w_1, w_2 \in T$, and then note that
\[
(s_0+2 s_1)^2+w(\tau_0+2 \tau_1)^2 \equiv s_0^2 + \tau_0^2 + (w-1) \tau_0^2 + 4 (s_1^2+s_0 s_1) + 4 (\tau_1^2+\tau_0 \tau_1) \pmod{8}.
\]
Let us concern ourselves with the case when this value is $1 \pmod{2}$, that is, when $s_0^2+\tau_0^2 \equiv 1 \pmod{2}$.
Then by Lemma \ref{Elijah}, we have
\[
s_0^2 + \tau_0^2  \equiv 1 + 2 (\tau_0^2-\tau_0) + 4 \tau_0^{q/2} (\tau_0+1) \pmod{8},
\]
and since $s_0^2+\tau_0^2 \equiv 1 \pmod{2}$, we must have $s_0+\tau_0 \equiv 1 \pmod{2}$, so that $(s_0+2 s_1)^2+w(\tau_0+2 \tau_1)^2 \pmod{8}$ is
\[
1 + 2 \left(\left(\frac{w+1}{2}\right) \tau_0^2 - \tau_0\right) + 4 \tau_0^{q/2}(\tau_0+1) + 4 (s_1^2+(1-\tau_0) s_1) + 4 (\tau_1^2+\tau_0 \tau_1).
\]
As we let $(s_0,\tau_0)$ run through $(\teich)^2\smallsetminus\{(0,0)\}$, we see that $(s_0^2,\tau_0^2)$ runs through $(\teich)^2\smallsetminus\{(0,0)\}$, and we obtain $s_0^2+\tau_0^2 \equiv 1 \pmod{2}$ in $q$ different ways.
If $4\mid 1+w$, then $(w+1)/2 \equiv 0 \pmod{2}$, so then $(w+1)\tau_0^2/2+\tau_0 \pmod{2}$ runs through $\resfield$, taking each value once.
But if $4\nmid 1+w$, then Lemma \ref{Bartholomew} shows that $(w+1) \tau_0^2/2+\tau_0 \pmod{2}$ runs through the values of $\frac{2}{w+1} S \pmod{2}$, taking each value twice.
In either case, Corollary \ref{Hildegard} shows that for any given values of $s_0$ and $\tau_0$, the term $(s_1^2+(1-\tau_0) s_1) + (\tau_1^2+\tau_0 \tau_1)$ taken modulo $2$ runs through $\resfield$, taking each value $q$ times as $(s_1,\tau_1)$ runs through $T^2$.
Thus, if $4\mid 1+w$, we see that $(s_0+2 s_1)^2+w(\tau_0+2 \tau_1)^2 \pmod{8}$ runs through the values of the form $1+2 s+4 c$ with $s,c \in \teich$, taking each value $q$ times.
And if $4 \nmid 1+w$, we see that $(s_0+2 s_1)^2+w(\tau_0+2 \tau_1)^2 \pmod{8}$ runs through the values of the form $1+4 s/(w+1) + 4 c$ with $s \in S$ and $c \in T$, taking each value $2 q$ times.

Of course the value of $(s_0+2 s_1)^2+w(\tau_0+2 \tau_1)^2$ can be a unit not congruent to $1$ modulo $2$.  If $u \in T$ and we want to count the instances where $(s_0+2 s_1)^2+ w(\tau_0+2 \tau_1)^2 \equiv u \pmod{2}$, then just pick the unique $v \in T$ with $v^2 = u$, and then note there is bijection between the quadruples $(s_0,\tau_0,s_1,\tau_1)$ in our summation that make $(s_0+2 s_1)^2+w(\tau_0+2 \tau_1)^2$ congruent to $1$ modulo $2$ and the quadruples that make it congruent to $u$ modulo $2$: just scale the quadruple by $v$.  So we just multiply all the outputs that are $1$ modulo $2$ by $v^2=u$ to get the outputs that are $u$ modulo $2$.

Finally, we must consider the outputs of $(s_0+2 s_1)^2+w(\tau_0+2 \tau_1)^2$ that vanish modulo $2$, that is, when $s_0=\tau_0$ (which runs through $\teichu$ since our summation prohibits both from vanishing simultaneously).  Meanwhile $(s_1,\tau_1)$ runs through $\teich^2$.  Then we have
\begin{align*}
(s_0+2 s_1)^2+w(\tau_0+2 \tau_1)^2 \equiv (1+w) \tau_0^2 + 4 (s_1^2+\tau_0 s_1) + 4 (\tau_1^2+\tau_0 \tau_1) \\
\equiv (1+w) \tau_0^2 + 4 ((s_1+\tau_1)^2+\tau_0 (s_1+\tau_1)) \pmod{8}.
\end{align*}
For each value of $\tau_0 \in \teichu$, Lemma \ref{Bartholomew} shows that the term $(s_1+\tau_1)^2+\tau_0 (s_1+\tau_1) \pmod{2}$ runs through $\tau_0^2 S \pmod{2}$, taking each value $2 q$ times as $(s_1,\tau_1)$ runs through $T^2$.
And $\tau_0^2$ runs through $\teichu$ as $\tau_0$ runs through $\teichu$, so we get $2 q$ instances of each element of the form $\tau(1+w+4 s) \pmod{8}$ for $\tau \in \teichu$ and $s \in S$.
There are several cases to consider.
If $4\mid 1+w$ and $\Tr((1+w)/4) \equiv 0 \pmod{2}$, then our expression furnishes $2 q$ instances of each value that $4 \tau s \pmod{8}$ attains as $(\tau,s)$ runs through $\teichu \times S$, and since $0 \in S$, this means we get $(2 q)(q-1)=2 q^2-2 q$ instances of $0 \pmod{8}$ and $(2 q)(q/2-1)=q^2-2 q$ instances of $4 \tau \pmod{8}$ for each $\tau \in \teichu$.
If $4 \mid 1+w$ and $\Tr((1+w)/4) \equiv 1 \pmod{2}$, then our expression furnishes $2 q$ instances of each value that $4 \tau s \pmod{8}$ attains as $(\tau,s)$ runs through $\teichu \times (T\smallsetminus S)$, and so we get $q^2$ instances of each element of the form $4 \tau \pmod{8}$ with $\tau \in \teichu$.
If $4 \nmid 1+w$, then we still have $2 \mid 1+w$, and then our expression furnishes $2 q$ instances for each value that $\tau(1+w+4 s) \pmod{8}$ attains as $(\tau,s)$ runs through $\teichu \times S$.

When we put together all these counts and coalesce cosets (see Remark \ref{Anne}), we see that if $4 \mid 1+w$ and $\Tr((1+w)/4) \equiv 0 \pmod{2}$, then
the head of the $p$-adic generating function for $\Sq\oplus w \Sq$ is
\[
H_{\Sq\oplus w \Sq}(z)=z^\ring-\frac{1}{q} z^{2\ring} + \frac{q-2}{q^2} z^{4 \ring} + \frac{1}{q^2} z^{8 \ring},
\]
and if $4 \mid 1+w$ with $\Tr((1+w)/4) \equiv 1 \pmod{2}$, then
\[
H_{\Sq\oplus w \Sq}(z)=z^\ring-\frac{1}{q} z^{2\ring} + \frac{1}{q} z^{4 \ring} - \frac{1}{q^2} z^{8 \ring},
\]
and if $4 \nmid 1+w$, then
\[
H_{\Sq\oplus w \Sq}(z)=\frac{2}{q^2} \sums{u \in \teichu \\ s \in S} z^{u(1+4 s/(1+w))+4 \ring} + \frac{2}{q^3} \sums{\tau\in\teichu \\ s \in S} z^{\tau (1+w+4 s)+8\ring}. \qedhere
\]

\section{Calculations for Table \ref{Violet}}\label{Mordecai}

In this section we calculate the values $I(Q_0,Q_1,Q_2)$ in Table \ref{Violet}.
Recall from Section \ref{Wilbur} that if $Q_0$, $Q_1$, and $Q_2$ are unimodular quadratic forms, and if we let $G_{Q_i}(z)$ and $H_{Q_i}(z)$ denote respectively the $p$-adic generating generating function of $Q_i$ and the head of said generating function, then $I(Q_0,Q_1,Q_2)=\Ig(H_{Q_0}(z) G_{Q_1}(z^2) G_{Q_2}(z^4))$.

Throughout this section, we assume that $p=2$ and does not ramify in $\ring$.
We let $\teich$ be a set of Teichm\"uller representatives for $\resfield$ in $\field$, that is, $T$ contains all the $(q-1)$th roots of unity and $0$.
We let $\teichu=T\smallsetminus\{0\}$, and we let $S=\{\tau \in \teich: \Tr(\tau) \equiv 0 \pmod{\pi}\}$.  Note that $\card{S}=q/2$.
We always use $Q_0$, $Q_1$, and $Q_2$ to denote unimodular quadratic forms, and if $Q$ is a quadratic form over $\ring$, then $G_Q(z)$ will denote the $p$-adic generating function for $Q$, and $H_Q(z)$ will denote the head of $G_Q(z)$.

We use the same shorthand for unimodular quadratic forms that is used in Table \ref{Violet}: $\Planes(+)$ means a direct sum of hyperbolic planes (with the correct number needed to achieve a particular rank, if specified) and $\Planes(-)$ is the direct sum of a single elliptic plane and some number of hyperbolic planes (again, achieving a particular rank, if specified).  For example, if we say that $\rank(Q_i)=r_i$ and $Q_i=a\Sq\oplus b\Sq\oplus\Planar{i}$, then we mean that either $Q_i=a\Sq\oplus b\Sq\oplus \Hyp^{(r_i-2)/2}$ (if $\pm_i=+$) or $Q_i=a\Sq\oplus b\Sq\oplus \Ell\oplus\Hyp^{(r_i-4)/2}$ (if $\pm_i=-$).

When calculating $\Ig(H_{Q_0}(z) G_{Q_1}(z^2) G_{Q_2}(z^4))$ for unimodular forms $Q_0$, $Q_1$, and $Q_2$, the first thing to note from Corollary \ref{Hortense} is that $H_{Q_0}(z)$ is $8$-uniform.
Therefore we may make extensive use of Remark \ref{Zachary} and Lemma \ref{Una} in our calculations.
Indeed, throughout this section, when $F(z) \in \limgroupring$, we use $\widehat{F}(z)$ as shorthand for the $2$-uniformization of $F(z)$ and $\widetilde{F}(z)$ for the $4$-uniformization.
Note that by $\widehat{F}(z^{2^j})$, we mean that one should first $2$-uniformize $F$, and then scale by replacing every instance of $z$ with $z^{2^j}$.
If one wants to perform the operations in the opposite order, then one arrives at the same function if one first scales $F(z)$ to obtain $F(z^{2^j})$, and then uniformizes $F(z^{2^j})$ modulo $2^{j+1}$.
The same convention and principle holds for $\widetilde{F}(z)$, and so $\widetilde{F}(z^2)$ and $\widehat{F}(z^4)$ are both $8$-uniform.
\begin{lemma}\label{Samantha}
We have
\[H_{Q_0}(z) G_{Q_1}(z^2) G_{Q_2}(z^4)=H_{Q_0}(z) G_{Q_1}(z^2) \widehat{H}_{Q_2}(z^4)=H_{Q_0}(z) \widetilde{G}_{Q_1}(z^2) \widehat{G}_{Q_2}(z^4).\]
\end{lemma}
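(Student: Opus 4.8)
The plan is to run everything off the single structural fact that the head $H_{Q_0}(z)$ is $8$-uniform. Since $p=2$ is unramified we have $\ell=\val(2)=1$, so Corollary \ref{Hortense} exhibits $H_{Q_0}(z)$ as a $\C$-linear combination of terms $z^{\coset{Q_0(a)}{2\ell+1}}=z^{\coset{Q_0(a)}{3}}$, i.e.\ it is $\pi^3$-uniform. Consequently, setting $P(z):=H_{Q_0}(z)\,G_{Q_1}(z^2)$, Lemma \ref{Una} shows $P(z)$ is again $\pi^3$-uniform, and that multiplying any further factor against $P(z)$ is insensitive to reducing that factor modulo $\pi^j$ for every $j\ge 3$. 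I will also use the recursion of Lemma \ref{Heinrich}, which for the degree-$2$ form $Q_2$ of rank $r_2$ reads $G_{Q_2}(z)=H_{Q_2}(z)+q^{-r_2}G_{Q_2}(z^{\pi^2})$, together with the commutation of scaling and uniformization recorded just before the lemma, namely $\widehat{F}(z^4)=F(z^4)\!\!\pmod{\pi^3}$ and $\widetilde{F}(z^2)=F(z^2)\!\!\pmod{\pi^3}$.

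For the first equality I would scale the recursion for $Q_2$ by $z\mapsto z^{\pi^2}=z^4$, obtaining $G_{Q_2}(z^4)=H_{Q_2}(z^4)+q^{-r_2}G_{Q_2}(z^{\pi^4})$, and multiply on the left by $P(z)$. The principal summand is $P(z)\,H_{Q_2}(z^4)$; since $P(z)$ is $\pi^3$-uniform, Lemma \ref{Una} lets me replace $H_{Q_2}(z^4)$ by its $\pi^3$-uniformization, which by the commutation rule is exactly $\widehat{H}_{Q_2}(z^4)$, so this summand is $P(z)\,\widehat{H}_{Q_2}(z^4)$. The remaining summand $q^{-r_2}P(z)\,G_{Q_2}(z^{\pi^4})$ is a deeply scaled factor, to which Corollary \ref{Philip} (with $i=3$, $j=4$) and Remark \ref{Zachary} apply. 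The substance of the first equality is to pin down this residual term precisely and show that, after its cosets are coalesced, it is reabsorbed so that the total collapses to exactly $P(z)\,\widehat{H}_{Q_2}(z^4)$, which is the assertion that $G_{Q_2}(z^4)$ may be traded for $\widehat{H}_{Q_2}(z^4)$ inside the product.

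For the second equality I would proceed in two steps, each an instance of Lemma \ref{Una}. First, since $H_{Q_0}(z)$ is $\pi^3$-uniform, $H_{Q_0}(z)\,G_{Q_1}(z^2)=H_{Q_0}(z)\,\big(G_{Q_1}(z^2)\!\!\pmod{\pi^3}\big)=H_{Q_0}(z)\,\widetilde{G}_{Q_1}(z^2)=P(z)$, using the commutation rule for $\widetilde{F}$. Second, as $P(z)$ is itself $\pi^3$-uniform, $P(z)\,G_{Q_2}(z^4)=P(z)\,\big(G_{Q_2}(z^4)\!\!\pmod{\pi^3}\big)=P(z)\,\widehat{G}_{Q_2}(z^4)$, using the commutation rule for $\widehat{F}$. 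Chaining these gives $H_{Q_0}(z)\,G_{Q_1}(z^2)\,G_{Q_2}(z^4)=H_{Q_0}(z)\,\widetilde{G}_{Q_1}(z^2)\,\widehat{G}_{Q_2}(z^4)$, and together with the first equality this yields the displayed chain by transitivity; these independent substitutions are legitimate by associativity and Remark \ref{Bertram}.

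The main obstacle is the bookkeeping of the deeply scaled residual $q^{-r_2}P(z)\,G_{Q_2}(z^{\pi^4})$ in the first equality: one must track the single coset modulo $2$ on which the head $H_{Q_2}$ and the full generating function $G_{Q_2}$ differ, use Remark \ref{Zachary} to see where the residual lands after scaling by $\pi^4$, and verify it is accounted for so that $\widehat{H}_{Q_2}(z^4)$—rather than $\widehat{G}_{Q_2}(z^4)$—survives as the middle term. The second equality, by contrast, requires nothing beyond $\pi^3$-uniformity of the head and two applications of Lemma \ref{Una}.
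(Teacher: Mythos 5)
Your treatment of the second equality is precisely the paper's argument: the paper's entire proof is the single observation that $H_{Q_0}(z)$ is $8$-uniform by Corollary \ref{Hortense}, after which Lemma \ref{Una} lets one reduce the factors $G_{Q_1}(z^2)$ and $G_{Q_2}(z^4)$ modulo $8$, which by the commutation of scaling with uniformization yields $\widetilde{G}_{Q_1}(z^2)$ and $\widehat{G}_{Q_2}(z^4)$. That part of your proposal is correct and complete.

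Your treatment of the first equality, however, has a genuine gap, and it is one your own second step should have flagged. You defer the crucial verification --- that the residual $q^{-r_2}P(z)\,G_{Q_2}(z^{\pi^4})$ is ``reabsorbed'' so the total collapses to $P(z)\,\widehat{H}_{Q_2}(z^4)$ --- and that verification fails. Carrying out your plan: Corollary \ref{Philip} gives $P(z)\,G_{Q_2}(z^{\pi^4})=P(z)\,G_{Q_2}(1)=P(z)$, and Lemma \ref{Una} gives $P(z)\,H_{Q_2}(z^4)=P(z)\,\widehat{H}_{Q_2}(z^4)$, so
\[
P(z)\,G_{Q_2}(z^4)=P(z)\,\widehat{H}_{Q_2}(z^4)+\frac{1}{q^{r_2}}\,P(z),
\]
and the extra term does not vanish, since the normalization of $P$ is $H_{Q_0}(1)\,G_{Q_1}(1)=1-q^{-r_0}\neq 0$ for $r_0>0$. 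Concretely, for $Q_2=\Hyp$, Table \ref{Rachel} gives $\widehat{G}_{\Hyp}(z^4)=z^{8\ring}$, so $P(z)\,G_{\Hyp}(z^4)=P(z)$, whereas $P(z)\,\widehat{H}_{\Hyp}(z^4)=\left(1-\frac{1}{q^2}\right)P(z)$; the case $Q_2=0$ is even starker, since then $\widehat{H}_{Q_2}=0$. The resolution is that the printed middle term $\widehat{H}_{Q_2}(z^4)$ is a misprint for $\widehat{G}_{Q_2}(z^4)$: your own (correct) argument already proves $P(z)\,G_{Q_2}(z^4)=P(z)\,\widehat{G}_{Q_2}(z^4)$ in one application of Lemma \ref{Una}, the tables show $\widehat{G}_{Q}\neq\widehat{H}_{Q}$, and the lemma is later invoked (e.g.\ in the proof of Lemma \ref{Robert}) exactly in the $\widehat{G}$ form. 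So the right move was not to hunt for a cancellation that does not exist, but to notice that your proof of the second equality contradicts the literal first equality and conclude that the statement requires $\widehat{G}_{Q_2}$ in the middle term.
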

\begin{proof}
Since Corollary \ref{Hortense} shows that $H_{Q_0}(z)$ is $8$-uniform, this follows from Lemma \ref{Una}.
\end{proof}
The various uniformizations and related quantities on Tables \ref{Rachel} and \ref{Edith} will be useful.
They are easy to calculate from $G_Q$ and $H_Q$ as given in Lemmata \ref{Rebecca}, \ref{Christine}, and \ref{Eveline}.
Now we are ready to examine $H_{Q_0}(z) G_{Q_1}(z^2) G_{Q_2}(z^4)$ according to various cases.
\begin{table}[ht]
\caption{Uniformizations of Generating Functions for a Unimodular Form $Q$}\label{Rachel}
\begin{center}
\vspace{-3mm}
\begin{tabular}{|c|c|}
\multicolumn{2}{c}{$r=\rank(Q)$} \\
\multicolumn{2}{c}{$a \equiv 1 \pmod{2}$}\\
\hline
\multicolumn{2}{|c|}{$Q = \Planar{0}$} \\
\hline
$\norm(Q)$ & $\begin{cases} 2 R & \text{if $r > 0$} \\ 0 & \text{if $r=0$}\end{cases}$ \\
$\widetilde{G}_Q(z)$ & $\left(1\mp \frac{1}{q^{r/2}}\right) z^{2 \ring} \pm\frac{1}{q^{r/2}} z^{4\ring}$ \\
$\widetilde{H}_Q(z)$ & $\left(1\mp \frac{1}{q^{r/2}}\right) \left(z^{2 \ring}\pm\frac{1}{q^{r/2}} z^{4\ring}\right)$ \\
$\Ig(\widetilde{H}_Q(z))$ & $\left(1\mp \frac{1}{q^{r/2}}\right) \left(t \pm\frac{t^2}{q^{r/2}}\right) \igrp$ \\ 
$H_Q(z)-\widetilde{H}_Q(z)$ & $0$ \\
$\Ig(H_Q(z)-\widetilde{H}_Q(z))$ & $0$ \\
$\widehat{G}_Q(z)$ & $z^{2 \ring}$ \\
$\widehat{H}_Q(z)$ & $\left(1- \frac{1}{q^r}\right) z^{2\ring}$ \\
$\Ig(\widehat{H}_Q(z))$ & $\left(1- \frac{1}{q^r}\right) t \igrp$ \\
\hline
\hline
\multicolumn{2}{|c|}{$Q = a\Sq\oplus \Planar{0}$} \\
\hline
$\norm(Q)$ & $R$ \\
$\widetilde{G}_Q(z)$ & $\left(1\mp \frac{1}{q^{(r-1)/2}}\right) z^\ring \pm \frac{1}{q^{(r+1)/2}} \sum_{\tau \in \teich} z^{a \tau+4\ring}$ \\
$\widetilde{H}_Q(z)$ & $\left(1\mp \frac{1}{q^{(r-1)/2}}\right) z^\ring -\frac{1}{q^r} z^{4\ring} \pm \frac{1}{q^{(r+1)/2}} \sum_{\tau \in \teich} z^{a \tau+4\ring}$ \\
$\Ig(\widetilde{H}_Q(z))$ &  $\left(1-\frac{t^2}{q^r} \pm \frac{t^2-t}{q^{(r+1)/2}}\right) \igrp$ \\
$H_Q(z)-\widetilde{H}_Q(z)$ & $\frac{2}{q^{r+1}} \sums{\tau \in \teichu \\ s \in S} z^{a \tau (1+4 s)+ 8 \ring} - \frac{1}{q^r} \sums{\tau \in \teichu} z^{a \tau+ 4 \ring}$ \\
$\Ig(H_Q(z)-\widetilde{H}_Q(z))$ & $0$ \\
$\widehat{G}_Q(z)$ & $z^\ring$ \\
$\widehat{H}_Q(z)$ & $z^\ring - \frac{1}{q^r} z^{2\ring}$ \\
$\Ig(\widehat{H}_Q(z))$ & $\left(1-\frac{t}{q^r}\right) \igrp$ \\
\hline
\end{tabular}
\end{center}
\end{table}
\begin{table}[ht]
\caption{Uniformizations of Generating Functions for a Unimodular Form $Q$}\label{Edith}
\begin{center}
\vspace{-3mm}
\begin{tabular}{|c|c|}
\multicolumn{2}{c}{$r=\rank(Q)$} \\
\multicolumn{2}{c}{$a \equiv b \equiv 1 \pmod{2}$}\\
\hline
\multicolumn{2}{|c|}{$Q = a\Sq\oplus b\Sq \Planar{0}$ with $4 \mid a+b$ and $\sigma=(-1)^{\Tr((a+b)/(4 a))}$} \\
\hline
\hline
$\norm(Q)$ & $R$ \\
$\widetilde{G}_Q(z)$ & $z^\ring \mp \frac{1}{q^{r/2}} z^{2 \ring} \pm \frac{1}{q^{r/2}}  z^{4 \ring}$ \\
$\widetilde{H}_Q(z)$ & $ z^\ring \mp \frac{1}{q^{r/2}} z^{2 \ring} \pm \frac{1}{q^{r/2}}  z^{4 \ring} -\frac{1}{q^r} z^{4\ring}$ \\
$\Ig(\widetilde{H}_Q(z))$ & $\left(1 -\frac{t^2}{q^r} \pm \frac{t^2-t}{q^{r/2}}\right) \igrp$ \\
$H_Q(z)-\widetilde{H}_Q(z)$ & $\frac{\sigma}{q^r} \left(z^{8\ring}-z^{4\ring}\right)$\\
$\Ig(H_Q(z)-\widetilde{H}_Q(z))$ & $\frac{\sigma}{q^r} (t^3-t^2) \igrp$ \\
$\widehat{G}_Q(z)$ & $z^{\ring}$ \\
$\widehat{H}_Q(z)$ & $ z^{\ring} - \frac{1}{q^r} z^{2\ring}$ \\
$\Ig(\widehat{H}_Q(z))$ & $\left(1- \frac{t}{q^r} \right) \igrp$ \\
\hline
\hline
\multicolumn{2}{|c|}{$Q = a\Sq\oplus b\Sq \Planar{0}$ with $4 \nmid a+b$} \\
\hline
$\norm(Q)$ & $R$ \\
$\widetilde{G}_Q(z)$ & $\left(1\mp \frac{1}{q^{(r-2)/2}}\right) z^\ring \pm \frac{1}{q^{r/2}} z^{2 \ring} \pm \frac{2}{q^{(r+2)/2}} \sums{\tau \in \teichu \\ s \in S} z^{\tau\left(a + \frac{4}{a+b} s\right) + 4 \ring}$ \\
$\widetilde{H}_Q(z)$ & $\left(1\mp \frac{1}{q^{(r-2)/2}}\right) z^\ring \pm \frac{1}{q^{r/2}} z^{2 \ring} \pm \frac{2}{q^{(r+2)/2}} \sums{\tau \in \teichu \\ s \in S} z^{\tau\left(a + \frac{4}{a+b} s\right) + 4 \ring} -\frac{1}{q^r} z^{4 \ring}$ \\
$\Ig(\widetilde{H}_Q(z))$ & $\left(1-\frac{t^2}{q^r}\right) \igrp$ \\
$H_Q(z)-\widetilde{H}_Q(z)$ & $-\frac{1}{q^{r-1}} z^{2\ring} + \frac{1}{q^r} z^{4\ring} + \frac{2}{q^{r+1}} \sums{\tau\in\teichu \\ s \in S} z^{\tau \left(a+b+4 s\right)+8\ring}$ \\
$\Ig(H_Q(z)-\widetilde{H}_Q(z))$ & $0$ \\
$\widehat{G}_Q(z)$ & $z^{\ring}$ \\
$\widehat{H}_Q(z)$ & $ z^{\ring} - \frac{1}{q^r} z^{2\ring}$ \\
$\Ig(\widehat{H}_Q(z))$ & $\left(1- \frac{t}{q^r} \right) \igrp$ \\
\hline
\end{tabular}
\end{center}
\end{table}
\FloatBarrier
\begin{lemma}\label{Robert}
We have
\[
H_{Q_0}(z) G_{Q_1}(z^2) G_{Q_2}(z^4) z^{4\ring} = H_{Q_0}(z) G_{Q_1}(z^2) G_{\Sq}(z^4).
\]
\end{lemma}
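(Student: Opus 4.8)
The plan is to show that both sides of the identity collapse to the common value $H_{Q_0}(z)\,G_{Q_1}(z^2)\,z^{4\ring}$, using freely that $\limgroupring$ is commutative (it is the inverse limit of the commutative group algebras $\groupring{k}$) so that factors may be regrouped at will. Throughout I keep in mind that $p=2$ is unramified, so $\unif=2$, $\ell=\val(2)=1$, the scaling $z^4$ is scaling by $4=\unif^2$, and $z^{4\ring}=\zncoset{2}$.

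First I would dispose of the left-hand side. Regrouping it as $H_{Q_0}(z)\,G_{Q_1}(z^2)\bigl(G_{Q_2}(z^4)\,z^{4\ring}\bigr)$, I would apply Remark \ref{Zachary} to the bracketed factor with $F=G_{Q_2}$, scaling exponent $j=2$, and $\zncoset{k}=z^{4\ring}$ having $k=2\le j$. Since $G_{Q_2}$ is a $p$-adic generating function its normalization is $G_{Q_2}(1)=1$, so Remark \ref{Zachary} yields $G_{Q_2}(z^4)\,z^{4\ring}=z^{4\ring}$, and the left-hand side becomes $H_{Q_0}(z)\,G_{Q_1}(z^2)\,z^{4\ring}$.

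For the right-hand side the key input is that $H_{Q_0}(z)$ is $\unif^{2\ell+1}=\unif^3$-uniform (equivalently $8$-uniform) by Corollary \ref{Hortense}; hence by Lemma \ref{Una} the product $H_{Q_0}(z)\,G_{Q_1}(z^2)$ is again $\unif^3$-uniform. I would then invoke Lemma \ref{Una} a second time, with $j=3$, to replace $G_{\Sq}(z^4)$ by its $\unif^3$-uniformization $\widehat{G}_{\Sq}(z^4)$, using the scaling/uniformization convention recorded at the start of Section \ref{Mordecai}, by which $\widehat{G}_{\Sq}(z^4)=G_{\Sq}(z^4)\pmod{\unif^3}$. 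From the row $Q=a\Sq\oplus\Planar{0}$ of Table \ref{Rachel} one reads $\widehat{G}_{\Sq}(z)=z^\ring$ — which ultimately reflects that $x\mapsto x^2$ is the Frobenius, a bijection on $\resfield$, so $x^2$ hits each residue modulo $\unif$ exactly once — and therefore $\widehat{G}_{\Sq}(z^4)=z^{4\ring}$. Thus the right-hand side also reduces to $H_{Q_0}(z)\,G_{Q_1}(z^2)\,z^{4\ring}$, matching the left.

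The step I expect to require the most care is ensuring the two reductions land on the same element, since they proceed by different mechanisms and $Q_2$ is arbitrary while $\Sq$ is fixed. The left side collapses because the explicit factor $z^{4\ring}$ forces $G_{Q_2}(z^4)$ to contribute only through its normalization; the right side has no such explicit factor and instead relies on $G_{\Sq}(z^4)$ already agreeing with $z^{4\ring}$ modulo $\unif^3$. These match precisely because the prefactor is uniform at the coarse level $\unif^{2\ell+1}=\unif^3$, so it cannot distinguish $G_{\Sq}(z^4)$ from its $\unif^3$-reduction $z^{4\ring}$; a prefactor uniform only at a finer level would detect the difference and the identity would fail. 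That the head $H_{Q_0}$ supplies exactly this coarseness, via $2\ell+1=3$ for unramified $p=2$, is what makes the index bookkeeping close, and this is where the unramified hypothesis is genuinely used.
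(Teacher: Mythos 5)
Your proof is correct and follows essentially the same route as the paper's: the paper collapses the left side to $H_{Q_0}(z)G_{Q_1}(z^2)z^{4\ring}$ via Remark \ref{Zachary}, identifies this with $H_{Q_0}(z)G_{Q_1}(z^2)\widehat{G}_{\Sq}(z^4)$ using Table \ref{Rachel}, and then invokes Lemma \ref{Samantha} to restore $G_{\Sq}(z^4)$. The only difference is that you unpack Lemma \ref{Samantha} into its ingredients (Corollary \ref{Hortense} giving $8$-uniformity of $H_{Q_0}$, plus Lemma \ref{Una}), which is the same argument in slightly more explicit form.
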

\begin{proof}
We have $H_{Q_0}(z) G_{Q_1}(z^2) G_{Q_2}(z^4) z^{4\ring} = H_{Q_0}(z) G_{Q_1}(z^2) z^{4\ring}$, which in turn equals $H_{Q_0}(z) G_{Q_1}(z^2) \widehat{G}_{\Sq}(z^4)$ by Table \ref{Rachel}, and so equals $H_{Q_0}(z) G_{Q_1}(z^2) G_{\Sq}(z^4)$ by Lemma \ref{Samantha}.
\end{proof}
\begin{lemma}\label{Anthony}
If $\norm(Q_1)=\norm(Q_2)=\ring$, then $H_{Q_0}(z) G_{Q_1}(z^2) G_{Q_2}(z^4)$ is just $\widehat{H}_{Q_0}(z)$, which can be obtained from Table \ref{Rachel} or \ref{Edith}.
\end{lemma}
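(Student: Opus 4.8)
The plan is to reduce the product to its coarsest ingredient by repeatedly exploiting that a factor which is uniform at some level annihilates the finer structure of the factors it multiplies. The whole argument is a short chain of applications of Lemma~\ref{Una}, together with the observation that $\widehat{G}_Q(z)=z^\ring$ for every unimodular form $Q$ of norm $\ring$, which can be read directly off Tables~\ref{Rachel} and~\ref{Edith}.

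First I would invoke Lemma~\ref{Samantha}, whose second equality gives $H_{Q_0}(z) G_{Q_1}(z^2) G_{Q_2}(z^4)=H_{Q_0}(z)\widetilde{G}_{Q_1}(z^2)\widehat{G}_{Q_2}(z^4)$; this rests on $H_{Q_0}(z)$ being $8$-uniform by Corollary~\ref{Hortense}. The hypothesis $\norm(Q_2)=\ring$ lets me read off $\widehat{G}_{Q_2}(z)=z^\ring$ (every norm-$\ring$ entry of Table~\ref{Rachel} or~\ref{Edith} has this value), whence $\widehat{G}_{Q_2}(z^4)=z^{4\ring}$. Likewise $\norm(Q_1)=\ring$ gives $\widehat{G}_{Q_1}(z)=z^\ring$, so $\widehat{G}_{Q_1}(z^2)=z^{2\ring}$; and since reducing the $8$-uniform $\widetilde{G}_{Q_1}(z^2)$ modulo $4$ returns the $4$-uniformization, which is exactly $\widehat{G}_{Q_1}(z^2)$, we get $\widetilde{G}_{Q_1}(z^2)\equiv z^{2\ring}\pmod{4}$.

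Next I would collapse the two scaled factors against these uniform cosets. Because $z^{4\ring}$ is $\pi^2$-uniform, Lemma~\ref{Una} gives $z^{4\ring}\widetilde{G}_{Q_1}(z^2)=z^{4\ring}\bigl(\widetilde{G}_{Q_1}(z^2)\bmod 4\bigr)=z^{4\ring}z^{2\ring}=z^{2\ring}$, using the coset arithmetic $4\ring+2\ring=2\ring$. The product has thus simplified to $H_{Q_0}(z)\,z^{2\ring}$. Applying Lemma~\ref{Una} a final time with the $\pi^1$-uniform factor $z^{2\ring}$ turns this into $z^{2\ring}\,\widehat{H}_{Q_0}(z)$; and since $\widehat{H}_{Q_0}(z)$ is $\pi^1$-uniform, i.e. a $\C$-combination of terms $z^A$ with $A$ a coset of $2\ring$, each term satisfies $z^{2\ring}z^A=z^{A+2\ring}=z^A$, so multiplication by $z^{2\ring}$ is the identity and the product equals $\widehat{H}_{Q_0}(z)$, as claimed. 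The explicit value is then the $\widehat{H}_Q$ entry of Table~\ref{Rachel} or~\ref{Edith} appropriate to $Q_0$.

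No step is genuinely hard; the only real care needed is bookkeeping the uniformity levels, ensuring at each multiplication that the factor played as $F$ in Lemma~\ref{Una} is uniform at a level no finer than the modulus to which the other factor is reduced, and keeping the coset identities $4\ring+2\ring=2\ring$ and $A+2\ring=A$ straight. I expect this level-tracking, rather than any substantive computation, to be the main thing to get right.
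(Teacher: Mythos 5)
Your proposal is correct and follows essentially the same route as the paper's proof: reduce via Lemma~\ref{Samantha}, read $\widehat{G}_{Q_2}(z^4)=z^{4\ring}$ off the tables, use Lemma~\ref{Una} to collapse $\widetilde{G}_{Q_1}(z^2)\,z^{4\ring}$ to $\widehat{G}_{Q_1}(z^2)\,z^{4\ring}=z^{2\ring}$, and finish with $H_{Q_0}(z)\,z^{2\ring}=\widehat{H}_{Q_0}(z)$. The only difference is that you spell out the uniformity bookkeeping in more detail than the paper does; the argument is the same.
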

\begin{proof}
Lemma \ref{Samantha} allows us to replace $H_{Q_0}(z) G_{Q_1}(z^2) G_{Q_2}(z^4)$ by $H_{Q_0}(z) \widetilde{G}_{Q_1}(z^2) \widehat{G}_{Q_2}(z^4)$, which equals $H_{Q_0}(z) \widetilde{G}_{Q_1}(z^2) z^{4\ring}$ by Tables \ref{Rachel} and \ref{Edith}.
Then we may use Lemma \ref{Una} to replace $\widetilde{G}_{Q_1}(z^2) z^{4\ring}$ by $\widehat{G}_{Q_1}(z^2) z^{4 \ring}$, which equals $z^{2\ring} z^{4 \ring}=z^{2 \ring}$ by Tables \ref{Rachel} and \ref{Edith}.  So we need only calculate $H_{Q_0}(z) z^{2 \ring}$, which is $\widehat{H}_{Q_0}$ by Lemma \ref{Una}.
\end{proof}
\begin{lemma}\label{Andrew}
We have
\[
H_{Q_0}(z) G_{Q_1}(z^2) G_{Q_2}(z^4) z^{2\ring} = H_{Q_0}(z) G_{\Sq}(z^2) G_{\Sq}(z^4).
\]
\end{lemma}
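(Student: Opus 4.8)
The plan is to show that both sides of the asserted identity reduce to one and the same $2$-uniform element, namely $H_{Q_0}(z)\,z^{2\ring}$ (equivalently $\widehat{H}_{Q_0}(z)$), and then conclude equality. Since $\limgroupring$ is commutative, I may freely reorder the factors in each product, which is what makes the absorption arguments below legitimate. Recall also that here $\pi=2$, so $2\ring=\ncoset{1}$.

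First I would dispose of the left-hand side using the coalescence principle of Remark \ref{Zachary}. Applying it with $j=2$ and $k=1$ gives $G_{Q_2}(z^4)\,z^{2\ring}=G_{Q_2}(1)\,z^{2\ring}=z^{2\ring}$, since $G_{Q_2}(1)=1$ for a $p$-adic generating function; applying it again with $j=1$ and $k=1$ gives $G_{Q_1}(z^2)\,z^{2\ring}=z^{2\ring}$. Hence the left-hand side collapses to $H_{Q_0}(z)\,z^{2\ring}$. For the right-hand side the key observation is that $\Sq=x^2$ represents the unit $1$, so $\norm(\Sq)=\ring$; taking $Q_1=Q_2=\Sq$ in Lemma \ref{Anthony} then yields $H_{Q_0}(z)\,G_{\Sq}(z^2)\,G_{\Sq}(z^4)=\widehat{H}_{Q_0}(z)$. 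Finally, exactly as in the closing step of the proof of Lemma \ref{Anthony}, I would identify $\widehat{H}_{Q_0}(z)$ with $H_{Q_0}(z)\,z^{2\ring}$: since $H_{Q_0}(z)$ is $8$-uniform by Corollary \ref{Hortense}, Lemma \ref{Una} (with the $\pi^1$-uniform factor $z^{2\ring}$ and $j=1$) gives $z^{2\ring}H_{Q_0}(z)=z^{2\ring}\widehat{H}_{Q_0}(z)$, and multiplying the $2$-uniform element $\widehat{H}_{Q_0}(z)$ by $z^{2\ring}$ leaves it unchanged because $z^{A}z^{2\ring}=z^{A+2\ring}=z^{A}$ for every coset $A\in\quotring{1}$. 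Both sides therefore equal $H_{Q_0}(z)\,z^{2\ring}$, which proves the lemma.

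I do not expect any genuine computational obstacle; the entire content is bookkeeping with uniformity and scaling, and in particular no explicit evaluation of the generating functions from Tables \ref{Rachel} and \ref{Edith} is required. The only points demanding a little care are verifying that the hypothesis $k\le j$ of Remark \ref{Zachary} holds in each application (it does, since $2\ring=\ncoset{1}$ and the two scalings are by $\pi$ and $\pi^2$), and confirming that $\Sq$ really has norm $\ring$ so that Lemma \ref{Anthony} applies with $Q_1=Q_2=\Sq$.
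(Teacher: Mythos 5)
Your proof is correct and follows essentially the same route as the paper's: collapse the left-hand side to $H_{Q_0}(z)\,z^{2\ring}=\widehat{H}_{Q_0}(z)$ via Remark \ref{Zachary} and uniformity, then identify this with the right-hand side through Lemma \ref{Anthony}. (One cosmetic slip: Remark \ref{Zachary} is the normalization-extraction principle, not the coalescence principle of Remark \ref{Anne}, but your application of it is correct.)
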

\begin{proof}
We see that $H_{Q_0}(z) G_{Q_1}(z^2) G_{Q_2}(z^4) z^{2\ring}=H_{Q_0}(z) z^{2\ring}=\widehat{H}_{Q_0}(z)$, which in turn equals $ H_{Q_0}(z) G_{\Sq}(z^2) G_{\Sq}(z^4)$ by Lemma \ref{Anthony}.
\end{proof}
\begin{lemma}
If $\norm(Q_2)=\ring$ and $\norm(Q_1)\not=\ring$, then $H_{Q_0}(z) G_{Q_1}(z^2) G_{Q_2}(z^4)$ is just $\widetilde{H}_{Q_0}(z)$, which can be obtained from Table \ref{Rachel} or \ref{Edith}.
\end{lemma}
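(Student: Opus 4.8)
The plan is to mirror the argument of Lemma \ref{Anthony}, exploiting the two norm hypotheses to collapse the triple product onto a single coset before multiplying $H_{Q_0}$ back in. First I would invoke Lemma \ref{Samantha} to replace $H_{Q_0}(z) G_{Q_1}(z^2) G_{Q_2}(z^4)$ by $H_{Q_0}(z)\widetilde{G}_{Q_1}(z^2)\widehat{G}_{Q_2}(z^4)$. Since $\norm(Q_2)=\ring$, Tables \ref{Rachel} and \ref{Edith} give $\widehat{G}_{Q_2}(z)=z^\ring$, so that $\widehat{G}_{Q_2}(z^4)=z^{4\ring}$, and the product becomes $H_{Q_0}(z)\,\widetilde{G}_{Q_1}(z^2)\,z^{4\ring}$.

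Next I would use the hypothesis $\norm(Q_1)\not=\ring$. By Corollary \ref{Frank} (and the identification of norms discussed in Section \ref{Jules}), a unimodular form whose norm is not $\ring$ must be of the type $\Planar{1}$, possibly of rank $0$, so its $4$-uniformization is read off the first block of Table \ref{Rachel}: $\widetilde{G}_{Q_1}(z)$ is a combination of $z^{2\ring}$ and $z^{4\ring}$. After the substitution $z\mapsto z^2$ these become $z^{4\ring}$ and $z^{8\ring}$, both supported on cosets contained in $4\ring$. Multiplying by $z^{4\ring}$ and using the arithmetic of the symbols $z^{\bullet}$, namely $z^A z^{4\ring}=z^{A+4\ring}$, collapses each such term onto $z^{4\ring}$; since the coefficients of $\widetilde{G}_{Q_1}$ sum to $G_{Q_1}(1)=1$, one obtains $\widetilde{G}_{Q_1}(z^2)\,z^{4\ring}=z^{4\ring}$. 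Hence the whole product reduces to $H_{Q_0}(z)\,z^{4\ring}$.

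Finally, $z^{4\ring}$ is $\pi^2$-uniform while $H_{Q_0}(z)$ is $8$-uniform by Corollary \ref{Hortense} (recall $\pi=2$), so Lemma \ref{Una} yields $H_{Q_0}(z)\,z^{4\ring}=z^{4\ring}\bigl(H_{Q_0}(z)\!\!\pmod{\pi^2}\bigr)=z^{4\ring}\widetilde{H}_{Q_0}(z)$, and since $\widetilde{H}_{Q_0}$ is itself supported on cosets of $4\ring$ the factor $z^{4\ring}$ is absorbed, giving $\widetilde{H}_{Q_0}(z)$, which is tabulated in Tables \ref{Rachel} and \ref{Edith}. I do not expect a genuine obstacle here: the computation is routine once the machinery is in place, and the only point requiring care is the bookkeeping that both norm hypotheses conspire to place every relevant support coset inside $4\ring$. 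In particular, it is precisely the condition $\norm(Q_1)\not=\ring$ that keeps a $z^\ring$ term out of $\widetilde{G}_{Q_1}$; such a term would scale to $z^{2\ring}$ and survive the multiplication, as in Lemma \ref{Anthony}. This is exactly what separates the present case, with answer $\widetilde{H}_{Q_0}$, from the $\widehat{H}_{Q_0}$ of Lemma \ref{Anthony}.
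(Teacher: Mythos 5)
Your proof is correct and follows essentially the same route as the paper's: reduce via Lemma \ref{Samantha}, use $\norm(Q_2)=\ring$ to turn $\widehat{G}_{Q_2}(z^4)$ into $z^{4\ring}$, use $\norm(Q_1)\not=\ring$ to collapse $\widetilde{G}_{Q_1}(z^2)\,z^{4\ring}$ to $z^{4\ring}$, and conclude with $H_{Q_0}(z)\,z^{4\ring}=\widetilde{H}_{Q_0}(z)$. The only cosmetic difference is that you carry out the middle collapse by reading the explicit support of $\widetilde{G}_{Q_1}$ off Table \ref{Rachel}, where the paper instead passes through $\widehat{G}_{Q_1}$ via Lemma \ref{Una}; both are valid.
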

\begin{proof}
Lemma \ref{Samantha} allows us to replace $H_{Q_0}(z) G_{Q_1}(z^2) G_{Q_2}(z^4)$ by $H_{Q_0}(z) \widetilde{G}_{Q_1}(z^2) \widehat{G}_{Q_2}(z^4)$, which equals $H_{Q_0}(z) \widetilde{G}_{Q_1}(z^2) z^{4\ring}$ by Tables \ref{Rachel} and \ref{Edith}.
Then we may replace $\widetilde{G}_{Q_1}(z^2) z^{4\ring}$ by $\widehat{G}_{Q_1}(z^2) z^{4 \ring}$, which equals $z^{4\ring} z^{4 \ring}=z^{4 \ring}$ by Table \ref{Rachel}.  So we need only calculate $H_{Q_0}(z) z^{4 \ring}=\widetilde{H}_{Q_0}(z)$.
\end{proof}
\begin{lemma}\label{Clarence}
If $\norm(Q_2)\not=\ring$, then $H_{Q_0}(z) G_{Q_1}(z^2) G_{Q_2}(z^4) = H_{Q_0}(z) \widetilde{G}_{Q_1}(z^2)$.
\end{lemma}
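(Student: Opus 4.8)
The plan is to reduce the left-hand side via Lemma \ref{Samantha}, write the factor coming from $Q_2$ explicitly as a single power of $z$, and then observe that this factor is absorbed by the $8$-uniform prefix $H_{Q_0}(z)\widetilde{G}_{Q_1}(z^2)$.

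First I would pin down the $Q_2$ factor. Over an unramified $2$-adic field every unimodular form has norm $\ring$, $2\ring$, or $0$, and by Corollary \ref{Frank} the forms of norm $2\ring$ or $0$ are exactly $\Hyp^{r/2}$, $\Ell\oplus\Hyp^{r/2-1}$, and the zero form, i.e.\ the forms written $\Planes(\pm)$. Since $\norm(Q_2)\not=\ring$ forces $\norm(Q_2)\in\{2\ring,0\}$, Table \ref{Rachel} gives $\widehat{G}_{Q_2}(z)=z^{2\ring}$. Scaling $z\mapsto z^4$ (that is, by $s=\pi^2=4$) sends $z^{2\ring}$ to $z^{4\cdot 2\ring}=z^{8\ring}$, so $\widehat{G}_{Q_2}(z^4)=z^{8\ring}=\zncoset{3}$. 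By Lemma \ref{Samantha} the left-hand side equals $H_{Q_0}(z)\widetilde{G}_{Q_1}(z^2)\widehat{G}_{Q_2}(z^4)$, hence equals $H_{Q_0}(z)\widetilde{G}_{Q_1}(z^2)\,z^{8\ring}$.

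It then remains to show that multiplying by $z^{8\ring}$ does nothing. Corollary \ref{Hortense} shows $H_{Q_0}(z)$ is $8$-uniform (here $\ell=\val(2)=1$, so $2\ell+1=3$ and $\pi^3=8$), and $\widetilde{G}_{Q_1}(z^2)$ is $8$-uniform as recorded just before Lemma \ref{Samantha}; by Remark \ref{Bertram} the product $H_{Q_0}(z)\widetilde{G}_{Q_1}(z^2)$ is $8$-uniform, i.e.\ a $\C$-linear combination of terms $z^A$ with $A\in\quotring{3}$. Since $z^{8\ring}=\zncoset{3}$ corresponds to the zero coset $\pi^3\ring$ of $\quotring{3}$, the multiplication rule $z^A z^B=z^{A+B}$ gives $z^A z^{8\ring}=z^{A+\pi^3\ring}=z^A$ for every such $A$; thus multiplication by $z^{8\ring}$ fixes every $8$-uniform element. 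Applying this to $H_{Q_0}(z)\widetilde{G}_{Q_1}(z^2)$ yields $H_{Q_0}(z)\widetilde{G}_{Q_1}(z^2)\,z^{8\ring}=H_{Q_0}(z)\widetilde{G}_{Q_1}(z^2)$, which is the claim.

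The only real content is bookkeeping, and the main obstacle (such as it is) is getting the scaling exponent right: $z^{2\ring}$ must scale to $z^{8\ring}$ and not to $z^{4\ring}$, so that the resulting modulus $\pi^3\ring$ is at least as coarse as the $\pi^3$-uniformity of the prefix and absorption actually occurs. An incorrect modulus would leave a genuine factor rather than the identity. Everything else follows from the already-established uniformity of $H_{Q_0}$ and of $\widetilde{G}_{Q_1}(z^2)$ together with the group-ring arithmetic $z^A z^B=z^{A+B}$.
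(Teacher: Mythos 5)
Your argument is correct and follows essentially the same route as the paper's proof: reduce via Lemma \ref{Samantha}, read off $\widehat{G}_{Q_2}(z^4)=z^{8\ring}$ from Table \ref{Rachel}, and absorb that factor into the $8$-uniform product $H_{Q_0}(z)\widetilde{G}_{Q_1}(z^2)$. The extra care you take with the scaling exponent and with identifying the norm-$2\ring$ (or $0$) forms via Corollary \ref{Frank} is sound but not a different method.
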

\begin{proof}
Lemma \ref{Samantha} allows us to replace $H_{Q_0}(z) G_{Q_1}(z^2) G_{Q_2}(z^4)$ by $H_{Q_0}(z) \widetilde{G}_{Q_1}(z^2) \widehat{G}_{Q_2}(z^4)$, which equals $H_{Q_0}(z) \widetilde{G}_{Q_1}(z^2) z^{8\ring}$ by Table \ref{Rachel}.  Since $\widetilde{G}_{Q_1}(z^2)$ is a $8$-uniform, we see that $\widetilde{G}_{Q_1}(z^2) z^{8\ring}= \widetilde{G}_{Q_1}(z^2)$.
\end{proof}
\begin{lemma}\label{David}
If $\norm(Q_2)\not=\ring$ and $Q_1=\Planar{1}$, then
\[
H_{Q_0}(z) G_{Q_1}(z^2) G_{Q_2}(z^4)=\widetilde{H}_{Q_0}(z) \pm_1 \frac{1}{q^{r_1/2}} \left(H_{Q_0}(z)-\widetilde{H}_{Q_0}(z)\right).
\]
\end{lemma}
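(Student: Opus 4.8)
The plan is to peel the three-fold product down to a single product against $H_{Q_0}$ and then expand the surviving factor using the explicit data on Table \ref{Rachel}. Since $\norm(Q_2)\neq\ring$, Lemma \ref{Clarence} applies verbatim and collapses $H_{Q_0}(z) G_{Q_1}(z^2) G_{Q_2}(z^4)$ to $H_{Q_0}(z) \widetilde{G}_{Q_1}(z^2)$, so the entire lemma reduces to evaluating this product for $Q_1=\Planar{1}$.

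Next I would read off the $\Planar{0}$ row of Table \ref{Rachel} (with rank $r_1$ and sign $\pm_1$),
\[
\widetilde{G}_{Q_1}(z) = \left(1 \mp_1 \frac{1}{q^{r_1/2}}\right) z^{2\ring} \pm_1 \frac{1}{q^{r_1/2}} z^{4\ring},
\]
and scale $z\mapsto z^2$ to obtain $\widetilde{G}_{Q_1}(z^2) = \left(1 \mp_1 \frac{1}{q^{r_1/2}}\right) z^{4\ring} \pm_1 \frac{1}{q^{r_1/2}} z^{8\ring}$. Distributing $H_{Q_0}(z)$ across this two-term expression leaves exactly the two products $H_{Q_0}(z) z^{4\ring}$ and $H_{Q_0}(z) z^{8\ring}$ to identify.

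The crux — and the one place care is required — is to evaluate these at the correct levels of uniformity. Recall from Corollary \ref{Hortense} that $H_{Q_0}(z)$ is $8$-uniform. For the second product I would use that $z^{8\ring}=z^{\pi^3\ring}$ and that any $\pi^m$-uniform element $F=\sum_{A\in\quotring{m}}F_A z^A$ satisfies $F z^{\pi^m\ring}=F$ (by the rule $z^A z^B=z^{A+B}$, since $A+\pi^m\ring=A$ for a coset $A$ of $\pi^m\ring$); this gives $H_{Q_0}(z) z^{8\ring}=H_{Q_0}(z)$. The first product is more delicate: $z^{4\ring}=z^{\pi^2\ring}$ is only $\pi^2$-uniform, while $H_{Q_0}(z)$ need not be $\pi^2$-uniform, so I would invoke Lemma \ref{Una} with the $\pi^2$-uniform factor $z^{4\ring}$ to replace $H_{Q_0}(z)$ by its $4$-uniformization $\widetilde{H}_{Q_0}(z)$, and then apply the absorption principle above to the now $\pi^2$-uniform element $\widetilde{H}_{Q_0}(z)$ to conclude $H_{Q_0}(z) z^{4\ring}=\widetilde{H}_{Q_0}(z)$. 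Explaining cleanly why scaling by $z^{4\ring}$ demotes $H_{Q_0}$ to $\widetilde{H}_{Q_0}$ whereas scaling by $z^{8\ring}$ leaves it untouched is the main obstacle; everything else is bookkeeping.

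Finally I would assemble the two evaluations into
\[
H_{Q_0}(z)\widetilde{G}_{Q_1}(z^2) = \left(1 \mp_1 \frac{1}{q^{r_1/2}}\right)\widetilde{H}_{Q_0}(z) \pm_1 \frac{1}{q^{r_1/2}} H_{Q_0}(z),
\]
and regroup the $\widetilde{H}_{Q_0}(z)$ and $H_{Q_0}(z)$ terms to rewrite the right-hand side as $\widetilde{H}_{Q_0}(z) \pm_1 \frac{1}{q^{r_1/2}}\bigl(H_{Q_0}(z)-\widetilde{H}_{Q_0}(z)\bigr)$, which is precisely the claimed identity.
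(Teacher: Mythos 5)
Your proposal is correct and follows the paper's own route exactly: reduce via Lemma \ref{Clarence} to $H_{Q_0}(z)\widetilde{G}_{Q_1}(z^2)$, substitute the $\Planar{1}$ entry from Table \ref{Rachel}, and use Lemma \ref{Una} together with the $8$-uniformity of $H_{Q_0}(z)$ from Corollary \ref{Hortense} to evaluate the products with $z^{4\ring}$ and $z^{8\ring}$. The paper compresses all of this into one sentence, so your write-up is simply a correctly filled-in version of the same argument.
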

\begin{proof}
Lemma \ref{Clarence} allows us to replace $H_{Q_0}(z) G_{Q_1}(z^2) G_{Q_2}(z^4)$ with $H_{Q_0}(z) \widetilde{G}_{Q_1}(z^2)$, and we use the value of $\widetilde{G}_{Q_1}$ from Table \ref{Rachel} and apply Lemma \ref{Una}.
\end{proof}
\begin{lemma}
If $\norm(Q_0)$, $\norm(Q_1)$, and $\norm(Q_2)\not=\ring$, then $H_{Q_0}(z) G_{Q_1}(z^2) G_{Q_2}(z^4)=\widetilde{H}_{Q_0}(z)=H_{Q_0}(z)$, which can be obtained from Lemma \ref{Rebecca} or Table \ref{Rachel}.
\end{lemma}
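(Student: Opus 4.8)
The plan is to recognize this statement as the degenerate case of Lemma \ref{David}, the one in which the correction term vanishes. First I would translate the norm hypotheses into shape hypotheses on the forms: by Corollary \ref{Frank}, a unimodular form over an unramified $2$-adic ring has norm $2\ring$ or $0$ precisely when it is of the type $\Planes(\pm)$ (that is, $\Hyp^{r/2}$ or $\Ell\oplus\Hyp^{(r-2)/2}$), whereas any form carrying an $a\Sq$ summand has norm $\ring$. Hence $\norm(Q_1)\neq\ring$ is exactly the condition $Q_1=\Planar{1}$, and $\norm(Q_0)\neq\ring$ is exactly $Q_0=\Planar{0}$.

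Since $\norm(Q_2)\neq\ring$ and $Q_1=\Planar{1}$, the hypotheses of Lemma \ref{David} are met, so I would immediately invoke it to write
\[
H_{Q_0}(z) G_{Q_1}(z^2) G_{Q_2}(z^4)=\widetilde{H}_{Q_0}(z) \pm_1 \frac{1}{q^{r_1/2}}\left(H_{Q_0}(z)-\widetilde{H}_{Q_0}(z)\right).
\]
The remaining step is to kill the correction term. Because $\norm(Q_0)\neq\ring$ forces $Q_0=\Planar{0}$, I would read off from the $Q=\Planar{0}$ block of Table \ref{Rachel} the entry $H_{Q_0}(z)-\widetilde{H}_{Q_0}(z)=0$; equivalently, the head of a $\Planes(\pm)$ form is already $4$-uniform and so equals its own $4$-uniformization. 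Substituting this into the displayed identity collapses the whole expression to $\widetilde{H}_{Q_0}(z)$, and the same table entry gives $\widetilde{H}_{Q_0}(z)=H_{Q_0}(z)$, which is the asserted value and is furnished explicitly by Lemma \ref{Rebecca} or Table \ref{Rachel}.

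I do not anticipate a genuine obstacle here: the result is a clean corollary of Lemma \ref{David} together with the vanishing entry in Table \ref{Rachel}. The only point requiring care is the bookkeeping that identifies the norm conditions with the $\Planes(\pm)$ shapes, so that Table \ref{Rachel} (rather than the norm-$\ring$ Table \ref{Edith}) is the relevant one and supplies $H_{Q_0}=\widetilde{H}_{Q_0}$. Should one prefer a self-contained derivation not routed through Lemma \ref{David}, I would instead start from Lemma \ref{Clarence} to reduce $H_{Q_0}(z) G_{Q_1}(z^2) G_{Q_2}(z^4)$ to $H_{Q_0}(z)\widetilde{G}_{Q_1}(z^2)$, note from Table \ref{Rachel} that $\widetilde{G}_{Q_1}(z^2)$ is supported on $z^{4\ring}$ and $z^{8\ring}$ and hence equals $G(z^{\pi^2})$ for some $G$ with $G(1)=1$, and then apply Corollary \ref{Philip} to the $\pi^2$-uniform factor $H_{Q_0}(z)=\widetilde{H}_{Q_0}(z)$ to obtain $H_{Q_0}(z)\widetilde{G}_{Q_1}(z^2)=H_{Q_0}(z)$.
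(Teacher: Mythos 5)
Your proposal is correct and follows the paper's own proof exactly: the paper likewise applies Lemma \ref{David} and then reads off $H_{Q_0}(z)-\widetilde{H}_{Q_0}(z)=0$ from the $\Planes(\pm)$ block of Table \ref{Rachel}. Your explicit identification of the norm conditions with the $\Planes(\pm)$ shapes is a helpful piece of bookkeeping the paper leaves implicit, but the argument is the same.
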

\begin{proof}
Apply Lemma \ref{David}, and note from Table \ref{Rachel} that $H_{Q_0}(z)-\widetilde{H}_{Q_0}(z)=0$.
\end{proof}
\begin{lemma}
Suppose that $\norm(Q_2)\not=\ring$, that $Q_1=\Planar{1}$ is of rank $r_1$, and that $Q_0 = a \Sq\oplus\Planar{0}$ is of rank $r_0$.  Then
\[
\Ig(H_{Q_0}(z) G_{Q_1}(z^2) G_{Q_2}(z^4)) =\Ig(\widetilde{H}_{Q_0}(z)) = \Ig(H_{Q_0}(z)),
\]
which can be obtained from Lemma \ref{Christine} or Table \ref{Rachel}.
\end{lemma}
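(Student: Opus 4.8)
The plan is to reduce everything to Lemma \ref{David} together with a single entry of Table \ref{Rachel}. Since the hypotheses $\norm(Q_2)\neq\ring$ and $Q_1=\Planar{1}$ are exactly those of Lemma \ref{David}, I would first invoke that lemma to write
\[
H_{Q_0}(z) G_{Q_1}(z^2) G_{Q_2}(z^4) = \widetilde{H}_{Q_0}(z) \pm_1 \frac{1}{q^{r_1/2}}\left(H_{Q_0}(z) - \widetilde{H}_{Q_0}(z)\right).
\]
I would then apply the $\C$-linear map $\Ig$ to both sides, obtaining
\[
\Ig\!\left(H_{Q_0}(z) G_{Q_1}(z^2) G_{Q_2}(z^4)\right) = \Ig(\widetilde{H}_{Q_0}(z)) \pm_1 \frac{1}{q^{r_1/2}}\left(\Ig(H_{Q_0}(z)) - \Ig(\widetilde{H}_{Q_0}(z))\right).
\]

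The entire argument then rests on the observation that the parenthesized difference vanishes. Because $Q_0 = a\Sq\oplus\Planar{0}$ has norm $\ring$, I would consult the row of Table \ref{Rachel} for $Q=a\Sq\oplus\Planar{0}$, which records that $\Ig(H_{Q_0}(z)-\widetilde{H}_{Q_0}(z))=0$. If I preferred to verify this directly rather than cite the table, the check is immediate from Lemma \ref{Stevie}: in the expression for $H_{Q_0}(z)-\widetilde{H}_{Q_0}(z)$ each coset $a\tau(1+4s)+8\ring$ and $a\tau+4\ring$ consists of units, so each contributes $t^0=1$ under $\Ig$, and the counts $\card{\teichu}=q-1$ and $\card{S}=q/2$ make the two sums cancel. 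Either way this kills the correction term and simultaneously yields $\Ig(H_{Q_0}(z))=\Ig(\widetilde{H}_{Q_0}(z))$, giving the full chain of equalities claimed. The common value is then read off from the $\Ig(\widetilde{H}_Q(z))$ line of Table \ref{Rachel} (equivalently, from $\Ig(H_{Q_\pm}(z))$ in Lemma \ref{Christine}).

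There is essentially no obstacle here: the content is carried entirely by Lemma \ref{David} and the precomputed table entry, and the only work is the linearity bookkeeping plus the vanishing of $\Ig(H_{Q_0}-\widetilde{H}_{Q_0})$, which is itself a tabulated fact. The one genuine computation, should I unfold it, is the unit-coset evaluation described above, and that is entirely routine.
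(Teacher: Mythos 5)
Your proposal is correct and follows the paper's proof exactly: the paper likewise applies Lemma \ref{David} and then notes from Table \ref{Rachel} that $\Ig$ annihilates $H_{Q_0}(z)-\widetilde{H}_{Q_0}(z)$. Your optional direct verification of that table entry via Lemma \ref{Stevie} (the unit-coset count $(q-1)\cdot\frac{q}{2}\cdot\frac{2}{q^{r_0+1}} = \frac{q-1}{q^{r_0}}$ cancelling against $\frac{q-1}{q^{r_0}}$) is a correct, if unneeded, supplement.
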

\begin{proof}
Apply Lemma \ref{David}, and note from Table \ref{Rachel} that $\Ig$ annihilates $H_{Q_0}(z)-\widetilde{H}_{Q_0}(z)$.
\end{proof}
\begin{lemma}\label{Felicia}
Suppose that $\norm(Q_2)\not=\ring$, that $Q_1=\Planar{1}$ is of rank $r_1$, and that $Q_0=a\Sq \oplus b\Sq\oplus\Planar{0}$ is of rank $r_0$ with $a\equiv b \equiv 1 \pmod{2}$.

If $4 \mid a+b$, let $\sigma =(-1)^{\Tr((a+b)/(4 a))}$, and then
\[
\Ig(H_{Q_0}(z) G_{Q_1}(z^2) G_{Q_2}(z^4)) = \left(1 -\frac{t^2}{q^{r_0}} \pm_0 \frac{t^2-t}{q^{r_0/2}} \pm_1 \frac{\sigma (t^3-t^2)}{q^{r_0+r_1/2}}\right) \igrp.
\]

If $4 \nmid a+b$, then
\[
\Ig(H_{Q_0}(z) G_{Q_1}(z^2) G_{Q_2}(z^4)) = \Ig(\widetilde{H}_{Q_0}(z))=\Ig(H_{Q_0}(z)),
\]
which can be obtained from Lemma \ref{Eveline} or Table \ref{Edith}.
\end{lemma}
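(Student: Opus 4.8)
The plan is to reduce everything to Lemma \ref{David} and then read off the needed Igusa values from Table \ref{Edith}. Since the hypotheses here include $\norm(Q_2) \neq \ring$ and $Q_1 = \Planar{1}$, Lemma \ref{David} applies verbatim and gives
\[
H_{Q_0}(z) G_{Q_1}(z^2) G_{Q_2}(z^4) = \widetilde{H}_{Q_0}(z) \pm_1 \frac{1}{q^{r_1/2}}\left(H_{Q_0}(z) - \widetilde{H}_{Q_0}(z)\right).
\]
First I would apply $\Ig$ to both sides, using that $\Ig$ is $\C$-linear, to obtain
\[
\Ig(H_{Q_0}(z) G_{Q_1}(z^2) G_{Q_2}(z^4)) = \Ig(\widetilde{H}_{Q_0}(z)) \pm_1 \frac{1}{q^{r_1/2}}\,\Ig\!\left(H_{Q_0}(z) - \widetilde{H}_{Q_0}(z)\right).
\]
Since $Q_0 = a\Sq \oplus b\Sq \oplus \Planar{0}$ with $a \equiv b \equiv 1 \pmod{2}$, the two quantities $\Ig(\widetilde{H}_{Q_0}(z))$ and $\Ig(H_{Q_0}(z) - \widetilde{H}_{Q_0}(z))$ are precisely the entries in the two $a\Sq \oplus b\Sq \Planar{0}$ blocks of Table \ref{Edith}, split according to whether $4 \mid a+b$.

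In the case $4 \mid a+b$ with $\sigma = (-1)^{\Tr((a+b)/(4a))}$, Table \ref{Edith} supplies $\Ig(\widetilde{H}_{Q_0}(z)) = (1 - t^2/q^{r_0} \pm_0 (t^2-t)/q^{r_0/2})\igrp$ and $\Ig(H_{Q_0}(z) - \widetilde{H}_{Q_0}(z)) = \frac{\sigma}{q^{r_0}}(t^3 - t^2)\igrp$. Substituting these and factoring out the common $\igrp$ then yields the claimed
\[
\left(1 - \frac{t^2}{q^{r_0}} \pm_0 \frac{t^2-t}{q^{r_0/2}} \pm_1 \frac{\sigma(t^3-t^2)}{q^{r_0 + r_1/2}}\right)\igrp.
\]
The only care required is bookkeeping with the two signs: $\pm_0$ comes from the $\Planes(\pm_0)$ part of $Q_0$ recorded in Table \ref{Edith}, while $\pm_1$ is the sign introduced by the coefficient $\pm_1 q^{-r_1/2}$ in Lemma \ref{David}.

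In the case $4 \nmid a+b$, Table \ref{Edith} records $\Ig(H_{Q_0}(z) - \widetilde{H}_{Q_0}(z)) = 0$, so the correction term vanishes regardless of $\pm_1$ and we are left with $\Ig(H_{Q_0}(z) G_{Q_1}(z^2) G_{Q_2}(z^4)) = \Ig(\widetilde{H}_{Q_0}(z))$; moreover the vanishing of that same quantity shows $\Ig(H_{Q_0}(z)) = \Ig(\widetilde{H}_{Q_0}(z))$, giving the stated equality with both $\Ig(\widetilde{H}_{Q_0})$ and $\Ig(H_{Q_0})$. I do not anticipate any genuine obstacle: all the analytic content has already been absorbed into Lemma \ref{David} and into the computation of the table entries (which rest on Lemma \ref{Eveline}), so the remaining work is purely a linear substitution followed by an algebraic simplification.
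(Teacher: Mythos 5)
Your proposal is correct and follows essentially the same route as the paper: the paper's proof is precisely ``apply Lemma \ref{David}, then read $\Ig(\widetilde{H}_{Q_0}(z))$ and $\Ig(H_{Q_0}(z)-\widetilde{H}_{Q_0}(z))$ off Table \ref{Edith}.'' Your sign bookkeeping ($\pm_0$ from the table entry, $\pm_1$ from the coefficient in Lemma \ref{David}) and the handling of the $4\nmid a+b$ case via the vanishing of $\Ig(H_{Q_0}(z)-\widetilde{H}_{Q_0}(z))$ both match the intended argument.
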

\begin{proof}
Apply Lemma \ref{David}, and use Table \ref{Edith} to obtain $\Ig(\widetilde{H}_{Q_0}(z))$ and $\Ig(H_{Q_0}(z)-\widetilde{H}_{Q_0}(z))$.
\end{proof}
\begin{remark}\label{Vera}
Note that if $s \in R$ with $s \equiv 1 \pmod{2}$ then since scaling by $s$ does not change valuation, we should have
\[
\Ig(H_{s Q_0}(z) G_{s Q_1}(z^2) G_{s Q_2}(z^4)) = \Ig(H_{Q_0}(z) G_{Q_1}(z^2) G_{Q_2}(z^4)),
\]
and so it makes sense that the sign $\sigma$ in Lemma \ref{Felicia} should not change when we replace $(a,b)$ with $(s a, s b)$.  Note that we insist that $s \equiv 1 \pmod{2}$ to preserve our assumption that $a$ and $b$ should be $1$ modulo $2$.
\end{remark}
\begin{lemma}\label{Horace}
Suppose that $\norm(Q_2)\not=\ring$, and that $Q_1=c\Sq\oplus\Planar{1}$ is of rank $r_1$.  Then
\[
H_{Q_0}(z) G_{Q_1}(z^2) G_{Q_2}(z^4)= \widehat{H}_{Q_0}(z) \pm_1 \frac{1}{q^{(r_1+1)/2}} (H_{Q_0}(z)-\widetilde{H}_{Q_0}(z)) \sum_{v \in \teich} z^{2 c v}.
\]
\end{lemma}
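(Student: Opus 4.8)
The plan is to imitate the proof of Lemma \ref{David}, reducing the triple product to $H_{Q_0}(z)$ times the four-uniformization of $G_{Q_1}$ and then exploiting the uniformity levels recorded in Table \ref{Rachel}. Since $\norm(Q_2)\not=\ring$, Lemma \ref{Clarence} replaces $H_{Q_0}(z) G_{Q_1}(z^2) G_{Q_2}(z^4)$ by $H_{Q_0}(z)\widetilde{G}_{Q_1}(z^2)$. Because $Q_1=c\Sq\oplus\Planar{1}$ has norm $\ring$, I read $\widetilde{G}_{Q_1}(z)$ off the second block of Table \ref{Rachel} (with $a$ replaced by $c$ and $r$ by $r_1$) and scale by $z\mapsto z^2$, which sends $z^\ring$ to $z^{2\ring}$ and $z^{c\tau+4\ring}$ to $z^{2c\tau+8\ring}$, giving
\[
\widetilde{G}_{Q_1}(z^2)=\left(1\mp_1 \frac{1}{q^{(r_1-1)/2}}\right) z^{2\ring} \pm_1 \frac{1}{q^{(r_1+1)/2}} \sum_{\tau \in \teich} z^{2c \tau+8\ring}.
\]

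First I would dispatch the two products. Lemma \ref{Una} applied with the $\pi$-uniform factor $z^{2\ring}$ gives $H_{Q_0}(z) z^{2\ring}=\widehat{H}_{Q_0}(z)$, exactly as in Lemma \ref{Anthony}. For the remaining sum, Corollary \ref{Hortense} shows $H_{Q_0}(z)$ is $8$-uniform, so by Lemma \ref{Una} each coset factor $z^{2c\tau+8\ring}$ may be replaced by the singleton $z^{2c\tau}$ without altering the product; writing $\sigma=\sum_{v\in\teich} z^{2cv}$, this shows $H_{Q_0}(z)\sum_{\tau}z^{2c\tau+8\ring}=H_{Q_0}(z)\sigma$. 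Hence
\[
H_{Q_0}(z) G_{Q_1}(z^2) G_{Q_2}(z^4)=\left(1\mp_1 \frac{1}{q^{(r_1-1)/2}}\right)\widehat{H}_{Q_0}(z) \pm_1 \frac{1}{q^{(r_1+1)/2}} H_{Q_0}(z)\sigma.
\]

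The crux is the identity $\widetilde{H}_{Q_0}(z)\,\sigma=q\,\widehat{H}_{Q_0}(z)$. To prove it I would four-uniformize $\sigma$: each singleton $z^{2cv}$ becomes $z^{2cv+4\ring}$, and as $v$ runs over $\teich$ the cosets $2cv+4\ring$ exhaust $2\ring/4\ring$ exactly once (since $c$ is a unit and reduction modulo $\pi$ carries $\teich$ bijectively onto $\resfield$), so coalescence (Remark \ref{Anne}) gives $\sigma\equiv q\, z^{2\ring}\pmod{\pi^2}$. As $\widetilde{H}_{Q_0}(z)$ is $4$-uniform, Lemma \ref{Una} then yields $\widetilde{H}_{Q_0}(z)\sigma=q\,\widetilde{H}_{Q_0}(z) z^{2\ring}$, and a further application of Lemma \ref{Una} (the $z^{2\ring}$ factor forcing a $2$-uniformization) gives $\widetilde{H}_{Q_0}(z) z^{2\ring}=\widehat{H}_{Q_0}(z)$. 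Substituting $\widetilde{H}_{Q_0}(z)\sigma=q\,\widehat{H}_{Q_0}(z)$ into the displayed expression and collecting the $\widehat{H}_{Q_0}(z)$ terms precisely as in Lemma \ref{David} rewrites it as $\widehat{H}_{Q_0}(z) \pm_1 \frac{1}{q^{(r_1+1)/2}}(H_{Q_0}(z)-\widetilde{H}_{Q_0}(z))\sigma$, which is the claim.

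The main obstacle is the bookkeeping of uniformity levels: one must remember that $H_{Q_0}$ is $8$-uniform while $\widetilde{H}_{Q_0}$ is only $4$-uniform and $\widehat{H}_{Q_0}$ only $2$-uniform, and invoke Lemma \ref{Una} at the correct modulus in each product so that the two coalescence steps, namely the passage $\sum_{\tau}z^{2c\tau+8\ring}\mapsto\sigma$ and the reduction $\sigma\equiv q\,z^{2\ring}\pmod{\pi^2}$, are legitimate. All the remaining manipulations are the same routine algebra already carried out in Lemma \ref{David}.
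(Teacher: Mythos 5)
Your proposal is correct and follows essentially the same route as the paper: reduce via Lemma \ref{Clarence} to $H_{Q_0}(z)\widetilde{G}_{Q_1}(z^2)$, read $\widetilde{G}_{Q_1}$ off Table \ref{Rachel}, and use the $8$-uniformity of $H_{Q_0}$ together with coalescence to isolate the $(H_{Q_0}-\widetilde{H}_{Q_0})\sigma$ term. The only cosmetic difference is that you compute $\widetilde{H}_{Q_0}(z)\sigma=q\,\widehat{H}_{Q_0}(z)$ directly via Lemma \ref{Una}, whereas the paper splits off $(\widetilde{H}_{Q_0}-\widehat{H}_{Q_0})\sigma$ and kills it by the identity $(z^{a+4\ring}-z^{a+2\ring})\sigma=0$; the two computations are equivalent.
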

\begin{proof}
Lemma \ref{Clarence} allows us to replace $H_{Q_0}(z) G_{Q_1}(z^2) G_{Q_2}(z^4)$ with $H_{Q_0}(z) \widetilde{G}_{Q_1}(z^2)$, and we use the value of $\widetilde{G}_{Q_1}$ from Table \ref{Rachel}.
We multiply this by $H_{Q_0}(z)$, and note that $H_{Q_0}(z) z^{2\ring}=\widehat{H}_{Q_0}(z)$ and $H_{Q_0}(z) z^{2 c \tau+8\ring}=H_{Q_0}(z) z^{2 c \tau}$ by the $8$-uniformity of $H_{Q_0}(z)$, and so obtain
\[
\left(1 \mp_1 \frac{1}{q^{(r_1-1)/2}}\right) \widehat{H}_{Q_0}(z) \pm_1 \frac{1}{q^{(r_1+1)/2}} H_{Q_0}(z) \sum_{v \in \teich} z^{2 c v},
\]
which is equal to
\[
\widehat{H}_{Q_0}(z) \pm_1 \frac{1}{q^{(r_1+1)/2}} (\widetilde{H}_{Q_0}(z)-\widehat{H}_{Q_0}(z)) \sum_{v \in \teich} z^{2 c v} 
\pm_1 \frac{1}{q^{(r_1+1)/2}} (H_{Q_0}(z)-\widetilde{H}_{Q_0}(z)) \sum_{v \in \teich} z^{2 c v}.
\]
Now note that $\widetilde{H}_{Q_0}(z)-\widehat{H}_{Q_0}(z)$ is necessarily a linear combination of terms of the form $z^{a+4\ring}-z^{a+2\ring}$, so that by coalescence of cosets (see Remark \ref{Anne}), we have
\[
\left(z^{a+4\ring}-z^{a+2\ring}\right) \sum_{v \in \teich} z^{2 c v} = 0.\qedhere
\]
\end{proof}
\begin{lemma}
Suppose that $\norm(Q_2)\not=\ring$, that $Q_1=c\Sq\oplus\Planar{1}$ is of rank $r_1$, that $Q_0=\Planar{0}$ is of rank $r_0$.  Then
\[
H_{Q_0}(z) G_{Q_1}(z^2) G_{Q_2}(z^4)=\widehat{H}_{Q_0}(z),
\]
which can be obtained from Table \ref{Rachel}.
\end{lemma}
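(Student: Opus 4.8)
The plan is to apply Lemma \ref{Horace} directly and then observe that the correction term it produces vanishes for forms of the shape $\Planar{0}$. The hypotheses of the present statement, namely $\norm(Q_2)\not=\ring$ and $Q_1=c\Sq\oplus\Planar{1}$ of rank $r_1$, are exactly the hypotheses of Lemma \ref{Horace}, so I would begin by invoking that lemma to write
\[
H_{Q_0}(z) G_{Q_1}(z^2) G_{Q_2}(z^4)= \widehat{H}_{Q_0}(z) \pm_1 \frac{1}{q^{(r_1+1)/2}} \bigl(H_{Q_0}(z)-\widetilde{H}_{Q_0}(z)\bigr) \sum_{v \in \teich} z^{2 c v}.
\]
This reduces everything to understanding the single quantity $H_{Q_0}(z)-\widetilde{H}_{Q_0}(z)$ in the extra hypothesis $Q_0=\Planar{0}$.

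The second and final step is to consult Table \ref{Rachel}. Since $Q_0=\Planar{0}$ is a direct sum of hyperbolic planes together with at most one elliptic plane, it has norm $2\ring$ (or $0$ when the rank is $0$), and the corresponding row of Table \ref{Rachel} records that $H_{Q_0}(z)-\widetilde{H}_{Q_0}(z)=0$. Geometrically this reflects the fact that such forms represent only values of the appropriate valuation parity, so the head is already $4$-uniform and coincides with its own $4$-uniformization; no refinement beyond the $4$-uniform level is needed. Substituting this vanishing into the displayed identity kills the entire correction term, leaving
\[
H_{Q_0}(z) G_{Q_1}(z^2) G_{Q_2}(z^4)=\widehat{H}_{Q_0}(z),
\]
which is precisely the asserted value and may be read off the $\widehat{H}_Q(z)$ entry of Table \ref{Rachel}.

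There is essentially no obstacle here: the content has been front-loaded into Lemma \ref{Horace} and into the tabulated uniformizations of Table \ref{Rachel}, and this statement is the degenerate case in which the $\pm_1$ discrepancy term disappears. The only thing worth flagging is bookkeeping consistency, namely that the $Q_0=\Planar{0}$ row of Table \ref{Rachel} is the one carrying $H_{Q_0}-\widetilde{H}_{Q_0}=0$; this mirrors the identical mechanism used in the earlier lemmas that applied Lemma \ref{David}, where the same vanishing was exploited. I expect the proof to be a single short paragraph: cite Lemma \ref{Horace}, then cite Table \ref{Rachel} to annihilate the correction.
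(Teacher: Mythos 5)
Your proposal is correct and follows exactly the paper's own argument: apply Lemma \ref{Horace} and then observe from Table \ref{Rachel} that $H_{Q_0}(z)-\widetilde{H}_{Q_0}(z)=0$ when $Q_0=\Planar{0}$, which annihilates the correction term. Nothing further is needed.
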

\begin{proof}
Use Lemma \ref{Horace} and note from Table \ref{Rachel} that $H_{Q_0}(z)-\widetilde{H}_{Q_0}(z)=0$.
\end{proof}
\begin{lemma}
Suppose that $\norm(Q_2)\not=\ring$, that $Q_1=c\Sq\oplus\Planar{1}$ is of rank $r_1$, and that $Q_0=a\Sq\oplus\Planar{0}$ is of rank $r_0$.  Then
\[
\Ig(H_{Q_0}(z) G_{Q_1}(z^2) G_{Q_2}(z^4))=\Ig(\widehat{H}_{Q_0}(z)),
\]
which can be obtained from Table \ref{Rachel}.
\end{lemma}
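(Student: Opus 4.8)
The plan is to begin from the decomposition supplied by Lemma \ref{Horace}, whose hypotheses ($\norm(Q_2)\neq\ring$ and $Q_1=c\Sq\oplus\Planar{1}$) are exactly those in force here:
\[
H_{Q_0}(z) G_{Q_1}(z^2) G_{Q_2}(z^4)= \widehat{H}_{Q_0}(z) \pm_1 \frac{1}{q^{(r_1+1)/2}} (H_{Q_0}(z)-\widetilde{H}_{Q_0}(z)) \sum_{v \in \teich} z^{2 c v}.
\]
Since $\Ig$ is $\C$-linear, the claim reduces to showing that $\Ig$ kills the correction term, after which the surviving term $\Ig(\widehat{H}_{Q_0}(z))$ is read directly from Table \ref{Rachel} (where it equals $(1-t/q^{r_0})\igrp$). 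So the whole problem becomes proving
\[
\Ig\!\left((H_{Q_0}(z)-\widetilde{H}_{Q_0}(z)) \sum_{v \in \teich} z^{2 c v}\right)=0.
\]

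First I would insert the explicit value of $H_{Q_0}(z)-\widetilde{H}_{Q_0}(z)$ from the row $Q_0=a\Sq\oplus\Planar{0}$ of Table \ref{Rachel}, namely a $\C$-linear combination of the cosets $z^{a\tau(1+4s)+8\ring}$ (with $\tau\in\teichu$, $s\in S$) and $z^{a\tau+4\ring}$ (with $\tau\in\teichu$). The key point is that every center $a\tau(1+4s)$ and $a\tau$ is a unit, and multiplying by $z^{2cv}$ merely translates the center by $2cv$, an element of positive valuation; hence every coset occurring in the product is still centered at a unit. By Lemma \ref{Stevie}, $\Ig(z^A)=t^{0}=1$ for each such coset, so on the span of unit-centered cosets the map $\Ig$ coincides with the normalization functional $F\mapsto F(1)$.

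It then remains to compute this normalization. Because normalization is multiplicative and $\sum_{v\in\teich}z^{2cv}$ has normalization $q$, the correction term has normalization $(H_{Q_0}-\widetilde{H}_{Q_0})(1)\cdot q$. Since $\widetilde{H}_{Q_0}$ is the $4$-uniformization of $H_{Q_0}$ it shares its normalization, so $(H_{Q_0}-\widetilde{H}_{Q_0})(1)=0$; alternatively this vanishing is immediate from Table \ref{Rachel} on substituting $\card{\teichu}=q-1$ and $\card{S}=q/2$. Thus the correction has zero normalization, hence zero image under $\Ig$, and the lemma follows.

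The main obstacle is exactly the factor $\sum_{v\in\teich}z^{2cv}$ that $\Planes$-plus-a-square forms $Q_1$ introduce into the correction: in the earlier companion lemma (where $Q_1=\Planar{1}$) the correction was simply a scalar multiple of $H_{Q_0}-\widetilde{H}_{Q_0}$, whose $\Ig$-image vanishes outright by Table \ref{Rachel}, whereas here one must argue that multiplying by $\sum_v z^{2cv}$ preserves that vanishing. The observation that all centers remain units, collapsing $\Ig$ to the normalization on the relevant subspace, is what makes this go through cleanly; trying instead to coalesce cosets as in the proof of Lemma \ref{Horace} is less tidy, because the $z^{a\tau(1+4s)+8\ring}$ terms do not coalesce as simply as the rank-one contributions did there.
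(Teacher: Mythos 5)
Your proposal is correct and follows essentially the same route as the paper: apply Lemma \ref{Horace} and then show that $\Ig$ annihilates the correction term $(H_{Q_0}(z)-\widetilde{H}_{Q_0}(z))\sum_{v\in\teich}z^{2cv}$, whose explicit expansion into unit-centered cosets the paper simply writes out before declaring it "clearly annihilated." Your observation that $\Ig$ agrees with the normalization functional on unit-centered cosets, combined with multiplicativity of normalization and $(H_{Q_0}-\widetilde{H}_{Q_0})(1)=0$, is a clean way of justifying that final step.
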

\begin{proof}
We apply Lemma \ref{Horace}, and note from Table \ref{Rachel} that
\[
(H_{Q_0}(z)-\widetilde{H}_{Q_0}(z)) \sum_{v \in \teich} z^{2 c v} = 
- \frac{1}{q^{r_0}} \sums{\tau \in \teichu \\ v \in \teich} z^{a \tau+ 2 cv + 4 \ring} +
\frac{2}{q^{r_0+1}} \sums{\tau \in \teichu \\ s \in S \\ v \in \teich} z^{a \tau (1+4 s)+ 2 c v + 8 \ring},
\]
which is annihilated by $\Ig$.
\end{proof}
\begin{lemma}\label{Veronica}
Suppose that $\norm(Q_2)\not=\ring$, that $Q_1=c\Sq\oplus\Planar{1}$ is of rank $r_1$, and that $Q_0=a\Sq\oplus b\Sq\oplus\Planar{0}$ is of rank $r_0$ with $a \equiv b \equiv 1 \pmod{2}$ and $4 \mid a+b$, and let $\sigma =(-1)^{\Tr((a+b)/(4 a))}$.  Then
\[
\Ig(H_{Q_0}(z) G_{Q_1}(z^2) G_{Q_2}(z^4))= \left(1- \frac{t}{q^{r_0}} \pm_1 \frac{\sigma (t^3-t^2)}{q^{r_0+(r_1+1)/2}} \right) \igrp.\]
\end{lemma}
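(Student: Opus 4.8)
The plan is to specialize Lemma \ref{Horace} to the present shape of $Q_0$ and then apply the linear map $\Ig$, reading off all needed uniformizations from Table \ref{Edith}. Since the hypotheses of Lemma \ref{Horace} are exactly $\norm(Q_2)\ne\ring$ together with $Q_1=c\Sq\oplus\Planar{1}$, that lemma applies verbatim and gives
\[
H_{Q_0}(z) G_{Q_1}(z^2) G_{Q_2}(z^4)= \widehat{H}_{Q_0}(z) \pm_1 \frac{1}{q^{(r_1+1)/2}} (H_{Q_0}(z)-\widetilde{H}_{Q_0}(z)) \sum_{v \in \teich} z^{2 c v}.
\]
I would evaluate $\Ig$ on the two summands separately. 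For the first, the row of Table \ref{Edith} with $4\mid a+b$ records $\Ig(\widehat{H}_{Q_0}(z))=\left(1-\tfrac{t}{q^{r_0}}\right)\igrp$, which already supplies the leading part of the claimed formula.

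For the second summand I would substitute the value $H_{Q_0}(z)-\widetilde{H}_{Q_0}(z)=\frac{\sigma}{q^{r_0}}\left(z^{8\ring}-z^{4\ring}\right)$ from the same row of Table \ref{Edith}, so that, using $z^A z^B=z^{A+B}$,
\[
(H_{Q_0}(z)-\widetilde{H}_{Q_0}(z)) \sum_{v \in \teich} z^{2 c v} = \frac{\sigma}{q^{r_0}} \sum_{v\in\teich}\left(z^{2cv+8\ring}-z^{2cv+4\ring}\right).
\]
The heart of the argument is to compute $\Ig$ of this sum term by term via Lemma \ref{Stevie}. Because $c$ is a unit, for every $v\in\teichu$ the element $2cv$ has valuation $1$ and lies in neither $\ncoset{3}=8\ring$ nor $\ncoset{2}=4\ring$; hence Lemma \ref{Stevie} gives $\Ig(z^{2cv+8\ring})=\Ig(z^{2cv+4\ring})=t^{\val(2cv)}=t$, and these two contributions cancel in the difference. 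Only the term $v=0$ survives, where $2cv=0$ yields the pure cosets $\ncoset{3}$ and $\ncoset{2}$, so $\Ig$ produces $t^3\igrp$ and $t^2\igrp$ respectively. Thus $\Ig$ of the whole sum equals $\frac{\sigma}{q^{r_0}}(t^3-t^2)\igrp$.

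Combining the two pieces and distributing the factor $\pm_1/q^{(r_1+1)/2}$ gives
\[
\Ig(H_{Q_0}(z) G_{Q_1}(z^2) G_{Q_2}(z^4))= \left(1- \frac{t}{q^{r_0}} \pm_1 \frac{\sigma (t^3-t^2)}{q^{r_0+(r_1+1)/2}} \right) \igrp,
\]
which is the assertion. The only step demanding genuine care — the main obstacle — is the term-by-term $\Ig$ evaluation of the coset sum: one must observe that $\coset{2cv}{3}$ and $\coset{2cv}{2}$ have identical $\Ig$-images precisely when $2cv$ has valuation $1$ (equivalently $v\ne0$), forcing the entire sum to collapse to its single $v=0$ contribution. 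Everything else is routine bookkeeping with the tabulated uniformizations in Table \ref{Edith} and the $\C$-linearity of $\Ig$.
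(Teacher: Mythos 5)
Your proposal is correct and follows the same route as the paper: apply Lemma \ref{Horace}, read off $\widehat{H}_{Q_0}$ and $H_{Q_0}-\widetilde{H}_{Q_0}=\sigma(z^{8\ring}-z^{4\ring})/q^{r_0}$ from Table \ref{Edith}, and observe that $\Ig$ kills the terms with $v\in\teichu$ (both cosets give $t^{\val(2cv)}=t$ and cancel) while the $v=0$ term contributes $\sigma q^{-r_0}(t^3-t^2)\igrp$. Your term-by-term justification via Lemma \ref{Stevie} simply spells out the cancellation the paper states without detail.
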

\begin{proof}
We apply Lemma \ref{Horace}, and use the values of $\widehat{H}_{Q_0}$ and $H_{Q_0}(z)-\widetilde{H}_{Q_0}(z)=\sigma (z^{8\ring}-z^{4\ring})/q^{r_0}$ from Table \ref{Edith}.  Note that $\Ig$ annihilates $(H_{Q_0}(z)-\widetilde{H}_{Q_0}(z)) z^{2 c v}$ when $v\in\teichu$ and gives $\sigma q^{-r_0} (t^3-t^2)(1-1/q)/(1-t/q)$ when $v=0$.
\end{proof}
\begin{remark}
By the same principle alluded to in Remark \ref{Vera}, the sign $\sigma$ in Lemma \ref{Veronica} is unchanged when we replace $(a,b,c)$ with $(s a, s b, s c)$ for some $s \in R$ with $s\equiv 1 \pmod{2}$.
\end{remark}
\begin{lemma}\label{Xavier}
Suppose that $\norm(Q_2)\not=\ring$, that $Q_1=c\Sq\oplus\Planar{1}$ is of rank $r_1$, and that $Q_0=a\Sq\oplus b\Sq\oplus\Planar{0}$ is of rank $r_0$ with $a \equiv b \equiv 1 \pmod{2}$ and $4 \nmid a+b$, and let $\phi=(-1)^{\Tr\left(\frac{c}{2} \left[\left(\frac{a+b}{2 c}\right)^q+\frac{a+b}{2 c}\right]\right)}$.  Then
\[
\Ig(H_{Q_0}(z) G_{Q_1}(z^2) G_{Q_2}(z^4))= \left(1 - \frac{t}{q^{r_0}} \pm_1 \frac{\phi(t^3-t^2)}{q^{r_0+(r_1+1)/2}} \right)\igrp 
\]
\end{lemma}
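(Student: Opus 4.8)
The plan is to reduce the claim to the single identity provided by Lemma~\ref{Horace} and then carry out one valuation count over Teichm\"uller representatives, exactly as in the proof of Lemma~\ref{Veronica}. Since $\norm(Q_2)\not=\ring$ and $Q_1=c\Sq\oplus\Planar{1}$ has rank $r_1$, Lemma~\ref{Horace} gives
\[
H_{Q_0}(z) G_{Q_1}(z^2) G_{Q_2}(z^4)=\widehat{H}_{Q_0}(z)\pm_1\frac{1}{q^{(r_1+1)/2}}\bigl(H_{Q_0}(z)-\widetilde{H}_{Q_0}(z)\bigr)\sum_{v\in\teich}z^{2 c v}.
\]
Applying $\Ig$ and reading $\Ig(\widehat{H}_{Q_0}(z))=(1-t/q^{r_0})\igrp$ from the $4\nmid a+b$ block of Table~\ref{Edith} yields the first summand of the asserted formula. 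It therefore remains to prove that
\[
\Ig\Bigl(\bigl(H_{Q_0}(z)-\widetilde{H}_{Q_0}(z)\bigr)\sum_{v\in\teich}z^{2 c v}\Bigr)=\frac{\phi(t^3-t^2)}{q^{r_0}}\igrp.
\]

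To do so I would substitute the value of $H_{Q_0}(z)-\widetilde{H}_{Q_0}(z)$ from the same block of Table~\ref{Edith} and multiply termwise by $\sum_{v\in\teich}z^{2 c v}$. Coalescence of cosets (Remark~\ref{Anne}) collapses the two simple terms, since $z^{2\ring}\sum_{v}z^{2 c v}=q\,z^{2\ring}$ and $z^{4\ring}\sum_{v}z^{2 c v}=q\,z^{2\ring}$; together they contribute $\tfrac{1-q}{q^{r_0-1}}z^{2\ring}$, on which $\Ig$ acts to give $\tfrac{1-q}{q^{r_0-1}}\,t\,\igrp$. The essential contribution comes from the term $\tfrac{2}{q^{r_0+1}}\sum_{\tau\in\teichu,\,s\in S}z^{\tau(a+b+4 s)+8\ring}\sum_{v\in\teich}z^{2 c v}$, and I would evaluate $\Ig$ on it by classifying, for each triple $(\tau,s,v)\in\teichu\times S\times\teich$, the valuation of the exponent $\tau(a+b+4 s)+2 c v$. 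Writing $a+b=2 m$ with $m$ odd (permissible because $4\nmid a+b$ while $a\equiv b\equiv 1\pmod{2}$), this exponent equals $2 N$ with $N=\tau(m+2 s)+c v$, so by Lemma~\ref{Stevie} the term contributes $t$, $t^2$, or $t^3\igrp$ according as $\val(N)$ is $0$, $1$, or at least $2$.

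The heart of the proof, and the step I expect to be the main obstacle, is the count of triples with $\val(N)\ge 2$, which is where the sign $\phi$ is born. For $N$ to be even one needs $v$ to be the unique representative $v_0\in\teich$ with $v_0\equiv\tau m/c\pmod{2}$; the other $q-1$ values of $v$ force $N$ to be a unit. Since the Teichm\"uller lift is multiplicative, $v_0=\tau\mu_0$, where $\mu_0$ is the Teichm\"uller lift of the reduction of $\mu=(a+b)/(2 c)$, and hence $N=\tau(m+c\mu_0+2 s)$. Because $m+c\mu_0\equiv 0\pmod{2}$, we get $\val(N)\ge 2$ precisely when $s\equiv(m+c\mu_0)/2\pmod{2}$, which admits a (unique) solution $s\in S$ exactly when $\overline{(m+c\mu_0)/2}$ has trace zero. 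This condition is manifestly independent of $\tau$, and Lemma~\ref{Richard} (giving $\mu^q\equiv\mu_0\pmod{4}$) identifies it with the exponent defining $\phi$, since $\Tr\bigl(\tfrac{c}{2}(\mu^q+\mu)\bigr)\equiv\Tr\bigl((m+c\mu_0)/2\bigr)\pmod{2}$, so that $(-1)^{\Tr((m+c\mu_0)/2)}=\phi$.

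It follows that there are $q-1$ triples with $\val(N)\ge 2$ when $\phi=1$ and none when $\phi=-1$, i.e. $\tfrac{(q-1)(1+\phi)}{2}$ in every case, together with $\tfrac{(q-1)(q-1-\phi)}{2}$ triples having $\val(N)=1$ and $\tfrac{q(q-1)^2}{2}$ with $N$ a unit (a tally easily checked against the total $\tfrac{q^2(q-1)}{2}$). Feeding these counts through $\Ig$ via Lemma~\ref{Stevie}, combining with the $\tfrac{1-q}{q^{r_0-1}}t\,\igrp$ found above, and simplifying with $\igrp=(q-1)/(q-t)$ collapses everything to $\tfrac{\phi(t^3-t^2)}{q^{r_0}}\igrp$, which completes the proof. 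The only genuine labor is the trace bookkeeping of the previous paragraph, which proceeds along the lines of the arguments in Appendix~\ref{Karen}.
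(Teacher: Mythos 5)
Your proposal is correct and follows essentially the same route as the paper's proof: both reduce via Lemma \ref{Horace} and Table \ref{Edith} to the triple sum $\frac{2}{q^{r_0+1}}\sum_{\tau,s,v} z^{\tau(a+b+4s)+2cv+8\ring}$, classify the valuations of the exponents over $(\tau,s,v)\in\teichu\times S\times\teich$, and arrive at the same counts $\frac{q(q-1)^2}{2}$, $\frac{(q-1)(q-1-\phi)}{2}$, and $\frac{(q-1)(1+\phi)}{2}$ for the three valuation regimes. The only cosmetic difference is that you identify the exceptional representative $v_0=\tau\mu_0$ directly via multiplicativity of the Teichm\"uller lift together with Lemma \ref{Richard}, whereas the paper reparameterizes $v=w\tau$ and invokes Corollary \ref{Martin}; the resulting trace condition, and hence the sign $\phi$, is identical.
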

\begin{proof}
We apply Lemma \ref{Horace}, and use the value of $H_{Q_0}-\widetilde{H}_{Q_0}$ from Table \ref{Edith} to see that 
\[
(H_{Q_0}(z)-\widetilde{H}_{Q_0}(z)) \sum_{v \in \teich} z^{2 c v}=-\frac{q-1}{q^{r_0-1}} z^{2\ring} + \frac{2}{q^{r_0+1}} \sum_{\tau\in\teichu, s \in S, v \in \teich} z^{\tau \left(a+b+4 s\right)+2 c v + 8\ring}.
\]
Let us examine the triple sum.  First, reparameterize it with $v=w \tau$ with $w \in \teich$ to obtain
\[
\sum_{\tau\in\teichu, s \in S, w \in \teich} z^{2 c \tau \left(w+e+2 s/c\right)+8\ring},
\]
where we set $e=\frac{a+b}{2 c}$.  For each $w \in \teich$, there is a unique $x\in \teich$ such that $w+e\equiv x\pmod{2}$, and $x$ runs through $\teich$ as $w$ runs through $\teich$.
By Corollary \ref{Martin}, we have $w+e \equiv x + e^q + e + 2(x e)^{q/2} \pmod{4}$, and so we can again reparameterize our sum as
\[
\sum_{\tau\in\teichu, s \in S, x \in \teich} z^{2 c \tau \left(x+e^q+e+2(x e)^{q/2}+2 s/c\right)+8\ring}.
\]
where we note that $2\mid e^q+e$ by Lemma \ref{Richard}.  The only part of our expression for $H_{Q_0}(z) G_{Q_1}(z^2) G_{Q_2}(z^4)$ to which it is difficult to apply $\Ig$ is this sum.
To do this, we need to understand the valuation of the term $2 c \tau\left(x+e^q+e+2(x e)^{q/2}+2 s/c\right)$.  The valuation is $1$ when $x\not=0$, i.e., for $(q-1)^2 q/2$ of the triple sum's terms.

When $x=0$, we have $4 \tau \left(c(e^q+e)/2 + s\right)$, so the valuation is at least $2$.  It is exactly $2$ if and only if $c(e^q+e)/2 \not\equiv s \pmod{2}$.  If $\Tr(c(e^q+e)/2)\equiv 1 \pmod{2}$, this invariably happens, so we get a valuation of $2$ for $q(q-1)/2$ of our summation terms.  If $\Tr(c(e^q+e)/2) \equiv 0 \pmod{2}$, then we get a valuation of $2$ for $(q-1)(q/2-1)$ of our triple sum's terms, and a valuation of $3$ or higher for $q-1$ of our triple sum's terms.  Thus, if we apply $\Ig$ to the triple sum, when $\Tr(c(e^q+e)) \equiv 0\pmod{4}$, we get
\[
\frac{q(q-1)^2}{2} t + \frac{(q-1)(q-2)}{2} t^2 + (q-1) t^3 \igrp,
\]
and when $\Tr(c(e^q+e))\equiv 2 \pmod{4}$, we get.
\[
\frac{q(q-1)^2}{2} t + \frac{q(q-1)}{2} t^2.
\]
Using these calculations to apply $\Ig$ to $(H_{Q_0}(z)-\widetilde{H}_{Q_0}(z)) \sum_{v \in \teich} z^{2 c v}$ and the value of $\Ig(\widehat{H}_{Q_0}(z))$ from Table \ref{Edith}, we get the desired value of $\Ig(H_{Q_0}(z) G_{Q_1}(z^2) G_{Q_2}(z^4))$ via Lemma \ref{Horace}.
\end{proof}
\begin{remark}
By the same principle alluded to in Remark \ref{Vera}, the sign $\phi$ in Lemma \ref{Xavier} is unchanged when we replace $(a,b,c)$ with $(s a, s b, s c)$ for some $s \in R$ with $s\equiv 1 \pmod{2}$.  For $\frac{1}{2} \left[\left(\frac{a+b}{2 c}\right)^q+\frac{a+b}{2 c}\right]$ is invariant under this scaling, and is an element of $R$ since $u^q\equiv u \pmod{2}$ for any $u \in R$.  Thus when we scale by $s$, the element $\frac{c}{2} \left[\left(\frac{a+b}{2 c}\right)^q+\frac{a+b}{2 c}\right] \in R$ is scaled by a factor of $s$, so it is unchanged modulo $2$, and thus $\phi$ is unchanged.
\end{remark}
\begin{lemma}\label{Nelson}
Suppose that $\norm(Q_2)\not=\ring$ and that $Q_1=c\Sq\oplus d\Sq\oplus\Planar{1}$ is of rank $r_1$ with $c\equiv d \equiv 1\pmod{2}$ and $4 \mid c+d$.
Then
\[
H_{Q_0}(z) G_{Q_1}(z^2) G_{Q_2}(z^4)= \widehat{H}_{Q_0}(z) \pm_1 \frac{1}{q^{r_1/2}} \left(H_{Q_0}(z)-\widetilde{H}_{Q_0}(z)\right).
\]
\end{lemma}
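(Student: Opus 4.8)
The plan is to follow the same pattern as Lemma \ref{David}, collapsing the triple product to a multiple of $H_{Q_0}(z)$ by first discarding $Q_2$ and then expanding the $4$-uniformized generating function of $Q_1$ against the $8$-uniform head $H_{Q_0}(z)$. First I would invoke Lemma \ref{Clarence}: since $\norm(Q_2)\not=\ring$ by hypothesis, we have $H_{Q_0}(z) G_{Q_1}(z^2) G_{Q_2}(z^4) = H_{Q_0}(z) \widetilde{G}_{Q_1}(z^2)$, so $Q_2$ drops out entirely and only the $4$-uniformization $\widetilde{G}_{Q_1}$ of $G_{Q_1}$ survives.

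Next I would read $\widetilde{G}_{Q_1}$ off Table \ref{Edith}. Because $Q_1 = c\Sq\oplus d\Sq\oplus\Planar{1}$ with $c\equiv d\equiv 1\pmod 2$ and $4\mid c+d$, the relevant row gives $\widetilde{G}_{Q_1}(z) = z^\ring \mp_1 \frac{1}{q^{r_1/2}} z^{2\ring} \pm_1 \frac{1}{q^{r_1/2}} z^{4\ring}$; crucially, the sign $\sigma$ does not appear in this particular entry. Replacing $z$ by $z^2$ yields $\widetilde{G}_{Q_1}(z^2) = z^{2\ring} \mp_1 \frac{1}{q^{r_1/2}} z^{4\ring} \pm_1 \frac{1}{q^{r_1/2}} z^{8\ring}$, which is $8$-uniform.

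Then I would multiply by $H_{Q_0}(z)$ and exploit its $8$-uniformity (Corollary \ref{Hortense}, as $\ell=\val(2)=1$). The three product terms collapse via the coset-absorption identities already used throughout this section, each an instance of Lemma \ref{Una}: $H_{Q_0}(z)\,z^{2\ring}=\widehat{H}_{Q_0}(z)$, $H_{Q_0}(z)\,z^{4\ring}=\widetilde{H}_{Q_0}(z)$, and $H_{Q_0}(z)\,z^{8\ring}=H_{Q_0}(z)$. Substituting and regrouping gives $\widehat{H}_{Q_0}(z) \mp_1 \frac{1}{q^{r_1/2}}\widetilde{H}_{Q_0}(z) \pm_1 \frac{1}{q^{r_1/2}}H_{Q_0}(z) = \widehat{H}_{Q_0}(z) \pm_1 \frac{1}{q^{r_1/2}}\left(H_{Q_0}(z)-\widetilde{H}_{Q_0}(z)\right)$, which is precisely the claim.

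There is essentially no obstacle: the computation is a direct analogue of Lemma \ref{David}, the sole difference being that $Q_1$ now has norm $\ring$, so its $4$-uniformized generating function begins at $z^\ring$ rather than $z^{2\ring}$, which shifts the leading uniformization from $\widetilde{H}_{Q_0}$ to $\widehat{H}_{Q_0}$. The one point deserving a moment's care is confirming that the $4\mid c+d$ entry for $\widetilde{G}_{Q_1}$ in Table \ref{Edith} is indeed $\sigma$-free, so that no additional term survives the multiplication.
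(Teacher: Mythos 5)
Your proposal is correct and follows the paper's own argument essentially verbatim: reduce to $H_{Q_0}(z)\,\widetilde{G}_{Q_1}(z^2)$ via Lemma \ref{Clarence}, read the ($\sigma$-free) value of $\widetilde{G}_{Q_1}$ from Table \ref{Edith}, and absorb each $z^{2^j\ring}$ factor into the corresponding uniformization of the $8$-uniform $H_{Q_0}(z)$. The regrouping into $\widehat{H}_{Q_0}(z)\pm_1 q^{-r_1/2}(H_{Q_0}(z)-\widetilde{H}_{Q_0}(z))$ is exactly as in the paper.
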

\begin{proof}
Lemma \ref{Clarence} allows us to replace $H_{Q_0}(z) G_{Q_1}(z^2) G_{Q_2}(z^4)$ with $H_{Q_0}(z) \widetilde{G}_{Q_1}(z^2)$, and we use the value of $\widetilde{G}_{Q_1}(z)$ from Table \ref{Edith}.  We multiply this by $H_{Q_0}(z)$, and note that $H_{Q_0}(z) z^{2^j \ring}$ is just the $2^j$-uniformization of $H_{Q_0}(z)$.
\end{proof}
\begin{lemma}
Suppose that $\norm(Q_2)\not=\ring$ and that $Q_1=c\Sq\oplus d\Sq\oplus\Planar{1}$ is of rank $r_1$ with $c\equiv d \equiv 1\pmod{2}$ and $4 \mid c+d$.
Suppose that $Q_0=\Planar{0}$.
Then
\[
H_{Q_0}(z) G_{Q_1}(z^2) G_{Q_2}(z^4)= \widehat{H}_{Q_0}(z),
\]
which can be obtained from Table \ref{Rachel}.
\end{lemma}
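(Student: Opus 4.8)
The plan is to obtain this statement as an immediate specialization of Lemma \ref{Nelson}. The hypotheses stated here are precisely those of Lemma \ref{Nelson} (namely $\norm(Q_2)\neq\ring$, together with $Q_1=c\Sq\oplus d\Sq\oplus\Planar{1}$ satisfying $c\equiv d\equiv 1\pmod 2$ and $4\mid c+d$), with the single additional assumption that $Q_0=\Planar{0}$. So the first step is simply to invoke Lemma \ref{Nelson}, which yields
\[
H_{Q_0}(z) G_{Q_1}(z^2) G_{Q_2}(z^4)= \widehat{H}_{Q_0}(z) \pm_1 \frac{1}{q^{r_1/2}} \left(H_{Q_0}(z)-\widetilde{H}_{Q_0}(z)\right).
\]

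The second and final step is to evaluate the correction term $H_{Q_0}(z)-\widetilde{H}_{Q_0}(z)$ under the extra hypothesis $Q_0=\Planar{0}$. Consulting the $Q=\Planar{0}$ portion of Table \ref{Rachel}, this difference is recorded as $0$. The underlying reason is that the head $H_{Q_\pm}(z)$ of a direct sum of hyperbolic planes (with at most one elliptic plane), as computed in Lemma \ref{Rebecca}, is supported only on the cosets $2\ring$ and $2\pi\ring$, and hence is already $\pi^2$-uniform (that is, $4$-uniform in the unramified case where $\pi=2$); consequently its $4$-uniformization $\widetilde{H}_{Q_0}(z)$ coincides with $H_{Q_0}(z)$. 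Thus the $\pm_1$ term vanishes identically, leaving $H_{Q_0}(z) G_{Q_1}(z^2) G_{Q_2}(z^4)=\widehat{H}_{Q_0}(z)$, exactly as claimed; the explicit value is then read off from the $\widehat{H}_Q$ entry of Table \ref{Rachel}.

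I do not expect any genuine obstacle here: the essential content has already been isolated in Lemma \ref{Nelson}, and the only thing to check is that the specialization $Q_0=\Planar{0}$ annihilates the one term distinguishing the general formula from the present case. This is the same collapse that was exploited in the $Q_0=\Planar{0}$ sub-cases following Lemmata \ref{David} and \ref{Horace}, where the identical observation $H_{Q_0}(z)-\widetilde{H}_{Q_0}(z)=0$ reduces the expression to $\widehat{H}_{Q_0}(z)$.
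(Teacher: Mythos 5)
Your proposal is correct and follows exactly the paper's argument: invoke Lemma \ref{Nelson} and observe from Table \ref{Rachel} that $H_{Q_0}(z)-\widetilde{H}_{Q_0}(z)=0$ when $Q_0=\Planar{0}$. Your added explanation of why this difference vanishes (the head from Lemma \ref{Rebecca} is supported only on $z^{2\ring}$ and $z^{4\ring}$, hence already $4$-uniform) is a correct elaboration of what the paper leaves to the table.
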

\begin{proof}
Use Lemma \ref{Nelson} and note from Table \ref{Rachel} that $\widetilde{H}_{Q_0}(z)-H_{Q_0}(z)=0$.
\end{proof}
\begin{lemma}
Suppose that $\norm(Q_2)\not=\ring$ and that $Q_1=c\Sq\oplus d\Sq\oplus\Planar{1}$ is of rank $r_1$ with $c\equiv d \equiv 1\pmod{2}$ and $4 \mid c+d$.
Suppose that $Q_0=a\Sq\oplus\Planar{0}$.  Then
\[
\Ig(H_{Q_0}(z) G_{Q_1}(z^2) G_{Q_2}(z^4))= \Ig(\widehat{H}_{Q_0}(z)),
\]
which can be obtained from Table \ref{Rachel}.
\end{lemma}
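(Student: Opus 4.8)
The plan is to reduce this immediately to the factorization already furnished by Lemma \ref{Nelson}, which applies under exactly the present hypotheses ($\norm(Q_2)\neq\ring$, and $Q_1=c\Sq\oplus d\Sq\oplus\Planar{1}$ with $c\equiv d\equiv 1\pmod 2$ and $4\mid c+d$), and which places no restriction on the shape of $Q_0$. That lemma gives
\[
H_{Q_0}(z) G_{Q_1}(z^2) G_{Q_2}(z^4)= \widehat{H}_{Q_0}(z) \pm_1 \frac{1}{q^{r_1/2}} \left(H_{Q_0}(z)-\widetilde{H}_{Q_0}(z)\right).
\]
First I would apply the $\C$-linear map $\Ig$ to both sides, obtaining
\[
\Ig(H_{Q_0}(z) G_{Q_1}(z^2) G_{Q_2}(z^4))= \Ig(\widehat{H}_{Q_0}(z)) \pm_1 \frac{1}{q^{r_1/2}} \Ig\!\left(H_{Q_0}(z)-\widetilde{H}_{Q_0}(z)\right).
\]

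The only remaining task is to show that the correction term is annihilated, i.e.\ that $\Ig(H_{Q_0}(z)-\widetilde{H}_{Q_0}(z))=0$ when $Q_0=a\Sq\oplus\Planar{0}$. This is precisely the entry in the row labeled $\Ig(H_Q(z)-\widetilde{H}_Q(z))$ of Table \ref{Rachel} for a form of the shape $a\Sq\oplus\Planar{0}$. Concretely, Table \ref{Rachel} records
\[
H_{Q_0}(z)-\widetilde{H}_{Q_0}(z)=\frac{2}{q^{r_0+1}}\sums{\tau\in\teichu \\ s\in S} z^{a\tau(1+4s)+8\ring}-\frac{1}{q^{r_0}}\sums{\tau\in\teichu} z^{a\tau+4\ring},
\]
and since $a\tau(1+4s)$ and $a\tau$ are units, Lemma \ref{Stevie} sends each of these coset terms to $t^0=1$; using $\card{\teichu}=q-1$ and $\card{S}=q/2$, the two contributions are $\tfrac{2}{q^{r_0+1}}(q-1)(q/2)$ and $\tfrac{1}{q^{r_0}}(q-1)$, which cancel. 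Hence $\Ig(H_{Q_0}(z)-\widetilde{H}_{Q_0}(z))=0$, the correction term drops out, and we are left with $\Ig(H_{Q_0}(z) G_{Q_1}(z^2) G_{Q_2}(z^4))=\Ig(\widehat{H}_{Q_0}(z))$, whose explicit value may then be read off from the $\Ig(\widehat{H}_Q(z))$ row of Table \ref{Rachel}.

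I expect no genuine obstacle here: all the substantive work has been pushed into Lemma \ref{Nelson} (which itself rests on Lemma \ref{Clarence} and the tabulated value of $\widetilde{G}_{Q_1}$) and into the Table \ref{Rachel} computation of $\Ig(H_{Q_0}-\widetilde{H}_{Q_0})$, which descends from the head calculation in Lemma \ref{Christine}. The argument is a direct parallel to the neighboring $Q_0=\Planar{0}$ and $Q_0=a\Sq\oplus\Planar{0}$ cases, the sole point being that $\Ig$ kills the difference $H_{Q_0}-\widetilde{H}_{Q_0}$; the mild bookkeeping is simply confirming that the $z^{\cdots+4\ring}$ and $z^{\cdots+8\ring}$ unit cosets each contribute $1$ under $\Ig$ so that the coefficients $\tfrac{2}{q^{r_0+1}}$ and $\tfrac{1}{q^{r_0}}$ exactly offset after counting over $\teichu\times S$.
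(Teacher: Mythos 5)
Your proposal is correct and follows essentially the same route as the paper: apply Lemma \ref{Nelson} and observe that $\Ig$ annihilates $H_{Q_0}(z)-\widetilde{H}_{Q_0}(z)$ for $Q_0=a\Sq\oplus\Planar{0}$, as recorded in Table \ref{Rachel}. Your explicit verification of that cancellation (each unit coset mapping to $1$ under $\Ig$, with the counts over $\teichu\times S$ and $\teichu$ offsetting) is exactly the computation underlying the table entry the paper cites.
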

\begin{proof}
Apply Lemma \ref{Nelson}, and note from Table \ref{Rachel} that $\Ig$ annihilates $H_{Q_0}(z)-\widetilde{H}_{Q_0}(z)$.
\end{proof}
\begin{lemma}\label{Ronald}
Suppose that $\norm(Q_2)\not=\ring$ and that $Q_1=c\Sq\oplus d\Sq\oplus\Planar{1}$ is of rank $r_1$ with $c\equiv d \equiv 1\pmod{2}$ and $4 \mid c+d$.
Suppose that $Q_0=a\Sq\oplus b\Sq\oplus\Planar{0}$ is of rank $r_0$.

If $4 \mid a+b$, let $\sigma =(-1)^{\Tr((a+b)/(4 a))}$, and then
\[
\Ig(H_{Q_0}(z) G_{Q_1}(z^2) G_{Q_2}(z^4))= \left(1- \frac{t}{q^{r_0}} \pm_1 \frac{\sigma(t^3-t^2)}{q^{r_0+r_1/2}}\right) \igrp.
\]
If $4\nmid a+b$, then
\[
\Ig(H_{Q_0}(z) G_{Q_1}(z^2) G_{Q_2}(z^4))= \Ig(\widehat{H}_{Q_0}(z)),
\]
which can be obtained from Table \ref{Edith}.
\end{lemma}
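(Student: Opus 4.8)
The plan is to invoke Lemma \ref{Nelson}, which does all the substantive work of simplifying the product into the form
\[
H_{Q_0}(z) G_{Q_1}(z^2) G_{Q_2}(z^4) = \widehat{H}_{Q_0}(z) \pm_1 \frac{1}{q^{r_1/2}}\left(H_{Q_0}(z)-\widetilde{H}_{Q_0}(z)\right),
\]
an identity valid precisely under the present hypotheses that $\norm(Q_2)\not=\ring$, that $Q_1=c\Sq\oplus d\Sq\oplus\Planar{1}$, and that $4\mid c+d$. Since $\Ig$ is $\C$-linear (as noted after \eqref{Raphael}), I would simply apply it to both sides to obtain
\[
\Ig(H_{Q_0}(z) G_{Q_1}(z^2) G_{Q_2}(z^4)) = \Ig(\widehat{H}_{Q_0}(z)) \pm_1 \frac{1}{q^{r_1/2}}\,\Ig(H_{Q_0}(z)-\widetilde{H}_{Q_0}(z)).
\]
This reduces the entire problem to reading off two entries from Table \ref{Edith}, since $Q_0=a\Sq\oplus b\Sq\oplus\Planar{0}$ is exactly the kind of form tabulated there.

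Next I would split into the two cases dictated by the statement. When $4\mid a+b$, Table \ref{Edith} supplies $\Ig(\widehat{H}_{Q_0}(z))=(1-t/q^{r_0})\igrp$ and $\Ig(H_{Q_0}(z)-\widetilde{H}_{Q_0}(z))=\sigma(t^3-t^2)\igrp/q^{r_0}$, with $\sigma=(-1)^{\Tr((a+b)/(4a))}$. Substituting these and factoring out the common $\igrp$ yields
\[
\left(1-\frac{t}{q^{r_0}}\pm_1\frac{\sigma(t^3-t^2)}{q^{r_0+r_1/2}}\right)\igrp,
\]
which is the claimed expression. When $4\nmid a+b$, the same table gives $\Ig(H_{Q_0}(z)-\widetilde{H}_{Q_0}(z))=0$, so the second term vanishes identically and the answer collapses to $\Ig(\widehat{H}_{Q_0}(z))$, exactly as asserted.

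There is no genuine obstacle remaining, as the difficulty has been front-loaded into Lemma \ref{Nelson} (where the $\widehat{G}_{Q_2}(z^4)$ factor is absorbed and the $c\Sq\oplus d\Sq$ part of $Q_1$ is processed through its $4$-uniformization) and into the tabulated computations of $\Ig(\widehat{H}_{Q_0})$ and $\Ig(H_{Q_0}-\widetilde{H}_{Q_0})$ in Table \ref{Edith}. The only point requiring care is the bookkeeping of the powers of $q$: one must check that the factor $1/q^{r_1/2}$ from Lemma \ref{Nelson} combines with the $1/q^{r_0}$ from the table to produce $1/q^{r_0+r_1/2}$, and that the sign $\sigma$ occurring here is literally the table's $\sigma$, which holds since both are defined by $(-1)^{\Tr((a+b)/(4a))}$ and depend only on $Q_0$.
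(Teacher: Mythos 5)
Your proof is correct and is essentially identical to the paper's: the paper likewise proves this by applying Lemma \ref{Nelson} and then reading $\Ig(\widehat{H}_{Q_0}(z))$ and $\Ig(H_{Q_0}(z)-\widetilde{H}_{Q_0}(z))$ off Table \ref{Edith} in the two cases. Your version merely spells out the linearity of $\Ig$ and the bookkeeping of the powers of $q$, which the paper leaves implicit.
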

\begin{proof}
Apply Lemma \ref{Nelson}, and use Table \ref{Edith} to obtain $\Ig(\widehat{H}_{Q_0}(z))$ and $\Ig(H_{Q_0}(z)-\widetilde{H}_{Q_0}(z))$.
\end{proof}
\begin{remark}
By the same principle alluded to in Remark \ref{Vera}, the sign $\sigma$ in Lemma \ref{Ronald} is unchanged when we replace $(a,b,c,d)$ with $(s a, s b, s c, s d)$ for some $s \in R$ with $s\equiv 1 \pmod{2}$.
\end{remark}
\begin{lemma}\label{Roger}
Suppose that $\norm(Q_2)\not=\ring$ and that $Q_1=c\Sq\oplus d\Sq\oplus\Planar{1}$ is of rank $r_1$ with $c\equiv d \equiv 1\pmod{2}$ and $4 \nmid c+d$.
Then
\[
H_{Q_0}(z) G_{Q_1}(z^2) G_{Q_2}(z^4) = \widehat{H}_{Q_0}(z) \pm_1 \frac{2}{q^{(r_1+2)/2}}  \left(H_{Q_0}(z)-\widetilde{H}_{Q_0}(z)\right) \sums{v \in \teichu \\ u \in S} z^{2 v\left(c + \frac{4}{c+d} u\right)}.
\]
\end{lemma}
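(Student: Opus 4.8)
The proof will mirror the template of Lemmata \ref{Horace} and \ref{Nelson}. Since $\norm(Q_2)\neq\ring$, Lemma \ref{Clarence} replaces $H_{Q_0}(z)G_{Q_1}(z^2)G_{Q_2}(z^4)$ by $H_{Q_0}(z)\widetilde{G}_{Q_1}(z^2)$. As $Q_1=c\Sq\oplus d\Sq\oplus\Planar{1}$ has rank $r_1$, norm $\ring$, and $4\nmid c+d$, I read $\widetilde{G}_{Q_1}(z)$ off the lower block of Table \ref{Edith}, substitute $z\mapsto z^2$, and multiply by $H_{Q_0}(z)$. Because $H_{Q_0}(z)$ is $8$-uniform (Corollary \ref{Hortense}), I may use $H_{Q_0}(z)z^{2\ring}=\widehat{H}_{Q_0}(z)$, $H_{Q_0}(z)z^{4\ring}=\widetilde{H}_{Q_0}(z)$, and $H_{Q_0}(z)z^{A+8\ring}=H_{Q_0}(z)z^{A}$ for a singleton $A$, obtaining
\[
\left(1\mp_1\frac{1}{q^{(r_1-2)/2}}\right)\widehat{H}_{Q_0}(z)\pm_1\frac{1}{q^{r_1/2}}\widetilde{H}_{Q_0}(z)\pm_1\frac{2}{q^{(r_1+2)/2}}H_{Q_0}(z)\,\Sigma,
\]
where $\Sigma=\sums{\tau\in\teichu \\ s\in S}z^{2\tau(c+\frac{4}{c+d}s)}$ is exactly the sum occurring in the claimed formula.

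Writing $H_{Q_0}(z)\,\Sigma=(H_{Q_0}(z)-\widetilde{H}_{Q_0}(z))\,\Sigma+\widetilde{H}_{Q_0}(z)\,\Sigma$ and comparing with the target, it remains only to verify
\[
\mp_1\frac{1}{q^{(r_1-2)/2}}\widehat{H}_{Q_0}(z)\pm_1\frac{1}{q^{r_1/2}}\widetilde{H}_{Q_0}(z)\pm_1\frac{2}{q^{(r_1+2)/2}}\widetilde{H}_{Q_0}(z)\,\Sigma=0.
\]
This reduces to evaluating $\widetilde{H}_{Q_0}(z)\,\Sigma$. The point is that $\widetilde{H}_{Q_0}(z)$ is $4$-uniform and, since $\val(c+d)=1$ forces $\frac{8\tau s}{c+d}\in 4\ring$, one has $2\tau(c+\frac{4}{c+d}s)=2\tau c+\frac{8\tau s}{c+d}\equiv 2\tau c\pmod 4$; hence Lemma \ref{Una} lets me replace each exponent by $2\tau c+4\ring$. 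As the summand is then independent of $s$ and $\card{S}=q/2$, I get $\widetilde{H}_{Q_0}(z)\,\Sigma=\frac{q}{2}\widetilde{H}_{Q_0}(z)\sum_{\tau\in\teichu}z^{2\tau c+4\ring}$. Restoring the $\tau=0$ term and coalescing cosets (Remark \ref{Anne}) gives $\sum_{\tau\in\teichu}z^{2\tau c+4\ring}=q\,z^{2\ring}-z^{4\ring}$, and then $\widetilde{H}_{Q_0}(z)z^{2\ring}=\widehat{H}_{Q_0}(z)$ and $\widetilde{H}_{Q_0}(z)z^{4\ring}=\widetilde{H}_{Q_0}(z)$ yield $\frac{2}{q^{(r_1+2)/2}}\widetilde{H}_{Q_0}(z)\,\Sigma=\frac{1}{q^{(r_1-2)/2}}\widehat{H}_{Q_0}(z)-\frac{1}{q^{r_1/2}}\widetilde{H}_{Q_0}(z)$, so the three terms above cancel identically.

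The main obstacle is this evaluation of $\widetilde{H}_{Q_0}(z)\,\Sigma$: one must reduce the singleton exponents correctly modulo $4$, carry the multiplicity $\card{S}=q/2$, and coalesce $\sum_{\tau\in\teichu}z^{2\tau c+4\ring}$ into $q\,z^{2\ring}-z^{4\ring}$ after reinserting the $\tau=0$ coset. A secondary nuisance is keeping the signs $\pm_1,\mp_1$ aligned when matching the $\widehat{H}_{Q_0}$ and $\widetilde{H}_{Q_0}$ contributions of $\widetilde{H}_{Q_0}(z)\,\Sigma$ against the leading coefficients of the intermediate expression; the reduction via Lemma \ref{Clarence} and the reading of Table \ref{Edith} are routine and proceed exactly as in the neighbouring lemmata.
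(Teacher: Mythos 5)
Your proposal is correct and follows essentially the same route as the paper: reduce via Lemma \ref{Clarence} to $H_{Q_0}(z)\widetilde{G}_{Q_1}(z^2)$, expand using Table \ref{Edith} and the $8$-uniformity of $H_{Q_0}(z)$, and show that everything other than $\widehat{H}_{Q_0}(z)$ and the $\bigl(H_{Q_0}(z)-\widetilde{H}_{Q_0}(z)\bigr)\Sigma$ term cancels. The only (cosmetic) difference is in how the cancellation is organized: the paper groups the residual terms as $\pm_1 q^{-r_1/2}\bigl(\widetilde{H}_{Q_0}(z)-\widehat{H}_{Q_0}(z)\bigr)\sum_{\tau \in \teich} z^{2\tau c}$ and kills this with the coalescence identity $\bigl(z^{a+4\ring}-z^{a+2\ring}\bigr)\sum_{\tau\in\teich}z^{2\tau c}=0$, whereas you evaluate $\widetilde{H}_{Q_0}(z)\Sigma$ explicitly to $\tfrac{q}{2}\bigl(q\widehat{H}_{Q_0}(z)-\widetilde{H}_{Q_0}(z)\bigr)$ --- the same computation, correctly carried out.
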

\begin{proof}
Lemma \ref{Clarence} allows us to replace $H_{Q_0}(z) G_{Q_1}(z^2) G_{Q_2}(z^4)$ with $H_{Q_0}(z) \widetilde{G}_{Q_1}(z^2)$, and we use the value of $\widetilde{G}_{Q_1}$ from Table \ref{Edith}.
We multiply this by $H_{Q_0}(z)$, and note that $H_{Q_0}(z) z^{2^j\ring}$ is the $2^j$-uniformization of $H_{Q_0}(z)$ and $H_{Q_0}(z) z^{2 v\left(c+\frac{4}{c+d}u\right)+8\ring}=H_{Q_0}(z) z^{2 v\left(c+\frac{4}{c+d}u\right)}$ because $H_{Q_0}(z)$ is $8$-uniform by Corollary \ref{Hortense}.  So we obtain
\begin{multline*}
H_{Q_0}(z) G_{Q_1}(z^2) G_{Q_2}(z^4)=\left(1\mp_1 \frac{1}{q^{(r_1-2)/2}}\right) \widehat{H}_{Q_0}(z) \pm_1 \frac{1}{q^{r_1/2}} \widetilde{H}_{Q_0}(z) \\ \pm_1 \frac{2}{q^{(r_1+2)/2}}  H_{Q_0}(z) \sums{\tau \in \teichu \\ s \in S} z^{2 \tau\left(c + \frac{4}{c+d} s\right)},
\end{multline*}
which is
\begin{multline*}
\left(1\mp_1 \frac{1}{q^{(r_1-2)/2}}\right) \widehat{H}_{Q_0}(z) \pm_1 \frac{1}{q^{r_1/2}} \widetilde{H}_{Q_0}(z)  \pm_1 \frac{1}{q^{r_1/2}}  \widetilde{H}_{Q_0}(z) \sums{\tau \in \teichu} z^{2 \tau} \\
\pm_1 \frac{2}{q^{(r_1+2)/2}}  \left(H_{Q_0}(z)-\widetilde{H}_{Q_0}(z)\right) \sums{\tau \in \teichu \\ s \in S} z^{2 \tau\left(c + \frac{4}{c+d} s\right)},
\end{multline*}
which in turn is
\[
\widehat{H}_{Q_0}(z) \pm_1 \frac{\widetilde{H}_{Q_0}(z)-\widehat{H}_{Q_0}(z)}{q^{r_1/2}}\sums{\tau \in \teich} z^{2 \tau c} 
\pm_1 \frac{2\left(H_{Q_0}(z)-\widetilde{H}_{Q_0}(z)\right)}{q^{(r_1+2)/2}} \sums{\tau \in \teichu \\ s \in S} z^{2 \tau\left(c + \frac{4}{c+d} s\right)}.
\]
Now note that $\widetilde{H}_{Q_0}(z)-\widehat{H}_{Q_0}(z)$ is necessarily a linear combination of terms of the form $z^{a+4\ring}-z^{a+2\ring}$, so that by coalescence of cosets (see Remark \ref{Anne}), we have
\[
\left(z^{a+4\ring}-z^{a+2\ring}\right) \sum_{\tau \in \teich} z^{2 \tau c} = 0.\qedhere
\]
\end{proof}
\begin{lemma}
Suppose that $\norm(Q_2)\not=\ring$ and that $Q_1=c\Sq\oplus d\Sq\oplus\Planar{1}$ is of rank $r_1$ with $c\equiv d \equiv 1\pmod{2}$ and $4 \nmid c+d$.
Suppose that $Q_0=\Planar{0}$.  Then
\[
H_{Q_0}(z) G_{Q_1}(z^2) G_{Q_2}(z^4)=\widehat{H}_{Q_0}(z),
\]
which can be obtained from Table \ref{Rachel}.
\end{lemma}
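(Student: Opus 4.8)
The plan is to reduce this statement directly to Lemma \ref{Roger}, which has already been proved under exactly the hypotheses in force here: $\norm(Q_2) \neq \ring$ and $Q_1 = c\Sq \oplus d\Sq \oplus \Planar{1}$ with $c \equiv d \equiv 1 \pmod 2$ and $4 \nmid c+d$. That lemma supplies the identity
\[
H_{Q_0}(z) G_{Q_1}(z^2) G_{Q_2}(z^4) = \widehat{H}_{Q_0}(z) \pm_1 \frac{2}{q^{(r_1+2)/2}} \left(H_{Q_0}(z)-\widetilde{H}_{Q_0}(z)\right) \sums{v \in \teichu \\ u \in S} z^{2 v\left(c + \frac{4}{c+d} u\right)},
\]
valid for an arbitrary unimodular $Q_0$. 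The only remaining task is to specialize to $Q_0 = \Planar{0}$ and show that the correction term disappears.

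First I would invoke Lemma \ref{Roger} to obtain the displayed decomposition. Then I would read off from the $Q = \Planar{0}$ row of Table \ref{Rachel} that $H_{Q_0}(z) - \widetilde{H}_{Q_0}(z) = 0$ in this case. Substituting this vanishing annihilates the entire second term regardless of the sign $\pm_1$ and of the value of the sum over $\teichu \times S$, leaving precisely $H_{Q_0}(z) G_{Q_1}(z^2) G_{Q_2}(z^4) = \widehat{H}_{Q_0}(z)$, as claimed.

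There is essentially no obstacle; all the content was discharged in proving Lemma \ref{Roger}, and the only new input is the vanishing of $H_{Q_0} - \widetilde{H}_{Q_0}$ for a sum of planes. If I wished to justify that entry of Table \ref{Rachel} from scratch rather than citing it, I would note that by Lemma \ref{Rebecca} the head $H_{\Planar{0}}(z)$ is a $\C$-linear combination of $z^{2\ring}$ and $z^{2\unif\ring} = z^{4\ring}$; since $\unif = 2$, these exponents are the cosets $\unif\ring$ and $\unif^2\ring$, so the head is already $\pi^2$-uniform and hence equals its own $4$-uniformization $\widetilde{H}_{Q_0}(z)$. This reflects the fact that $\Planar{0}$ has norm $2\ring$, so its head is supported on even-valuation cosets and admits no genuine refinement upon passing from $\pi^2$-uniformity to $4$-uniformity.
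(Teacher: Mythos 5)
Your proof is correct and follows exactly the paper's own route: apply Lemma \ref{Roger} and then observe from Table \ref{Rachel} that $H_{Q_0}(z)-\widetilde{H}_{Q_0}(z)=0$ when $Q_0=\Planar{0}$, which kills the correction term. The extra remark justifying that table entry (the head of $\Planar{0}$ is a combination of $z^{2\ring}$ and $z^{4\ring}$, hence already $4$-uniform) is sound and harmless, though not needed.
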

\begin{proof}
Apply Lemma \ref{Roger}, and note from Table \ref{Rachel} that $\widetilde{H}_{Q_0}=H_{Q_0}$.
\end{proof}
\begin{lemma}
Suppose that $\norm(Q_2)\not=\ring$ and that $Q_1=c\Sq\oplus d\Sq\oplus\Planar{1}$ is of rank $r_1$ with $c\equiv d \equiv 1\pmod{2}$ and $4 \nmid c+d$.
Suppose that $Q_0=a\Sq\oplus\Planar{0}$ is of rank $r_0$.  Then
\[
\Ig(H_{Q_0}(z) G_{Q_1}(z^2) G_{Q_2}(z^4))=\Ig(\widehat{H}_{Q_0}(z)),
\]
which can be obtained from Table \ref{Rachel}.
\end{lemma}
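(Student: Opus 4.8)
The plan is to reduce to Lemma \ref{Roger} and then show that $\Ig$ annihilates the resulting correction term. Since $Q_1 = c\Sq\oplus d\Sq\oplus\Planar{1}$ with $4\nmid c+d$ and $\norm(Q_2)\neq\ring$, Lemma \ref{Roger} gives
\[
H_{Q_0}(z) G_{Q_1}(z^2) G_{Q_2}(z^4) = \widehat{H}_{Q_0}(z) \pm_1 \frac{2}{q^{(r_1+2)/2}} \left(H_{Q_0}(z)-\widetilde{H}_{Q_0}(z)\right) \sums{v \in \teichu \\ u \in S} z^{2 v\left(c + \frac{4}{c+d} u\right)}.
\]
Applying the $\C$-linear map $\Ig$, it suffices to show that $\Ig$ kills the second summand, i.e.\ that $\Ig\big((H_{Q_0}(z)-\widetilde{H}_{Q_0}(z)) \sum_{v\in\teichu, u\in S} z^{2v(c+4u/(c+d))}\big)=0$.

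First I would substitute the value of $H_{Q_0}(z)-\widetilde{H}_{Q_0}(z)$ for $Q_0=a\Sq\oplus\Planar{0}$ recorded in Table \ref{Rachel}, namely $\frac{2}{q^{r_0+1}}\sum_{\tau\in\teichu, s\in S} z^{a\tau(1+4s)+8\ring} - \frac{1}{q^{r_0}}\sum_{\tau\in\teichu} z^{a\tau+4\ring}$, and multiply it by the displayed sum over $(v,u)$. This produces two families of cosets: one of the form $z^{a\tau(1+4s)+2v(c+4u/(c+d))+8\ring}$ and one of the form $z^{a\tau+2v(c+4u/(c+d))+4\ring}$.

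The crucial observation is a valuation count. Because $c\equiv d\equiv 1\pmod 2$ we have $2\mid c+d$, and the hypothesis $4\nmid c+d$ forces $\val(c+d)=1$; hence $\val(4/(c+d))=1$, $c+4u/(c+d)$ is a unit, and $2v(c+4u/(c+d))$ has valuation exactly $\val(2)=1$. Adding this valuation-$1$ element to the units $a\tau(1+4s)$ and $a\tau$ leaves a unit, so every coset center above has valuation $0$ and lies outside $8\ring$ (respectively $4\ring$). By the second case of Lemma \ref{Stevie} each such term contributes $t^0=1$ under $\Ig$. Counting multiplicities with $\card{\teichu}=q-1$ and $\card{S}=q/2$, the first family contributes $\frac{2}{q^{r_0+1}}(q-1)^2(q/2)^2 = \frac{(q-1)^2}{2q^{r_0-1}}$ and the second contributes $-\frac{1}{q^{r_0}}(q-1)^2(q/2) = -\frac{(q-1)^2}{2q^{r_0-1}}$, so the two cancel and the correction term is annihilated.

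Thus $\Ig(H_{Q_0}(z) G_{Q_1}(z^2) G_{Q_2}(z^4))=\Ig(\widehat{H}_{Q_0}(z))$, which is read off from Table \ref{Rachel}. I expect the only genuine obstacle to be the valuation bookkeeping: verifying that $4\nmid c+d$ makes $2v(c+4u/(c+d))$ have valuation exactly $1$ so that each coset center stays a unit. This unit property is precisely what makes $\Ig$ output the same constant $t^0$ on every term of both families, enabling the clean numerical cancellation; without it one would have to track varying powers of $t$ and the two families would not cancel termwise.
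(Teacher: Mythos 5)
Your proof is correct and follows essentially the same route as the paper: apply Lemma \ref{Roger}, substitute $H_{Q_0}(z)-\widetilde{H}_{Q_0}(z)$ from Table \ref{Rachel}, and observe that $\Ig$ annihilates the resulting correction term. The paper simply asserts this annihilation is clear, whereas you spell out the valuation count showing every coset center is a unit and the two families' contributions cancel numerically; that computation is accurate.
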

\begin{proof}
We apply Lemma \ref{Roger}, and use the value of $H_{Q_0}(z)-\widetilde{H}_{Q_0}(z)$ from Table \ref{Rachel}.  When it is multiplied by $\sums{v \in \teichu \\ u \in S} z^{2 v\left(c + \frac{4}{c+d} u\right)}$, one gets
\[
\frac{2}{q^{r_0+1}} \sums{t,v \in \teichu \\ s, u \in S} z^{a t (1+4 s)+ 2 v\left(c + \frac{4}{c+d} u\right)+8\ring}
-\frac{1}{q^{r_0}} \sums{t, v \in \teichu \\ u \in S} z^{a t + 2 v\left(c + \frac{4}{c+d} u\right) + 4\ring},
\]
which is clearly annihilated by $\Ig$.
\end{proof}
\begin{lemma}
Suppose that $\norm(Q_2)\not=\ring$ and that $Q_1=c\Sq\oplus d\Sq\oplus\Planar{1}$ is of rank $r_1$ with $c\equiv d \equiv 1\pmod{2}$ and $4 \nmid c+d$.
Suppose that $Q_0=a\Sq\oplus b\Sq\oplus\Planar{0}$ is of rank $r_0$ with $a\equiv b \equiv 1\pmod{2}$ and $4 \mid a+b$.
Then
\[
\Ig(H_{Q_0}(z) G_{Q_1}(z^2) G_{Q_2}(z^4)) = \Ig(\widehat{H}_{Q_0}(z)),
\]
which can be obtained from Table \ref{Edith}.
\end{lemma}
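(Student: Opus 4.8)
The plan is to invoke Lemma~\ref{Roger} and then show that $\Ig$ annihilates the resulting correction term. Since $\norm(Q_2)\neq\ring$ and $Q_1=c\Sq\oplus d\Sq\oplus\Planar{1}$ with $4\nmid c+d$, Lemma~\ref{Roger} gives
\[
H_{Q_0}(z) G_{Q_1}(z^2) G_{Q_2}(z^4) = \widehat{H}_{Q_0}(z) \pm_1 \frac{2}{q^{(r_1+2)/2}} \left(H_{Q_0}(z)-\widetilde{H}_{Q_0}(z)\right) \sums{v \in \teichu \\ u \in S} z^{2 v\left(c + \frac{4}{c+d} u\right)}.
\]
Applying the $\C$-linear map $\Ig$, and reading off $\Ig(\widehat{H}_{Q_0}(z))=\left(1-t/q^{r_0}\right)\igrp$ from Table~\ref{Edith}, it suffices to prove that $\Ig$ kills the second summand.

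Here I would use the standing hypothesis $4\mid a+b$. Table~\ref{Edith} records that in this case $H_{Q_0}(z)-\widetilde{H}_{Q_0}(z)=\frac{\sigma}{q^{r_0}}\left(z^{8\ring}-z^{4\ring}\right)$, where $\sigma=(-1)^{\Tr((a+b)/(4a))}$. Multiplying this difference by $z^{2 v(c + \frac{4}{c+d} u)}$ and using the rule $z^A z^B=z^{A+B}$ produces, for each $(v,u)\in\teichu\times S$, the pair $z^{w+8\ring}-z^{w+4\ring}$ with $w=2 v\left(c+\frac{4}{c+d}u\right)$. The one elementary point to check is that $\val(w)=1$: the factor $2$ contributes valuation $1$, $v$ is a unit, and $c+\frac{4}{c+d}u$ is a unit because $c\equiv1\pmod2$ while $4/(c+d)$ has valuation $1$ (the hypothesis $4\nmid c+d$ forces $\val(c+d)=1$) and $u\in\teich$, so $\frac{4}{c+d}u$ has valuation at least $1$.

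Since $\val(w)=1$ is strictly less than both $\val(8)=3$ and $\val(4)=2$, the cosets $w+8\ring$ and $w+4\ring$ are proper cosets whose representatives have valuation $1$, so Lemma~\ref{Stevie} gives $\Ig(z^{w+8\ring})=\Ig(z^{w+4\ring})=t$. By linearity $\Ig(z^{w+8\ring}-z^{w+4\ring})=t-t=0$ for every $(v,u)$, whence $\Ig$ annihilates the entire correction term. This yields $\Ig(H_{Q_0}(z) G_{Q_1}(z^2) G_{Q_2}(z^4))=\Ig(\widehat{H}_{Q_0}(z))$, the value supplied by Table~\ref{Edith}.

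I do not anticipate a genuine obstacle; the work is essentially bookkeeping, and the only step requiring attention is the valuation computation for $w$. It is worth contrasting this with the companion case $4\nmid a+b$ treated just above: there $H_{Q_0}(z)-\widetilde{H}_{Q_0}(z)$ is supported on unit-valuation cosets, and the vanishing of $\Ig$ rests on a global cancellation of the total coefficients of two multi-indexed sums. Under the present hypothesis $4\mid a+b$, by contrast, the difference $H_{Q_0}(z)-\widetilde{H}_{Q_0}(z)$ is concentrated on the ideals $4\ring$ and $8\ring$ via $z^{8\ring}-z^{4\ring}$; after the valuation-$1$ shift this collapses to a termwise difference that $\Ig$ kills pair by pair, making the argument here cleaner than its $4\nmid a+b$ analogue.
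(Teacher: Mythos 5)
Your proof is correct and follows the same route as the paper: apply Lemma~\ref{Roger}, read off $H_{Q_0}(z)-\widetilde{H}_{Q_0}(z)=\sigma(z^{8\ring}-z^{4\ring})/q^{r_0}$ from Table~\ref{Edith}, and observe that $\Ig$ annihilates each shifted difference because the shift $2v\bigl(c+\tfrac{4}{c+d}u\bigr)$ has valuation $1$ for $v\in\teichu$. Your explicit valuation check and the use of Lemma~\ref{Stevie} simply spell out what the paper states in one line.
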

\begin{proof}
We apply Lemma \ref{Roger}, and use Table \ref{Edith} to obtain the value $H_{Q_0}(z)-\widetilde{H}_{Q_0}(z)=\sigma (z^{8\ring}-z^{4\ring})/q^{r_0}$, where $\sigma =(-1)^{\Tr((a+b)/(4 a))}$.
Note that $\Ig$ always annihilates $(H_{Q_0}(z)-\widetilde{H}_{Q_0}(z)) z^{2 v\left(c + \frac{4}{c+d} u\right)}$ when $v\in\teichu$.
\end{proof}
\begin{lemma}\label{Gertrude}
Suppose that $\norm(Q_2)\not=\ring$ and that $Q_1=c\Sq\oplus d\Sq\oplus\Planar{1}$ is of rank $r_1$ with $c\equiv d \equiv 1\pmod{2}$ and $4 \nmid c+d$.
Suppose that $Q_0=a\Sq\oplus b\Sq\oplus\Planar{0}$ is of rank $r_0$ with $a\equiv b \equiv 1\pmod{2}$ and $4 \nmid a+b$.

If $4\nmid a+b+c+d$, then 
\[
\Ig(H_{Q_0}(z) G_{Q_1}(z^2) G_{Q_2}(z^4)) = \Ig(\widehat{H}_{Q_0}(z)),
\]
which can be obtained from Table \ref{Edith}.

If $4\mid a+b+c+d$, let $\psi=(-1)^{\Tr\left(\frac{c}{2}\left[\left(\frac{c+d}{2}\right)^q+\frac{a+b}{2 c}\right]\right)}$, and then
\[
\Ig(H_{Q_0}(z) G_{Q_1}(z^2) G_{Q_2}(z^4)) = \left(1-\frac{t}{q^{r_0}} \pm_1 \frac{\psi(t^3-t^2)}{q^{r_0+r_1/2}}\right) \igrp.
\]
\end{lemma}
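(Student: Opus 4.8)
The plan is to mirror the proofs of Lemmata \ref{Xavier} and \ref{Veronica}, reducing the evaluation of $\Ig(H_{Q_0}(z) G_{Q_1}(z^2) G_{Q_2}(z^4))$ to a valuation count over Teichm\"uller representatives. Since $\norm(Q_2)\neq\ring$ and $Q_1=c\Sq\oplus d\Sq\oplus\Planar{1}$ with $4\nmid c+d$, I would first invoke Lemma \ref{Roger}, which writes the product as $\widehat{H}_{Q_0}(z)$ plus the correction $\pm_1\frac{2}{q^{(r_1+2)/2}}(H_{Q_0}(z)-\widetilde{H}_{Q_0}(z))\sum_{v\in\teichu,\,u\in S} z^{2v(c+\frac{4}{c+d}u)}$, and then substitute the value of $H_{Q_0}(z)-\widetilde{H}_{Q_0}(z)$ for $Q_0=a\Sq\oplus b\Sq\oplus\Planar{0}$ with $4\nmid a+b$ from Table \ref{Edith}. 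Before the real computation I would record an elementary simplification: under the hypotheses, $a+b$ and $c+d$ are each even but not divisible by $4$, hence each is $\equiv 2\pmod{4}$, so $a+b+c+d\equiv 0\pmod{4}$. Thus the branch $4\nmid a+b+c+d$ cannot occur under the stated hypotheses and is vacuous, and it remains only to derive the $\psi$ formula.

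The term $\widehat{H}_{Q_0}(z)$ supplies $\Ig(\widehat{H}_{Q_0}(z))=(1-t/q^{r_0})\igrp$ by Table \ref{Edith}, which is the first part of the claimed answer. All the genuine content lies in applying $\Ig$ to the correction. Multiplying the value of $H_{Q_0}(z)-\widetilde{H}_{Q_0}(z)$ from Table \ref{Edith} by the double sum $\sum_{v\in\teichu,\,u\in S} z^{2v(c+\frac{4}{c+d}u)}$ and coalescing cosets (Remark \ref{Anne}) yields a simple $z^{2\ring}$-type term together with a quadruple sum over $(\tau,s,v,u)$ whose exponent $\tau(a+b+4s)+2v(c+\frac{4}{c+d}u)$ is a sum of two pieces, each of valuation exactly $1$. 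I would reparameterize by $v=w\tau$, set $e=\frac{a+b}{2c}$, introduce $\frac{c+d}{2}$, and apply Corollary \ref{Martin} together with Lemmata \ref{Richard} and \ref{Elijah} to rewrite these exponents modulo $8$ so that their valuations can be read off directly.

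The heart of the argument, and the step I expect to be the main obstacle, is this valuation bookkeeping: the two valuation-$1$ pieces interact, so the valuation rises to $2$ (and occasionally $3$) only on a sublocus. Following Lemma \ref{Xavier}, the valuation-$1$ contributions from the quadruple sum combine with the $z^{2\ring}$-type term and cancel, leaving only the valuation-$2$ and valuation-$3$ contributions in the $\pm_1$-part, whose relative counts are governed by $\Tr\big(\frac{c}{2}\big[(\frac{c+d}{2})^q+\frac{a+b}{2c}\big]\big)\bmod 2$; this defines the sign $\psi$. The appearance of $(\frac{c+d}{2})^q$ here, in place of the $(\frac{a+b}{2c})^q$ of Lemma \ref{Xavier}, is exactly the footprint of the extra $d\Sq$ summand and of the factor $\frac{4}{c+d}u$ carried by the double sum. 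Applying $\Ig$ by Lemma \ref{Stevie} (a coset $z^{A+8\ring}$ with $\val(A)=j<3$ contributes $t^j$, while the residual $z^{8\ring}$-cosets contribute $t^3\igrp$) and assembling with $(1-t/q^{r_0})\igrp$ then produces $\big(1-\frac{t}{q^{r_0}}\pm_1\frac{\psi(t^3-t^2)}{q^{r_0+r_1/2}}\big)\igrp$, as claimed. Finally, as in Remark \ref{Vera}, I would check that $\psi$ is unchanged under scaling $(a,b,c,d)$ by a unit $\equiv 1\pmod{2}$, confirming consistency with the invariance of $\Ig$ under such scaling.
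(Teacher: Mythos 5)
There is a genuine gap: your claim that the branch $4\nmid a+b+c+d$ is vacuous is false, and it rests on conflating ``valuation exactly $1$'' with ``congruent to $2$ modulo $4$.'' The hypotheses $4\nmid a+b$ and $4\nmid c+d$ say only that $\twoval(a+b)=\twoval(c+d)=1$, i.e., that $(a+b)/2$ and $(c+d)/2$ are units of $\ring$. Over $\Z_2$ every unit is $1$ modulo $2$, so there the sum $(a+b)/2+(c+d)/2$ is forced to be a non-unit and your conclusion $4\mid a+b+c+d$ holds. But the lemma is stated for an arbitrary unramified $2$-adic ring, where the residue field $\resfield$ has $q=2^f$ elements: when $q\geq 4$ one can choose $(a+b)/2$ and $(c+d)/2$ to reduce to two distinct nonzero elements of $\resfield$, whence their sum is again a unit and $\twoval(a+b+c+d)=1$. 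The paper's own proof singles this out explicitly (its case (I)), noting that it ``is not achievable when $\ring=\Z_2$, but can be achieved in any unramified extension thereof.'' Your proposal therefore omits the proof of the first displayed identity $\Ig(H_{Q_0}(z) G_{Q_1}(z^2) G_{Q_2}(z^4)) = \Ig(\widehat{H}_{Q_0}(z))$ in the subcase $\twoval(a+b+c+d)=1$, which requires its own evaluation of the sums $M$ and $N$ (in that case $M=q z^{2\ring}-z^{4\ring}$ rather than $(q-1)z^{4\ring}$, and one must check that the extra valuation-$1$ contributions cancel).

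Apart from this, your outline follows the paper's route: reduce via Lemma \ref{Roger} to $\widehat{H}_{Q_0}(z)$ plus a correction, substitute $H_{Q_0}(z)-\widetilde{H}_{Q_0}(z)$ from Table \ref{Edith}, reparameterize $v=w\tau$, and use Lemmata \ref{Richard}, \ref{Elijah}, and \ref{Eustace} to read off valuations. That skeleton is sound, but the decisive computation --- splitting the quadruple sum according to whether $s+(w/g)u$ ranges over all of $\resfield$ or only over $S$, and then separating the cases by $\twoval(a+b+c+d)$ and by $\Tr(c(g^q+e)/2)\bmod 2$ --- is exactly where the two branches of the lemma diverge, so the case you discarded cannot be waved away.
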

\begin{proof}
We apply Lemma \ref{Roger}, and use the value of $H_{Q_0}-\widetilde{H}_{Q_0}$ from Table \ref{Edith} to see that 
\[
\left(H_{Q_0}(z)-\widetilde{H}_{Q_0}(z)\right) \sums{v \in \teichu \\ u \in S} z^{2 v\left(c + \frac{4}{c+d} u\right)}
\]
is
\[
\frac{2-q}{2 q^{r_0-2}} z^{2\ring} - \frac{1}{2 q^{r_0-1}} z^{4\ring} + \frac{2}{q^{r_0+1}} \sums{\tau, v\in\teichu \\ s, u \in S} z^{\tau \left(a+b+4 s\right) + 2 v\left(c+\frac{4}{c+d} u\right) +8\ring}.
\]
Let us examine the quadruple sum.  First, reparameterize with $v = \tau w$ with $w \in \teichu$ to obtain
\[
\sums{\tau, w\in\teichu \\ s, u \in S} z^{2 \tau \left(c(w+e)+ 2 s  + 2 (w/g)  u\right)+8\ring},
\]
where $e=\frac{a+b}{2 c}$ and $g=\frac{c+d}{2}$.
Note that if $w \not\equiv g \pmod{2}$, then Lemma \ref{Eustace} tells us that $s + (w/g)u \pmod{2}$ runs through $\resfield$, taking each value $q^2/4$ times, as $(s, u)$ runs through $S^2$.  If $w \equiv g \pmod{2}$, then $s + (w/g)u \pmod{2}$ runs through $S$, taking each value $q/2$ times.
Lemma \ref{Richard} tells us that $w\equiv g^q \pmod{4}$ when $w \equiv g \pmod{2}$, so our quadruple sum becomes
\[
\frac{q^2}{4} \sums{\tau, w\in\teichu \\ w\not\equiv g \!\!\!\! \pmod{2}} z^{2 \tau c(w+e)+4\ring} + \frac{q}{2} \sums{\tau \in\teichu \\ s \in S} z^{2 \tau \left(c(g^q+e)+ 2 s\right)+8\ring},
\]
and we can sum over all $w$, and then deduct the $w=0$ and $w=g$ terms to get
\[
\frac{q^3(q-2)}{4} z^{2\ring} + \frac{q^2}{4} z^{4\ring} -\frac{q^2}{4} \sum_{\tau \in \teichu} z^{\tau(a+b+c+d)+4\ring} + \frac{q}{2} \sums{\tau \in\teichu \\ s \in S} z^{2 \tau \left(c(g^q+e)+ 2 s\right)+8\ring},
\]
where we have used the fact that $c(c+d) \equiv c+d \pmod{4}$ because $c\equiv d \equiv 1 \pmod{2}$.
When we combine this with the other terms, we get that
\begin{equation}\label{Colin}
\left(H_{Q_0}(z)-\widetilde{H}_{Q_0}(z)\right) \sums{v \in \teichu \\ u \in S} z^{2 v\left(c + \frac{4}{c+d} u\right)} =
-\frac{1}{2 q^{r_0-1}} \left(M-\frac{2}{q} N\right)
\end{equation}
where
\[
M = \sum_{\tau \in \teichu} z^{\tau(a+b+c+d)+4\ring} \text{\quad and \quad } N = \sums{\tau \in\teichu \\ s \in S} z^{2 \tau \left(c(g^q+e)+ 2 s\right)+8\ring}.
\]
We get different values for these sums depending on $\twoval(a+b+c+d)$, $\twoval(g^q+e)$, and in the case where $\twoval(g^q+e) \geq 1$ whether $\Tr(c(g^q+e)/2)$ is $0$ or $1$ modulo $2$.
The important cases to delineate for $M$ are (I) $\twoval(a+b+c+d)=1$ and (II) $\twoval(a+b+c+d) > 1$.  Note that case (I) is not achievable when $\ring=\Z_2$, but can be achieved in any unramified extension thereof.
Since $g^q \equiv g \pmod{2}$ and $c\equiv 1 \pmod{2}$ (making $e \equiv \frac{a+b}{2} \pmod{2}$), we see that $g^q+e \equiv \frac{a+b+c+d}{2} \pmod{2}$, so we see that $\twoval(g^q+e)=0$ in case (I) and $\twoval(g^q+e) > 0$ in case (II).
We then divide case (II) into case (IIA) where $\Tr(c(g^q+e)/2)\equiv 1 \pmod{2}$, and case (IIB) where $\Tr(c(g^q+e)/2) \equiv 0 \pmod{2}$.

In case (I), we have $M(z) = q z^{2\ring}-z^{4\ring}$,
and
\[
\Ig(M(z))=(q t- t^2) \igrp \text{\quad and \quad} \Ig(N(z)) =\frac{(q-1) q t}{2}.
\]
In case (IIA) or (IIB), we have $M(z) = (q-1) z^{4\ring}$, and
\[
\Ig(M(z))=(q-1) t^2 \igrp.
\]
Then in case (IIA), we obtain
\[
\Ig(N(z)) =\frac{(q-1) q t^2}{2},
\]
and in case (IIB), we have
\[
\Ig(N(z)) =\frac{(q-1)(q-2)t^2}{2} +(q-1) t^3 \igrp=\left(\frac{q t^3+q(q-2)t^2}{2}\right) \igrp. 
\]
Using these calculations to apply $\Ig$ to \eqref{Colin}, and also using value of $\Ig(\widehat{H}_{Q_0}(z))$ from Table \ref{Edith}, we get the desired value of $\Ig(H_{Q_0}(z) G_{Q_1}(z^2) G_{Q_2}(z^4))$ via Lemma \ref{Roger}.
\end{proof}
\begin{remark}
By the same principle alluded to in Remark \ref{Vera}, the sign $\psi$ in Lemma \ref{Gertrude} is unchanged when we replace $(a,b,c,d)$ with $(s a, s b, s c, s d)$ for some $s \in R$ with $s\equiv 1 \pmod{2}$.  To see this, recall that Lemma \ref{Gertrude} assumes that $a \equiv b \equiv c \equiv d \equiv 1 \pmod{2}$ and let us note that $\psi$ arises only in the case where $4 \mid a+b+c+d$, so we assume that these assumptions are always in force.  (Indeed, we insist that $s \equiv 1 \pmod{2}$ to preserve these assumptions.)  Now $u^q \equiv u \pmod{2}$ for any $u \in R$, and by our assumptions, we have $\left(\frac{c+d}{2}\right)^q+\frac{a+b}{2 c} \equiv \frac{a+b+c+d}{2} \equiv 0 \pmod{2}$, so that $\frac{1}{2}\left[\left(\frac{c+d}{2}\right)^q+\frac{a+b}{2 c}\right]$ is always an element of $R$ under our assumptions.
Furthermore, note that the term $(a+b)/(2 c)$ is unchanged when we apply our scaling by $s$.  The term $((c+d)/2)^q \in R$ is scaled by $s^q$, which is $1$ modulo $4$ because $s \equiv 1 \pmod{2}$.  Thus the term $\frac{1}{2}\left[\left(\frac{c+d}{2}\right)^q+\frac{a+b}{2 c}\right]$ is unchanged modulo $2$ under this scaling.  So $\frac{c}{2}\left[\left(\frac{c+d}{2}\right)^q+\frac{a+b}{2 c}\right]$ is unchanged modulo $2$ when we scale $(a,b,c,d)$ by $s$, and thus $\psi$ is unchanged under this scaling.
\end{remark}

\section*{Acknowledgements}

The authors thank Prof.~M.~Helena Noronha, who decided to fund this research project through National Science Foundation grant DMS 1247679.
The authors thank an anonymous referee for a very careful reading of the manuscript, with many helpful corrections and suggestions that have improved this paper.

\def\cprime{$'$}

\end{document}